\documentclass{article}
\usepackage{amsmath}
\usepackage{amsfonts,amssymb,mathrsfs}
\usepackage{theorem}
\usepackage{ulem}
\textwidth 6.5in
\textheight 9in
\topmargin -2cm 
\voffset -0cm
\hoffset -2cm

\usepackage{hyperref}

\usepackage{tikz}
\usepackage{tikz-cd}
\usepgflibrary{arrows}
\usetikzlibrary{matrix}

\newcommand{\Ob}{\mathrm{Ob}}

\newcommand{\cl}{\mathrm{cl}}

\input amssym.def

\numberwithin{equation}{section}

\numberwithin{figure}{section}


\makeatletter
\newcommand\qedsymbol{\hbox{$\Box$}}
\newcommand\qed{\relax\ifmmode\Box\else
  {\unskip\nobreak\hfil\penalty50\hskip1em\null\nobreak\hfil\qedsymbol
  \parfillskip=\z@\finalhyphendemerits=0\endgraf}\fi}

\newenvironment{proof-of}[1][{}]{\par\noindent \textbf{Proof of} {#1}. }{\qed}

\newenvironment{proof}[0]{\par\noindent \textbf{Proof}.}{\qed}


\newcommand{\gray}[1]{\textcolor{gray}{#1}}

\newcommand{\blue}[1]{\textcolor{blue}{#1}}

\DeclareMathOperator*{\Gal}{Gal}

\newcommand{\GT}{\mathsf{GT}}
\newcommand{\GTSh}{\mathsf{GTSh}}
\newcommand{\GTh}{\widehat{\mathsf{GT}}}
\newcommand{\Zhat}{\widehat{\mathbb{Z}}}

\newcommand{\F}{\mathsf{F}}
\newcommand{\Fh}{\widehat{\mathsf{F}}}

\newcommand{\hcP}{\hat{\mathcal{P}}}

\newcommand{\PaB}{\mathsf{PaB}}
\newcommand{\NFI}{\mathsf{NFI}}

\newcommand{\PB}{\mathrm{PB}} 
\newcommand{\B}{\mathrm{B}} 
\newcommand{\N}{\mathsf{N}} 
\newcommand{\K}{\mathsf{K}} 

\newcommand{\sT}{\mathsf{T}}

\newcommand{\paren}{(\hspace{-0.05cm}..\hspace{-0.05cm}(}

\newcommand{\Conf}{{\mathsf{Conf}}}
\newcommand{\Ch}{{\mathsf{Ch}}}





\newcommand{\Hom}{\mathrm{Hom}}
\newcommand{\Isom}{\mathrm{Isom}}
\newcommand{\Aut}{\mathrm{Aut}}

\newcommand{\ord}{\mathrm{ord}}
\newcommand{\isom}{\mathrm{isom}}
\newcommand{\pr}{\mathrm{pr}}

\newcommand{\conn}{\mathrm {conn}}

\newcommand{\id}{\mathrm{id}}

\newcommand{\ti}[1]{{\tilde{#1}}}
\newcommand{\wt}[1]{{\widetilde{#1}}}
\newcommand{\wh}[1]{{\widehat{#1}}}
\newcommand{\ol}[1]{{\overline{#1}}}

\newcommand{\sh}{\sharp}

\newcommand{\hs}{\heartsuit}

\newcommand{\al}{{\alpha}}
\newcommand{\la}{{\lambda}}


\newcommand{\mm}{{\mathfrak{m}}}
\newcommand{\ou}{\mathfrak{ou}}

\newcommand{\mmp}{{\mathfrak{p}}}
\newcommand{\ms}{\mathfrak{s}}

\newcommand{\om}{{\omega}}

\newcommand{\si}{{\sigma}}
\newcommand{\ga}{{\gamma}}
\newcommand{\vf}{{\varphi}}

\newcommand{\ka}{{\kappa}}

\newcommand{\ML}{{\mathcal{ML}}}


\newcommand{\cI}{\mathcal{I}}

\newcommand{\cP}{\mathcal{P}}

\newcommand{\cL}{{\mathcal{L}}}

\newcommand{\cG}{\mathcal{G}}

\newcommand{\cZ}{{\mathcal{Z}}}
\newcommand{\cS}{{\mathcal{S}}}
\newcommand{\cT}{{\mathcal{T}}}

\newcommand{\cO}{\mathcal{O}}



\newcommand{\bbP}{{\mathbb P}}
\newcommand{\bbC}{{\mathbb C}}

\newcommand{\bbZ}{{\mathbb Z}}
\newcommand{\bbQ}{{\mathbb Q}}

\newcommand{\Qbar}{\overline{\mathbb Q}}



\newcommand{\te}{\theta}


\renewcommand{\mod}{\,\,\mathrm{mod}\,\,}

\newcommand{\lan}{\langle\,}
\newcommand{\ran}{\,\rangle}

\date{}
\newtheorem{thm}{Theorem}[section]
\newtheorem{defi}[thm]{Definition}

\newtheorem{cor}[thm]{Corollary}

\newtheorem{prop}[thm]{Proposition}

\newtheorem{quest}[thm]{Question}

\newtheorem{pty}[thm]{Property}

\theorembodyfont{\rm}
\newtheorem{remark}[thm]{Remark}

\title{What are $\GT$-shadows?}

\author{V. A. Dolgushev,  K. Q. Le and A. A. Lorenz}

\date{}

\begin{document}

\large

\maketitle

\begin{abstract}
Let $\B_4$ (resp. $\PB_4$) be the braid group (resp. the pure braid group) on $4$ strands
and $\NFI_{\PB_4}(\B_4)$ be the poset whose objects are finite index normal subgroups $\N$ of $\B_4$
that are contained in $\PB_4$. In this paper, we introduce $\GT$-shadows which 
may be thought of as ``approximations'' to elements of the profinite version $\GTh$ of  
the Grothendieck-Teichmueller group \cite[Section 4]{Drinfeld}. We prove that $\GT$-shadows 
form a groupoid whose objects are elements of $\NFI_{\PB_4}(\B_4)$. We show that $\GT$-shadows 
coming from elements of $\GTh$ satisfy various additional properties and we investigate these
properties. We establish an explicit link between $\GT$-shadows and the group $\GTh$ 
(see Theorem \ref{thm:GTh-ML}). We also present selected results of computer experiments 
on $\GT$-shadows. 
In the appendix of this paper, we give a complete description of $\GT$-shadows in the Abelian 
setting. We also prove that, in the Abelian setting, every $\GT$-shadow comes from an element 
of $\GTh$.  Objects very similar to $\GT$-shadows  
were introduced in paper \cite{HS} by D. Harbater and L. Schneps. 
A variation of the concept of $\GT$-shadows for the coarse version of $\GTh$ was studied in papers 
\cite{Guillot} and \cite{Guillot1} by P. Guillot. 
\end{abstract}

\tableofcontents

\section{Introduction}
\label{sec:intro}
The absolute Galois group $G_{\bbQ}$ of the field $\bbQ$ of rational numbers and the 
Grothendieck-Teichmueller group $\GTh$ introduced by V. Drinfeld in \cite{Drinfeld}
are among the most mysterious objects in mathematics\footnote{This list of references is
far from complete.} \cite{Jordan}, \cite{Fresse1}, 
\cite{GT-outer}, \cite{Ihara-ICM}, \cite{OpenGTLeila}, \cite{Nakamura-Leila}, \cite{IOM}, 
\cite{Leila-lect}. 

Using the outer action of $G_{\bbQ}$ on the algebraic fundamental group of 
$\bbP^1_{\Qbar} \setminus \{0,1,\infty\}$, one can produce a natural group homomorphism 
\begin{equation}
\label{G-Q-to-GTh}
G_{\bbQ} \to \GTh
\end{equation}
and, due to Belyi's theorem \cite{Belyi}, this homomorphism is injective.  
Although both $G_{\bbQ}$ and $\GTh$ are uncountable, it is very hard to produce explicit 
examples of elements in $G_{\bbQ}$ and in $\GTh$. In particular, the famous question on 
surjectivity of \eqref{G-Q-to-GTh} posed by Ihara at his ICM address \cite{Ihara-ICM} is still open. 

The group $G_{\bbQ}$ can be studied by investigating finite degree field extensions of $\bbQ$. 
In fact $G_{\bbQ}$ coincides with the limit of the functor 
that sends a finite degree 
Galois extension $K$ of $\bbQ$ to the Galois group $\Gal(K/\bbQ)$. 
The goal of this paper is to propose a loose analog of such a functor for $\GTh$.  

The most elegant definition of the group $\GTh$ involves (the profinite completion $\wh{\PaB}$ of) 
the operad $\PaB$ of parenthesized braids \cite{BNGT},  \cite[Chapter 6]{Fresse1},  
\cite{Tamarkin}. $\PaB$ is an operad in the category of groupoids that 
is ``assembled from'' braid groups  $\B_n$ for all $n \ge 1$.  
The objects of $\PaB(n)$ are words of the free magma generated by symbols
$1,2, \dots, n$ in which each generator appears exactly once. 
For example, $\PaB(3)$ has exactly $12$ objects: 
$
(12)3,~ (21)3,~ (23)1,~ (32)1,~ (31)2,~ (13)2,$ 
$ 1(23),~ 2(13),~ 2(31),~ 3(21),~ 3(12),~ 1(32).$ 
For every $n \ge 2$ and every object $\tau$ of $\PaB$, we have 
$$
\Aut_{\PaB(n)}(\tau) = \PB_n\,,
$$  
where $\PB_n$ is the pure braid group on $n$ strands. 

As an operad in the category of groupoids, $\PaB$ is generated by 
these two morphisms:
\begin{equation}
\label{beta-al}
\begin{tikzpicture}[scale=1.5, > = stealth]
\tikzstyle{v} = [circle, draw, fill, minimum size=0, inner sep=1]
\draw (-0.7,0.5) node[anchor=center] {{$\beta ~ : = $}};
\node[v] (v1) at (0, 0) {};
\draw (0,-0.2) node[anchor=center] {{\small $1$}};
\draw (0,1.2) node[anchor=center] {{\small $2$}};
\node[v] (v2) at (1, 0) {};
\draw (1,-0.2) node[anchor=center] {{\small $2$}};
\draw (1,1.2) node[anchor=center] {{\small $1$}};
\node[v] (vv2) at (0, 1) {};
\node[v] (vv1) at (1, 1) {};
\draw [->] (v1) -- (vv1);
\draw (v2) -- (0.6, 0.4); 
\draw [->] (0.4, 0.6) -- (vv2);
\begin{scope}[shift={(3.5,0)}]
\draw (-0.7,0.5) node[anchor=center] {{$\al ~ : = $}};
\node[v] (v1) at (0, 0) {};
\draw (0,-0.2) node[anchor=center] {{\small $(1$}};
\node[v] (v2) at (0.5, 0) {};
\draw (0.5,-0.2) node[anchor=center] {{\small $2)$}};
\node[v] (v3) at (1.5, 0) {};
\draw (1.5,-0.2) node[anchor=center] {{\small $3$}};
\node[v] (vv1) at (0, 1) {};
\draw (0,1.2) node[anchor=center] {{\small $1$}};
\node[v] (vv2) at (1, 1) {};
\draw (1,1.2) node[anchor=center] {{\small $(2$}};
\node[v] (vv3) at (1.5, 1) {};
\draw (1.5,1.2) node[anchor=center] {{\small $3)$}};
\draw [->] (v1) -- (vv1);  \draw [->] (v2) -- (vv2);  \draw [->] (v3) -- (vv3); 
\end{scope}
\end{tikzpicture}
\end{equation}
Moreover, any relation on $\beta$ and $\al$ in $\PaB$ is a consequence of
the pentagon relation and the two hexagon relations 
(see \eqref{pentagon}, \eqref{hexagon1} and \eqref{hexagon11} in 
Appendix \ref{app:oper-PaB}). The hexagon relations 
come from two ways of connecting $(12)3$ to $3(12)$ and two ways of connecting 
$1(23)$ to $(23)1$ in $\PaB(3)$. Similarly, the pentagon relation comes from two ways 
of connecting $((12)3)4$ to $1(2(34))$ in $\PaB(4)$. 
For more details about the operad $\PaB$ and its profinite completion $\wh{\PaB}$, 
see Appendix \ref{app:PaB}. 

By definition, $\GTh$ is the group $\Aut(\wh{\PaB})$ of (continuous) 
automorphisms\footnote{We tacitly assume that our automorphisms act as identity on objects.} 
of the profinite completion $\wh{\PaB}$ of $\PaB$.

Since the morphisms $\beta$ and $\al$ from \eqref{beta-al} are topological 
generators of $\wh{\PaB}$, every $\hat{T}\in \GTh$ is uniquely determined by 
its values
\begin{equation}
\label{hat-T-beta-alpha}
\hat{T}(\beta) \in \Hom_{\wh{\PaB}}((1,2), (2,1)), \qquad 
\hat{T}(\al) \in \Hom_{\wh{\PaB}}((1,2)3, 1(2,3)).
\end{equation}
Moreover, since $\Aut_{\wh{\PaB}}((1,2)3) = \wh{\PB}_3$, 
$\Aut_{\wh{\PaB}}((1,2)) = \wh{\PB}_2$ and  $\wh{\PB}_2 \cong \Zhat$, 
the underlying set of $\GTh$ can be identified with the subset of 
pairs $(\hat{m}, \hat{f}) \in \wh{\bbZ} \times \wh{\PB}_3$
satisfying some relations and technical conditions. 

Recall that $\PB_3$ is isomorphic to the direct product $\F_2\times \bbZ$ of 
the free group $\F_2$ on two generators and the infinite cyclic group. 
The $\F_2$-factor is generated by the two standard generators $x_{12}$, $x_{23}$
and the $\bbZ$-factor is generated by the element 
$c:= x_{23} x_{12} x_{13}$. In this paper, we tacitly identify $\F_2$ (resp. its profinite completion $\wh{\F}_2$) 
with the subgroup $\lan x_{12}, x_{23} \ran \le \PB_3$ (resp. the topological closure 
of $\lan x_{12}, x_{23} \ran$ in $\wh{\PB}_3$). Occasionally, we denote the standard generators 
of $\F_2$ by $x$ and $y$. 

One can show\footnote{This statement can also be found in many introductory papers on $\GTh$.}
(see, for example, Corollary \ref{cor:comm-F2} in Section \ref{sec:pairs-shadows}
of this paper) that, for every $\hat{T}\in \GTh$, the corresponding element $\hat{f} \in \wh{\PB}_3$ belongs to 
the topological closure $([\Fh_2, \Fh_2])^{\cl}$ of the commutator subgroup of $\wh{\F}_2$. 

\begin{remark}  
\label{rem:GTh-different}
Due to Proposition \ref{prop:T-hat-F2}, the restriction of every (continuous) automorphism 
$\hat{T} \in \Aut(\wh{\PaB})$ to  $\wh{\F}_2 \le \wh{\PB_3} = \Aut_{\wh{\PaB}}((1,2)3)$
gives us an automorphism of $\wh{\F}_2$. 
In fact, many authors introduce $\GTh$ as the subgroup of (continuous) 
automorphisms of $\wh{\F}_2$ of the form 
$$
x \mapsto x^{\hat{\la}}, \qquad 
y \mapsto \hat{f}^{-1} y^{\hat{\la}} \hat{f}\,,
$$
where the pair $(\hat{\la}, \hat{f}) \in \Zhat^{\times} \times ([\Fh_2, \Fh_2])^{\cl}$ satisfies certain cocycle relations 
and the ``invertibility condition.'' Another equivalent definition of $\GTh$ is based on the use 
of the outer automorphisms of the profinite completions of the pure mapping class groups. 
For more details about this definition, we refer the reader to \cite[Main Theorem]{GT-outer}. 
\end{remark}
\begin{remark}  
\label{rem:subtle-prop}
It is known (see \cite[Theorem 2]{Leila-Sasha-Coh}) that, for every $(\hat{m}, \hat{f}) \in \GTh$, 
the element $\hat{f}$ satisfies further rather subtle properties. It would be interesting to investigate 
whether $\GT$-shadows satisfy consequences of these properties.
\end{remark}

\subsection{The link between $G_{\bbQ}$ and $\GTh$}
\label{sec:GQ-GTh}

For completeness, we briefly recall here the link between the absolute Galois group $G_{\bbQ}$ of 
rationals and the Grothendieck-Teichmueller group $\GTh$. 

Applying the basic theory of the algebraic fundamental group \cite{SGA1}, \cite[Section 5.6]{Szamuely} to
$$
\bbP^1_{\Qbar} \setminus \{0,1,\infty\},
$$ 
we get an outer action of the absolute Galois group $G_{\bbQ}$ on $\wh{\F}_2$. 
Using the fact that this action preserves the 
inertia subgroups, we can lift this outer action to an honest action of the form
\begin{equation}
\label{G-Q-acts}
g(x)  = x^{\chi(g)}, \qquad g(y)  = \hat{f}_g(x,y)^{-1} y^{\chi(g)} \hat{f}_g(x,y), \qquad g \in G_{\bbQ}\,,
\end{equation}
where $\chi: G_{\bbQ} \to  \wh{\bbZ}^{\times}$ is the cyclotomic character and 
$\hat{f}_g(x,y)$ is an element of $([\Fh_2, \Fh_2])^{\cl}$ that depends only on $g$. 

It is known \cite[Section 4]{Drinfeld}, \cite[Section 3]{Ihara-ICM},  
\cite[Theorem 4.7.7]{Szamuely}, \cite[Fact 4.7.8]{Szamuely} that, 
\begin{itemize}

\item $\forall$ $g \in G_{\bbQ}$, the pair 
$\big(\, (\chi(g)-1)/2, \hat{f}_g(x,y) \big) \in \wh{\bbZ} \times \Fh_2 $ 
defines an element of $\GTh$;
 
\item the assignment 
$
g \in G_{\bbQ} ~\mapsto~ \big(\, (\chi(g)-1)/2, \hat{f}_g(x,y) \big)  \in \wh{\bbZ} \times \wh{F}_2 
$
defines the group homomorphism \eqref{G-Q-to-GTh}. 

\item finally, using Belyi's theorem \cite{Belyi}, one can prove that the homomorphism \eqref{G-Q-to-GTh} is injective.
 
\end{itemize}
For more details, we refer the reader to \cite{Ihara-embedding}.

\subsection{The groupoid $\GTSh$ of $\GT$-shadows and its link to $\GTh$}
\label{sec:GTSh-intro}

Let us denote by $\PaB^{\le 4}$ the truncation of the operad $\PaB$ up to arity $4$, i.e. 
$$
\PaB^{\le 4} : = \PaB(1) \sqcup  \PaB(2) \sqcup  \PaB(3) \sqcup  \PaB(4). 
$$
Moreover, let $\NFI_{\PB_4}(\B_4)$ be the poset of finite index normal subgroups
$\N \lhd \B_4$ such that $\N \le \PB_4$. 

To every $\N \in \NFI_{\PB_4}(\B_4)$, we assign an equivalence relation $\sim_{\N}$ 
on $\PaB^{\le 4}$ that is compatible with the structure of the truncated operad 
and the composition of morphisms. For every $\N \in \NFI_{\PB_4}(\B_4)$, the quotient 
$$
\PaB^{\le 4}/\sim_{\N}
$$
is a truncated operad in the category of {\it finite} groupoids. 

In this paper, we introduce a groupoid $\GTSh$ whose objects are elements 
of $\NFI_{\PB_4}(\B_4)$. Morphisms from $\ti{\N}$ to $\N$ are isomorphisms 
of truncated operads
\begin{equation}
\label{morph}
\PaB^{\le 4} /\sim_{\ti{\N}}  ~\stackrel{\cong}{\longrightarrow}~ \PaB^{\le 4} /\sim_{\N}\,. 
\end{equation}
We call these isomorphisms \emph{$\GT$-shadows}.

Just as $\PaB$, the truncated operad $\PaB^{\le 4}$ is generated by the braiding $\beta \in \PaB(2)$ 
and the associator $\al \in \PaB(3)$. Hence morphisms of $\GTSh$ to $\N \in \NFI_{\PB_4}(\B_4)$ 
are in bijection with pairs 
\begin{equation}
\label{pairs-intro}
(m + N_{\ord} \bbZ, f \N_{\PB_3}) ~\in~ \bbZ / N_{\ord} \bbZ \times \PB_3 / \N_{\PB_3}\,,
\end{equation}
that satisfy appropriate versions of the hexagon relations, the pentagon 
relation and some technical conditions.  
Here, the integer $N^{\ord}$ and the (finite index) normal subgroup $\N_{\PB_3} \unlhd \PB_3$ are obtained 
from $\N$ via a precise procedure described in Subsection \ref{sec:fromN-in-PB4}.

We denote by $\GT(\N)$ the set of such pairs \eqref{pairs-intro} and identify them 
with $\GT$-shadows whose target is $\N$. From now on, we denote by $[(m,f)]$ the 
$\GT$-shadow represented by a pair $(m,f) \in \bbZ \times \PB_3$. 
  
A $\GT$-shadow $[(m,f)] \in \GT(\N)$ is called \emph{genuine} if there exists 
an element $\hat{T} \in \GTh$ such that the diagram 
\begin{equation}
\label{T-hat-diag}
\begin{tikzpicture}
\matrix (m) [matrix of math nodes, row sep=1.5em, column sep=1.5em]
{\wh{\PaB}^{\le 4} & \wh{\PaB}^{\le 4} \\
 \PaB^{\le 4}/ \sim_{\ti{\N}}  &  \PaB^{\le 4}/ \sim_{\N}\,, \\};
\path[->, font=\scriptsize]
(m-1-1) edge node[above] {$\hat{T}$} (m-1-2) edge (m-2-1)
(m-2-1) edge node[above] {$\cong$} (m-2-2) (m-1-2) edge (m-2-2);
\end{tikzpicture}
\end{equation}
commutes. In \eqref{T-hat-diag}, 
the lower horizontal arrow is the isomorphism corresponding to 
$[(m,f)]$ and the vertical arrows are the canonical projections.  
If such $\hat{T}$ does not exist, we say that the $\GT$-shadow $[(m,f)]$ is 
\emph{fake}\footnote{This name was suggested to the authors by David Harbater.}. 

In this paper, we show that genuine $\GT$-shadows satisfy additional conditions. 
For example, every genuine $\GT$-shadow in $\GT(\N)$ 
can be represented by a pair $(m,f)$ 
with\footnote{It should be mentioned that, in the computer implementation \cite{package-GT}, 
we only considered $\GT$-shadows of the form $[(m,f)]$ with $f \in \F_2 \le \PB_3$.} 
\begin{equation}
\label{f-in-comm-F2}
f \in [\F_2, \F_2], 
\end{equation}
where $[\F_2, \F_2]$ is the commutator subgroup of $\F_2 \le \PB_3$. 

A $\GT$-shadow $[(m,f)]$ satisfying all these additional conditions (see Definition \ref{dfn:charm}) 
is called \emph{charming}. In this paper, we show that charming $\GT$-shadows form a subgroupoid 
of $\GTSh$ and we denote this subgroupoid by $\GTSh^{\hs}$. 

The groupoid $\GTSh^{\hs}$ is highly disconnected. However, it is easy to see that, 
for every $\N \in \NFI_{\PB_4}(\B_4)$, the connected component $\GTSh^{\hs}_{\conn}(\N)$ is 
a finite groupoid (see Proposition \ref{prop:GTSh-conn-finite}).
In all examples we have considered so far (see \cite{package-GT} and Section \ref{sec:comp-exp} 
of this paper),  $\GTSh^{\hs}_{\conn}(\N)$ has at most two objects and, for many of examples of 
$\N \in \NFI_{\PB_4}(\B_4)$ the groupoid $\GTSh_{\conn}(\N)$ has exactly one object 
(i.e. $\GT(\N)$ is a group). Such elements of $\NFI_{\PB_4}(\B_4)$ play a special role and
we call them \emph{isolated}. We denote by $\NFI^{isolated}_{\PB_4}(\B_4)$ the subposet 
of isolated elements of $\NFI_{\PB_4}(\B_4)$.

In this paper, we show that the subposet $\NFI^{isolated}_{\PB_4}(\B_4)$ is cofinal 
(i.e., for every $\N \in \NFI_{\PB_4}(\B_4)$, there exists $\K \in \NFI^{isolated}_{\PB_4}(\B_4)$ such
that $\K \le \N$). 
We show that the assignment $\N \mapsto \GT(\N)$ upgrades to a functor $\ML$
from the poset $\NFI^{isolated}_{\PB_4}(\B_4)$ to the category of finite groups and we prove that 
the limit of this functor is precisely the Grothendieck-Teichmueller group $\GTh$
(see Theorem \ref{thm:GTh-ML}).

\begin{remark}  
\label{rem:Guillot}
Recall \cite{GT-outer} that, omitting the pentagon relation from the definition of $\GTh$, we get    
the coarse version $\GTh_0$ of the Grothendieck-Teichmueller group. 
It is not hard to show that $\GTh_0$ is the group 
of continuous automorphisms of the truncated operad $\wh{\PaB}^{\le 3}$
and $\GTh$ is a subgroup of $\GTh_0$.
In papers \cite{Guillot} and \cite{Guillot1}, P. Guillot studies a variant of $\GT$-shadows for 
this coarse version $\GTh_0$ of the Grothendieck-Teichmueller group. 
\end{remark}

\subsection{Organization of the paper}
\label{sec:org}
In Section \ref{sec:pairs-shadows}, we introduce the poset of compatible equivalence 
relations on the truncated operad $\PaB^{\le 4}$, and we show that  $\NFI_{\PB_4}(\B_4)$
can be identified with the subposet of this poset. We introduce the concept 
of $\GT$-pair and show that $\GT$-pairs coming from elements of $\GTh$ 
satisfy certain conditions. This consideration motivates the 
concept of $\GT$-shadow (see Definition \ref{dfn:GT-shadows}). 
We prove that $\GT$-shadows form a groupoid $\GTSh$: objects 
of this groupoid are elements of  $\NFI_{\PB_4}(\B_4)$ and morphisms are 
$\GT$-shadows. 

In Section \ref{sec:pairs-shadows}, we also investigate further conditions on $\GT$-shadows 
coming from elements of $\GTh$, introduce charming $\GT$-shadows and prove that 
charming $\GT$-shadows form a subgroupoid of $\GTSh$. In this section, we introduce 
a natural functor $\Ch_{cyclot}$ from $\GTSh$ to the category of finite cyclic groups. 
We call this functor \emph{the virtual cyclotomic character}.

In Section \ref{sec:ML}, we introduce an important subposet $\NFI^{isolated}_{\PB_4}(\B_4)$
of $\NFI_{\PB_4}(\B_4)$ and construct a functor $\ML$ from  $\NFI^{isolated}_{\PB_4}(\B_4)$
to the category of finite groups. In this section, we prove that the limit of the functor $\ML$ is 
precisely the Grothendieck-Teichmueller group $\GTh$. 

In Section \ref{sec:comp-exp}, we present selected results of computer experiments.
We outline the basic information about 35 selected elements of $\NFI_{\PB_4}(\B_4)$
and the corresponding connected components of the groupoid $\GTSh$. We say a few words 
about selected remarkable examples. 
Finally, we discuss two versions of the Furusho property (see Properties \ref{P:Furusho-strong}
and \ref{P:Furusho-weak}) and list selected open questions.   

In Appendix \ref{app:PaB}, we give a brief reminder of (pure) braid groups, 
the operad $\PaB$ and its completion.  

In Appendix \ref{app:Abelian}, we give a complete description of charming $\GT$-shadows in the 
Abelian setting and we prove that, in this setting, every charming $\GT$-shadow is genuine
(see Theorem \ref{thm:Abelian}).

\subsection{Notational conventions}
For a set $X$ with an equivalence relation and $a \in X$ we will denote by $[a]$
the equivalence class that contains the element $a$. For a groupoid $\cG$, the notation 
$\ga \in \cG$ means that $\ga$ is a {\it morphism} of this groupoid. 
 
Every finite group is tacitly considered with the discrete topology. For a group $G$, $\hat{G}$ 
denotes the profinite completion \cite{RZ-profinite} of $G$.
The notation $[G, G]$ is reserved for the commutator subgroup of $G$. 
For a normal subgroup $H\unlhd G$ of finite index, we denote by $\NFI_{H}(G)$ the poset 
of finite index normal subgroups $N$ in $G$ such that $N \le H$. Moreover, 
$\NFI(G) := \NFI_G(G)$, i.e. $\NFI(G)$ is the poset of normal finite index subgroups of a group $G$.

For a group $G$ and elements $\K \le \N$ of the poset $\NFI(G)$, 
the notation $\cP_{\N}$ (resp. $\cP_{\K, \N}$)  is reserved for the reduction
homomorphism $G \to G/\N$ (resp.  $G/\K \to G/\N$). The notation $\hat{\cP}_{\N}$
is reserved for the canonical (continuous) homomorphism from $\hat{G}$ to $G/\N$. 
Similar notation is used for the canonical functors to finite quotients of a groupoid. 

The notation $\B_n$ (resp. $\PB_n$) is reserved for the Artin braid group 
on $n$ strands (resp. the pure braid group on $n$ strands). $S_n$ denotes the 
symmetric group on $n$ letters. The standard generators of $\B_n$ are 
denoted by $\si_1, \dots, \si_{n-1}$ and the standard generators of $\PB_n$
are denoted by $x_{ij}$ (for $1 \le i < j \le n$). We will tacitly identify the 
free group $\F_2$ on two generators with the subgroup 
$\lan x_{12}, x_{23} \ran$ of $\PB_3$.

We will freely use the language of operads \cite[Section 3]{notes}, \cite[Chapter 1]{Fresse1}, 
\cite{LV-operads}, \cite{Markl-Stasheff}, \cite{Jim-operad}. In this paper, we work with operads 
in the category of sets and in the category of (topological) groupoids. The category of topological 
groupoids is understood in the ``strict sense.'' For example, the associativity axioms for 
the {\it elementary insertions}\footnote{In the literature, elementary insertions are sometimes called {\it partial 
compositions.}} $\circ_i$ (for operads in the category of groupoids)
are satisfied ``on the nose.''   

For an integer $q \ge 1$, a {\it $q$-truncated operad} in the category of groupoids is 
a collection of groupoids $\{ \cG(n) \}_{1 \le n \le q}$ such that 
\begin{itemize}

\item For every $1 \le n \le q$, the groupoid $\cG(n)$ is equipped with an action of $S_n$.

\item For every triple of integers $i,n,m$ such that $1 \le i  \le n$, $n,m, n+m -1 \le q$
we have functors 
\begin{equation}
\label{circ-i-fun}
\circ_i :  \cG(n) \times  \cG(m) \to  \cG(n+m -1).
\end{equation}

\item The axioms of the operad for $\{ \cG(n) \}_{1 \le n \le q}$ are satisfied 
in the cases where all the arities are $\le q$. 

\end{itemize}

For every operad $\cO$ and every integer $q \ge 1$, the disjoint union 
$\displaystyle \cO^{\le q} : = \bigsqcup_{n=0}^q \cO(n)$
is clearly a $q$-truncated operad. In this paper, we only consider $4$-truncated operads. 
So we will simply call them {\it truncated operads}. 
  
The operad $\PaB$ of parenthesized braids, its truncation $\PaB^{\le 4}$ and 
its completion $\wh{\PaB}^{\le 4}$ play the central role in this paper. See Appendix \ref{app:PaB} 
for more details.

\bigskip
\noindent
{\bf Acknowledgement.} We are thankful to Benjamin Collas, David Harbater, Julia Hartmann, 
Florian Pop, Leila Schneps, Dmitry Vaintrob and 
John Voight for useful discussions. V.A.D. is thankful to Pavol Severa for showing him the works 
of Pierre Guillot. V.A.D. discussed vague ideas of the construction presented in this paper 
during a walk near Z\"urich with Thomas Willwacher in October of 2016. 
V.A.D. is thankful to Thomas for leaving him with the question ``What are $\GT$-shadows?'' and for giving 
the title to this paper! V.A.D. is especially thankful to Leila Schneps for her unbounded enthusiasm about this
project and its possible continuations. V.A.D. is also thankful to Leila Schneps for her suggestion 
to modify the original definition of charming $\GT$-shadows, her comments about the introduction, 
and her input concerning hypothetical versions of Furusho's property. 
A.A.L. acknowledges both the Temple University Honors Program and 
the Undergraduate Research Program for their active support of undergraduate researchers.
V.A.D. and K.Q.L. acknowledge a partial support from NSF grant DMS-1501001.

\section{$\GT$-pairs and $\GT$-shadows}
\label{sec:pairs-shadows}

\subsection{The poset of compatible equivalence relations on $\PaB^{\le 4}$}
\label{sec:poset-equiv}

An equivalence relation $\sim$ on the disjoint union of groupoids\footnote{Recall that $\PaB(0)$ is the 
empty groupoid.} 
$$
\PaB^{\le 4} = \PaB(1) \sqcup  \PaB(2) \sqcup \PaB(3) \sqcup \PaB(4)
$$
is an equivalence relation on the set of morphisms of $\PaB^{\le 4}$ such that, if 
$\ga \sim \ti{\ga}$, then the source (resp. the target) of $\ga$ coincides with the source 
(resp. the target) of $\ti{\ga}$. 
In particular, $\ga \sim \ti{\ga}$ implies that $\ga$ and $\ti{\ga}$ have the same arity. 

\begin{defi}  
\label{dfn:compat-equiv}
An equivalence relation $\sim$ on $\PaB^{\le 4}$ is called \emph{compatible} if
\begin{itemize}

\item for every pair of composable morphisms $\ga, \ti{\ga} \in \PaB(n)$ the equivalence class of 
the composition $\ga \cdot \ti{\ga} $ depends only on the equivalence classes of 
$\ga$ and $\ti{\ga}$;

\item for every $\ga, \ti{\ga} \in \PaB(n)$ and every $\te \in S_n$ 
$$
\ga \sim \ti{\ga} ~~~\Leftrightarrow~~~ \te(\ga) \sim \te(\ti{\ga});
$$ 

\item for every tuple of integers $i,n,m$, $1\le i \le n$, $n,m, n+m-1 \le 4$, and 
every $\ga_1, \ti{\ga}_1 \in \PaB(n)$,  $\ga_2, \ti{\ga}_2 \in \PaB(m)$ we have 
$$
\ga_1 \sim \ti{\ga}_1 ~~\Rightarrow~~ \ga_1 \circ_i \ga_2 \sim \ti{\ga}_1 \circ_i \ga_2\,,
\qquad
\ga_2 \sim \ti{\ga}_2 ~~\Rightarrow~~ \ga_1 \circ_i \ga_2 \sim \ga_1 \circ_i \ti{\ga}_2\,.
$$

\end{itemize}
\end{defi}  

It is clear that, for every compatible equivalence relation $\sim$ on $\PaB^{\le 4}$, the set 
\begin{equation}
\label{PaB-quotient}
\PaB^{\le 4} /\sim 
\end{equation}
of equivalence classes of morphisms in $\PaB^{\le 4}$ is a truncated operad in 
the category of groupoids. The set of objects of \eqref{PaB-quotient}
coincides with the set of objects of $\PaB^{\le 4}$. The action of symmetric groups and the
elementary insertions are defined by the formulas
$$
\te(\,  [\ga]\, ) : = [\te (\ga)]\,, \qquad \te \in S_n, ~~ \ga \in \PaB(n), 
$$
$$
[\ga_1] \circ_i [\ga_2] : = [\ga_1 \circ_i \ga_2], \qquad \ga_1 \in \PaB(n), ~~\ga_2 \in \PaB(m). 
$$

The conditions of Definition \ref{dfn:compat-equiv} guarantee that the composition of morphisms, 
the action of 
the symmetric groups on $\PaB(n)/\sim$ and the elementary operadic insertions are 
well defined. The axioms of the (truncated) operad follow directly from their 
counterparts for $\PaB^{\le 4}$. 

Compatible equivalence relations on $\PaB^{\le 4}$ form a poset with the following 
obvious partial order: we say that $\sim_1 \le \sim_2$ if $\sim_1$ is finer than 
$\sim_2$, i.e.
$$
\ga \sim_1 \ti{\ga} ~\Rightarrow~ \ga \sim_2 \ti{\ga}.  
$$ 

It is clear that, for every pair of compatible equivalence relations $\sim_1, \sim_2$ on $\PaB^{\le 4}$
such that $\sim_1 \le \sim_2$, we have a natural onto morphism of truncated operads 
\begin{equation}
\label{map-of-quotients}
\cP_{\sim_1, \sim_2} : \PaB^{\le 4}/\sim_1 ~\to~ \PaB^{\le 4}/\sim_2\,.
\end{equation}
Moreover, the assignment $\sim ~\mapsto~ \PaB^{\le 4}/\sim$ upgrades to a functor from the poset of 
compatible equivalence relations to the category of truncated operads. 

For every compatible equivalence relation $\sim$ on $\PaB^{\le 4}$, we denote by 
$\cP_{\sim}$ the natural (onto) morphism of truncated operads:
\begin{equation}
\label{to-quotient}
\cP_{\sim} : \PaB^{\le 4} ~\to~ \PaB^{\le 4}/\sim\,.
\end{equation}

\subsection{From $\NFI_{\PB_4}(\B_4)$ to the poset of compatible equivalence relations}
\label{sec:fromN-in-PB4}

In this paper, we mostly consider compatible equivalence relations for which the 
set of morphisms of \eqref{PaB-quotient} is finite and a large supply of such compatible 
equivalence relations come from elements of the poset $\NFI_{\PB_4}(\B_4)$. 

For $\N \in \NFI_{\PB_4}(\B_4)$, we set 
\begin{equation}
\label{N-PB-3}
\N_{\PB_3} : = \vf_{123}^{-1} (\N) \cap \vf_{12,3,4}^{-1}(\N) \cap
\vf_{1,23,4}^{-1}(\N)  \cap \vf_{1,2,34}^{-1}(\N) \cap \vf_{234}^{-1}(\N)
\end{equation}
and 
\begin{equation}
\label{N-PB-2}
\N_{\PB_2} : = \vf_{12}^{-1} (\N_{\PB_3}) \cap \vf_{12,3}^{-1}(\N_{\PB_3}) \cap
\vf_{1,23}^{-1}(\N_{\PB_3})  \cap \vf_{23}^{-1}(\N_{\PB_3}),
\end{equation}
where $\vf_{123}, ~\vf_{12,3,4},~ \vf_{1,23,4}, ~\vf_{1,2,34}, ~\vf_{234}$ are the homomorphisms 
defined in \eqref{vfs-PB} and $\vf_{12}, ~\vf_{12,3},~ \vf_{1,23}, ~\vf_{23}$ are the homomorphisms 
defined in \eqref{vfs-PB-2-3} (see also the explicit formulas in \eqref{vfs-3-4-on-gen} and 
\eqref{vfs-2-3-on-gen}).  
 
We claim that 
\begin{prop}
\label{prop:N-PB3-B3}
For every $\N \in \NFI_{\PB_4}(\B_4)$, the subgroup $\N_{\PB_3}$ 
(resp. $\N_{\PB_2}$) is an element of the poset $\NFI_{\PB_3}(\B_3)$ 
(resp. the poset $\NFI_{\PB_2}(\B_2)$). 
\end{prop}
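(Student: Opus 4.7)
The plan is to reduce the claim to standard facts about preimages under group homomorphisms, once we recognize that the maps appearing in \eqref{N-PB-3} and \eqref{N-PB-2} are restrictions of group homomorphisms between the full braid groups that respect the natural projections to symmetric groups.

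First I would observe that each of the five cabling-type homomorphisms $\vf_{123}, \vf_{12,3,4}, \vf_{1,23,4}, \vf_{1,2,34}, \vf_{234}$ used to define $\N_{\PB_3}$ extends naturally to a group homomorphism $\B_3 \to \B_4$ (add a trivial strand, or double a strand, depending on the case). This is immediate from the explicit formulas on generators given in \eqref{vfs-3-4-on-gen}, and the extensions commute with the canonical projections $\B_3 \to S_3$ and $\B_4 \to S_4$. In particular, if $\beta \in \B_3$ represents a non-trivial permutation, then so does $\vf(\beta) \in \B_4$, which gives the key identity $\vf^{-1}(\PB_4) = \PB_3$ for each of the five maps.

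Next, since $\N \lhd \B_4$ and each $\vf$ is a homomorphism $\B_3 \to \B_4$, the preimage $\vf^{-1}(\N)$ is normal in $\B_3$. Standard group theory gives an injection $\B_3/\vf^{-1}(\N) \hookrightarrow \B_4/\N$, so $[\B_3 : \vf^{-1}(\N)] \le [\B_4 : \N] < \infty$. Moreover, because $\N \le \PB_4$ and $\vf^{-1}(\PB_4) = \PB_3$, we conclude $\vf^{-1}(\N) \le \PB_3$. Taking the intersection of the five preimages, which is a finite intersection of finite-index normal subgroups of $\B_3$ all contained in $\PB_3$, shows that $\N_{\PB_3}$ is a finite-index normal subgroup of $\B_3$ contained in $\PB_3$, i.e., $\N_{\PB_3} \in \NFI_{\PB_3}(\B_3)$.

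For the second assertion, I would repeat verbatim the same argument with $\N_{\PB_3} \in \NFI_{\PB_3}(\B_3)$ in place of $\N$ and with the four homomorphisms $\vf_{12}, \vf_{12,3}, \vf_{1,23}, \vf_{23}$ (whose explicit descriptions on generators appear in \eqref{vfs-2-3-on-gen}) extended to $\B_2 \to \B_3$. The same reasoning shows that each preimage is finite-index, normal in $\B_2$, and contained in $\PB_2$, hence so is the intersection $\N_{\PB_2}$.

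The argument is essentially bookkeeping; the only point that requires a moment's care is the verification that each $\vf$ indeed extends from $\PB_3 \to \PB_4$ (resp. $\PB_2 \to \PB_3$) to a homomorphism of the ambient braid groups respecting the projection to symmetric groups. Once this is in place, normality in the full braid group and the containment in the pure braid group both follow formally, and these are precisely the features that distinguish membership in $\NFI_{\PB_3}(\B_3)$ (resp. $\NFI_{\PB_2}(\B_2)$) from the a priori weaker statement of being a finite-index subgroup of $\PB_3$ (resp. $\PB_2$).
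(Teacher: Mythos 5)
Your argument breaks at its central step: the three strand-doubling maps $\vf_{12,3,4}$, $\vf_{1,23,4}$, $\vf_{1,2,34}$ (and $\vf_{12,3}$, $\vf_{1,23}$ in arity two) do \emph{not} extend to group homomorphisms $\B_3 \to \B_4$ (resp.\ $\B_2 \to \B_3$) that intertwine the projections to symmetric groups. Here is a quick obstruction for $\vf_{12,3}$: any such extension $\phi:\B_2 \to \B_3$ would satisfy $\rho(\phi(\si_1))^2 = \rho(\vf_{12,3}(x_{12})) = \rho(x_{13}x_{23}) = e$, so $\rho(\phi(\si_1))$ would have order at most~$2$; but a doubling of strand~$1$ that respects the block structure must send the transposition $(1,2)\in S_2$ to a $3$-cycle in $S_3$ (the block $\{1,2\}$ trades places with $\{3\}$), which has order~$3$. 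Equivalently, the formula $\vf(h) = \ou(\mm(h)\circ_i \id_{12})$ is multiplicative only on $\PB_n$: by \eqref{mm-comp} one has $\vf(h_1 h_2) = \ou\bigl(\rho(h_2)^{-1}(\mm(h_1))\circ_i \id_{12}\bigr)\cdot \vf(h_2)$, and for non-pure $h_2$ the first factor doubles a \emph{different} strand of $h_1$ than $\vf(h_1)$ does. Only the two ``add a trivial strand'' maps $\vf_{123}$, $\vf_{234}$ (and $\vf_{12}$, $\vf_{23}$) extend in the way you describe.

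Consequently the doubling preimages $\vf_{12,3,4}^{-1}(\N)$, $\vf_{1,23,4}^{-1}(\N)$, $\vf_{1,2,34}^{-1}(\N)$ are in general normal in $\PB_3$ but \emph{not} in $\B_3$, and the ``preimage of a normal subgroup'' reduction is unavailable. What the paper proves is weaker but sufficient: with $\te=\rho(g)$ and $\chi=\mm(h)$ for $g\in\B_3$, $h\in\PB_3$, it computes $\vf_{1,23,4}(ghg^{-1}) = \ti{g}\cdot\ou(\te(\chi)\circ_2\id_{12})\cdot\ti{g}^{-1}$ and then checks by casework on $\te^{-1}(2)\in\{1,2,3\}$ that the middle factor equals $\vf_{12,3,4}(h)$, $\vf_{1,23,4}(h)$, or $\vf_{1,2,34}(h)$. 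So conjugation by $\B_3$ \emph{permutes} the three doubling preimages among themselves, and only the intersection of all five preimages --- namely $\N_{\PB_3}$ --- is conjugation-stable. That permutation argument is precisely the step you skipped. Your remaining observations --- each preimage has finite index, the intersection lies in $\PB_3$, and the $\N_{\PB_2}$ case is immediate since every subgroup of $\B_2\cong\bbZ$ is normal --- are all correct and agree with the paper; only the normality of $\N_{\PB_3}$ in $\B_3$ needs the extra work.
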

\begin{proof}
Since every subgroup of $\B_2$ is normal and $\N_{\PB_2}$ has a finite index in $\PB_2$,
the statement about  $\N_{\PB_2}$ is obvious. 

It is also easy to see that $\N_{\PB_3}$ is a subgroup of finite index in $\PB_3$. 
So it suffices to prove that 
$$
g\, \N_{\PB_3} \, g^{-1} \le \N_{\PB_3} \qquad \forall~g \in B_3. 
$$

Let $h \in \N_{\PB_3}$ and $g \in \B_3$. Then 
\begin{equation}
\label{vf-to-conj}
\vf_{1,23,4} ( g \cdot h \cdot g^{-1} ) = \ou ( \mm( g \cdot h \cdot g^{-1}) \circ_2 \id_{12} ). 
\end{equation}

Using identity \eqref{mm-comp}, we get
$$
\mm( g \cdot h \cdot g^{-1}) = \te(\mm(g)) \cdot \te(\chi) \cdot \mm(g^{-1})\,,
$$
where $\te = \rho(g)$ and $\chi : = \mm(h)$. 

Therefore
\begin{equation}
\label{mm-unfold}
\mm( g \cdot h \cdot g^{-1}) \circ_2 \id_{12} = 
 (\te(\mm(g))  \circ_2 \id_{12}) \cdot (\te(\chi) \circ_2 \id_{12}) \cdot 
 (\mm(g^{-1}) \circ_2 \id_{12}).
\end{equation}

Combining \eqref{vf-to-conj} with \eqref{mm-unfold}, we get 
\begin{equation}
\label{vf-to-conj1}
\vf_{1,23,4} ( g \cdot h \cdot g^{-1} ) = 
\ou(\te(\mm(g))  \circ_2 \id_{12}) \cdot \ou(\te(\chi) \circ_2 \id_{12}) \cdot 
 \ou(\mm(g^{-1}) \circ_2 \id_{12}).
\end{equation}

Since 
$$
\ou(\te(\mm(g))  \circ_2 \id_{12})
\cdot \ou(\mm(g^{-1}) \circ_2 \id_{12})
= 
\ou\big( 
\te(\mm(g))  \circ_2 \id_{12} 
\, \cdot\,  
\mm(g^{-1}) \circ_2 \id_{12} 
\big)=
$$
$$
\ou\big( 
( \te(\mm(g)) 
\, \cdot\,  
\mm(g^{-1}) ) \circ_2 \id_{12} 
\big) = 
\ou\big( 
\mm(g \cdot g^{-1}) \, \circ_2\, \id_{12} 
\big) =\ou(\id_{(1(23))4}) = 1_{\B_4}\,, 
$$
the element $\vf_{1,23,4} ( g \cdot h \cdot g^{-1} ) \in \B_4$ can be rewritten as
$$
\vf_{1,23,4} ( g \cdot h \cdot g^{-1} ) =  \ti{g} \cdot \ou(\te(\chi) \circ_2 \id_{12}) \cdot  \ti{g}^{-1},
$$
where 
$$
\ti{g}  : = \ou(\te(\mm(g))  \circ_2 \id_{12}). 
$$
Thus it remains to prove that 
\begin{equation}
\label{stuff-chi-in-N}
\ou(\te(\chi) \circ_2 \id_{12})\in \N.
\end{equation}

For this purpose, we consider the three possible cases: $\te(1)= 2$, $\te(2)= 2$ and $\te(3)=2$:
\begin{itemize}

\item If $\te(1)= 2$ then $\ou(\te(\chi) \circ_2 \id_{12}) = \vf_{12,3,4}(h)$ and 
\eqref{stuff-chi-in-N} is a consequence of $h \in  \vf_{12,3,4}^{-1} (\N)$. 

\item If $\te(2)= 2$ then $\ou(\te(\chi) \circ_2 \id_{12}) = \vf_{1,23,4}(h)$ and
\eqref{stuff-chi-in-N} is a consequence of $h \in  \vf_{1,23,4}^{-1} (\N)$.

\item If $\te(3)= 2$ then $\ou(\te(\chi) \circ_2 \id_{12}) = \vf_{1,2,34}(h)$ and
\eqref{stuff-chi-in-N} is a consequence of $h \in  \vf_{1,2,34}^{-1} (\N)$. 

\end{itemize}

We proved that the element $g h g^{-1}$ belongs to $\vf_{1,23,4}^{-1}(\N)\subset \PB_3$. 
The proofs for the remaining $4$ homomorphisms $\vf_{123}, \vf_{12,3,4}, \vf_{1,2,34}$ and $\vf_{234}$ are similar 
and we omit them.
\end{proof}

It is clear that $\N_{\PB_2}  = \lan x_{12}^{N_{\ord}} \ran$, where $N_{\ord}$ is the index of $\N_{\PB_2}$ in $\PB_2$, i.e. 
$N_{\ord}$ is the least common multiple of orders of $x_{12} \N_{\PB_3}$, $x_{23} \N_{\PB_3}$, 
$x_{12} x_{13} \N_{\PB_3}$ and $x_{13} x_{23} \N_{\PB_3}$ in  $\PB_3 / \N_{\PB_3}$.

Using the identities $x_{12} x_{13} =  x_{23}^{-1} c$, $x_{13} x_{23} = x_{12}^{-1} c$ involving 
the generator $c$ (see \eqref{c-PB3-B3}) of the center of $\PB_3$, it is easy to prove the
following statement: 
\begin{prop}
\label{prop:N-ord}
Let $\N_{\PB_2} = \lan x_{12}^{N_{\ord}} \ran $ be the subgroup of $\PB_2$ defined in \eqref{N-PB-2}. 
Then $N_{\ord}$ coincides with 
\begin{enumerate}

\item the  least common multiple of orders of elements 
$x_{12} \N_{\PB_3}$, $x_{23} \N_{\PB_3}$ and  $x_{12} x_{13} \N_{\PB_3}$; 

\item the  least common multiple of orders of elements 
$x_{12} \N_{\PB_3}$, $x_{23} \N_{\PB_3}$ and  $x_{13} x_{23} \N_{\PB_3}$; and 

\item the  least common multiple of orders of elements 
$x_{12} \N_{\PB_3}$, $x_{23} \N_{\PB_3}$ and  $c\, \N_{\PB_3}$ 

\end{enumerate}
\end{prop}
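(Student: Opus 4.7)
The plan is to work entirely inside the finite quotient group $G := \PB_3 / \N_{\PB_3}$, where everything reduces to a small calculation with orders of elements. Write $a, b, p, q, r$ for the orders of the images of $x_{12}$, $x_{23}$, $x_{12} x_{13}$, $x_{13} x_{23}$, $c$ in $G$, respectively. By the paragraph just before the proposition, $N_{\ord} = \mathrm{lcm}(a, b, p, q)$, so the three equalities to prove are
\[
\mathrm{lcm}(a,b,p,q) \;=\; \mathrm{lcm}(a,b,p) \;=\; \mathrm{lcm}(a,b,q) \;=\; \mathrm{lcm}(a,b,r).
\]
It will suffice to prove the symmetric pair $\mathrm{lcm}(a,b,p)=\mathrm{lcm}(a,b,r)$ and $\mathrm{lcm}(a,b,q)=\mathrm{lcm}(a,b,r)$, since these force $p$ and $q$ to divide $\mathrm{lcm}(a,b,r)$ and hence the full four-term lcm collapses.

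The key observation, which I would isolate as a preliminary lemma, is that although $x_{23}$ and $x_{12}x_{13}$ do not commute in $\PB_3$, their images in $G$ \emph{do} commute. Indeed, since $c$ is central in $\PB_3$, its image $\bar c$ is central in $G$; from $c = x_{23}\, (x_{12}x_{13})$ we get $\bar x_{23}\cdot \overline{x_{12}x_{13}} = \bar c$, while using $\overline{x_{12}x_{13}} = \bar x_{23}^{-1}\bar c$ and centrality of $\bar c$ one computes $\overline{x_{12}x_{13}} \cdot \bar x_{23} = \bar x_{23}^{-1}\bar c \, \bar x_{23} = \bar c$. The same argument, starting from $c = x_{12}(x_{13}x_{23})$, shows that $\bar x_{12}$ commutes with $\overline{x_{13}x_{23}}$ in $G$.

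Once this commutativity is in hand, the two nontrivial divisibilities follow by raising to powers. On one hand, $\bar c^{n} = \bar x_{23}^{\,n}\cdot \overline{x_{12}x_{13}}^{\,n}$ by the commutativity, so taking $n=\mathrm{lcm}(b,p)$ gives $\bar c^{n}=1$; hence $r \mid \mathrm{lcm}(b,p)$. On the other hand, from $\overline{x_{12}x_{13}} = \bar x_{23}^{-1}\bar c$ and centrality of $\bar c$ we obtain $\overline{x_{12}x_{13}}^{\,n} = \bar x_{23}^{-n}\bar c^{\,n}$, so $n=\mathrm{lcm}(b,r)$ forces $p \mid \mathrm{lcm}(b,r)$. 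Combining the two divisibilities yields $\mathrm{lcm}(a,b,p)=\mathrm{lcm}(a,b,r)$, and the identical argument based on $x_{13}x_{23} = x_{12}^{-1} c$ yields $\mathrm{lcm}(a,b,q)=\mathrm{lcm}(a,b,r)$.

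The proof is essentially entirely formal, so I do not expect any serious obstacle; the only thing one has to be careful about is not accidentally using that $\bar x_{12}, \bar x_{13}, \bar x_{23}$ have any commutation relations among themselves (they do not in general). The single fact that does the work is that $\bar c$ is central in $G$, which is an immediate consequence of the centrality of $c$ in $\PB_3$ recalled in \eqref{c-PB3-B3}; all the rest is bookkeeping with the two defining identities $x_{12}x_{13} = x_{23}^{-1} c$ and $x_{13}x_{23} = x_{12}^{-1} c$.
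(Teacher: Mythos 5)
Your proof is correct, and it follows exactly the route the paper indicates: the whole argument turns on the two identities $x_{12}x_{13}=x_{23}^{-1}c$, $x_{13}x_{23}=x_{12}^{-1}c$ and on $c$ being central, and the reduction from the four-term $\mathrm{lcm}$ to each three-term $\mathrm{lcm}$ via mutual divisibility is exactly the ``easy'' bookkeeping the authors had in mind.

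One small factual slip in your motivating remarks: you say that $x_{23}$ and $x_{12}x_{13}$ ``do not commute in $\PB_3$.'' In fact they do, already in $\PB_3$: the displayed equation \eqref{c-PB3-B3} gives both $c=x_{23}(x_{12}x_{13})$ and $c=(x_{12}x_{13})x_{23}$ directly, and likewise $x_{12}(x_{13}x_{23})=(x_{13}x_{23})x_{12}=c$ follows from $x_{13}x_{23}=x_{12}^{-1}c$ and centrality of $c$. So the ``preliminary lemma'' you isolate is not a phenomenon special to the quotient; it holds at the level of $\PB_3$. This does not affect the validity of your argument at all (you only use the statement in $G$, which is weaker), but it is worth knowing, and it lets you skip the lemma entirely: once you write $\overline{x_{12}x_{13}}=\bar{x}_{23}^{-1}\bar{c}$ with $\bar{c}$ central, the identity $\overline{x_{12}x_{13}}^{\,n}=\bar{x}_{23}^{-n}\bar{c}^{\,n}$ and its rearrangement $\bar{c}^{\,n}=\bar{x}_{23}^{\,n}\,\overline{x_{12}x_{13}}^{\,n}$ both follow immediately, giving the two divisibilities $p\mid\mathrm{lcm}(b,r)$ and $r\mid\mathrm{lcm}(b,p)$ in one stroke.
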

\qed

\bigskip

Given $\N \in \NFI_{\PB_4}(\B_4)$ and the corresponding normal subgroups $\N_{\PB_3}$ and $\N_{\PB_2}$, 
we will now define an equivalence relation $\sim_{\N}$ on the set of morphisms in $\PaB^{\le 4}$.  
 
The groupoid $\PaB(1)$ has exactly one object and exactly one (identity) morphism. 
So $\PaB(1)$ has only one equivalence relation. 

For two isomorphisms $(2 \le n \le 4)$
$$
\ga, \ti{\ga} \in \Hom_{\PaB(n)}(\tau_1, \tau_2),
$$
we declare that $\ga \sim_{\N} \ti{\ga}$ if and only if 
\begin{equation}
\label{ou-loop}
\ou (\ga^{-1} \cdot \ti{\ga}) \in  \N_{\PB_n}\,,
\end{equation}
where $\N_{\PB_4} : =\N$.  In other words, $\ga \sim_{\N} \ti{\ga}$ if and only if 
$$
\ti{\ga} = \ga \cdot \eta,
$$
where $\ou(\eta) \in \N_{\PB_n}$ and the source of $\eta$ coincides with the target of $\eta$.

\bigskip

We claim that 
\begin{prop}  
\label{prop:sim-OK}
For every $\N \in \NFI_{\PB_4}(\B_4)$, $\sim_{\N}$ is a compatible equivalence relation 
on $\PaB^{\le 4}$ in the sense of Definition \ref{dfn:compat-equiv}. Moreover, the assignment 
$$
\N ~~\mapsto~~ \sim_{\N}
$$
upgrades to a functor from the poset $\NFI_{\PB_4}(\B_4)$ to the poset of 
compatible equivalence relations on $\PaB^{\le 4}$.
\end{prop}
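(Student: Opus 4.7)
The plan is to check, for a fixed $\N \in \NFI_{\PB_4}(\B_4)$, each of the three conditions of Definition \ref{dfn:compat-equiv} in turn, and then to read off functoriality in $\N$ directly from the preimage-intersection formulas \eqref{N-PB-3} and \eqref{N-PB-2}. Two facts power essentially every step: first, Proposition \ref{prop:N-PB3-B3}, which gives $\N_{\PB_3} \unlhd \B_3$ and $\N_{\PB_2} \unlhd \B_2$, and, combined with the hypothesis $\N \unlhd \B_4$, yields normality of $\N_{\PB_n}$ in $\B_n$ for $n=2,3,4$; and second, the bifunctoriality of the elementary insertions $\circ_i$ in the truncated operad $\PaB^{\le 4}$, together with the fact that $\ou$ sends composition of morphisms in $\PaB(n)$ to multiplication in $\B_n$.

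First I would check that $\sim_\N$ is an equivalence relation on each hom-set $\Hom_{\PaB(n)}(\tau_1,\tau_2)$: reflexivity is immediate from $\ou(\id)=1$; symmetry and transitivity reduce at once to $\N_{\PB_n}$ being closed under inversion and multiplication. Next, for compatibility with composition, writing $\ga'=\ga\cdot\eta_1$ and $\ti\ga'=\ti\ga\cdot\eta_2$ with $\ou(\eta_j)\in\N_{\PB_n}$, a short rearrangement gives
$$
\ga'\cdot\ti\ga' \;=\; \ga\cdot\ti\ga \cdot (\ti\ga^{-1}\eta_1\ti\ga)\cdot \eta_2,
$$
and $\ou(\ti\ga^{-1}\eta_1\ti\ga)=\ou(\ti\ga)^{-1}\ou(\eta_1)\ou(\ti\ga)\in \N_{\PB_n}$ by normality of $\N_{\PB_n}$ in $\B_n$. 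Compatibility with the $S_n$-action reduces to the same normality: the action of $\te\in S_n$ on a loop $\eta$ at $\tau$ corresponds to conjugation in $\B_n$ by any lift $\ti\te$ of $\te$, so $\ou(\te(\eta))\in \N_{\PB_n}$ whenever $\ou(\eta)\in \N_{\PB_n}$.

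The main step is compatibility with the elementary insertions $\circ_i$, and this is precisely where the defining formulas \eqref{N-PB-3} and \eqref{N-PB-2} come in. If $\ga_1'=\ga_1\eta_1$ and $\ga_2:\sigma_1\to\sigma_2$, then bifunctoriality yields
$$
\ga_1'\circ_i\ga_2 \;=\; (\ga_1\circ_i\ga_2)\cdot (\eta_1\circ_i\id_{\sigma_2}),
$$
so I must check that $\ou(\eta_1\circ_i\id_{\sigma_2})\in \N_{\PB_{n+m-1}}$. By the explicit descriptions in \eqref{vfs-PB} and \eqref{vfs-PB-2-3}, the assignment $\eta\mapsto \ou(\eta\circ_i\id_{\sigma_2})$ coincides with one of the homomorphisms $\vf_{123},\vf_{12,3,4},\vf_{1,23,4},\vf_{1,2,34},\vf_{234}$ (for insertions producing arity-$4$ loops) or one of $\vf_{12},\vf_{12,3},\vf_{1,23},\vf_{23}$ (for arity-$3$ loops); the intersections \eqref{N-PB-3} and \eqref{N-PB-2} are set up to include precisely these preimages. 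A symmetric argument, based on $\ga_1\circ_i(\ga_2\eta_2)=(\ga_1\circ_i\ga_2)\cdot(\id_{\tau_2}\circ_i\eta_2)$, handles the second slot. I expect this to be the one step that requires real care: the obstacle is not conceptual but bookkeeping, namely matching each operadic insertion of an identity to the correct $\vf_\bullet$ and verifying that \eqref{N-PB-3}, \eqref{N-PB-2} cover every case, including those arising from the two-step passage $\PB_2\to\PB_3\to\PB_4$.

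Finally, functoriality of $\N\mapsto\sim_\N$ is immediate from the definitions: if $\K\le\N$ in $\NFI_{\PB_4}(\B_4)$, then $\vf_\bullet^{-1}(\K)\subseteq\vf_\bullet^{-1}(\N)$ for each relevant $\vf_\bullet$, so $\K_{\PB_3}\le\N_{\PB_3}$ and hence $\K_{\PB_2}\le\N_{\PB_2}$; thus $\ou(\ga^{-1}\ti\ga)\in \K_{\PB_n}$ forces $\ou(\ga^{-1}\ti\ga)\in \N_{\PB_n}$, showing that $\sim_\K$ refines $\sim_\N$.
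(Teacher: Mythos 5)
Your plan tracks the paper's own proof closely: compatibility with composition via normality of $\N_{\PB_n}$ in $\B_n$; compatibility with insertions via bifunctoriality of $\circ_i$, reduction to inserting identities, and matching against the homomorphisms $\vf_\bullet$ of \eqref{vfs-PB} and \eqref{vfs-PB-2-3}; and functoriality from $\K\le\N\Rightarrow\K_{\PB_n}\le\N_{\PB_n}$. The minor index slip in the bifunctoriality step (the inserted identity should be at the source of $\ga_2$, i.e.\ $\id_{\sigma_1}$ rather than $\id_{\sigma_2}$) and the acknowledged bookkeeping in the $\vf_\bullet$-matching step — including the two-step passage through arity $3$ that the paper carries out for $m=3$ — are all in the spirit of the paper's proof.

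There is, however, one genuine error of substance in the $S_n$-equivariance step. You assert that the action of $\te\in S_n$ on a loop $\eta$ in $\PaB(n)$ "corresponds to conjugation in $\B_n$ by any lift $\ti\te$ of $\te$." That is not the $S_n$-action on $\PaB(n)$ as defined in this paper: by \eqref{ou-S-n}, $\ou(\te(\ga)) = \ou(\ga)$ — the $S_n$-action permutes the labels on the source and target objects but leaves the underlying braid \emph{unchanged}; no conjugation takes place. The paper's proof of the second condition of Definition \ref{dfn:compat-equiv} uses precisely this identity and nothing else. Your conclusion happens to hold anyway (normality of $\N_{\PB_n}$ would make the conjugation argument work), so this does not collapse your proof, but the stated justification is wrong, and the misreading of the $S_n$-action on $\PaB$ is the kind of confusion that can silently corrupt later steps — for example, in the insertion analysis, where the underlying permutation of the object where $\eta$ lives really does influence which $\vf_\bullet$ arises, but through the operadic insertion rule (which strand carries the given label), not through conjugation. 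You should replace the conjugation claim by the identity $\ou(\te(\eta)) = \ou(\eta)$.
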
  
\begin{proof}
The first property of $\sim_{\N}$ follows from the fact that
$\N_{\PB_4} : = \N$ (resp. $\N_{\PB_3}$,  $\N_{\PB_2}$) is normal in $\B_4$ (resp. $\B_3$, $\B_2$).

The second property of $\sim_{\N}$ follows from the obvious identity:
$$
\ou(\ga) = \ou (\te(\ga)) \qquad \forall~~\ga \in \PaB(n),~\te \in S_n\,.
$$

The proof of the last property is based on the observation that elementary operadic insertions for $\PaB$
can be expressed in terms of the operations $? \circ_i \id_{\tau}$, $\id_{\tau} \circ_i ?$, 
and the composition of morphisms in the groupoids $\PaB(3)$ and $\PaB(4)$. 

Let $n \in \{2,3\}$ and $\eta$ be a morphism in $\PaB(n)$ whose target coincides with its source. 
In particular, $\ou (\eta) \in \PB_n$. Let us prove that, if $\ou (\eta) \in \N_{\PB_n}$,
then, for every $\tau \in \Ob(\PaB(m))$ with $n+m-1 \le 4$, we have 
\begin{equation}
\label{eta-circ-id}
\ou (\eta \circ_i  \id_{\tau}) \in \PB_{n+m-1}, \qquad \forall~~ 1 \le i \le n 
\end{equation}
and 
\begin{equation}
\label{id-circ-eta}
\ou (\id_{\tau} \circ_i \eta) \in \PB_{n+m-1}, \qquad \forall~~ 1 \le i \le m. 
\end{equation} 

Let $h = \ou (\eta)$. If $m = 2$ then there exists $1 \le j \le n$ 
(resp. $j \in \{1,2\}$) such that $\ou(\mm(h) \circ_j \id_{12}) = \ou (\eta \circ_i  \id_{\tau})$
(resp. $\ou(\id_{12}) \circ_j \mm(h) = \ou (\id_{\tau} \circ_i \eta)$).

Thus, if $m=2$, \eqref{eta-circ-id} and \eqref{id-circ-eta} follow 
directly from the definitions of $\N_{\PB_3}$, $\N_{\PB_2}$ \eqref{N-PB-3}, \eqref{N-PB-2} 
and the definitions of the homomorphisms $\vf_{123}, \dots$, $\vf_{12}, \dots$
(see \eqref{vfs-PB} and \eqref{vfs-PB-2-3}).
 
If $m=3$, then there exist $j,k \in \{1,2\}$ such that
$$
\ou (\eta \circ_i  \id_{\tau}) = \ou \big( (\eta \circ_j \id_{12}) \circ_k \id_{12} \big). 
$$
For example, if $\eta \in \Hom_{\PaB}((2,1),(2,1))$, then 
$\ou (\eta \circ_2  \id_{2(1,3)}) = \ou \big( (\eta \circ_2 \id_{12}) \circ_3 \id_{12} \big)$. 

Thus \eqref{eta-circ-id} for $m=3$ follows from \eqref{eta-circ-id} for $m=2$. 
Similarly,  \eqref{id-circ-eta} for $m=3$ follows from \eqref{id-circ-eta} for $m=2$. 

We will now use \eqref{eta-circ-id} and  \eqref{id-circ-eta} to prove the last 
property of $\sim_{\N}$. 

Consider $\gamma_{1}, \tilde{\gamma}_{1} \in \text{Hom}_{\PaB(n)}(\tau_{1}, \tau_{2})$
and $\gamma_{2}, \tilde{\gamma}_{2} \in \text{Hom}_{\PaB(m)}(\omega_{1}, \omega_{2})$. 
First suppose $\gamma_{1} \sim_{\N} \tilde{\gamma}_{1}$, so 
$\tilde{\gamma}_{1} = \gamma_{1} \cdot \eta$ for some $\eta \in \Hom_{\PaB(n)}(\tau_1, \tau_1)$ 
such that $\ou(\eta) \in \N_{\PB_{n}}$. It follows that
\begin{align*}
	\tilde{\gamma}_{1} \circ_{i} \gamma_{2} &= (\gamma_{1} \cdot \eta) \circ_{i} (\gamma_{2} \cdot \id_{\omega_{1}})\\
	&= (\gamma_{1} \circ_{i} \gamma_{2}) \cdot (\eta \circ_{i} \id_{\omega_{1}})
\end{align*}
Due to  \eqref{eta-circ-id}, $\ou(\eta \circ_{i} \id_{\omega_{1}}) \in \N_{\PB_{n+m-1}}$ and 
hence $\tilde{\gamma}_{1} \circ_{i} \gamma_{2} \sim \gamma_{1} \circ_{i} \gamma_{2}$.
 
Now suppose $\gamma_{2} \sim_{\N} \tilde{\gamma}_{2}$, so $\tilde{\gamma}_{2} = \gamma_{2} \cdot \eta'$ 
for some $\eta' \in  \Hom_{\PaB(m)}(\om_1, \om_1)$ such that $\ou(\eta') \in \N_{\PB_{m}}$. It follows that
\begin{align*}
	\gamma_{1} \circ_{i} \tilde{\gamma}_{2} &= (\gamma_{1} \cdot \id_{\tau_{1}}) \circ_{i} (\gamma_{2} \cdot \eta')\\
	&= (\gamma_{1} \circ_{i} \gamma_{2}) \cdot (\id_{\tau_{1}} \circ_{i} \eta')
\end{align*}
Due to \eqref{id-circ-eta}, $\ou(\id_{\tau_{1}} \circ_{i} \eta') \in \N_{\PB_{n+m-1}}$ and 
hence $\gamma_{1} \circ_{i} \tilde{\gamma}_{2} \sim \gamma_{1} \circ_{i} \gamma_{2}$.

This completes the proof of the fact that $\sim_{\N}$ is indeed a compatible equivalence relation on $\PaB^{\le 4}$. 

It is clear that, if $\ti{\N}, \N \in \NFI_{\PB_4}(\B_4)$ and 
$\ti{\N} \le \N$, then $\ti{\N}_{\PB_3} \le \N_{\PB_3}$ and  $\ti{\N}_{\PB_2} \le \N_{\PB_2}$. 

Thus the assignment  $\N ~\mapsto~ \sim_{\N}$ upgrades to a functor from 
the poset $\NFI_{\PB_4}(\B_4)$ to the poset of compatible equivalence relations. 
\end{proof}

\bigskip 

Later, we will need the following technical statement about $\NFI_{\PB_4}(\B_4)$:
\begin{prop}  
\label{prop:subgroups-in-PB3-2}
~~~
\begin{itemize}
\item[{\bf A)}] For every $\N \in \NFI(\PB_3)$, there exists $\K \in  \NFI_{\PB_4}(\B_4)$ 
satisfying the property 
$$
\K_{\PB_3} \le \N.
$$

\item[{\bf B)}] For every $\N \in \NFI(\PB_2)$ there exists 
$\K \in \NFI_{\PB_4}(\B_4)$ such that $\K_{\PB_2} \le \N$. 
 
\end{itemize}

\end{prop}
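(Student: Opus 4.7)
The plan is to prove \textbf{A)} via the Fadell--Neuwirth splitting of $\PB_4$ and then deduce \textbf{B)} from \textbf{A)} using Proposition \ref{prop:N-ord}. The key observation is that although $\K_{\PB_3}$ is defined as an intersection of five preimages, controlling a single one --- namely $\vf_{123}^{-1}(\K)$ --- is already enough to bound $\K_{\PB_3}$.

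For \textbf{A)}, start from the classical split short exact sequence
$$
1 \to \F_3 \to \PB_4 \xrightarrow{p} \PB_3 \to 1,
$$
where $p$ forgets the fourth strand. On pure braids, the homomorphism $\vf_{123}: \B_3 \to \B_4$ (the operadic cabling appending an untouched fourth strand, $\sigma_i \mapsto \sigma_i$) restricts to a section of $p$, so $p \circ \vf_{123}\big|_{\PB_3} = \id_{\PB_3}$. Given $\N \in \NFI(\PB_3)$, set $\K_1 := p^{-1}(\N)\unlhd \PB_4$; this is normal in $\PB_4$ of finite index $[\PB_3 : \N]$, hence of finite index in $\B_4$, though typically not normal in $\B_4$. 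Pass to its $\B_4$-core
$$
\K := \bigcap_{g \in \B_4} g\, \K_1 \, g^{-1}.
$$
Since $\K_1$ has only finitely many $\B_4$-conjugates, $\K$ is a finite intersection of finite-index subgroups, hence finite-index in $\B_4$. By construction $\K \unlhd \B_4$ and $\K \le \K_1 \le \PB_4$, so $\K \in \NFI_{\PB_4}(\B_4)$. To conclude, I check $\vf_{123}^{-1}(\K_1) = \N$: if $\vf_{123}(g) \in \K_1 \le \PB_4$ then $g \in \PB_3$ (the induced permutation map $S_3 \hookrightarrow S_4$ is injective), and then $p(\vf_{123}(g)) = g \in \N$; the reverse containment is immediate. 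This gives $\K_{\PB_3} \le \vf_{123}^{-1}(\K) \le \vf_{123}^{-1}(\K_1) = \N$.

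For \textbf{B)}, write $\N = \lan x_{12}^n \ran \in \NFI(\PB_2)$. Using $\PB_3 \cong \F_2 \times \bbZ$, with $\F_2 = \lan x_{12}, x_{23}\ran$ and the $\bbZ$-factor generated by the center $c$, define a homomorphism $\phi: \PB_3 \to \bbZ/n\bbZ$ by $x_{12} \mapsto 1$, $x_{23} \mapsto 0$, $c \mapsto 0$, and set $\N_\phi := \ker\phi \in \NFI(\PB_3)$. Applying \textbf{A)} to $\N_\phi$ produces $\K \in \NFI_{\PB_4}(\B_4)$ with $\K_{\PB_3} \le \N_\phi$. The surjection $\PB_3/\K_{\PB_3} \twoheadrightarrow \PB_3/\N_\phi \cong \bbZ/n\bbZ$ forces the order of $x_{12}$ in $\PB_3/\K_{\PB_3}$ to be divisible by $n$; by Proposition \ref{prop:N-ord} this order divides $N_{\ord}$, so $n \mid N_{\ord}$ and $\K_{\PB_2} = \lan x_{12}^{N_{\ord}} \ran \le \lan x_{12}^n \ran = \N$.

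The only non-cosmetic point I expect to require care is verifying that the operadic definition of $\vf_{123}$ agrees on $\PB_3$ with the naive inclusion $\PB_3 \hookrightarrow \PB_4$ adding a passive fourth strand, so that the splitting identity $p \circ \vf_{123}\big|_{\PB_3} = \id_{\PB_3}$ really holds. Once this is read off from the formulas for $\vf_{123}$ in Appendix \ref{app:PaB}, the Fadell--Neuwirth splitting delivers \textbf{A)}, and the abelianization argument of \textbf{B)} is routine.
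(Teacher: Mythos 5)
Your proof is correct. The paper actually defers the proof of Proposition~\ref{prop:subgroups-in-PB3-2} to the stronger Proposition~\ref{prop:cofinal}, which additionally requires $\K$ to be \emph{isolated} (and hence relies on Corollary~\ref{cor:isolated-cofinal} from later in the paper); your argument proves exactly the weaker statement that was asked, and does so without any forward dependence. For part~\textbf{A)} the two arguments are essentially the same in content: the paper extends a permutation representation $\psi:\PB_3\to S_n$ with $\N=\ker\psi$ to $\ti{\psi}:\PB_4\to S_n$ by $\ti{\psi}(x_{ij}):=\psi(x_{ij})$ for $i<j\le 3$ and $\ti{\psi}(x_{i4}):=\id$, which is precisely $\psi\circ p$ where $p$ is your Fadell--Neuwirth projection, and then replaces $\ker\ti\psi$ by the (characteristic) intersection of all normal subgroups of $\PB_4$ of the same index to force normality in $\B_4$, whereas you take the $\B_4$-core of $p^{-1}(\N)$; both fix-ups are standard and equivalent for the purpose at hand. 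For part~\textbf{B)} your route is genuinely different and arguably cleaner: the paper constructs a second explicit homomorphism $\ti{\ka}:\PB_4\to S_n$ (with the slightly opaque choice $\ti{\ka}(x_{23}):=\ka(x_{12})^{-1}$) and checks $\vf_{12}^{-1}\big(\vf_{123}^{-1}(\ker\ti\ka)\big)=\N$ directly, while you reduce~\textbf{B)} to~\textbf{A)} by passing through the auxiliary $\N_\phi=\ker(\phi:\PB_3\to\bbZ/n\bbZ)$ and using Proposition~\ref{prop:N-ord} to transfer the divisibility constraint from $\PB_3/\K_{\PB_3}$ down to $N_{\ord}$. This buys you a uniform treatment of both parts once $p$ and the splitting identity $p\circ\vf_{123}\big|_{\PB_3}=\id_{\PB_3}$ (read off from~\eqref{vfs-3-4-on-gen}) are in place. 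One small stylistic note: the parenthetical ``the induced permutation map $S_3\hookrightarrow S_4$ is injective'' is superfluous, since $\vf_{123}$ is defined with domain $\PB_3$, so $g\in\PB_3$ is automatic.
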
  
\begin{proof}
Stronger versions of these statements are proved in Subsection \ref{sec:settled-isolated}
(see Proposition \ref{prop:cofinal}).
So we omit the proof of this proposition. 
\end{proof}

\subsection{The set of $\GT$-pairs $\GT_{\pr}(\N)$}
\label{sec:GT-pr-N}

Let $\N \in \NFI_{\PB_4}(\B_4)$ and $\sim_{\N}$ be the corresponding compatible equivalence relation 
on $\PaB^{\le 4}$. Let $\N_{\PB_3}$ (resp. $\N_{\PB_2}$) be the corresponding normal 
subgroup of $\PB_3$ (resp. $\PB_2$) and $N_{\ord}$ be the index of  $\N_{\PB_2}$ in $\PB_2$. 

Since the groupoid $\PaB(0)$ is empty,  Theorem \ref{thm:PaB-gener} implies that
the truncated operad $\PaB^{\le 4}$ is generated by morphisms 
$\al$ and $\beta$ shown in figure
\ref{fig:beta-alpha-here}. 
\begin{figure}[htp] 
\centering 
\begin{tikzpicture}[scale=1.5, > = stealth]
\tikzstyle{v} = [circle, draw, fill, minimum size=0, inner sep=1]
\draw (-0.7,0.5) node[anchor=center] {{$\beta ~ : = $}};
\node[v] (v1) at (0, 0) {};
\draw (0,-0.2) node[anchor=center] {{\small $1$}};
\draw (0,1.2) node[anchor=center] {{\small $2$}};
\node[v] (v2) at (1, 0) {};
\draw (1,-0.2) node[anchor=center] {{\small $2$}};
\draw (1,1.2) node[anchor=center] {{\small $1$}};
\node[v] (vv2) at (0, 1) {};
\node[v] (vv1) at (1, 1) {};
\draw [->] (v1) -- (vv1);
\draw (v2) -- (0.6, 0.4); 
\draw [->] (0.4, 0.6) -- (vv2);
\begin{scope}[shift={(3.5,0)}]
\draw (-0.7,0.5) node[anchor=center] {{$\al ~ : = $}};
\node[v] (v1) at (0, 0) {};
\draw (-0.15,-0.2) node[anchor=center] {{\small $($}};
\draw (0,-0.2) node[anchor=center] {{\small $1$}};
\node[v] (v2) at (0.5, 0) {};
\draw (0.5,-0.2) node[anchor=center] {{\small $2$}};
\draw (0.65,-0.2) node[anchor=center] {{\small $)$}};
\node[v] (v3) at (1.5, 0) {};
\draw (1.5,-0.2) node[anchor=center] {{\small $3$}};
\node[v] (vv1) at (0, 1) {};
\draw (0,1.2) node[anchor=center] {{\small $1$}};
\node[v] (vv2) at (1, 1) {};
\draw (0.85,1.2) node[anchor=center] {{\small $($}};
\draw (1,1.2) node[anchor=center] {{\small $2$}};
\node[v] (vv3) at (1.5, 1) {};
\draw (1.5,1.2) node[anchor=center] {{\small $3$}};
\draw (1.65,1.2) node[anchor=center] {{\small $)$}};
\draw [->] (v1) -- (vv1);  \draw [->] (v2) -- (vv2);  \draw [->] (v3) -- (vv3); 
\end{scope}
\end{tikzpicture}
\caption{The isomorphisms $\al$ and $\beta$} \label{fig:beta-alpha-here}
\end{figure}

Moreover any relation on $\al$ and $\beta$ in $\PaB^{\le 4}$ is a consequence of 
the pentagon relation
\begin{equation}
\label{pentagon-here}
\begin{tikzpicture}
\matrix (m) [matrix of math nodes, row sep=1.5em, column sep=1.5em]
{~&  (1(23))4  &  ~ & 1((23)4) &  ~ \\
((12)3)4 & ~ & ~ & ~  & 1(2(34)) \\ 
 ~ & ~~ & (12)(34)  & ~ &  ~ \\};
\path[->, font=\scriptsize]
(m-2-1) edge node[above] {$\id_{12} \circ_1 \al~~~~~~~~$} (m-1-2)
(m-1-2) edge node[above] {$\al \circ_2 \id_{12}$} (m-1-4)
(m-1-4) edge node[above] {$~~~~~~~~\id_{12} \circ_2 \al$} (m-2-5)
(m-2-1) edge node[above] {$~~~~\al \circ_1 \id_{12}$} (m-3-3)
 (m-3-3)  edge node[above] {$\al \circ_3 \id_{12}~~~~$} (m-2-5);
\end{tikzpicture}
\end{equation}
and the hexagon relations
 \begin{equation}
\label{hexagon1-here}
\begin{tikzpicture}
\matrix (m) [matrix of math nodes, row sep=1.8em, column sep=3.2em]
{(12) 3 &  3(12)  &  (31)2  \\
1(23)  & 1(32) &   (13)2  \\ };
\path[->, font=\scriptsize]
(m-1-1) edge node[above] {$\beta \circ_1 \id_{12}$} (m-1-2)  edge node[left] {$\al~$} (m-2-1)
(m-2-1) edge node[above] {$\id_{12} \circ_2 \beta $} (m-2-2)  (m-2-2)  edge node[above] {$ (2,3)\, \al^{-1}$} (m-2-3) 
(m-2-3)  edge node[right] {$(2,3)\, (\id_{12} \circ_1 \beta)$} (m-1-3) (m-1-3)  edge node[above] {$(1,3,2)\, \al$} (m-1-2) ;
\end{tikzpicture}
\end{equation}
\begin{equation}
\label{hexagon11-here}
\begin{tikzpicture}
\matrix (m) [matrix of math nodes, row sep=1.8em, column sep=3.2em]
{1(23) &  (23)1  &  2(31)  \\
(12)3  & (21)3 &   2(13)  \\ };
\path[->, font=\scriptsize]
(m-1-1) edge node[above] {$\beta \circ_2 \id_{12}$} (m-1-2)  (m-1-2) edge node[above] {$(1,2,3)\, \al$}  (m-1-3)
(m-1-1) edge node[left] {$\al^{-1}$} (m-2-1)  (m-2-1) edge node[above] {$\id_{12} \circ_1 \beta$} (m-2-2) 
(m-2-2)  edge node[above] {$(1,2)\, \al $} (m-2-3)  (m-2-3) edge node[right] {$(1,2) \, (\id_{12} \circ_2 \beta)$}  (m-1-3);
\end{tikzpicture}
\end{equation}

Thus morphisms of truncated operads 
\begin{equation}
\label{from-truncated}
T : \PaB^{\le 4} \to \PaB^{\le 4} / \sim_{\N}
\end{equation}
are in bijection with pairs
\begin{equation}
\label{the-pair}
(m + N_{\ord} \bbZ ,f \N_{\PB_3}) \in \bbZ/ N_{\ord} \bbZ \times \PB_3/ \N_{\PB_3}
\end{equation}
satisfying the relations 
\begin{equation}
\label{hexa1}
\si_1 x_{12}^m \, f^{-1} \si_2 x_{23}^m f \, \N_{\PB_3} ~ = ~ 
f^{-1} \si_1 \si_2 (x_{13} x_{23})^m \, \N_{\PB_3}\,,
\end{equation} 
\begin{equation}
\label{hexa11}
f^{-1} \si_2 x_{23}^m f \, \si_1 x_{12}^m \,  \N_{\PB_3}  ~=~ \si_2 \si_1 (x_{12} x_{13})^m \, f \,  \N_{\PB_3}
\end{equation}
in $\B_3 / \N_{\PB_3}$ and  
\begin{equation}
\label{GT-penta}
\vf_{234}(f)\, \vf_{1,23,4} (f)\, \vf_{123}(f) \, \N    ~ = ~  \vf_{1,2,34}(f)   \vf_{12,3,4}(f)\, \N
\end{equation}
in $\PB_4 /\N$.

More precisely, this bijection sends a pair \eqref{the-pair} to the morphism 
of truncated operads $T_{m,f} : \PaB^{\le 4} \to \PaB^{\le 4} / \sim_{\N}$
defined by the formulas:
$$
T_{m,f}(\al): = [ \al \cdot \mm(f) ], \qquad
T_{m,f}(\beta) : =  [\beta \cdot \mm(x_{12}^m)],
$$ 
where $\mm$ is the map from $\B_n$ to $\PaB(n)$ defined in Appendix \ref{app:groupoid-PaB}.

This observation motivates our definition of a $\GT$-pair:
\begin{defi}  
\label{dfn:GT-pair}
For $\N \in \NFI_{\PB_4}(\B_4)$, the set $\GT_{\pr}(\N)$ consists of pairs 
$$
(m + N_{\ord} \bbZ ,f \N_{\PB_3}) ~\in~ \bbZ/ N_{\ord} \bbZ \times \PB_3/ \N_{\PB_3}
$$
satisfying \eqref{hexa1}, \eqref{hexa11} and \eqref{GT-penta}.
Elements of $\GT_{\pr}(\N)$ are called \emph{$\GT$-pairs}.
\end{defi}  
We will represent $\GT$-pairs by tuples $(m,f) \in \bbZ \times \PB_3$.
It is straightforward to see that, if relations \eqref{hexa1}, \eqref{hexa11} and \eqref{GT-penta}
are satisfied for a pair $(m,f)$, then they are also satisfied for $(m+ q N_{\ord}, f h)$, where $q$ 
is an arbitrary integer and $h$ is an arbitrary element in $\N_{\PB_3}$.  
A $\GT$-pair in $\bbZ/ N_{\ord} \bbZ \times \PB_3/ \N_{\PB_3}$ represented by a tuple 
$(m,f) \in \bbZ \times \PB_3$ will be often denoted by 
$$
[(m,f)].
$$

For a tuple $(m,f)$ representing a $\GT$-pair in $\GT_{\pr}(\N)$, we denote by  
$T_{m,f}$ the corresponding morphism of truncated operads:
\begin{equation}
\label{T-m-f}
T_{m,f} : \PaB^{\le 4} ~\to~  \PaB^{\le 4} /\sim_{\N}\,.
\end{equation}

It is clear that the assignment $\ou$ from Appendix \ref{app:groupoid-PaB} 
induces the obvious map 
\begin{equation}
\label{ou-quotient}
\PaB(n) / \sim_{\N} ~\to~ \B_n  /\N_{\PB_n}\,,
\end{equation}
for every $2 \le n \le 4$ and, by abuse of notation, we will use the same symbol 
$\ou$ for the map \eqref{ou-quotient}.

Using this map together with the $\mm: \B_n \to \PaB(n)$ from 
Appendix \ref{app:groupoid-PaB} and morphism \eqref{T-m-f}, we define group homomorphisms 
$\B_2 \to \B_2 / \N_{\PB_2}$,  $\B_3 \to \B_3 / \N_{\PB_3}$,  $\B_4 \to \B_4 / \N$. 
Restricting these homomorphisms to $\PB_2$, $\PB_3$ and $\PB_4$, we get 
group homomorphisms $\PB_2 \to \PB_2 / \N_{\PB_2}$,  $\PB_3 \to \PB_3 / \N_{\PB_3}$,  
$\PB_4 \to \PB_4 / \N$, respectively. More precisely,  
\begin{cor}
\label{cor:for-braid-groups}
For every pair $(m + N_{\ord} \bbZ , ~ f \N_{\PB_3}) \in \GT_{\pr}(\N)$, and every 
$2 \le n \le 4$, the assignment
\begin{equation}
\label{T-B-n}
T^{\B_n}_{m,f}(g) : = \ou \circ T_{m,f} \circ \mm(g)
\end{equation}
defines a group homomorphism from $\B_n \to \B_n / \N_{\PB_n}$. 
The restriction of $T^{\B_n}_{m,f}$ to $\PB_n$ gives us a group homomorphism 
\begin{equation}
\label{T-PB-n}
T^{\PB_n}_{m,f} (g) : \PB_n \to  \PB_n / \N_{\PB_n}\,.
\end{equation}

The action of $T^{\B_4}_{m,f}$ on the generators of $\B_4$ is given by the formulas:
\begin{equation}
\label{for-B4}
T^{\B_4}_{m,f} (\si_1) : = \si_1 x_{12}^m \, \N, \quad
T^{\B_4}_{m,f} (\si_2) : = \vf_{123}(f)^{-1} (\si_2 x_{23}^m) \vf_{123}(f) \, \N, 
\end{equation}
$$
T^{\B_4}_{m,f} (\si_3) : =  \vf_{12,3,4}(f)^{-1} \, (\si_3 x_{34}^m) \, \vf_{12,3,4}(f)\, \N. 
$$
The action of $T^{\B_3}_{m,f}$ on the generators of $\B_3$ are given by the formulas:
\begin{equation}
\label{for-B3}
T^{\B_3}_{m,f} (\si_1) : = \si_1 x_{12}^m \, \N_{\PB_3} \,, \quad
T^{\B_3}_{m,f} (\si_2) : = f^{-1} (\si_2 x_{23}^m) f \, \N_{\PB_3}\,.
\end{equation}
Finally, $T^{\B_2}_{m,f}$ sends $\si_1$ to $\si_1 x_{12}^m \, \N_{\PB_2}$. 
\end{cor}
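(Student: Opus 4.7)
The plan is to deduce all three assertions from three ingredients: (i) $T_{m,f}$ is a morphism of truncated operads (hence preserves composition of morphisms in the groupoids $\PaB(n)/\sim_{\N}$ and is $S_n$-equivariant), (ii) the map $\mm \colon \B_n \to \PaB(n)$ recalled in Appendix \ref{app:groupoid-PaB} satisfies the compositional identity (analogous to \eqref{mm-comp}) that records how a product in $\B_n$ translates into a composition of morphisms in $\PaB(n)$, and (iii) the induced map $\ou$ on equivalence classes \eqref{ou-quotient} is well defined and compatible with the projection to $S_n$.

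First I would check well-definedness: since $\mm$ lands in $\PaB(n)$ and $T_{m,f}$ lands in $\PaB(n)/\sim_{\N}$, the formula $T^{\B_n}_{m,f}(g) := \ou \circ T_{m,f} \circ \mm(g)$ yields an element of $\B_n/\N_{\PB_n}$ by \eqref{ou-quotient}. For multiplicativity, take $g, h \in \B_n$ and express $\mm(gh)$ in terms of $\mm(g)$, $\mm(h)$ and the permutation $\rho(g)$ using the identity of type \eqref{mm-comp}. Because $T_{m,f}$ preserves both composition in $\PaB(n)/\sim_{\N}$ and the action of $S_n$, the identity is carried through; applying $\ou$ then converts the composition in the groupoid quotient into the product in $\B_n/\N_{\PB_n}$, giving $T^{\B_n}_{m,f}(gh) = T^{\B_n}_{m,f}(g) \cdot T^{\B_n}_{m,f}(h)$. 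That $T^{\B_n}_{m,f}$ restricts to a homomorphism $\PB_n \to \PB_n/\N_{\PB_n}$ follows from the fact that $\ou$ and $\mm$ are both compatible with the natural surjection to $S_n$ and $T_{m,f}$ is $S_n$-equivariant; hence $\rho\bigl(T^{\B_n}_{m,f}(g)\bigr) = \rho(g)$, and purely braided inputs produce purely braided outputs.

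For the explicit formulas on generators, one writes each standard generator of $\B_n$ (for $2 \le n \le 4$) through $\mm$ as a composition built out of $\beta$, $\al$, their inverses, and elementary insertions $\circ_i \id_{\tau}$, $\id_{\tau} \circ_i$, landing in the correctly parenthesized source/target. One then substitutes $T_{m,f}(\beta) = [\beta \cdot \mm(x_{12}^m)]$ and $T_{m,f}(\al) = [\al \cdot \mm(f)]$, applies the operadic compatibility of $T_{m,f}$ under insertions and symmetric group action, and finally applies $\ou$. For $\B_2$ this gives immediately $T^{\B_2}_{m,f}(\si_1) = \si_1 x_{12}^m \N_{\PB_2}$. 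For $\B_3$, $\mm(\si_2)$ factors through a conjugation by $\al$ from $(12)3$ to $1(23)$, producing the conjugation by $f$ in \eqref{for-B3}; analogously in $\B_4$, $\mm(\si_2)$ involves an associator supported on the first three strands (hence $\vf_{123}(f)$), while $\mm(\si_3)$ involves the associator at the position $(12,3,4)$ (hence $\vf_{12,3,4}(f)$), yielding \eqref{for-B4}.

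The main obstacle is the multiplicativity step: a clean and basis-free argument requires using precisely the compositional identity for $\mm$, together with the fact that $T_{m,f}$ is operadic (not merely a functor of groupoids), in order that the combinatorial bookkeeping of parenthesized trees survives the translation through $\ou$. The remaining work is essentially a careful but routine expansion of $\mm(\si_i)$ in terms of $\al$, $\beta$, and insertions, already implicit in the definitions of $\vf_{123}$ and $\vf_{12,3,4}$ recorded in Appendix \ref{app:PaB}.
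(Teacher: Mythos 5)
Your proposal is correct and follows essentially the same route as the paper's proof: multiplicativity from the compositional identity \eqref{mm-comp} for $\mm$, the operadicity and $S_n$-equivariance of $T_{m,f}$, and the multiplicativity and $S_n$-invariance of $\ou$ (cf. \eqref{ou-S-n}); the restriction to $\PB_n$ from the fact that $T_{m,f}$ fixes objects (so $\mm$ being a right inverse of $\ou$ forces $\rho\big(T^{\B_n}_{m,f}(g)\big) = \rho(g)$); and the generator formulas by expanding $\mm(\si_i)$ through $\al$, $\beta$, and elementary insertions. One small slip to fix on write-up: in \eqref{mm-comp} the twisting permutation is $\rho(g_2)^{-1}$, so with $g_1 = g$, $g_2 = h$ the permutation entering your multiplicativity chain should be $\rho(h)$, not $\rho(g)$.
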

\begin{proof} It is clear that, for every two composable morphisms 
$\ga_1, \ga_2 \in \PaB(n)/\sim_{\N}$, we have 
\begin{equation}
\label{ou-composition}
\ou(\ga_1 \cdot \ga_2) = \ou(\ga_1) \cdot \ou(\ga_2).
\end{equation}
Then using \eqref{mm-comp} from Appendix \ref{app:groupoid-PaB} and the compatibility 
of $T_{m,f}$ with the structure of the truncated operad we get
\begin{align*}
	T^{\B_n}_{m,f}(g_{1} \cdot g_{2}) &= \ou \Big(T_{m,f}\big(\mm(g_{1} \cdot g_{2})\big)\Big) \\
	&= \ou \Big(T_{m,f} \big( \rho(g_{2})^{-1} \big(\mm(g_{1}) \big) \cdot \mm(g_{2}) \big) \Big) \\
	&= \ou \Big(T_{m,f} \big(\rho(g_{2})^{-1} (\mm(g_{1})) \big) \cdot T_{m,f} \big( \mm(g_{2}) \big) \Big) \\
	&= \ou \Big( \rho(g_{2})^{-1} T_{m,f} (\mm(g_{1})) \cdot T_{m,f} (\mm(g_{2})) \Big) \\
	&= \ou \Big(\rho(g_{2})^{-1} T_{m,f} (\mm(g_{1})) \Big) \cdot \ou \Big( T_{m,f} (\mm(g_{2})) \Big) \\
	&= \ou \Big(  T_{m,f}\big(\mm(g_{1})\big) \Big) \cdot \ou \Big(T_{m,f}\big(\mm(g_{2})\big) \Big) \\
	&= T^{\B_n}_{m,f}(g_{1}) \cdot T^{\B_n}_{m,f}(g_{2})
\end{align*}
whence $T^{\B_n}_{m,f}$ is a group homomorphism.

The second statement of the corollary follows immediately from the fact $\mm$ is a right 
inverse of $\ou$ and $T_{m,f}$ acts trivially on objects of $\PaB$.

We will now prove \eqref{for-B4}. The easier cases of $T_{m,f}^{\B_3}$ and $T_{m,f}^{\B_2}$
are left for the reader.

For the generator $\si_1$, we have:
\begin{align*}
	T_{m,f}^{\B_4}(\si_1) &= \ou \Big(T_{m,f} \big(\mm(\si_1) \big) \Big) \\ 
	&= \ou \Big(T_{m,f} \big(\id_{(12)3} \circ_1 \beta \big) \Big) \\
	&= \ou \Big( \id_{(12)3} \circ_1 [\beta \cdot \mm(x_{12}^{m})] \big) \\
	&= \si_{1} x_{12}^{m}\, \N.
\end{align*}

For the generator $\si_2$, we have:
\begin{align*}
T_{m,f}^{\B_4}(\si_2) &= \ou\Big(T_{m,f}\big(\mm(\si_2)\big)\Big) \\
&= \ou\Big(T_{m,f}\big( (2,3)(\id_{12} \circ_1 \alpha^{-1}) \cdot (\id_{(12)3} \circ_{2} \beta) 
\cdot (\id_{12} \circ_1 \alpha) \big) \Big) \\
&= \ou\Big( (2,3) \big(\id_{12} \circ_1 [\mm(f^{-1}) \cdot \al^{-1}]\big) 
\cdot \big(\id_{(12)3} \circ_{2} [\beta \cdot \mm(x_{12}^m)]\big) \cdot 
	\big(\id_{12} \circ_1 [\alpha \cdot \mm(f)] \big)\Big) \\
	&= \vf_{123}(f)^{-1} (\si_2 x_{23}^m) \vf_{123}(f)\, \N.
\end{align*}

Finally, for the generator $\si_3$, we have:
\begin{align*}
T_{m,f}^{\B_4}(\si_3) &= \ou\Big(T_{m,f}\big(\mm(\si_3)\big)\Big) \\
&= \ou\Big(T_{m,f}\big( (3,4)(\al^{-1} \circ_1 \id_{12}) \cdot (\id_{(12)3} \circ_3 \beta) 
\cdot (\al \circ_1 \id_{12}) \big)\Big) \\
&= \ou\Big( (3,4) \big([ \mm(f)^{-1} \cdot \al^{-1} ] \circ_1 \id_{12}\big) \cdot 
\big(\id_{(12)3} \circ_3 [ \beta\cdot \mm(x_{12}^{m})]\big) \cdot \big([\alpha \cdot \mm(f)] \circ_{1} \id_{12}\big)\Big) \\
&= \vf_{12,3,4}(f)^{-1}(\si_{3}x_{34}^{m})\vf_{12,3,4}(f)\, \N.
\end{align*}
This completes the proof of Corollary \ref{cor:for-braid-groups}.
\end{proof}

Let us now use Corollary \ref{cor:for-braid-groups} to prove the following statement:
\begin{cor}
\label{cor:act-on-PB3}
Let $\N \in \NFI_{\PB_4}(\B_4)$, $[(m,f)] \in \GT_{\pr}(\N)$ and $c$ be the generator of the center 
of $\PB_3$ (see \eqref{c-PB3-B3}). Then 
\begin{equation}
\label{act-on-xy}
T^{\PB_3}_{m,f}(x_{12}) = x_{12}^{2m+1} \, \N_{\PB_3}\,,
\qquad
T^{\PB_3}_{m,f}(x_{23}) = f^{-1} x_{23}^{2m+1} f \, \N_{\PB_3}\,,
\end{equation}
\begin{equation}
\label{act-on-x-c}
T^{\PB_3}_{m,f}(x_{13}) = x_{12}^{-m} \si_1^{-1}\,   f^{-1} x_{23}^{2m+1} f \, \si_1 x_{12}^m \, \N_{\PB_3},  \qquad
T^{\PB_3}_{m,f}(c) = c^{2m+1} \, \N_{\PB_3}\,. 
\end{equation}
\end{cor}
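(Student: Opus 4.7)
The plan is to compute $T^{\PB_3}_{m,f}$ on each of $x_{12}, x_{23}, x_{13}, c$ by writing them in terms of the Artin generators $\si_1, \si_2$ and invoking the formulas \eqref{for-B3} of Corollary \ref{cor:for-braid-groups}, together with the fact that $T^{\B_3}_{m,f}$ is a group homomorphism. Since $x_{12} = \si_1^2$ commutes with $\si_1$, the identity $T^{\PB_3}_{m,f}(x_{12}) = (\si_1 x_{12}^m)^2 = x_{12}^{2m+1} \pmod{\N_{\PB_3}}$ is immediate; the analogous computation for $x_{23} = \si_2^2$ (using that $\si_2$ commutes with $x_{23}$) yields the second formula. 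For $x_{13}$, I would use the identity $x_{13} = \si_1^{-1} \si_2^2 \si_1 = \si_1^{-1} x_{23} \si_1$, which follows from the braid relation $\si_1 \si_2 \si_1 = \si_2 \si_1 \si_2$ applied to the standard expression $x_{13} = \si_2 \si_1^2 \si_2^{-1}$, and simply conjugate the already-computed value of $T^{\PB_3}_{m,f}(x_{23})$ by $T^{\B_3}_{m,f}(\si_1) = \si_1 x_{12}^m$ to read off the claimed formula.

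The formula for $c$ is the delicate one. Using $c = \D^2$ with $\D = \si_1 \si_2 \si_1 = \si_2 \si_1 \si_2$, I would compute $T^{\B_3}_{m,f}(\D)$ in two independent ways. Starting from $\D = \si_1 \si_2 \si_1$ and invoking hexagon \eqref{hexa1} to rewrite $\si_1 x_{12}^m f^{-1} \si_2 x_{23}^m f$ as $f^{-1} \si_1 \si_2 (x_{13} x_{23})^m$, then using $(x_{13} x_{23})^m = c^m x_{12}^{-m}$ (a consequence of \eqref{c-PB3-B3} and the centrality of $c$ in $\B_3$) together with $[\si_1, x_{12}]=1$, produces $T^{\B_3}_{m,f}(\D) \equiv c^m f^{-1} \D \pmod{\N_{\PB_3}}$. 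Running the parallel computation from $\D = \si_2 \si_1 \si_2$ using hexagon \eqref{hexa11}, together with $(x_{12} x_{13})^m = c^m x_{23}^{-m}$ and $[\si_2, x_{23}]=1$, yields $T^{\B_3}_{m,f}(\D) \equiv c^m \D f \pmod{\N_{\PB_3}}$.

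Equating the two expressions gives the key intermediate identity
$$\D f \D^{-1} \equiv f^{-1} \pmod{\N_{\PB_3}},$$
whence, using the centrality of $\D^2 = c$ (so that $\D^{-1} f \D \equiv f^{-1}$ as well),
$$T^{\PB_3}_{m,f}(c) \,=\, (c^m \D f)^2 \,=\, c^{2m}\, \D f \D f \,=\, c^{2m}\, \D^2\, (\D^{-1} f \D)\, f \,\equiv\, c^{2m+1}\, f^{-1}\, f \,=\, c^{2m+1} \pmod{\N_{\PB_3}}.$$
The hard step is extracting $\D f \D^{-1} \equiv f^{-1}$: neither hexagon gives it in isolation, and it becomes visible only after $T^{\B_3}_{m,f}(\D)$ is computed two different ways and the results are compared. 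Once this identity is in hand, all four formulas reduce to routine braid-group manipulations.
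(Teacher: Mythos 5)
Your computations for $x_{12}$, $x_{23}$ and $x_{13}$ coincide with the paper's. For $T^{\PB_3}_{m,f}(c)$, however, you take a genuinely different route and your argument is correct. The paper writes $c = (\si_1\si_2)^3$, expands $T^{\B_3}_{m,f}(\si_1\si_2)^3$ into six factors, and applies both hexagons \emph{in-line} to the first four factors, then uses $x_{13}x_{23}=x_{12}^{-1}c$, $x_{12}x_{13}=x_{23}^{-1}c$, centrality of $c$, and the commutations $[\si_1,x_{12}]=[\si_2,x_{23}]=1$ to collapse the expression to $c^{2m}f^{-1}(\si_1\si_2)^3 f = c^{2m+1}$. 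You instead write $c = \D^2$ with $\D=\si_1\si_2\si_1=\si_2\si_1\si_2$, evaluate $T^{\B_3}_{m,f}(\D)$ in two ways (one hexagon per way), equate to obtain the intermediate congruence $\D f \D^{-1} \equiv f^{-1} \pmod{\N_{\PB_3}}$, and then square. The mechanical content — both hexagons, the same rewriting of $(x_{13}x_{23})^m$ and $(x_{12}x_{13})^m$, the same commutation facts — is identical, and the two proofs are comparable in length. What your variant buys is a conceptually meaningful intermediate step: since conjugation by $\D$ swaps $x_{12}\leftrightarrow x_{23}$ in $\PB_3$, the identity $\D f\D^{-1}\equiv f^{-1}$ is precisely the finite-quotient avatar of the $2$-cocycle relation $f(x,y)f(y,x)=1$, which is a standard consequence of the hexagons in the profinite $\GT$ theory; your derivation makes that implication visible at the level of $\GT$-pairs. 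The paper's version avoids naming this intermediate identity and simply pushes the algebra through.
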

\begin{proof}
The first equation in \eqref{act-on-xy} is a simple consequence of the first 
equation in \eqref{for-B3}. 

Using the second equation in \eqref{for-B3}, we get
$$
T^{\PB_3}_{m,f}(x_{23}) = T^{\PB_3}_{m,f}(\si_2^2) =  \big( f^{-1} (\si_2 x_{23}^m) f \big)^2\, \N_{\PB_3} =
 f^{-1} (\si^2_2 x_{23}^{2m}) f\,  \N_{\PB_3} =  f^{-1} \, x_{23}^{2m+1}\, f \, \N_{\PB_3}\,.
$$
Thus the second equation in \eqref{act-on-xy} is proved. 

Using the definition of $x_{13}:= \si_1^{-1} \si^2_2 \si_1 = \si_1^{-1} x_{23} \si_1$, 
the first equation in \eqref{for-B3} and the second equation in \eqref{act-on-xy}, we get
$$
T^{\PB_3}_{m,f}(x_{13}) = T^{\PB_3}_{m,f}( \si_1^{-1} x_{23} \si_1) = 
x_{12}^{-m} \si_1^{-1}\,   f^{-1} x_{23}^{2m+1} f \, \si_1 x_{12}^m \, \N_{\PB_3}\,.
$$
Thus the first equation in \eqref{act-on-x-c} is also satisfied. 

To prove the second equation in \eqref{act-on-x-c}, 
we use the formulas \eqref{hexa1}, \eqref{hexa11}, \eqref{for-B3}, and 
the identities $x_{1 3} x_{2 3} = x_{12}^{-1} c$,  $x_{12} x_{13} = x_{23}^{-1} c$ extensively.
\begin{align*}
	T_{m,f}^{\PB_3}(c) &= T_{m,f}^{\PB_3}((\si_1\si_2)^3) \\
	&= \big( T_{m,f}^{\B_3}(\si_1\si_2) \big) ^3 \\
	&= \si_1 x_{12}^m f^{-1} \si_2 x_{23}^m f \si_1 x_{12}^m f^{-1} \si_2 x_{23}^m f \si_1 x_{12}^m f^{-1} \si_2 x_{23}^m f \, \N_{\PB_3} \\
	&= f^{-1} \si_1 \si_2 (x_{13}x_{23})^m ~ \si_1 x_{12}^m ~ \si_2 \si_1 (x_{12}x_{13})^m f ~ f^{-1} \si_2 x_{23}^m f \, \N_{\PB_3} \\
	&= f^{-1} \si_1 \si_2 ( x_{12}^{-1} c)^m \si_1 x_{12}^m \si_2 \si_1 (x_{23}^{-1} c)^m \si_2 x_{23}^m f \,  \N_{\PB_3} \\
	&= f^{-1} \si_1 \si_2  x_{12}^{-m} c^m \si_1 x_{12}^m \si_2 \si_1 x_{23}^{-m} c^m \si_2 x_{23}^m f  \, \N_{\PB_3} \\
	&= c^{2m} f^{-1}  (\si_1 \si_2 \si_1 \si_2 \si_1 \si_2)  f \, \N_{\PB_3} \\
	&= c^{2m+1} \, \N_{\PB_3}\,.
\end{align*}
\end{proof}

\bigskip

\subsection{$\GT$-pairs coming from automorphisms of $\wh{\PaB}$}
\label{sec:GT-shadows}

Let $\N \in \NFI_{\PB_4}(\B_4)$ and $\sim_{\N}$ be the corresponding compatible equivalence 
relation. Since the groupoids $\PaB(n)/\sim_{\N}$ (for $1\le n \le 4$) are finite, 
we have a canonical continuous onto morphism of truncated operads 
\begin{equation}
\label{to-quotient-PaB}
\hcP_{\N} : \wh{\PaB}^{\le 4} ~\to~  \PaB^{\le 4} / \sim_{\N}\,.
\end{equation}

Thus, given any continuous automorphism $\hat{T} : \wh{\PaB} \to \wh{\PaB}$, 
we can produce the morphism of truncated operads
$$
T_{\N} : \PaB^{\le 4} ~\to~ \PaB^{\le 4} / \sim_{\N}
$$
by setting 
\begin{equation}
\label{T-N}
T_{\N} := \hcP_{\N} \circ \hat{T} \circ \cI,
\end{equation}
where $\cI$ is the natural embedding of truncated operads
\begin{equation}
\label{to-completion}
\cI :  \PaB^{\le 4} \to  \wh{\PaB}^{\le 4}\,.
\end{equation}

In other words, for every continuous automorphism of $\wh{\PaB}$ and every  $\N \in \NFI_{\PB_4}(\B_4)$,
we get a $\GT$-pair $[(m,f)]$ corresponding to $T_{\N}$. In this situation, we say that the $\GT$-pair 
$[(m,f)]$ {\it comes from} the automorphism $\hat{T}$.

$\GT$-pairs coming from automorphisms of $\wh{\PaB}$ satisfy additional properties. 
Indeed, since $\cI(\PaB^{\le 4})$ is dense in $\wh{\PaB}^{\le 4}$ and the morphism 
$$
\cP_{\N} \circ \hat{T} : \wh{\PaB}^{\le 4} \to  \PaB^{\le 4} / \sim_{\N}
$$
is continuous and onto, the morphism $T_{\N}$ is also onto.

Thus, if a $\GT$-pair $[(m,f)]$ comes from a (continuous) automorphism of $\wh{\PaB}$ 
then the group homomorphisms
\begin{equation}
\label{T-PB-4}
T^{\PB_4}_{m,f} : \PB_4 \to \PB_4/\N,
\end{equation}
\begin{equation}
\label{T-PB-3}
T^{\PB_3}_{m,f} : \PB_3 \to \PB_3/\N_{\PB_3},
\end{equation}
\begin{equation}
\label{T-PB-2}
T^{\PB_2}_{m,f} : \PB_2 \to \PB_2/\N_{\PB_2},
\end{equation}
are onto.

$\GT$-pairs satisfying these properties are called {\it $\GT$-shadows}.
More precisely, 
\begin{defi}  
\label{dfn:GT-shadows}
Let $\N$ be a finite index normal subgroup of $\B_4$ such that $\N \le \PB_4$.
Furthermore, let $\N_{\PB_3}$, $\N_{\PB_2}$ be the corresponding normal subgroups of 
$\B_3$ and $\B_2$, respectively and let $N_{\ord}$ be the index of $\N_{\PB_2}$ in $\PB_2$. 
The set $\GT(\N)$ consists of $\GT$-pairs $[(m,f)] \in \GT_{\pr}(\N)$ for which 
group homomorphisms \eqref{T-PB-4}, \eqref{T-PB-3}, \eqref{T-PB-2} are onto.
Elements of  $\GT(\N)$ are called \emph{$\GT$-shadows}.
\end{defi}  

It is easy to see that homomorphism \eqref{T-PB-2} is onto if and only if
\begin{equation}
\label{friendly}
(2 m+1) ~+~ N_{\ord} \bbZ  ~\textrm{ is a unit in the ring } ~\bbZ/ N_{\ord}\bbZ\,.
\end{equation}
We say that a $\GT$-pair $[(m,f)]$ is {\it friendly} if $m$ satisfies condition \eqref{friendly}. 

Due to the following proposition, only homomorphisms \eqref{T-PB-3} and \eqref{T-PB-2} matter:
\begin{prop}  
\label{prop:onto}
Let $\N \in \NFI_{\PB_4}(\B_4)$ and $[(m,f)] \in \GT_{\pr}(\N)$. 
The following statements are equivalent: 
\begin{enumerate}

\item $[(m,f)]$ is a $\GT$-shadow;

\item group homomorphisms  \eqref{T-PB-3} and  \eqref{T-PB-2} are onto;

\item the map of truncated operads $T_{m,f} : \PaB^{\le 4} \to  \PaB^{\le 4}/\sim_{\N}$ is onto.

\end{enumerate}

\end{prop}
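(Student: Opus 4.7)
The implication $(1) \Rightarrow (2)$ is tautological, so two things must be shown: the equivalence $(1) \Leftrightarrow (3)$, and the nontrivial direction $(2) \Rightarrow (1)$ (or equivalently $(2) \Rightarrow (3)$).

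For $(1) \Leftrightarrow (3)$, I would appeal only to general groupoid considerations. Each $\PaB(n)$ (and therefore each quotient $\PaB(n)/\sim_{\N}$) is connected, and by definition $T_{m,f}$ acts as the identity on objects. A functor between connected groupoids which is the identity on objects is surjective on morphisms iff it is surjective on the automorphism group of any one object. Under the identification of $\Aut_{\PaB(n)/\sim_{\N}}(\tau)$ with $\PB_n/\N_{\PB_n}$ induced by $\ou$ and $\mm$, this surjectivity becomes precisely the surjectivity of $T^{\PB_n}_{m,f}$. Applying this for $n \in \{2,3,4\}$ yields $(1) \Leftrightarrow (3)$.

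The heart of the statement is thus $(2) \Rightarrow (3)$. The key observation is that $\Im(T_{m,f}) \subseteq \PaB^{\le 4}/\sim_{\N}$ is a sub-truncated-operad: it is closed under composition (since $T_{m,f}$ is a functor on each arity), under operadic insertions and symmetric group actions (since $T_{m,f}$ is a morphism of truncated operads), and under inversion of morphisms (since the target is a groupoid). By Theorem~\ref{thm:PaB-gener}, $\PaB^{\le 4}$ is generated as a truncated operad by $\alpha$ and $\beta$, and hence the surjective operad morphism $\cP_{\sim_{\N}}$ exhibits $\PaB^{\le 4}/\sim_{\N}$ as generated by $[\alpha]$ and $[\beta]$. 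So it suffices to prove $[\alpha], [\beta] \in \Im(T_{m,f})$.

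To close the argument, recall $T_{m,f}(\alpha) = [\alpha \cdot \mm(f)]$ and $T_{m,f}(\beta) = [\beta \cdot \mm(x_{12}^m)]$, where $\mm(f)$ is an automorphism of the target $1(23)$ of $\alpha$ and $\mm(x_{12}^m)$ is an automorphism of the target $(2,1)$ of $\beta$. Since $T^{\PB_3}_{m,f}$ is onto by hypothesis (2), there exists $g \in \PB_3$ with $T^{\PB_3}_{m,f}(g) = f^{-1}\N_{\PB_3}$. Because $\mm$ is a homomorphism when restricted to $\PB_n$ (so $\mm(f)^{-1} = \mm(f^{-1})$), this translates to the equality $T_{m,f}(\mm(g)) = [\mm(f)]^{-1}$ in $\Aut_{\PaB(3)/\sim_{\N}}(1(23))$. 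Composing, $T_{m,f}(\alpha \cdot \mm(g)) = [\alpha]$. Analogously, surjectivity of $T^{\PB_2}_{m,f}$ produces $h \in \PB_2$ with $T_{m,f}(\beta \cdot \mm(h)) = [\beta]$, and the proof is complete. The only conceptual step is the recognition that $\Im(T_{m,f})$ is a sub-truncated-operad; no delicate calculation is required.
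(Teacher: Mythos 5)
Your proof is correct and follows essentially the same route as the paper: the crux, $(2) \Rightarrow (3)$, is handled identically by producing preimages of $[\alpha]$ and $[\beta]$ from the surjectivity of $T^{\PB_3}_{m,f}$ and $T^{\PB_2}_{m,f}$ and then invoking that $[\alpha]$ and $[\beta]$ generate the truncated quotient operad. You add two welcome clarifications that the paper dismisses as obvious — the connected-groupoid argument for $(1) \Leftrightarrow (3)$ and the observation that $\Im(T_{m,f})$ is a sub-truncated-operad — though note a small slip: $\mm(f)$ and $\mm(g)$ are automorphisms of $(12)3$, not of $1(23)$, so the cancellation $T_{m,f}(\alpha\cdot\mm(g)) = [\alpha]$ takes place in $\Hom_{\PaB(3)/\sim_{\N}}((12)3, 1(23))$.
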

\bigskip
\begin{proof} The implication {\it 1.} $\Rightarrow$  {\it 2.} is obvious. It is also clear that, 
if  $T_{m,f} : \PaB^{\le 4} \to  \PaB^{\le 4}/\sim_{\N}$ is onto then
group homomorphisms \eqref{T-PB-4}, \eqref{T-PB-3}, \eqref{T-PB-2} are onto.
Thus the implication {\it 3.} $\Rightarrow$ {\it 1.} is also obvious.   
 
It remains to prove the implication {\it 2.} $\Rightarrow$ {\it 3.}

Since group homomorphism \eqref{T-PB-2} is onto,
there exists $\ga_2 \in \Hom_{\PaB}(12,12)$ such that 
$$
T_{m,f} (\ga_2) = \big[ \mm( x_{12}^{-m}) \big].
$$
Therefore, 
$$
T_{m,f} (\beta \cdot \ga_2) =  T_{m,f} (\beta) \cdot T_{m,f}(\ga_2) = [\beta]. 
$$

Since homomorphism \eqref{T-PB-2} is onto, there exists 
$\ga_3 \in \Hom_{\PaB}((12)3,(12)3)$ such that 
$$
T_{m,f} (\ga_3) = \big[ \mm( f^{-1} ) \big].
$$
Therefore, 
$$
T_{m,f} (\al \cdot \ga_3) =  T_{m,f} (\al) \cdot T_{m,f}(\ga_3) = 
T_{m,f} (\al) \cdot  \big[ \mm( f^{-1} ) \big] = [\al]. 
$$

Since, as a truncated operad in the category of groupoids, $\PaB^{\le 4}$ is 
generated by $\beta$ and $\al$, the truncated operad  $\PaB^{\le 4} /\sim_{\N}$ is 
generated by the equivalence classes $[\beta] \in \PaB(2) /\sim_{\N}$ and 
$[\al]\in \PaB(3) / \sim_{\N}$. 

Using the fact that $[\beta]$ and $[\al]$ belong to the image of $T_{m,f}$, we conclude 
that the morphism of truncated operads $T_{m,f}$  is indeed onto. 

Since the implication {\it 2.} $\Rightarrow$ {\it 3.} is established, the proposition is proved.
\end{proof}

\subsection{The groupoid $\GTSh$}
\label{sec:GTSh}

Let $\N \in \NFI_{\PB_4}(\B_4)$ and $[(m,f)] \in \GT(\N)$. 
The morphism of truncated operads 
$$
T_{m,f} : \PaB^{\le 4} \to \PaB^{\le 4} /\sim_{\N} 
$$
gives us the obvious compatible equivalence relation $\sim_{\ms}$: 
\begin{equation}
\label{sim-source}
\ga_1 \sim_{\ms} \ga_2  ~\Leftrightarrow~ T_{m,f}(\ga_1) ~ = ~ T_{m,f}(\ga_2). 
\end{equation}

\begin{prop}  
\label{prop:sim-s-OK}
Let $\N \in \NFI_{\PB_4}(\B_4)$, $[(m,f)] \in \GT(\N)$ and 
$$
\N^{\ms} : = \ker(T^{\PB_4}_{m,f}) ~\unlhd ~ \PB_4\,.
$$
Then $\N^{\ms} \in \NFI_{\PB_4}(\B_4)$ and the compatible equivalence 
relation $\sim_{\ms}$ coincides with $\sim_{\N^{\ms}}$. 
\end{prop}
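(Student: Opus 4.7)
The plan is to verify three things in order: (a) that $\N^{\ms}$ belongs to $\NFI_{\PB_4}(\B_4)$; (b) that for $n \in \{2, 3\}$, the auxiliary subgroup $(\N^{\ms})_{\PB_n}$ produced from $\N^{\ms}$ via \eqref{N-PB-3}, \eqref{N-PB-2} coincides with $\ker(T^{\PB_n}_{m,f})$; and (c) that the compatible equivalence relations $\sim_{\ms}$ and $\sim_{\N^{\ms}}$ on $\PaB^{\le 4}$ agree. Step (a) is easy: $\N^{\ms} \subseteq \PB_4$ by construction, its index is finite because $T^{\PB_4}_{m,f}$ maps into the finite group $\PB_4/\N$, and, by Corollary \ref{cor:for-braid-groups}, $T^{\PB_4}_{m,f}$ is the restriction to $\PB_4$ of the group homomorphism $T^{\B_4}_{m,f} : \B_4 \to \B_4/\N$, so $\N^{\ms} = \PB_4 \cap \ker(T^{\B_4}_{m,f})$ is the intersection of two subgroups normal in $\B_4$ and is therefore normal in $\B_4$.

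For (b), each map $\vf$ appearing in \eqref{N-PB-3} or \eqref{N-PB-2} is defined via an operadic insertion of the form $? \circ_i \id_{12}$ or $\id_{12} \circ_i ?$ preceded by $\mm$ and followed by $\ou$. Since $T_{m,f}$ is a morphism of truncated operads acting trivially on objects, the defining formula $T^{\PB_{n'}}_{m,f}(g) = \ou(T_{m,f}(\mm(g)))$ from Corollary \ref{cor:for-braid-groups} yields a commutative square
$$
T^{\PB_{n'}}_{m,f}\bigl(\vf(h)\bigr) \, = \, \wt\vf\bigl(T^{\PB_n}_{m,f}(h)\bigr),
$$
where $\wt\vf : \PB_n / \N_{\PB_n} \to \PB_{n'} / \N_{\PB_{n'}}$ is well defined by the very definition of $\N_{\PB_n}$. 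As $\vf$ runs over the five (resp. four) such maps, an element $h \in \PB_n$ belongs to $\ker(T^{\PB_n}_{m,f})$ if and only if a lift of $T^{\PB_n}_{m,f}(h)$ to $\PB_n$ lies in $\bigcap_\vf \vf^{-1}(\N_{\PB_{n'}}) = \N_{\PB_n}$, if and only if $\vf(h) \in \N^{\ms}$ for every $\vf$, which is exactly the condition $h \in (\N^{\ms})_{\PB_n}$.

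For (c), a compatible equivalence relation on $\PaB^{\le 4}$ is determined by the set of loops $\eta \in \Aut_{\PaB(n)}(\tau)$ equivalent to $\id_\tau$, and the canonical isomorphism $\ou : \Aut_{\PaB(n)/\sim_\N}(\tau) \to \PB_n / \N_{\PB_n}$ converts the condition $T_{m,f}(\eta) = [\id_\tau]$ into $\ou(T_{m,f}(\eta)) = \id$. The remaining key point is the identity
$$
\ou\bigl(T_{m,f}(\eta)\bigr) \, = \, T^{\PB_n}_{m,f}\bigl(\ou(\eta)\bigr) \qquad \text{in }~ \PB_n / \N_{\PB_n}.
$$
When $\tau$ is the basepoint $\tau_0$ on which $\mm$ lands for pure braids, this is immediate from the definition of $T^{\PB_n}_{m,f}$; for a general $\tau$, pick any $\la \in \Hom_{\PaB(n)}(\tau_0, \tau)$, write $\eta = \la \cdot (\la^{-1} \eta \la) \cdot \la^{-1}$ with $\la^{-1} \eta \la \in \Aut_{\PaB(n)}(\tau_0)$, apply $T_{m,f}$ followed by $\ou$, and use that $\la$ equals $\mm(\ou(\la))$ (both being the unique morphism $\tau_0 \to \tau$ with underlying braid $\ou(\la)$) so that the conjugating factors $\ou(T_{m,f}(\la))$ and $T^{\B_n}_{m,f}(\ou(\la))$ agree and cancel in $\B_n / \N_{\PB_n}$ by the homomorphism property of $T^{\B_n}_{m,f}$. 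Combined with (b), this yields $\eta \sim_{\ms} \id_\tau$ iff $\ou(\eta) \in (\N^{\ms})_{\PB_n}$ iff $\eta \sim_{\N^{\ms}} \id_\tau$, which finishes the proof.

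The main obstacle, modest as it is, is the identity at step (c) for non-canonical objects: one must identify $\la$ with $\mm(\ou(\la))$ and track sources and targets carefully, but once this is in place everything else is a direct unwinding of definitions.
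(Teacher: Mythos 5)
Your overall strategy tracks the paper's closely: establish $\N^{\ms}\in\NFI_{\PB_4}(\B_4)$, relate $T^{\PB_n}_{m,f}$ to the cosimplicial maps $\vf$, and then match the two equivalence relations on loops. However, two identities you assert as exact equalities are in fact false, and the way you try to justify them hides the gap.

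First, in step (b) you claim $T^{\PB_{n'}}_{m,f}(\vf(h)) = \wt\vf\bigl(T^{\PB_n}_{m,f}(h)\bigr)$ literally. This fails for $\vf \in \{\vf_{1,23,4},\vf_{1,2,34},\vf_{1,23},\vf_{23}\}$ and the like, because $\mm(h)\circ_i\id_{12}$ lives at an object different from the canonical basepoint $((1,2)..$, so $\mm(\vf(h))$ is only \emph{conjugate} to $\mm(h)\circ_i\id_{12}$ by an associator, and $T_{m,f}$ sends that associator to a morphism with nontrivial underlying braid (e.g. $T_{m,f}(\al) = [\al\cdot\mm(f)]$, whose $\ou$-value is $f$). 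The correct statement is the paper's \eqref{T-PB-vf}: there is some $g$ with $g^{-1}T^{\PB_{n'}}_{m,f}(\vf(h))g = \vf(\ti h)\N_{\PB_{n'}}$. Your kernel computation still goes through because conjugation is invisible modulo a normal subgroup, but you never say that, and the square you draw simply does not commute.

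Second, in step (c) the crux is the claim that $\la = \mm(\ou(\la))$ for any $\la\in\Hom_{\PaB(n)}(\tau_0,\tau)$. This is wrong for general $\tau$: by construction $\mm$ always lands in morphisms whose \emph{target} is fully-left-parenthesized, so $\mm(\ou(\la))$ has target $\paren i_1,i_2)\dots i_n)$, which coincides with $\tau$ only when $\tau$ is fully-left-parenthesized. Consequently, the ``key identity'' $\ou(T_{m,f}(\eta)) = T^{\PB_n}_{m,f}(\ou(\eta))$ in $\PB_n/\N_{\PB_n}$ is false for general $\tau$. Concretely, take $n=3$, $\tau = 1(23)$, $\eta = \al\cdot\mm(x_{12})\cdot\al^{-1}$: then $\ou(T_{m,f}(\eta)) = f x_{12}^{2m+1} f^{-1}\N_{\PB_3}$ while $T^{\PB_3}_{m,f}(x_{12}) = x_{12}^{2m+1}\N_{\PB_3}$, and these differ unless $f$ commutes with $x_{12}^{2m+1}$ modulo $\N_{\PB_3}$. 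The two sides are conjugate, which is all you need, but the equality you write down is not available, and the mechanism you offer to prove it (identifying $\la$ with $\mm(\ou(\la))$) is broken. The paper avoids this: it first reduces to $\tau$ with trivial permutation via $S_n$-equivariance, then uses $\eta = \al_{\tau_0}^\tau\,\mm(h)\,\al_\tau^{\tau_0}$ together with the fact that $T_{m,f}$ respects composition, and concludes $T_{m,f}(\eta)=\id_\tau$ iff $T_{m,f}(\mm(h))=\id_{\tau_0}$ by \emph{cancellation} of the associator factors — it never claims the two underlying-braid maps agree, only that they vanish simultaneously.

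To repair your argument, replace each of your two exact identities by its conjugation version and then invoke normality of $\N_{\PB_{n'}}$ (and hence of $\N^{\ms}_{\PB_{n'}}$) to conclude that membership in the kernel is unaffected; alternatively, follow the paper's cancellation argument in (c) rather than trying to prove that $\ou\circ T_{m,f}$ equals $T^{\PB_n}_{m,f}\circ\ou$. Step (a) is fine and even slightly cleaner than the paper's (you observe directly that $\N^{\ms}=\PB_4\cap\ker(T^{\B_4}_{m,f})$ is an intersection of normal subgroups of $\B_4$, whereas the paper shows $\ker(T^{\B_4}_{m,f})\subseteq\PB_4$ via the permutation representation).
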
  
\begin{proof} To prove the first statement, we observe that, since $\N \in \PB_4$, 
the standard homomorphism $\rho : B_4 \to S_4$ induces a group homomorphism 
$\ti{\rho}: \B_4/\N \to S_4$.
Furthermore, using equations \eqref{for-B4}, it is easy to see that the composition  
$$
\psi : =  \ti{\rho} \circ T^{\B_4}_{m,f} : \B_4 \to S_4
$$
coincides with $\rho$. Thus $\N^{\ms}$ is the kernel of a group homomorphism 
$T^{\B_4}_{m,f}$ from $\B_4$ to a finite group $\B_4/\N$. Hence $\N^{\ms}$ is a 
finite index normal subgroup of $\B_4$. Since we also have $\N^{\ms} \le \PB_4$, 
we conclude that $\N^{\ms} \in \NFI_{\PB_4}(\B_4)$.  

Although the proof of the second statement is rather technical, the main idea is to 
show that group homomorphisms $T^{\PB_n}_{m,f}$ (for $n =2,3,4$) are, in some sense, compatible with 
the homomorphisms $\vf_{123}, \vf_{12,3,4}, \vf_{1,23,4}, \vf_{1,2,34}, 
\vf_{234}, \vf_{12}, \vf_{12,3}, \vf_{1,23}, \vf_{23}$ (see equations \eqref{vfs-3-4-on-gen} 
and \eqref{vfs-2-3-on-gen}). This fact is deduced from the compatibility of $T_{m,f}$ 
with the structures of truncated operads. 
Then the desired second statement of Proposition \ref{prop:sim-s-OK}
is a simple consequence of this compatibility property
of homomorphisms $T^{\PB_n}_{m,f}$ (for $n =2,3,4$). 

Let us consider $h \in \PB_n$ (for $n \in \{2,3\}$) and denote by $\ti{h}$
any representative of the coset $T^{\PB_n}_{m,f}(h)$ in $\PB_n/\N_{\PB_n}$. 
Our first goal is to prove that, for every
$$
\vf \in 
\begin{cases}
\{ \vf_{123}, \vf_{12,3,4}, \vf_{1,23,4}, \vf_{1,2,34}, \vf_{234} \}    \qquad \textrm{if} ~~ n=3,     \\
\{\vf_{12}, \vf_{12,3}, \vf_{1,23}, \vf_{23}\}    \qquad \qquad ~~ \textrm{if} ~~ n=2,
\end{cases} 
$$
there exists $g \in \PB_{n+1}/ \N_{\PB_{n+1}} $ such that 
\begin{equation}
\label{T-PB-vf}
g^{-1} T^{\PB_{n+1}}_{m,f} (\vf(h)) g = \vf (\ti{h})\, \N_{\PB_{n+1}}\,.
\end{equation}

Indeed, let $n=3$ and $\vf =  \vf_{1,23,4}$. 
Setting $\eta : = \mm(h)$ and using the compatibility of $T_{m,f}$ with 
operadic insertions and compositions we get 
\begin{equation}
\label{T-circ-2}
T_{m,f} (\eta) \circ_2 \id_{12} = T_{m,f} (\eta  \circ_2 \id_{12} ).
\end{equation}

Applying $\ou$ to the left hand side of \eqref{T-circ-2}, we get 
\begin{equation}
\label{to-LHS}
\ou \big(T_{m,f} (\eta) \circ_2 \id_{12} \big)  = \vf_{1,23,4} (\ti{h})\, \N_{\PB_4}\,,
\end{equation}
where $\ti{h}$ is an element of the coset $T^{\PB_3}_{m,f}(h)$ in $\PB_3/\N_{\PB_3}$.  

As for the right hand side of \eqref{T-circ-2}, we have 
$$
T_{m,f} (\eta  \circ_2 \id_{12} ) = 
T_{m,f} \big(\al_{((1,2)3)4}^{(1(2,3))4}  \cdot \mm(\vf_{1,23,4}(h)) \cdot \al_{(1(2,3))4}^{((1,2)3)4}  \big) =
$$
$$
T_{m,f}(\al_{((1,2)3)4}^{(1(2,3))4} ) \cdot T_{m,f}(\mm(\vf_{1,23,4}(h)))
\cdot T_{m,f} (\al_{(1(2,3))4}^{((1,2)3)4}).
$$

Thus 
\begin{equation}
\label{to-RHS}
\ou\big(T_{m,f} (\eta  \circ_2 \id_{12} )  \big) = g^{-1} T^{\PB_4}_{m,f} (\vf_{1,23,4}(h)) g,
\end{equation}
where $g = \ou \big( T_{m,f} (\al_{(1(2,3))4}^{((1,2)3)4}) \big)$.

Combining \eqref{to-LHS} with \eqref{to-RHS}, we conclude that \eqref{T-PB-vf}
holds for $n= 3$ and $\vf = \vf_{1,23,4}$. 

Let us now consider the case when $n=2$ and $\vf =  \vf_{12}$.
 
As above, setting $\eta : = \mm(h)$ and using the compatibility of $T_{m,f}$ with 
operadic insertions and compositions we get 
\begin{equation}
\label{T-circ-more}
\id_{12} \circ_1 T_{m,f} (\eta) = T_{m,f} (\id_{12}  \circ_1 \eta ).
\end{equation}

Applying $\ou$ to the left hand side of \eqref{T-circ-more}, we get 
\begin{equation}
\label{to-LHS-more}
\ou \big( \id_{12} \circ_1 T_{m,f} (\eta) \big) = \vf_{12}(\ti{h})\, \N_{\PB_3}\,,
\end{equation}
where $\ti{h}$ is an element of the coset $T^{\PB_2}_{m,f}(h)$ in $\PB_2/\N_{\PB_2}$.  

The right hand side of \eqref{T-circ-more} can be rewritten as follows: 
$$
T_{m,f} (\id_{12}  \circ_1 \eta ) = T_{m,f} \big( \mm( \vf_{12}(h)) \big).
$$
Hence 
\begin{equation}
\label{to-RHS-more}
\ou \big( T_{m,f} (\id_{12}  \circ_1 \eta )  \big) =
\ou \big( T_{m,f} \big( \mm( \vf_{12}(h)) \big) \big) = T^{\PB_3}_{m,f}( \vf_{12}(h)).
\end{equation}

Combining \eqref{to-LHS-more} with \eqref{to-RHS-more}, we conclude that 
\eqref{T-PB-vf} holds for $n= 2$ and $\vf = \vf_{12}$ with $g = 1_{\PB_3/\N_{\PB_3}}$.  

The proof of \eqref{T-PB-vf} for the remaining case proceeds in the similar way. 

Let us now prove that, for every $n \in \{2,3,4\}$, 
\begin{equation}
\label{mm-h-OK}
h \in \N^{\ms}_{\PB_n} ~\Rightarrow~ 
\mm(h) \sim_{\ms} \id_{((1,2)..}\,,
\end{equation}
where $((1,2)..$ denotes $12$ (resp. $(1,2)3$, $((1,2)3)4$) if $n =2$ (resp. $n=3$, $n=4$). 

For $n=4$, \eqref{mm-h-OK} is a straightforward consequence of the definition of $\N^{\ms}$. 
So let $n =3$ and $\ti{h}$ be an element of the coset $T^{\PB_3}_{m,f}(h)$ in $\PB_3/\N_{\PB_3}$

Since  $T^{\PB_4}_{m,f}(\vf(h)) = 1$ in $\PB_4/\N$ for every 
$\vf \in \{ \vf_{123}, \vf_{12,3,4}, \vf_{1,23,4}, \vf_{1,2,34}, \vf_{234} \}$, 
equation \eqref{T-PB-vf} implies that 
$$
\ti{h} ~\in~  \vf_{123}^{-1} (\N) \cap  \vf_{12,3,4}^{-1}(\N) \cap
\vf_{1,23,4}^{-1}(\N) \cap \vf_{1,2,34}^{-1} (\N) 
\cap  \vf_{234}^{-1} (\N).  
$$
In other words, $\ti{h} \in \N_{\PB_3}$ and hence $T^{\PB_3}_{m,f}(h) = 1$ in  $\PB_3/\N_{\PB_3}$
Thus \eqref{mm-h-OK} holds for $n=3$. 

Let us now consider the case $n =2$ and denote by
$\ti{h}$ an element of the coset $T^{\PB_2}_{m,f}(h)$ in $\PB_2/\N_{\PB_2}$. 

Since $\vf(h) \in \N^{\ms}_{\PB_3}$ for every 
$\vf \in \{\vf_{12}, \vf_{12,3}, \vf_{1,23}, \vf_{23}\}$ and 
implication \eqref{mm-h-OK} is proved for $n=3$, we conclude that 
$$
T^{\PB_3}_{m,f}(\vf(h)) = 1 \qquad \forall~ \vf \in \{\vf_{12}, \vf_{12,3}, \vf_{1,23}, \vf_{23}\}.
$$
Therefore, equation \eqref{T-PB-vf} implies that 
$$
\ti{h} ~\in~  \vf_{12}^{-1} (\N_{\PB_3}) \cap  
 \vf_{12,3}^{-1} (\N_{\PB_3}) \cap  \vf_{1,23}^{-1} (\N_{\PB_3})
\cap  \vf_{23}^{-1} (\N_{\PB_3}).  
$$
In other words, $\ti{h} \in \N_{\PB_2}$ and hence $T^{\PB_2}_{m,f}(h) =1$ 
in  $\PB_2/\N_{\PB_2}$. Thus implication \eqref{mm-h-OK} holds for $n=2$ as well.

Let us now prove that, for every $n \in \{2,3,4\}$ and $h \in \PB_n$
\begin{equation}
\label{if-mm-h-then}
\mm(h) \sim_{\ms} \id_{((1,2)..}  ~\Rightarrow~ h \in \N^{\ms}_{\PB_n}\,.
\end{equation}

Again, for $n = 4$, \eqref{if-mm-h-then} is a straightforward consequence of 
the definition of $\N^{\ms}$. So let $h \in \PB_3$. 
 
Since $\mm(h) \sim_{\ms} \id_{(1,2)3}$, $T^{\PB_3}_{m,f}(h)$ is the identity element 
of $\PB_3/ \N_{\PB_3}$.  Hence, equation \eqref{T-PB-vf} implies that 
$\vf(h) \in \N^{\ms}$ for every $\vf \in \{ \vf_{123}, \vf_{12,3,4}, \vf_{1,23,4}, \vf_{1,2,34}, \vf_{234} \} $ or 
equivalently $h \in  \N^{\ms}_{\PB_3}$. 

Similarly, if $h \in \PB_2$ and $\mm(h) \sim_{\ms} \id_{1 2}$ then $T^{\PB_2}_{m,f}(h)$ is the identity element 
of $\PB_2/ \N_{\PB_2}$. Hence, equation \eqref{T-PB-vf} implies that 
$T^{\PB_3}_{m,f}(\vf(h)) =1$ in $\PB_3/\N_{\PB_3}$ for every 
$$
\vf \in \{\vf_{12}, \vf_{12,3}, \vf_{1,23}, \vf_{23}\},
$$
or equivalently 
$$
\mm(\vf(h)) \sim_{\ms} \id_{((1,2)3} \qquad \forall~~\vf \in \{\vf_{12}, \vf_{12,3}, \vf_{1,23}, \vf_{23}\}.
$$

Since implication \eqref{if-mm-h-then} is already proved for $n =3$, we 
conclude that 
$$
\vf(h) \in \N^{\ms}_{\PB_3} \qquad \forall~~\vf \in \{\vf_{12}, \vf_{12,3}, \vf_{1,23}, \vf_{23}\}.
$$
Thus $h \in  \N^{\ms}_{\PB_2}$ and \eqref{if-mm-h-then} is proved for $n =2$. 

Let $n \in \{2,3,4\}$, $\tau \in \Ob(\PaB(n))$, 
$\eta \in \Aut_{\PaB}(\tau)$ and $h : = \ou (\eta) \in \PB_n$. 
Our next goal is to prove that 
\begin{equation}
\label{the-key-sim-ms}
h \in \N^{\ms}_{\PB_n} ~~\Leftrightarrow~~ \eta \sim_{\ms} \id_{\tau}\,.
\end{equation}

Since $T_{m,f}$ is compatible with the action of the symmetric groups, 
we may assume, without loss of generality, that the underlying permutation of 
$\tau$ is the identity permutation in $S_n$. 

Therefore
\begin{equation}
\label{eta-h}
\eta =  \al_{((1,2)..}^{\tau} \mm(h) \al^{((1,2)..}_{\tau} 
\end{equation}
and hence $T_{m,f}(\eta)= \id_{\tau}$ if and only if $T_{m,f}(\mm(h)) = \id_{((1,2)..}$ 
and the latter is equivalent to $\mm(h) \sim_{\ms}  \id_{((1,2)..}$.

Thus \eqref{the-key-sim-ms} is a consequence of implications \eqref{mm-h-OK} and 
\eqref{if-mm-h-then}.

Finally, let us use \eqref{the-key-sim-ms} to prove the statement of the proposition. 

Let $\ga, \ti{\ga} \in \PaB(n)$ (with $n \in \{2,3,4\}$) and $\tau$ be the source of 
both morphisms.  Clearly, $\ga \sim_{\ms} \ti{\ga}$ if and only 
if $\eta \sim_{\ms} \id_{\tau}$, where $\eta = \ga^{-1} \cdot \ti{\ga}$.

Thus, due to \eqref{the-key-sim-ms},  $\ga \sim_{\ms} \ti{\ga}$ if and 
only if $\ou ( \ga^{-1} \cdot \ti{\ga}) \in \N^{\ms}_{\PB_n}$. 

Proposition \ref{prop:sim-s-OK} is proved. 
\end{proof}

\bigskip

Proposition \ref{prop:sim-s-OK} has the following important consequences: 
\begin{cor}  
\label{cor:source}
For every $\GT$-shadow $[(m,f)] \in \GT(\N)$

\begin{itemize}


\item $|\PB_4 : \N^{\ms}| = |\PB_4: \N|$,

\item  $|\PB_3 : \N^{\ms}_{\PB_3}| = |\PB_3: \N_{\PB_3}|$, and

\item $N^{\ms}_{\ord} = N_{\ord}$ or equivalently $\N^{\ms}_{\PB_2} = \N_{\PB_2}$.

\end{itemize}
\qed
\end{cor}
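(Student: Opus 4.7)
The plan is to reduce all three bullets to a single observation: for every $n\in\{2,3,4\}$, the kernel of the group homomorphism $T^{\PB_n}_{m,f}:\PB_n\to\PB_n/\N_{\PB_n}$ is exactly $\N^{\ms}_{\PB_n}$, where the latter is built from $\N^{\ms}$ via formulas \eqref{N-PB-3} and \eqref{N-PB-2}. Granting this identification, Proposition \ref{prop:onto} gives that $T^{\PB_n}_{m,f}$ is onto (since $[(m,f)]$ is a $\GT$-shadow), so the first isomorphism theorem yields
\[
\PB_n/\N^{\ms}_{\PB_n}\ \cong\ \PB_n/\N_{\PB_n},
\]
forcing $|\PB_n:\N^{\ms}_{\PB_n}|=|\PB_n:\N_{\PB_n}|$ for $n=3,4$. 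This handles the first two bullets directly.

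The identification $\N^{\ms}_{\PB_n}=\ker(T^{\PB_n}_{m,f})$ for $n=3,2$ is already essentially contained in the proof of Proposition \ref{prop:sim-s-OK}: implications \eqref{mm-h-OK} and \eqref{if-mm-h-then} together say precisely that, for $h\in\PB_n$, one has $h\in\N^{\ms}_{\PB_n}$ if and only if $\mm(h)\sim_{\ms}\id_{((1,2)..}$, and the latter is in turn equivalent to $T^{\PB_n}_{m,f}(h)=1$ by the definition of $T^{\PB_n}_{m,f}=\ou\circ T_{m,f}\circ\mm$ (together with the fact that $\mm$ is a right inverse to $\ou$). I would simply quote these implications and combine them.

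For the third bullet, I would use the special fact that $\PB_2\cong\bbZ$ is cyclic: any two subgroups of $\bbZ$ of the same finite index coincide. Since $\N^{\ms}_{\PB_2}$ and $\N_{\PB_2}$ are both subgroups of $\PB_2$, and we have just established that their indices are equal, they must be the same subgroup. Equivalently $N^{\ms}_{\ord}=N_{\ord}$, since $\N_{\PB_2}=\langle x_{12}^{N_{\ord}}\rangle$ and similarly for $\N^{\ms}_{\PB_2}$.

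There is no genuine obstacle here: the whole content of the corollary is packaged in Propositions \ref{prop:onto} and \ref{prop:sim-s-OK}, and the corollary is simply a bookkeeping consequence via the first isomorphism theorem. The only place where one needs to be mildly careful is in extracting $\N^{\ms}_{\PB_n}=\ker(T^{\PB_n}_{m,f})$ from the proof of Proposition \ref{prop:sim-s-OK} rather than from its statement, since the statement itself is phrased in terms of equivalence relations rather than kernels.
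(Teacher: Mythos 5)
Your proof is correct, and it matches what the paper intends: the corollary carries no proof because the paper regards it as an immediate consequence of Propositions \ref{prop:onto} and \ref{prop:sim-s-OK}, exactly as you argue. One small remark: the identification $\N^{\ms}_{\PB_n}=\ker(T^{\PB_n}_{m,f})$ for $n=2,3$ can be read off directly from the \emph{statement} of Proposition \ref{prop:sim-s-OK} (since $\sim_{\ms}\,=\,\sim_{\N^{\ms}}$ applied to loops of the form $\mm(h)$ translates precisely into $T^{\PB_n}_{m,f}(h)=1 \Leftrightarrow h\in\N^{\ms}_{\PB_n}$), so you need not descend into the internal implications \eqref{mm-h-OK} and \eqref{if-mm-h-then} of its proof; otherwise the argument via surjectivity, the first isomorphism theorem, and the cyclicity of $\PB_2$ is exactly right.
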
  
\begin{cor}  
\label{cor:isom-quotients}
For every $\GT$-shadow $[(m,f)] \in \GT(\N)$ 
the morphism of truncated operads $T_{m,f} :  \PaB^{\le 4}  \to  \PaB^{\le 4} /\sim_{\N} $
factors as follows
$$
\begin{tikzpicture}
\matrix (m) [matrix of math nodes, row sep=1.8em, column sep=2.6em]
{\PaB^{\le 4}  &   ~ \\ 
\PaB^{\le 4} /\sim_{\N^{\ms}} & \PaB^{\le 4} /\sim_{\N}\,, 
\\ };
\path[->, font=\scriptsize]
(m-1-1) edge node[left] {$\cP_{\N^{\ms}}$} (m-2-1) edge node[above] {$~~T_{m,f}$} (m-2-2)
(m-2-1) edge node[above] {$T_{m,f}^{\isom}$} (m-2-2); 
\end{tikzpicture}
$$
where $\cP_{\N^{\ms}}$ is the canonical projection and $T_{m,f}^{\isom}$ is 
an isomorphism of truncated operads. 

The assignment $[(m,f)] \mapsto T_{m,f}^{\isom}$ gives us a bijection from the set 
$$
\{ [(m,f)] \in \GT(\N) ~|~ \N^{\ms} = \ker(T^{\PB_4}_{m,f}) \}
$$
to the set  $\Isom(\PaB^{\le 4} /\sim_{\N^{\ms}},\PaB^{\le 4} /\sim_{\N} )$ of isomorphisms 
of truncated operads (in the category of groupoids). 
\end{cor}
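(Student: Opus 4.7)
The plan is to derive the corollary essentially as a formal consequence of Proposition \ref{prop:sim-s-OK}, together with Proposition \ref{prop:onto} and the bijection between $\GT$-pairs and morphisms of truncated operads established just before Definition \ref{dfn:GT-pair}.

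For the factorization, I would argue as follows. By the very definition \eqref{sim-source} of $\sim_{\ms}$, two morphisms $\ga_1, \ga_2 \in \PaB^{\le 4}$ have $T_{m,f}(\ga_1) = T_{m,f}(\ga_2)$ if and only if $\ga_1 \sim_{\ms} \ga_2$. Proposition \ref{prop:sim-s-OK} identifies $\sim_{\ms}$ with $\sim_{\N^{\ms}}$, so $T_{m,f}$ descends to a well-defined \emph{injective} map $T_{m,f}^{\isom}$ on equivalence classes. It is a morphism of truncated operads because the operadic structure on $\PaB^{\le 4}/\sim_{\N^{\ms}}$ is the one induced from $\PaB^{\le 4}$ and $T_{m,f}$ is itself a morphism of truncated operads. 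Since $[(m,f)] \in \GT(\N)$, Proposition \ref{prop:onto} implies that $T_{m,f}$ is onto, and hence $T_{m,f}^{\isom}$ is onto as well. Thus $T_{m,f}^{\isom}$ is an isomorphism and the triangle commutes by construction.

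For the bijection, the assignment $[(m,f)] \mapsto T_{m,f}^{\isom}$ is well defined into $\Isom(\PaB^{\le 4}/\sim_{\N^{\ms}}, \PaB^{\le 4}/\sim_{\N})$ by the previous step. To construct an inverse, let $\Phi$ be any isomorphism of truncated operads $\PaB^{\le 4}/\sim_{\N^{\ms}} \to \PaB^{\le 4}/\sim_{\N}$ and consider the composition
\[
\Phi \circ \cP_{\N^{\ms}} : \PaB^{\le 4} \to \PaB^{\le 4}/\sim_{\N}.
\]
By the bijection preceding Definition \ref{dfn:GT-pair} (morphisms of truncated operads out of $\PaB^{\le 4}$ are in bijection with $\GT$-pairs in $\GT_{\pr}(\N)$ via the action on $\beta$ and $\al$), this composition is of the form $T_{m,f}$ for a uniquely determined $\GT$-pair $[(m,f)] \in \GT_{\pr}(\N)$. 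The composition is surjective because $\Phi$ is an isomorphism and $\cP_{\N^{\ms}}$ is onto, so by Proposition \ref{prop:onto} the pair $[(m,f)]$ is actually a $\GT$-shadow.

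It remains to check that this $[(m,f)]$ lies in the indicated subset and that the two assignments are mutual inverses. Both facts reduce to showing that the equivalence relation $\sim_{\ms}$ attached to $\Phi \circ \cP_{\N^{\ms}}$ equals $\sim_{\N^{\ms}}$: indeed, since $\Phi$ is injective on morphisms, one has $\Phi \circ \cP_{\N^{\ms}}(\ga_1) = \Phi \circ \cP_{\N^{\ms}}(\ga_2)$ iff $\cP_{\N^{\ms}}(\ga_1) = \cP_{\N^{\ms}}(\ga_2)$ iff $\ga_1 \sim_{\N^{\ms}} \ga_2$. Combining this with Proposition \ref{prop:sim-s-OK} (applied to the pair $[(m,f)]$ we just produced) yields $\ker(T_{m,f}^{\PB_4}) = \N^{\ms}$. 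The round-trip $\Phi \mapsto [(m,f)] \mapsto T_{m,f}^{\isom}$ gives back $\Phi$ because both agree on the operadic generators and on the quotient relation, and the round-trip $[(m,f)] \mapsto T_{m,f}^{\isom} \mapsto [(m',f')]$ gives back $[(m,f)]$ by the uniqueness part of the bijection before Definition \ref{dfn:GT-pair}.

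The only real content beyond Proposition \ref{prop:sim-s-OK} is the well-definedness of the inverse construction, and the potential obstacle there is simply making sure one quotes the correct previously established bijection between $\GT$-pairs and operad morphisms; everything else is a formal diagram-chase.
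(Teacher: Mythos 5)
Your proof is correct and follows essentially the same route as the paper: pass from $\sim_{\ms}$ to $\sim_{\N^{\ms}}$ via Proposition \ref{prop:sim-s-OK}, invoke Proposition \ref{prop:onto} for surjectivity, and for the bijection precompose an arbitrary isomorphism with $\cP_{\N^{\ms}}$ and appeal to the bijection preceding Definition \ref{dfn:GT-pair}. The only step the paper spells out that you elide is verifying that the set-level inverse $S_{m,f}^{\isom}$ of $T_{m,f}^{\isom}$ is itself a morphism of truncated operads (i.e.\ compatible with composition and the $\circ_i$); since a bijective morphism of operads in sets/groupoids that is the identity on objects automatically has this property, your shorter assertion is harmless, but it is worth knowing the paper does the explicit check.
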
  
\begin{proof} Due to Proposition \ref{prop:onto} and the definition of the equivalence relation $\sim_{\ms}$, 
we have the following commutative diagram of morphisms of truncated operads:
$$
\begin{tikzpicture}
\matrix (m) [matrix of math nodes, row sep=1.8em, column sep=2.6em]
{\PaB^{\le 4}  &   ~ \\ 
\PaB^{\le 4} /\sim_{\ms} & \PaB^{\le 4} /\sim_{\N}\,, 
\\ };
\path[->, font=\scriptsize]
(m-1-1) edge (m-2-1) edge node[above] {$~~T_{m,f}$} (m-2-2)
(m-2-1) edge node[above] {$T_{m,f}^{\isom}$} (m-2-2); 
\end{tikzpicture}
$$
with $T_{m,f}^{\isom}$ being a bijection\footnote{We tacitly assume that 
$T_{m,f}^{\isom}$ acts as the identity on the level of objects.} 
on the level of morphisms. 

Thanks to Proposition \ref{prop:sim-s-OK}, the equivalence relation $\sim_{\ms}$ coincides
with $\sim_{\N^{\ms}}$. Hence $T_{m,f}^{\isom}$ is a morphism of truncated operads
\begin{equation}
\label{T-isom}
T_{m,f}^{\isom} :   \PaB^{\le 4} /\sim_{\N^{\ms}}  \stackrel{\cong}{\,\longrightarrow\,} \PaB^{\le 4} /\sim_{\N}\,.
\end{equation}

Let us denote by $S_{m,f}^\isom: \PaB^{\le 4} /\sim_{\N} \to  \PaB^{\le 4} /\sim_{\N^{\ms}}$ 
the inverse of $T_{m,f}^{\isom}$ (viewed as a map of morphisms) and show 
that $S_{m,f}^\isom$ is compatible with the composition of morphisms and with 
the operadic insertions.

As for the compatibility with operadic insertions, we have 
$$
S_{m,f}^\isom([\ga_1]\circ_i[\ga_2]) = S_{m,f}^\isom(T_{m,f}^\isom([\ti{\ga}_1])\circ_iT_{m,f}^\isom([\ti{\ga}_2])) 
$$
$$
= S_{m,f}^\isom(T_{m,f}^\isom([\ti{\ga}_1]\circ_i[\ti{\ga}_2])) = [\ti{\ga}_1]\circ_i[\ti{\ga}_2] = S_{m,f}^\isom([\ga_1]) 
\circ_i S_{m,f}^\isom([\ga_2]), 
$$
for any $[\ga_1] \in \PaB(n) /\sim_{\N}, ~ [\ga_2] \in  \PaB(k) /\sim_{\N} $ and  $k \le 4$, $1 \le i \le n \le 4$.

The compatibility of $S_{m,f}^\isom$ with the composition of morphisms is proved in a similar fashion. 

Let us now consider an isomorphism of truncated operads 
$$
T^{\isom} : \PaB^{\le 4} /\sim_{\N^{\ms}}  \stackrel{\cong}{\,\longrightarrow\,} \PaB^{\le 4} /\sim_{\N}\,.
$$

Pre-composing $T^{\isom}$
with the canonical projection $\cP_{\N^{\ms}} :  \PaB^{\le 4} \to \PaB^{\le 4} /\sim_{\N^{\ms}} $
we get an onto morphism $T : = T^{\isom} \circ \cP_{\N^{\ms}}$ of truncated operads.
Since $T$ is uniquely determined by a $\GT$-shadow $[(m,f)]  \in \GT(\N)$ such 
that $\ker(T^{\PB_4}_{m,f}) =  \N^{\ms}$, we conclude that the assignment 
$[(m,f)] \mapsto T^{\isom}_{m,f}$ is indeed a bijection 
\begin{equation}
\label{mf-to-isom}
\{ [(m,f)] \in \GT(\N) ~|~ \N^{\ms} = \ker(T^{\PB_4}_{m,f}) \} 
\stackrel{\cong}{~\longrightarrow~} 
\Isom(\PaB^{\le 4} /\sim_{\N^{\ms}},\PaB^{\le 4} /\sim_{\N} ).
\end{equation}

Corollary \ref{cor:isom-quotients} is proved.
\end{proof}
\bigskip

Let us now observe that the assignment
\begin{equation}
\label{isom-truncated}
\Hom(\ti{\N}, \N) : = \Isom(\PaB^{\le 4} /\sim_{\ti{\N}},\PaB^{\le 4} /\sim_{\N}  ) 
\end{equation}
upgrades the set $\NFI_{\PB_4}(\B_4)$ to a groupoid. The set of objects 
of this groupoid is $\NFI_{\PB_4}(\B_4)$ and the set of morphisms 
from $\ti{\N}$ to $\N$ is the set  $\Isom(\PaB^{\le 4} /\sim_{\ti{\N}},\PaB^{\le 4} /\sim_{\N} )$
of isomorphisms of truncated operads (in the category of groupoids). 
Morphisms of this groupoid are composed in the standard way. 

The second statement of Corollary \ref{cor:isom-quotients} 
allows us to tacitly identify \eqref{isom-truncated} with the set 
$$
\{ [(m,f)] \in \GT(\N) ~|~ \ker(T^{\PB_4}_{m,f}) = \ti{\N} \}. 
$$

We will use the identification in the remainder of this paper and we denote 
by $\GTSh$ the resulting \emph{groupoid of $\GT$-shadows.}

The following proposition gives us an explicit formula for the composition of morphisms in $\GTSh$: 
\begin{prop}
\label{prop:GTSh}
Let $\N^{(1)}$, $\N^{(2)}$ and $\N^{(3)}$ be elements of $\NFI_{\PB_4}(\B_4)$ and 
$$
[(m_1,f_1)] \in \Hom_{\GTSh}(\N^{(1)}, \N^{(2)}), \qquad
[(m_2,f_2)] \in \Hom_{\GTSh}(\N^{(2)}, \N^{(3)}). 
$$
Then their composition $[(m_2,f_2)] \circ  [(m_1,f_1)]$ is represented by the pair $(m,f)$ where 
\begin{equation}
\label{sh-composition}
m : = 2 m_1 m_2 + m_1 + m_2\,, \qquad
f  \N^{(3)}_{\PB_3} : = f_2 \N^{(3)}_{\PB_3} \cdot T^{\PB_3}_{m_2, f_2} (f_1). 
\end{equation}
\end{prop}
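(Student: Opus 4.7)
The plan is to reduce the claim to an explicit computation on the operadic generators $\beta$ and $\al$, using the identification between $\GT$-shadows and isomorphisms of truncated operads provided by Corollary \ref{cor:isom-quotients}. Concretely, the composition of $[(m_1,f_1)]$ and $[(m_2,f_2)]$ in $\GTSh$ is, by definition, the composition $T^{\isom}_{m_2,f_2} \circ T^{\isom}_{m_1,f_1}$ of isomorphisms of truncated operads $\PaB^{\le 4}/\sim_{\N^{(1)}} \to \PaB^{\le 4}/\sim_{\N^{(3)}}$. Under the bijection of Corollary \ref{cor:isom-quotients}, this composition corresponds to a unique $\GT$-shadow $[(m,f)]$, and since $\PaB^{\le 4}$ is generated by $\beta$ and $\al$, the pair $(m,f)$ is determined (up to the ambiguity built into $\sim_{\N^{(3)}}$) by the images of $\beta$ and $\al$ under $T_{m,f} = T^{\isom}_{m_2,f_2} \circ T^{\isom}_{m_1,f_1} \circ \cP_{\N^{(1)}}$.

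First I would use the factorization $T^{\isom}_{m_i,f_i} \circ \cP_{\N^{(i)}} = T_{m_i,f_i}$ to rewrite this as $T_{m,f} = T^{\isom}_{m_2,f_2} \circ T_{m_1,f_1}$. Evaluating on $\beta$ yields $T^{\isom}_{m_2,f_2}\bigl([\beta\cdot\mm(x_{12}^{m_1})]_{\N^{(2)}}\bigr) = T_{m_2,f_2}\bigl(\beta\cdot\mm(x_{12}^{m_1})\bigr)$, and similarly $T_{m,f}(\al) = T_{m_2,f_2}\bigl(\al\cdot\mm(f_1)\bigr)$. Since $T_{m_2,f_2}$ is a morphism of truncated operads, these split as $T_{m_2,f_2}(\beta)\cdot T_{m_2,f_2}(\mm(x_{12}^{m_1}))$ and $T_{m_2,f_2}(\al)\cdot T_{m_2,f_2}(\mm(f_1))$, where the first factors are $[\beta\cdot\mm(x_{12}^{m_2})]$ and $[\al\cdot\mm(f_2)]$ by the definition of $T_{m_2,f_2}$.

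The heart of the argument is the following auxiliary observation, which I expect to be the main technical step: for every $h \in \PB_n$ with $n\in\{2,3\}$, one has
\[
T_{m_2,f_2}\bigl(\mm(h)\bigr) \;=\; \bigl[\mm(\tilde h)\bigr]_{\N^{(3)}}
\qquad\text{in}\quad \PaB(n)/\sim_{\N^{(3)}},
\]
where $\tilde h \in \PB_n$ is any representative of the coset $T^{\PB_n}_{m_2,f_2}(h) \in \PB_n/\N^{(3)}_{\PB_n}$. This follows by applying $\ou$: on the one hand $\ou\bigl(T_{m_2,f_2}(\mm(h))\bigr) = T^{\B_n}_{m_2,f_2}(h) = T^{\PB_n}_{m_2,f_2}(h)$ by the very definition \eqref{T-B-n}, and on the other hand any automorphism of the object $((1,2).{.}$ in $\PaB(n)/\sim_{\N^{(3)}}$ is represented by $\mm$ applied to its $\ou$-image, uniquely modulo $\N^{(3)}_{\PB_n}$ (this is where well-definedness of the right-hand side of \eqref{sh-composition} also follows, since $\ker T^{\PB_3}_{m_2,f_2} = \N^{(2)}_{\PB_3}$ by Proposition \ref{prop:sim-s-OK}).

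Granting the sub-lemma, it remains to assemble the two calculations. For $\beta$, Corollary \ref{cor:act-on-PB3} (or directly the last assertion of Corollary \ref{cor:for-braid-groups}) gives $T^{\PB_2}_{m_2,f_2}(x_{12}^{m_1}) = x_{12}^{m_1(2m_2+1)}\,\N^{(3)}_{\PB_2}$, so
\[
T_{m,f}(\beta) = \bigl[\beta\cdot\mm(x_{12}^{m_2})\cdot\mm(x_{12}^{m_1(2m_2+1)})\bigr]
= \bigl[\beta\cdot\mm(x_{12}^{2m_1m_2+m_1+m_2})\bigr],
\]
identifying the first component of the composition as $m = 2m_1m_2+m_1+m_2$. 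For $\al$, the same mechanism gives
\[
T_{m,f}(\al) = \bigl[\al\cdot\mm(f_2)\cdot\mm(\tilde f_1)\bigr]
= \bigl[\al\cdot\mm(f)\bigr],
\]
with $f\,\N^{(3)}_{\PB_3} = f_2\,\N^{(3)}_{\PB_3}\cdot T^{\PB_3}_{m_2,f_2}(f_1)$, matching the second formula in \eqref{sh-composition}. Comparing with the normal form $T_{m,f}(\al) = [\al\cdot\mm(f)]$, $T_{m,f}(\beta) = [\beta\cdot\mm(x_{12}^m)]$ from Subsection \ref{sec:GT-pr-N} then completes the proof.
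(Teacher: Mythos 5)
Your proposal is correct and follows essentially the same route as the paper: both identify the composition in $\GTSh$ with $T^{\isom}_{m_2,f_2}\circ T^{\isom}_{m_1,f_1}$ via Corollary \ref{cor:isom-quotients}, rewrite it as $T^{\isom}_{m_2,f_2}\circ T_{m_1,f_1}$, and read off $(m,f)$ from the images of the generators $\beta$ and $\al$. The only cosmetic difference is that you isolate as an explicit sub-lemma the identity $T_{m_2,f_2}(\mm(h))=[\mm(\tilde h)]$ (justified via $\ou$), whereas the paper uses the step $T_{m_2,f_2}(\mm(f_1))=\mm(T^{\PB_3}_{m_2,f_2}(f_1))$ silently in its chain of equalities.
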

\bigskip
\begin{proof} 
Let $[(m_2,f_2)]\in \GT(\N^{(3)})$ and $[(m_1,f_1)]\in \GT(\N^{(2)})$, where 
$$
\N^{(2)} : = \ker(T^{\PB_4}_{m_2,f_2}) \qquad \textrm{and} \qquad \N^{(1)} : = \ker(T^{\PB_4}_{m_1,f_1}).
$$
In other words, the $\GT$-shadow $[(m_1,f_1)]$ (resp.  $[(m_2,f_2)]$)
is a morphism from $\N^{(1)}$ to $\N^{(2)}$ (resp. a morphism from  $\N^{(2)}$ to $\N^{(3)}$) in $\GTSh$. 

By Corollary \ref{cor:isom-quotients}, we have the following diagram of morphisms of truncated operads
\begin{equation}
\begin{tikzpicture}
\matrix (m) [matrix of math nodes, row sep=1.8em, column sep=2.1em]
{\PaB^{\le 4}  & &   \\
	~ &  &  \\
	\PaB^{\le 4}  / \sim_{\N^{(1)}}   & ~ & \PaB^{\le 4} / \sim_{\N^{(2)}} & ~ & \PaB^{\le 4} / \sim_{\N^{(3)}}   \,, \\ };
\path[->, font=\scriptsize]
(m-1-1) edge node[left] {$\cP_{\N^{(1)}}$} (m-3-1) edge node[above] {$T_{m,f}$} (m-3-5)
(m-3-1) edge node[above] {$T^{\isom}_{m_1,f_1}$} (m-3-3) 
(m-3-3) edge node[above] {$T^{\isom}_{m_2,f_2}$} (m-3-5); 
\end{tikzpicture}
\end{equation}
where the vertical arrow is the canonical projection.

Formula \eqref{sh-composition} is obtained by looking at the image of 
the associator $[\alpha] \in \PaB^{\leq 4}/\sim_{\N^{(1)}}$ 
(resp. the braiding $[\beta] \in \PaB^{\leq 4}/\sim_{\N^{(1)}}$) under 
$T^{\isom}_{m_2,f_2}\circ T^{\isom}_{m_1,f_1}$. For $[\alpha]$, we have  
$$
T_{m,f}(\alpha) = T^{\isom}_{m_2,f_2}(T^{\isom}_{m_1,f_1}[\alpha]) =  T^{\isom}_{m_2,f_2}(T_{m_1,f_1}(\alpha))  
$$
$$
 =T^{\isom}_{m_2,f_2}([\alpha \cdot \mm(f_1)])=T_{m_2,f_2}(\alpha\cdot \mm(f_1))  =
 T_{m_2,f_2}(\alpha) \cdot T_{m_2,f_2}\big( \mm(f_1)) \big) 
$$
$$
= [\alpha\cdot \mm(f_2)] \cdot \mm(T^{\PB_3}_{m_2,f_2}(f_1))=
[\al \cdot \mm(f)], 
$$
where $f$ is any representative of the coset $f_2 \N^{(3)}_{\PB_3}  \cdot T^{\PB_3}_{m_2, f_2} (f_1)$
in $\PB_3 / \N^{(3)}_{\PB_3}$. 

Similarly, computing $T_{m,f}(\beta)$, it is easy to see that 
$m \equiv 2 m_1 m_2 + m_1 + m_2 ~\mod~ N^{(3)}_{\ord}$.
\end{proof}
\begin{remark}  
\label{rem:practical}
Later we will see that it makes sense to focus only on $\GT$-shadows 
that can be represented by pairs $(m,f)$ with 
\begin{equation}
\label{f-in-F2}
f \in \F_2 \le \PB_3\,.
\end{equation}
Let us call such $\GT$-shadows \emph{practical}.

Using \eqref{act-on-xy} and \eqref{sh-composition}, we get the
following formula for the composition $[(m,f)] := [(m_2,f_2)] \circ [(m_1,f_1)]$
of practical $\GT$-shadows $[(m_2,f_2)] $ and $[(m_1,f_1)]$:
\begin{equation}
\label{compose-practical}
\begin{array}{c}
m : = 2 m_1 m_2 + m_1 + m_2\,, \\[0.3cm]
f(x,y) : = f_2(x,y)\, f_1(x^{2 m_2+1}, f_2(x,y)^{-1} y^{2 m_2+1} f_2(x,y)).
\end{array}
\end{equation}
Due to this observation, practical $\GT$-shadows form a subgroupoid of $\GTSh$. 

The authors {\it do not know} whether there exists 
$\N \in \NFI_{\PB_4}(\B_4)$ and an onto morphism of truncated operads 
$\PaB^{\le 4} \to \PaB^{\le 4}/\sim_{\N}$ that {\it cannot} be represented by a pair 
$(m,f) \in \bbZ \times \F_2$. 
\end{remark}

\subsubsection{The virtual cyclotomic character}
\label{sec:virt-cyclotomic}

Let us observe that to every $\N \in \NFI_{\PB_4}(\B_4)$ we assign the (finite) cyclic group 
$$
\PB_2 / \lan x_{12}^{N_{\ord}} \ran \cong \bbZ / N_{\ord} \bbZ\,,
$$ 
where $N_{\ord}$ is the index of $\N_{\PB_2}$ in $\PB_2$. Moreover, if $[(m,f)]$ is a morphism 
from $\N^{\ms}$ to $\N$ in the groupoid $\GTSh$, then both $\N$ and $\N^{\ms}$ correspond 
to the same quotient $\PB_2 / \lan x_{12}^{N_{\ord}} \ran$ of $\PB_2$.

Proposition \ref{prop:GTSh} implies that the assignment $\N \mapsto  \bbZ / N_{\ord} \bbZ$ 
upgrades to a functor $\Ch_{cyclot}$ from $\GTSh$ to the category of finite cyclic groups. 
More precisely, 
\begin{cor}
\label{cor:virt-cyclotomic}
Let $[(m,f)]$ be a morphism from $\N^{(1)}$ to $\N^{(2)}$ in the groupoid $\GTSh$. 
The assignments
\begin{equation}
\label{virt-cyclotomic}
\N \mapsto \PB_2 / \N_{\PB_2}\,, \qquad 
 [(m,f)] \mapsto \Ch_{cyclot}(m,f)  \in  \Aut(\PB_2 / \N^{(2)}_{\PB_2}), 
\end{equation}
$$
\Ch_{cyclot}(m,f) \big( x_{12} \N^{(2)}_{\PB_2} \big) : = x^{2m+1}_{12} \N^{(2)}_{\PB_2}
$$
define a functor  $\Ch_{cyclot}$ from the groupoid $\GTSh$ to the category of finite cyclic groups. 
\end{cor}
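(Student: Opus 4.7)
The plan is to verify the three standard items: that the data of the proposed functor is well defined, that identities go to identities, and that composition is preserved. Throughout, for a morphism $[(m,f)]\in\Hom_{\GTSh}(\N^{(1)},\N^{(2)})$ we write $N_{\ord}$ for the index of $\N^{(2)}_{\PB_2}$ in $\PB_2$, noting that by Corollary \ref{cor:source} applied to the shadow $[(m,f)]\in\GT(\N^{(2)})$ with $\N^{(1)}=\N^{\ms}$, we automatically have $\N^{(1)}_{\PB_2}=\N^{(2)}_{\PB_2}$, so the target finite cyclic group is unambiguous.

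First I would verify that the formula $x_{12}\N^{(2)}_{\PB_2}\mapsto x_{12}^{2m+1}\N^{(2)}_{\PB_2}$ produces a well-defined automorphism of the cyclic group $\PB_2/\N^{(2)}_{\PB_2}\cong\bbZ/N_{\ord}\bbZ$. Well-definedness in the choice of representative $(m,f)$ is immediate: replacing $m$ by $m+qN_{\ord}$ changes $x_{12}^{2m+1}$ by $x_{12}^{2qN_{\ord}}$, which lies in $\N^{(2)}_{\PB_2}$. Since $\PB_2$ is infinite cyclic, the rule extends uniquely to a group endomorphism of $\PB_2/\N^{(2)}_{\PB_2}$. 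The fact that this endomorphism is in fact an automorphism is precisely the friendliness condition \eqref{friendly}, which, as noted after Definition \ref{dfn:GT-shadows}, is equivalent to the surjectivity of the homomorphism $T^{\PB_2}_{m,f}$ and is therefore forced by $[(m,f)]$ being a $\GT$-shadow. In fact, one observes that $\Ch_{cyclot}(m,f)$ coincides with the map $T^{\PB_2}_{m,f}$ from Corollary \ref{cor:for-braid-groups}, which sends $\si_1\mapsto\si_1 x_{12}^m\N^{(2)}_{\PB_2}$ and hence $x_{12}=\si_1^2\mapsto x_{12}^{2m+1}\N^{(2)}_{\PB_2}$.

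Next I would check preservation of identities. The identity morphism of $\N\in\NFI_{\PB_4}(\B_4)$ in $\GTSh$ is represented by the pair $(0,1_{\PB_3})$, and the corresponding map sends $x_{12}\mapsto x_{12}^{1}=x_{12}$, giving the identity automorphism of $\PB_2/\N_{\PB_2}$, as required.

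The core computation is functoriality under composition. Given composable morphisms $[(m_1,f_1)]\in\Hom_{\GTSh}(\N^{(1)},\N^{(2)})$ and $[(m_2,f_2)]\in\Hom_{\GTSh}(\N^{(2)},\N^{(3)})$, Proposition \ref{prop:GTSh} provides the composition formula $m=2m_1m_2+m_1+m_2$ for the scalar component of $[(m_2,f_2)]\circ[(m_1,f_1)]$. The key identity is the elementary algebraic observation
\begin{equation*}
2m+1 \;=\; 2(2m_1m_2+m_1+m_2)+1 \;=\; (2m_1+1)(2m_2+1).
\end{equation*}
Consequently, on $x_{12}\N^{(3)}_{\PB_2}$ we obtain
\begin{equation*}
\bigl(\Ch_{cyclot}(m_2,f_2)\circ\Ch_{cyclot}(m_1,f_1)\bigr)(x_{12}\N^{(3)}_{\PB_2})
= x_{12}^{(2m_1+1)(2m_2+1)}\N^{(3)}_{\PB_2}
= x_{12}^{2m+1}\N^{(3)}_{\PB_2},
\end{equation*}
which matches $\Ch_{cyclot}(m,f)(x_{12}\N^{(3)}_{\PB_2})$, and the proof is complete. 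There is no real obstacle here: everything reduces to the factorization $(2m_1+1)(2m_2+1)=2m+1$ once Corollary \ref{cor:source} is invoked to identify the source and target cyclic groups, and the only mildly subtle point is recognizing that the $f$-component of the composite shadow plays no role in $\Ch_{cyclot}$ because it only affects conjugation in $\PB_3$, which is trivial after projection to the abelian quotient $\PB_2/\N^{(3)}_{\PB_2}$.
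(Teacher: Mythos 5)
Your proposal is correct and follows essentially the same route as the paper's own proof: both rest on the friendliness condition to see that $\Ch_{cyclot}(m,f)$ is an automorphism of $\PB_2/\N^{(2)}_{\PB_2}$, and on the factorization $(2m_1+1)(2m_2+1)=2(2m_1m_2+m_1+m_2)+1$ combined with the composition formula of Proposition \ref{prop:GTSh} to establish functoriality. The extra checks you spell out (well-definedness in the choice of representative, preservation of identities, and the identification of $\Ch_{cyclot}(m,f)$ with $T^{\PB_2}_{m,f}$) are implicit or omitted in the paper but are all correct and harmless.
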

\begin{proof} Since, for every $\GT$ shadow $[(m,f)]$,  $2 m+1$ represents an invertible element of
the ring $\bbZ/ N^{(2)}_{\ord} \bbZ$, $\Ch_{cyclot}(m,f)$ is clearly an automorphism of $\PB_2 / \N^{(2)}_{\PB_2} = 
\PB_2 / \N^{(1)}_{\PB_2}$.

Thus it remains to show that $\Ch_{cyclot}$ is compatible with the composition of $\GT$-shadows. 

For this purpose, we consider two composable $\GT$-shadows: 
$[(m_1,f_1)] \in \Hom_{\GTSh}(\N^{(1)}, \N^{(2)})$ and $[(m_2,f_2)] \in  \Hom_{\GTSh}(\N^{(2)}, \N^{(3)})$. 

Since $\N^{(1)}$, $\N^{(2)}$ and  $\N^{(3)}$ belong to the same connected component of $\GTSh$, 
$\N^{(1)}_{\PB_2} = \N^{(2)}_{\PB_2} =  \N^{(3)}_{\PB_2}$ or equivalently 
$N^{(1)}_{\ord} = N^{(2)}_{\ord} = N^{(3)}_{\ord}$. So let us set 
$\N_{\PB_2} := \N^{(1)}_{\PB_2}$ and $N_{\ord} := N^{(1)}_{\ord}$. 
 
Let $[(m,f)] : = [(m_2,f_2)] \circ [(m_1,f_1)]$.  
 
Due to the first equation in \eqref{sh-composition}, $m \equiv 2 m_1 m_2 + m_1 + m_2 \,\mod\, N_{\ord}$. 
Hence 
$$
\Ch_{cyclot}(m,f) \big( x_{12} \N_{\PB_2} \big) =  x_{12}^{2(2 m_1 m_2 + m_1 + m_2)+1} \N_{\PB_2} 
$$
$$
= x_{12}^{4 m_1 m_2 + 2m_1 + 2m_2+1} \N_{\PB_2} = x_{12}^{(2 m_1+1)(2 m_2 + 1)} \N_{\PB_2}
$$
$$
= \big( x_{12}^{(2 m_1+1)} \N_{\PB_2} \big)^{2m_2+1} =  
\Ch_{cyclot}(m_2,f_2) \circ \Ch_{cyclot}(m_1,f_1)\, \big(  x_{12} \N_{\PB_2}\big). 
$$

Thus $\Ch_{cyclot}$ is indeed a functor from $\GTSh$ to the category of finite cyclic groups.
\end{proof}

We call the functor $\Ch_{cyclot}$ the {\it virtual cyclotomic character}. 
This name is justified by the following remark: 
\begin{remark}
\label{rem:virt-cyclotomic}
Let $\N \in \NFI_{\PB_4}(\B_4)$, $g \in G_{\bbQ}$ and $[(m,f)]$ 
be the $\GT$-shadow in $\GT(\N)$ induced by the element in $\GTh$ corresponding 
to $g$ then 
\begin{equation}
\label{cyclotomic_actually}
\Ch_{cyclot}(m,f) \big( x_{12} \N_{\PB_2} \big) = x_{12}^{ \chi(g)_{N_{\ord}} } \N_{\PB_2}\,,
\end{equation}
where $\chi : G_{\bbQ} \to \Zhat^{\times} \cong \Aut(\Zhat)$ is the cyclotomic character and 
$\chi(g)_{\N_{\ord}}$ represents the image of $\chi(g)$ in 
$\Aut(\bbZ/N_{\ord} \bbZ) \cong \big( \bbZ/N_{\ord} \bbZ \big)^{\times}$.   
Equation \eqref{cyclotomic_actually} follows from the discussion in \cite[Example 4.7.4]{Szamuely}
and \cite[Remark 4.7.5]{Szamuely}. See also \cite[Proposition 1.6]{Ihara-embedding}.
\end{remark}
 
\subsection{Charming $\GT$-shadows}
\label{sec:charm-GT}

Recall that $\PB_3$ is isomorphic to $\F_2 \times \bbZ$ where the $\F_2$-factor is freely generated 
by $x_{12}$ and $x_{23}$ and the $\bbZ$-factor is generated by the central element $c$ 
given in (\ref{c-PB3-B3}). This implies that $\wh{\PB}_3 \cong \Fh_2 \times \Zhat$. 
Due to the following proposition, the action of $\GTh$ on $\wh{\PB}_3$ (viewed as 
the automorphism group of $(12)3$ in $\wh{\PaB}$) respects this decomposition: 
\begin{prop}  
\label{prop:T-hat-F2}
For every (continuous) automorphism $\hat{T}$ of $\wh{\PaB}^{\le 4}$, its restriction 
to the subgroup $\Fh_2 \le \wh{\PB}_3$ gives us an 
automorphism\footnote{In fact, some specialists like to define $\GTh$ as a certain 
subgroup of continuous automorphisms of $\Fh_2$.} of $\Fh_2$
$$
\hat{T}\big|_{\Fh_2} : \Fh_2 \to \Fh_2
$$
defined by the formulas
\begin{equation}
\label{T-hat-F2}
\hat{T}(x):=x^{2\hat{m}+1}, \qquad 
\hat{T}(y) := \hat{f}^{-1} y^{2\hat{m}+1} \hat{f}.
\end{equation}
The restriction of $\hat{T}$ to the central factor $\Zhat$ of $\wh{\PB}_3$ gives us 
the continuous automorphism of $\Zhat$ defined by the formula 
\begin{equation}
\label{T-hat-Zhat}
\hat{T}(c) := c^{2\hat{m}+1}\,.
\end{equation}
\end{prop}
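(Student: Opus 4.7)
The plan is to reduce the profinite statement to the finite-level computations of Corollary \ref{cor:act-on-PB3} and then pass to the inverse limit over $\N \in \NFI_{\PB_4}(\B_4)$. Following the discussion around \eqref{hat-T-beta-alpha}, a continuous automorphism $\hat T \in \GTh$ is determined by a pair $(\hat m, \hat f) \in \Zhat \times \wh{\PB}_3$ with $\hat T(\beta) = \beta \cdot \mm(x_{12}^{\hat m})$ and $\hat T(\alpha) = \alpha \cdot \mm(\hat f)$. For each $\N \in \NFI_{\PB_4}(\B_4)$, the composition $T_\N := \hcP_\N \circ \hat T \circ \cI$ from \eqref{T-N} is, by construction, the morphism of truncated operads corresponding to the $\GT$-shadow $[(m_\N, f_\N)] \in \GT(\N)$ obtained by reducing $\hat m$ modulo $N_{\ord} \bbZ$ and $\hat f$ modulo $\N_{\PB_3}$.

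Applying Corollary \ref{cor:act-on-PB3} to each such $\GT$-shadow $[(m_\N, f_\N)]$ yields, simultaneously for all $\N$,
\[
T^{\PB_3}_{m_\N, f_\N}(x_{12}) = x_{12}^{2m_\N+1}\,\N_{\PB_3}, \quad
T^{\PB_3}_{m_\N, f_\N}(x_{23}) = f_\N^{-1}\, x_{23}^{2m_\N+1}\, f_\N\,\N_{\PB_3}, \quad
T^{\PB_3}_{m_\N, f_\N}(c) = c^{2m_\N+1}\,\N_{\PB_3}.
\]
The restriction of $\hat T$ to $\wh{\PB}_3 = \Aut_{\wh{\PaB}}((1,2)3)$ projects modulo each $\N_{\PB_3}$ to $T^{\PB_3}_{m_\N, f_\N}$, and by Proposition \ref{prop:subgroups-in-PB3-2}.A the subgroups $\{\N_{\PB_3} \mid \N \in \NFI_{\PB_4}(\B_4)\}$ form a cofinal subsystem in $\NFI(\PB_3)$. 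Consequently, taking the inverse limit gives the profinite identities
\[
\hat T(x_{12}) = x_{12}^{2\hat m+1}, \qquad
\hat T(x_{23}) = \hat f^{-1}\, x_{23}^{2\hat m+1}\, \hat f, \qquad
\hat T(c) = c^{2\hat m+1}
\]
in $\wh{\PB}_3$.

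To verify that $\hat T$ restricts to an automorphism of $\Fh_2 \le \wh{\PB}_3$, I would use the decomposition $\wh{\PB}_3 \cong \Fh_2 \times \Zhat$ in which the $\Zhat$-factor is generated by the central element $c$. Writing $\hat f = \hat f_0 \cdot c^{\hat k}$ with $\hat f_0 \in \Fh_2$ and $\hat k \in \Zhat$, the centrality of $c$ collapses the conjugation in the formula for $\hat T(x_{23})$ to $\hat f_0^{-1}\, x_{23}^{2\hat m+1}\, \hat f_0$, which manifestly lies in $\Fh_2$. Thus $\hat T$ carries the topological generators $x_{12}, x_{23}$ of $\Fh_2$ into $\Fh_2$, so by continuity $\hat T(\Fh_2) \subseteq \Fh_2$; applying the same reasoning to $\hat T^{-1} \in \GTh$ yields the reverse inclusion, so $\hat T|_{\Fh_2}$ is indeed an automorphism. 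Formula \eqref{T-hat-Zhat} on the central factor is then an immediate consequence of the displayed expression for $\hat T(c)$.

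The only delicate ingredient is the cofinality statement used to pass to the inverse limit, and this is exactly Proposition \ref{prop:subgroups-in-PB3-2}.A. Once that is available, every remaining step is a formal consequence of the finite-level computations already carried out in Corollary \ref{cor:act-on-PB3}, together with the elementary observation that the $\Zhat$-factor of $\hat f$ is invisible to the inner automorphism appearing in the formula for $\hat T(x_{23})$.
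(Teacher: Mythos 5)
Your proof is correct and follows essentially the same route as the paper's: both use Proposition \ref{prop:subgroups-in-PB3-2} to view the restriction $\hat{T}|_{\wh{\PB}_3}$ as the inverse limit of the finite-level homomorphisms $T^{\PB_3}_{m_\N,f_\N}$, both invoke Corollary \ref{cor:act-on-PB3} (equations \eqref{act-on-xy} and the second formula of \eqref{act-on-x-c}) together with the centrality of $c$ to see that the two factors $\Fh_2$ and $\Zhat$ of $\wh{\PB}_3$ are preserved, and both obtain bijectivity by applying the same reasoning to $\hat T^{-1}$. Your additional remark that $\hat f = \hat f_0 c^{\hat k}$ makes the $\Fh_2$-invariance transparent is a slightly more explicit rendering of the same centrality observation the paper uses.
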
   
\begin{proof} Due to Proposition \ref{prop:subgroups-in-PB3-2}, 
the action of $\hat{T}$ on $\wh{\PB}_3$ is determined by the group homomorphisms
$$
T^{\PB_3}_{m,f}: \PB_3 \to  \PB_3/\N_{\PB_3}\,, \qquad \N \in \NFI_{\PB_4}(\B_4)
$$
corresponding to $\GT$-shadows $[(m,f)]$ that come from $\hat{T}$.  

Combining this observation with equations \eqref{act-on-xy} and the 
second equation in \eqref{act-on-x-c} and using the fact that $c$ is a central element 
of $\PB_3$, we conclude that the restrictions of $\hat{T}$ to $\Fh_2$ and 
to $\Zhat$ give us group homomorphisms 
\begin{equation}
\label{F2-hat-and-Z-hat}
\hat{T}\big|_{\Fh_2} : \Fh_2 \to \Fh_2
\quad
\textrm{and}
\quad
\hat{T}\big|_{\Zhat} : \Zhat \to \Zhat,
\end{equation}
respectively.

Since the restrictions of the inverse of $\hat{T}$ to $\Fh_2$ and to $\Zhat$
give us inverses of the two homomorphisms in \eqref{F2-hat-and-Z-hat}, respectively, 
the homomorphisms in \eqref{F2-hat-and-Z-hat} are indeed automorphisms. 

Explicit formulas \eqref{T-hat-F2} and \eqref{T-hat-Zhat} are consequences 
of equations \eqref{act-on-xy} and the second equation in \eqref{act-on-x-c}.
\end{proof}

If a $\GT$-shadow $[(m,f)]$ comes from an automorphism of $\wh{\PaB}$ then
it satisfies further conditions. The following definition is motivated by these conditions. 
\begin{defi}  
\label{dfn:charm}
Let $\N \in \NFI_{\PB_4}(\B_4)$. A $\GT$-shadow $[(m,f)] \in \GT(\N)$ is called \emph{genuine}
if it comes from an automorphism of $\wh{\PaB}$. Otherwise, $[(m,f)]$ is called \emph{fake}.
Furthermore, a $\GT$-shadow $[(m,f)] \in \GT(\N)$ is called \emph{charming} if
\begin{itemize}

\item the coset $f \N_{\PB_3}$ can be represented by $f_1 \in [\F_2, \F_2]$ and

\item the group homomorphism 
\begin{equation}
\label{T-F-2}
T^{\F_2}_{m,f} \,:= \, T^{\PB_3}_{m,f}\, \big|_{\F_2} \,:\, \F_2 ~\to~ \F_2/(\N_{\PB_3} \cap \F_2)
\end{equation}
is onto.

\end{itemize}
\end{defi}  

Since the intersection $\N_{\PB_3} \cap \F_2$ plays an important role, 
we will denote it by $\N_{\F_2}$. 
\begin{equation}
\label{N-F-2}
\N_{\F_2} : = \N_{\PB_3} \cap \F_2\,.
\end{equation}
Clearly, the kernel of the homomorphism  $T^{\F_2}_{m,f} : \F_2 ~\to~ \F_2/\N_{\F_2} $
coincides with $\N^{\ms}_{\F_2}$ and $|\F_2 : \N^{\ms}_{\F_2}| = |\F_2 : \N_{\F_2}|$ for 
every charming $\GT$-shadow $[(m,f)]$.   

Let us prove that 
\begin{prop}  
\label{prop:gen-charm}
Every genuine $\GT$-shadow is charming. 
\end{prop}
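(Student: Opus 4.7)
The plan is to unpack the definition of \emph{genuine} and use the two structural facts already recorded in the excerpt: Proposition \ref{prop:T-hat-F2} (which says $\hat T$ preserves $\wh{\F}_2 \le \wh{\PB}_3$) and the property, referenced in the introduction and via Corollary \ref{cor:comm-F2}, that the ``associator part'' of any element of $\GTh$ lies in the topological closure $([\Fh_2, \Fh_2])^{\cl}$. Suppose $[(m,f)] \in \GT(\N)$ is genuine, coming from $\hat T \in \Aut(\wh{\PaB})$ via the recipe $T_{m,f} = \hat\cP_\N \circ \hat T \circ \cI$. Let $\hat f \in \wh{\PB}_3$ denote the element such that $\hat T(\al) = \al \cdot \mm(\hat f)$; then by construction the coset $f\,\N_{\PB_3}$ is precisely the image of $\hat f$ under the canonical continuous projection $\hat\cP_{\N_{\PB_3}} : \wh{\PB}_3 \twoheadrightarrow \PB_3/\N_{\PB_3}$.

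For the first bullet of Definition \ref{dfn:charm}, I would argue as follows. By (the profinite version of) Corollary \ref{cor:comm-F2}, one has $\hat f \in ([\Fh_2,\Fh_2])^{\cl}$. Since $\F_2$ is dense in $\wh{\F}_2$ and the commutator map is continuous, $[\F_2,\F_2]$ is dense in $([\Fh_2,\Fh_2])^{\cl}$. The set $\hat f\, \wh{\N}_{\PB_3}$, where $\wh{\N}_{\PB_3}$ denotes the topological closure of $\N_{\PB_3}$ in $\wh{\PB}_3$, is an open neighborhood of $\hat f$; hence it meets $[\F_2,\F_2]$, yielding $f_1 \in [\F_2,\F_2]$ with $\hat\cP_{\N_{\PB_3}}(f_1) = \hat\cP_{\N_{\PB_3}}(\hat f) = f\,\N_{\PB_3}$. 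This is the required representative of $f\,\N_{\PB_3}$ in the commutator subgroup.

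For the second bullet, the idea is to realize $T^{\F_2}_{m,f}$ as a composition through $\hat T|_{\wh{\F}_2}$ and then use surjectivity of that restriction. From $T_{m,f} = \hat\cP_\N \circ \hat T \circ \cI$ together with Corollary \ref{cor:for-braid-groups}, the group homomorphism $T^{\PB_3}_{m,f} : \PB_3 \to \PB_3/\N_{\PB_3}$ coincides with the composition $\PB_3 \hookrightarrow \wh{\PB}_3 \xrightarrow{\hat T} \wh{\PB}_3 \xrightarrow{\hat\cP_{\N_{\PB_3}}} \PB_3/\N_{\PB_3}$. Restricting to $\F_2$ and using Proposition \ref{prop:T-hat-F2}, i.e.\ $\hat T(\wh{\F}_2) = \wh{\F}_2$, one gets a factorization
\[
T^{\F_2}_{m,f} : \F_2 \hookrightarrow \wh{\F}_2 \xrightarrow{\hat T|_{\wh{\F}_2}} \wh{\F}_2 \xrightarrow{\hat\cP_{\N_{\F_2}}} \F_2/\N_{\F_2}.
\]
Here $\hat T|_{\wh{\F}_2}$ is an automorphism by Proposition \ref{prop:T-hat-F2}, and $\hat\cP_{\N_{\F_2}}$ is surjective because $\N_{\F_2}$ has finite index in $\F_2$. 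The composition of the last two arrows is thus a continuous surjection onto the finite discrete group $\F_2/\N_{\F_2}$, so every fiber is open. Density of $\F_2$ in $\wh{\F}_2$ then forces each fiber to intersect $\F_2$, establishing surjectivity of $T^{\F_2}_{m,f}$.

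The only real obstacle is the bookkeeping for Step 2: I have to justify that $T^{\F_2}_{m,f}$, originally defined through the operadic morphism $T_{m,f}$ and the formulas \eqref{for-B3}, really is the above composition through $\hat T|_{\wh{\F}_2}$, and that the image of $\F_2 \le \PB_3$ under this composition genuinely sits inside the subgroup $\F_2/\N_{\F_2} \le \PB_3/\N_{\PB_3}$ (which requires the first bullet we just proved, so that $f \in [\F_2,\F_2]$ forces $f^{-1}x_{23}^{2m+1}f$ to be in $\F_2$). Both points are immediate from the definitions once one notes that $\hat T$ preserves $\wh{\F}_2$ and that $\N_{\F_2} = \N_{\PB_3} \cap \F_2$, but they should be spelled out explicitly.
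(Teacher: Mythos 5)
Your argument for the second bullet (surjectivity of $T^{\F_2}_{m,f}$) is essentially the paper's: both factor $T^{\F_2}_{m,f}$ through $\hat{T}|_{\Fh_2}$ and invoke density of $\F_2$ in $\Fh_2$. For the first bullet, however, your approach is circular in the context of this paper: you appeal to Corollary \ref{cor:comm-F2} (the statement that $\hat{f} \in ([\Fh_2,\Fh_2])^{\cl}$), but the paper \emph{derives} Corollary \ref{cor:comm-F2} as a consequence of Proposition \ref{prop:gen-charm}. This is precisely why the paper proves the first bullet constructively at finite levels: it first removes the central $c$-factor by passing to a $\K \in \NFI_{\PB_4}(\B_4)$ with $\K_{\PB_3} \le \N_{\F_2} \times \N_{\bbZ}$ and applying the hexagon relation \eqref{hexa1} mod $\K$; then it kills the total exponents of $x_{12}$ and $x_{23}$ in the representative $f_1 \in \F_2$ by applying the pentagon relation \eqref{GT-penta} mod a still deeper $\K \le \N \cap \ker(\psi_x) \cap \ker(\psi_y)$, where $\psi_x, \psi_y: \PB_4 \to S_q$ count those exponents modulo $q$. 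Both steps use genuineness to lift the $\GT$-shadow to $\GT(\K)$. Your density argument would be valid if you cited the fact $\hat{f} \in ([\Fh_2,\Fh_2])^{\cl}$ from the external literature (Drinfeld, Ihara, etc.) instead of from Corollary \ref{cor:comm-F2} — but then the paper's self-contained derivation of that same fact would be lost.

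A minor additional slip: you state that the first bullet is needed to ensure $T^{\F_2}_{m,f}$ actually lands in $\F_2/\N_{\F_2}$ (``$f \in [\F_2,\F_2]$ forces $f^{-1} x_{23}^{2m+1} f \in \F_2$''). This is not needed: writing $f = g c^k$ with $g \in \F_2$, $k \in \bbZ$, centrality of $c$ gives $f^{-1} x_{23}^{2m+1} f = g^{-1} x_{23}^{2m+1} g \in \F_2$ unconditionally, so the target of $T^{\F_2}_{m,f}$ is always $\F_2/\N_{\F_2}$ regardless of whether $f$ lies in $\F_2$ or $[\F_2,\F_2]$. The two bullets of the charming property are thus logically independent, which matters because the paper actually proves the surjectivity of $T^{\F_2}_{m,f}$ \emph{before} finishing the $f_1 \in [\F_2,\F_2]$ representative.
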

\begin{proof}
Let $\N \in \NFI_{\PB_4}(\B_4)$ and $[(m,f)] \in \GT(\N)$ be a genuine $\GT$-shadow. 
The element $f \in \PB_3$ can be written uniquely as
$$
f = g\, c^k,
$$
where $g \in \F_2$, $k \in \bbZ$ and $c$ is defined in \eqref{c-PB3-B3}.   

Since $\N_{\PB_3}$ is a normal subgroup of finite index in $\PB_3$, 
the subgroup $\N_{\F_2} : = \N_{\PB_3} \cap \F_2$ is normal in $\F_2$ and it has a finite
index in $\F_2$. 
Similarly, the subgroup $\N_{\bbZ} := \N_{\PB_3} \cap \bbZ$ has a finite index in $\bbZ$. 
Therefore, the subgroup $\N_{\F_2} \times \N_{\bbZ}$ is normal and it has finite index in $\PB_3$. 

Due to Proposition \ref{prop:subgroups-in-PB3-2}, there exists
$\K \in \NFI_{\PB_4}(\B_4)$ such that $\K_{\PB_3}$ is contained in $\N_{\F_2} \times \N_{\bbZ}$. 
Since $[(m,f)]$ is a genuine $\GT$-shadow, there exists $(m_1, f_1) \in \bbZ \times \PB_3$
such that $(m_1, f_1)$ represents the same $\GT$-shadow $[(m,f)]$ in $\GT(\N)$ and
$[(m_1,f_1)] \in \GT(\K)$. 

Thus, without loss of generality, we may assume that $m = m_1$ and $f = f_1$, i.e. $[(m,f)] \in \GT(\K)$.

Using relation \eqref{hexa1}, we have 
$$
\si_1 x_{12}^m \, f^{-1} \si_2 x_{23}^m f \, \K_{\PB_3} ~ = ~ 
f^{-1} \si_1 \si_2 (x_{13} x_{23})^m \, \K_{\PB_3}\,.
$$
Next, using (\ref{c-PB3-B3}) and the fact that $c$ is a central element of $\B_3$,
we get that
$$
x_{12}^{m}\,\si_2^{-1}\,\si_1^{-1}g(x_{12},x_{23})
\si_1\, x_{12}^m g(x_{12},x_{23})^{-1} \si_2\, x_{23}^m g(x_{12},x_{23})\, c^{-m+k} \, \in \K_{\PB_3}\,.
$$

Using equations in \eqref{conj-xxx-PB3} from Appendix \ref{app:braids}, we deduce that 
\begin{equation*}
 x_{12}^{m}\,\,g(x_{23}^{-1}x_{12}^{-1}c, x_{12})\, 
 (x_{23}^{-1}x_{12}^{-1}c)^m g(x_{23}^{-1}x_{12}^{-1}c,x_{23})^{-1} \, 
 x_{23}^m g(x_{12},x_{23})\, c^{-m+k} \, \in \K_{\PB_3}\,, \text{ or}
\end{equation*}
\begin{equation*}
x_{12}^{m}\,\,g(x_{23}^{-1}x_{12}^{-1},x_{12})\, (x_{23}^{-1}x_{12}^{-1})^m g(x_{23}^{-1}x_{12}^{-1},x_{23})^{-1} \, x_{23}^m g(x_{12},x_{23})\, c^{k} \, \in \K_{\PB_3}.
\end{equation*}
Since $\K_{\PB_3}$ is a subgroup of $\N_{\F_2} \times \N_{\bbZ}$, we have $c^k \in \N_{\bbZ} \subset \N_{\PB_3}$. 
Hence $f c^{-k} \N_{\PB_3}= g(x_{12},x_{23})\N_{\PB_3}$, and so the $\GT$-shadow 
has a representative of the form $(m,f)$ where $f \in \F_2$. 

It remains to show that 
\begin{itemize}

\item $[(m,f)]$ can be represented by a pair $(m,f_1)$ with $f_1 \in [\F_2, \F_2]$ and

\item homomorphism \eqref{T-F-2} is onto. 

\end{itemize}

Since homomorphism \eqref{T-F-2} does not depend on the choice of the representative 
of the $\GT$-shadow $[(m,f)]$, we first prove that this homomorphism is indeed onto.

Due to Proposition \ref{prop:T-hat-F2}, we have the following commutative diagram:
\begin{center}
\begin{tikzcd}
\Fh_{2} \arrow[r, "\hat{T}|_{\Fh_{2}}"] & \Fh_{2} \arrow[d, "\hcP_{\N_{\F_{2}}}"] \\
\F_{2} \arrow[u, "i"] \arrow[r, "{T_{m,f}^{\F_{2}}}"] &  \F_2 / \N_{\F_2}
\end{tikzcd}
\end{center}
Since $\F_{2}$ is dense in $\Fh_{2}$, we get that the composition 
$\hcP_{\N_{\F_{2}}} \circ \hat{T}|_{\Fh_{2}} \circ i$ is surjective whence we conclude 
$T_{m,f}^{\F_{2}}$ is onto.

Let us now prove that $[(m,f)]$ can be represented by a pair $(m,\ti{f})$ with $\ti{f} \in [\F_2, \F_2]$. 

Let $q$ be the least common multiple of the orders of $x_{12}\N_{\F_2}$ and  $x_{23}\N_{\F_2}$ in $\F_2 / \N_{\F_2}$
and $\psi_x : \PB_4 \to S_{q}$, $\psi_y : \PB_4 \to S_{q}$  be the group homomorphisms defined by equations
$$
\psi_x(x_{12}) : = (1,2, \dots, q), \qquad  
\psi_x(x_{23}) = \psi_x(x_{13}) =  \psi_x(x_{14}) =  \psi_x(x_{24}) =  \psi_x(x_{34}) := \id_{S_q}
$$
and 
$$
\psi_y(x_{34}) : = (1,2, \dots, q), \qquad  
\psi_y(x_{12}) = \psi_y(x_{23}) =  \psi_y(x_{13}) =  \psi_y(x_{14}) =  \psi_y(x_{24}) := \id_{S_q}\,,
$$
respectively. 

Let $\K$ be an element of $\NFI_{\PB_4}(\B_4)$ such that 
\begin{equation}
\label{K-in-kernels}
\K \le \N \cap \ker(\psi_x) \cap \ker(\psi_y).
\end{equation}

Since $[(m,f)]$ is a genuine $\GT$-shadow, there exists a $\GT$-shadow 
$ [(m_1,f_1)] \in \GT(K)$ such that $(m_1, f_1)$ is also a representative of $[(m,f)]$. 
We can assume, without loss of generality, that $f_1 \in \F_2$. 

Applying equation \eqref{GT-penta} to $f_1$ we see that 
\begin{equation}
\label{penta-in-K}
f^{-1}_1(x_{13}x_{23}, x_{34}) f^{-1}_1(x_{12}, x_{23}x_{24})
f_1(x_{23}, x_{34}) f_1(x_{12}x_{13}, x_{24}x_{34}) f_1(x_{12}, x_{23}) \in \K. 
\end{equation}

Inclusions \eqref{K-in-kernels} and \eqref{penta-in-K} imply that
$$
\psi_x \big( f^{-1}_1(x_{13}x_{23}, x_{34}) f^{-1}_1(x_{12}, x_{23}x_{24})
f_1(x_{23}, x_{34}) f_1(x_{12}x_{13}, x_{24}x_{34}) f_1(x_{12}, x_{23}) \big) = \id_{S_q}\,,
$$
and 
$$
\psi_y \big( f^{-1}_1(x_{13}x_{23}, x_{34}) f^{-1}_1(x_{12}, x_{23}x_{24})
f_1(x_{23}, x_{34}) f_1(x_{12}x_{13}, x_{24}x_{34}) f_1(x_{12}, x_{23}) \big) = \id_{S_q}\,.
$$
Hence the sum $s_x$ of exponents of $x_{12}$ in $f_1$ and the sum $s_y$ of exponents of $x_{23}$ in $f_1$ are 
multiples of $q$, i.e. $x_{12}^{-s_x} \in \N_{\F_2}$ and $x_{23}^{-s_y} \in \N_{\F_2}$.  

Thus $(m, f_1 x_{12}^{-s_x} x_{23}^{-s_y})$ is yet another representative of the $\GT$-shadow $[(m,f)]$ in $\GT(\N)$ 
and, by construction, $ f_1 x_{12}^{-s_x} x_{23}^{-s_y} \in [\F_2, \F_2]$. 
\end{proof}

\bigskip
The following statement can be found in many introductory (and ``not so introductory'') papers
on the Grothendieck-Teichmueller group $\GTh$. Here, we deduce it from Proposition \ref{prop:gen-charm}.  
\begin{cor}  
\label{cor:comm-F2}
For every $(\hat{m}, \hat{f}) \in \GTh$, $\hat{f}$ belongs to the topological 
closure of commutator subgroup of $\Fh_2$.   
\end{cor}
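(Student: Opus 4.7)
The plan is to deduce the corollary directly from Proposition \ref{prop:gen-charm} combined with the cofinality statement in Proposition \ref{prop:subgroups-in-PB3-2}. Fix $(\hat{m},\hat{f}) \in \GTh$ and let $\hat{T}$ be the corresponding automorphism of $\wh{\PaB}$. For every $\N \in \NFI_{\PB_4}(\B_4)$, the automorphism $\hat{T}$ induces a genuine $\GT$-shadow $[(m,f)] \in \GT(\N)$ in the sense of Subsection \ref{sec:GT-shadows}, and by construction the image of $\hat{f}$ under the canonical projection $\hat{\cP}_{\N_{\PB_3}} : \wh{\PB}_3 \to \PB_3/\N_{\PB_3}$ equals $f \N_{\PB_3}$. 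By Proposition \ref{prop:gen-charm}, this $\GT$-shadow is charming, so we may choose a representative $(m,f)$ with $f \in [\F_2, \F_2]$. Consequently, the image of $\hat{f}$ in $\PB_3/\N_{\PB_3}$ lies in $[\F_2,\F_2] \cdot \N_{\PB_3}/\N_{\PB_3}$ for every $\N \in \NFI_{\PB_4}(\B_4)$.

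Next, I would invoke Proposition \ref{prop:subgroups-in-PB3-2}(A): given an arbitrary $N \in \NFI(\PB_3)$, there exists $\K \in \NFI_{\PB_4}(\B_4)$ with $\K_{\PB_3} \le N$. Hence the family $\{ \N_{\PB_3} \}_{\N \in \NFI_{\PB_4}(\B_4)}$ is cofinal in $\NFI(\PB_3)$. Since $\wh{\PB}_3$ is the inverse limit of the quotients $\PB_3/N$ over $N \in \NFI(\PB_3)$ and an inverse limit over a cofiltered system is unchanged by passage to a cofinal subsystem, the element $\hat{f}$ is entirely determined by its images $f \N_{\PB_3}$ as $\N$ ranges over $\NFI_{\PB_4}(\B_4)$. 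Each such image lies in $[\F_2,\F_2] \cdot \N_{\PB_3}/\N_{\PB_3}$, so $\hat{f}$ lies in the topological closure of $[\F_2,\F_2]$ inside $\wh{\PB}_3$.

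It remains to transfer this conclusion into $\Fh_2$. Using the decomposition $\wh{\PB}_3 \cong \Fh_2 \times \Zhat$ from the identification $\PB_3 \cong \F_2 \times \bbZ$, the subgroup $\Fh_2 \le \wh{\PB}_3$ is closed, and the subspace topology on $\Fh_2$ coincides with its profinite topology. Therefore the closure of $[\F_2,\F_2]$ in $\wh{\PB}_3$ coincides with its closure in $\Fh_2$, and in particular $\hat{f} \in \Fh_2$. Finally, by continuity of the commutator map on $\Fh_2$ and density of $\F_2$ in $\Fh_2$, any commutator $[\hat{a},\hat{b}]$ with $\hat{a},\hat{b} \in \Fh_2$ is a limit of commutators in $[\F_2,\F_2]$; iterating over finite products of commutators gives $[\Fh_2,\Fh_2] \subseteq \overline{[\F_2,\F_2]}$, and the reverse inclusion is trivial, so $\overline{[\F_2,\F_2]} = ([\Fh_2,\Fh_2])^{\cl}$. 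This yields $\hat{f} \in ([\Fh_2,\Fh_2])^{\cl}$, as required.

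No step is truly an obstacle given what is available; the only mildly subtle point is the interplay between the closure of $[\F_2,\F_2]$ in $\wh{\PB}_3$ versus in $\Fh_2$, and between $\overline{[\F_2,\F_2]}$ and $\overline{[\Fh_2,\Fh_2]}$. Both identifications are straightforward once one invokes the product decomposition $\wh{\PB}_3 \cong \Fh_2 \times \Zhat$ and the continuity of the group operations in a profinite group.
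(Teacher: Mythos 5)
Your proof is correct, and it relies on the same two key ingredients as the paper's proof: Proposition \ref{prop:gen-charm} (genuine implies charming, so each finite-quotient image of $\hat{f}$ has a representative in $[\F_2,\F_2]$) and the cofinality statement of Proposition \ref{prop:subgroups-in-PB3-2}. The only difference is one of routing: the paper fixes an arbitrary $\N \in \NFI(\F_2)$, observes that $\N \times \lan c \ran \in \NFI(\PB_3)$, and produces $\K \in \NFI_{\PB_4}(\B_4)$ with $\K_{\F_2} \le \N$, so that the reduction happens entirely in $\F_2$-quotients; you instead argue in $\PB_3$-quotients, conclude $\hat f$ lies in the closure of $[\F_2,\F_2]$ inside $\wh{\PB}_3$, and then descend to $\Fh_2$ via the product decomposition $\wh{\PB}_3 \cong \Fh_2 \times \Zhat$. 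You also make explicit the identification $\overline{[\F_2,\F_2]} = ([\Fh_2,\Fh_2])^{\cl}$ (via continuity of the commutator map), which the paper leaves implicit. Both versions are sound; the paper's is a touch more economical since it never leaves $\F_2$, while yours has the minor virtue of also making visible why $\hat f$ lands in $\Fh_2$ rather than just $\wh{\PB}_3$.
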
  
\begin{proof}
It suffices to show that, for every $\N \in \NFI(\F_2)$, the element 
$\hcP_{\N}(\hat{f}) \in  \F_2/\N$ can be represented by $f_1 \in [\F_2, \F_2]$. 
Let us observe that $\N \times \lan c \ran \in \NFI(\PB_3)$. 

Due to Proposition \ref{prop:subgroups-in-PB3-2}, there exists $\K \in \NFI_{\PB_4}(\B_4)$
such that $\K_{\PB_3} \le \N \times \lan c \ran$. Clearly, $\K_{\F_2} \le \N$. 

Since the pair $(\hcP_{K_{\ord}}(\hat{m}), \hcP_{\K_{\F_2} }(\hat{f}))$ is
a charming $\GT$-shadow in $\GT(\K)$, the element  
$\hcP_{\K_{\F_2} }(\hat{f})\in \F_2 / \K_{\F_2} $ can be represented by 
$f_1 \in [\F_2, \F_2]$. Since $\K_{\F_2} \le \N$, the same element $f_1 \in [\F_2, \F_2]$
represents the coset $\hcP_{\N}(\hat{f}) \in \F_2/\N$.  
\end{proof}

\bigskip
\bigskip

Let us denote by $\GT^{\hs}(\N)$ the subset of all charming $\GT$-shadows in $\GT(\N)$ 
and prove that  $\GT(\N)$ can be safely replaced by $\GT^{\hs}(\N)$ in all the 
constructions of Section \ref{sec:GTSh}. More precisely, 
\begin{prop}  
\label{prop:charm}
The assignment 
\begin{equation}
\label{GT-sh-morph-charm}
\Hom_{\GTSh^{\hs}}(\ti{\N}, \N) ~ : = ~ \{ [(m,f)] \in \GT^{\hs}(\N) ~|~ \ti{\N} = \ker(T^{\PB_4}_{m,f}) \}, 
\quad 
\ti{\N}, \N \in \NFI_{\PB_4}(\B_4)
\end{equation} 
upgrades the set  $\NFI_{\PB_4}(\B_4)$ to a groupoid. 
\end{prop}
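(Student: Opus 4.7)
The plan is to verify the three groupoid axioms for the class of charming $\GT$-shadows: the identity morphism at each $\N$ is charming, composition preserves the charming property, and inversion preserves the charming property. Throughout, I will use the practical composition formula \eqref{compose-practical} from Remark \ref{rem:practical}, which is applicable because every charming $\GT$-shadow admits a representative with $f \in \F_2$. The identity case is immediate: the identity morphism at $\N$ in $\GTSh$ is $[(0, 1_{\PB_3})]$, and $1 \in [\F_2, \F_2]$ trivially, while $T^{\F_2}_{0, 1}$ is the canonical projection $\F_2 \to \F_2/\N_{\F_2}$, which is onto.

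For composition, suppose $[(m_1, f_1)]$ and $[(m_2, f_2)]$ are composable charming shadows with $f_1, f_2 \in [\F_2, \F_2]$. By \eqref{compose-practical}, their composite is $[(m, f)]$ with
$$
f(x,y) = f_2(x, y) \cdot f_1\big(x^{2m_2+1},\, f_2(x,y)^{-1} y^{2m_2+1} f_2(x,y)\big).
$$
Because the commutator subgroup $[\F_2, \F_2]$ is preserved under arbitrary endomorphisms of $\F_2$, substituting $x^{2m_2+1}$ and $f_2^{-1} y^{2m_2+1} f_2$ (both in $\F_2$) for the generators of a word in $[\F_2, \F_2]$ again lands in $[\F_2, \F_2]$; hence $f \in [\F_2, \F_2]$. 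For surjectivity of $T^{\F_2}_{m, f}$, I combine \eqref{act-on-xy} with the explicit formula for $f$ to derive the composition identity
$$
T^{\F_2}_{m,f}(u)\,\N^{(3)}_{\F_2} \;=\; T^{\F_2}_{m_2, f_2}\big(\wt{T^{\F_2}_{m_1, f_1}(u)}\big)\,\N^{(3)}_{\F_2}
\qquad \forall\, u \in \F_2,
$$
where $\wt{\cdot}$ denotes any lift to $\F_2$. Since $T^{\F_2}_{m_1, f_1}$ is onto, the lifts $\wt{T^{\F_2}_{m_1, f_1}(u)}$ range over all of $\F_2$, and since $T^{\F_2}_{m_2, f_2}$ is onto, so is $T^{\F_2}_{m, f}$.

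For inverses, let $[(m, f)] \in \Hom_{\GTSh}(\ti{\N}, \N)$ be charming with representative $f \in [\F_2, \F_2]$, and let $[(m', f')]$ be its two-sided inverse in $\GTSh$. Applying \eqref{compose-practical} to $[(m', f')] \circ [(m, f)] = [(0, 1_{\PB_3})]$ yields $(2m+1)(2m'+1) \equiv 1 \pmod{N_{\ord}}$ (so the inverse is friendly) and the congruence
$$
f'(x,y) \;\equiv\; f\big(x^{2m'+1},\, f'(x,y)^{-1} y^{2m'+1} f'(x,y)\big)^{-1} \pmod{\ti{\N}_{\PB_3}}.
$$
The right-hand side lies in $[\F_2, \F_2]$ by the same substitution argument, so the coset $f' \ti{\N}_{\PB_3}$ admits a representative in $[\F_2, \F_2]$. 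Surjectivity of $T^{\F_2}_{m', f'}$ follows from the same composition identity: since $T^{\F_2}_{0, 1}$ is the canonical projection, for every $u \in \F_2$ we get $u\, \ti{\N}_{\F_2} = T^{\F_2}_{m', f'}\big(\wt{T^{\F_2}_{m, f}(u)}\big)\, \ti{\N}_{\F_2}$, so $T^{\F_2}_{m', f'}$ is onto. The main technical step is the composition identity for $T^{\F_2}_{m, f}$, which is implicit in the proof of Proposition \ref{prop:GTSh} and requires careful bookkeeping with the practical formula; once it is in place, both closure properties reduce to the elementary fact that the commutator subgroup of $\F_2$ is preserved under all endomorphisms.
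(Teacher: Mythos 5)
Your argument is correct, and for the inverse step it takes a genuinely different route from the paper. The paper verifies closure under inverses by writing $f^{-1}$ explicitly as a product of commutators $[g_{i1},g_{i2}]$, using surjectivity of $T^{\F_2}_{m,f}$ to lift each $g_{ij}$ to $\ti g_{ij}\in\F_2$, and showing that $g:=\prod_i[\ti g_{i1},\ti g_{i2}]\in[\F_2,\F_2]$ differs from $\ti f$ by an element of $\ker T^{\PB_3}_{m,f}=\N^{\ms}_{\PB_3}$, so $g$ represents the same coset. You instead feed $[(m',f')]\circ[(m,f)]=\id$ into the composition formula \eqref{sh-composition} (via \eqref{compose-practical}) and read off $f'\ti{\N}_{\PB_3}=\big(T^{\PB_3}_{m',f'}(f)\big)^{-1}=f\big(x^{2m'+1},\,(f')^{-1}y^{2m'+1}f'\big)^{-1}\ti{\N}_{\PB_3}$, then observe that since $f\in[\F_2,\F_2]$ its image under the substitution endomorphism again lies in $[\F_2,\F_2]$. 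This is shorter and avoids handling the commutator word explicitly; it also makes the $T^{\F_2}$-surjectivity of the inverse completely explicit, whereas the paper leaves it to the cardinality remark $|\F_2:\N^{\ms}_{\F_2}|=|\F_2:\N_{\F_2}|$ that follows Definition \ref{dfn:charm}. One point you should tighten: you write $f'(x,y)$ as if $f'\in\F_2$ is already known, which is exactly what you are in the middle of proving, and \eqref{compose-practical} is formally stated only for practical shadows. This is in fact harmless — writing $f'=g\,c^k$ with $g\in\F_2$ and $c$ central, the $c^k$ cancels in $(f')^{-1}y^{2m'+1}f'=g^{-1}y^{2m'+1}g\in\F_2$, so $T^{\PB_3}_{m',f'}(f)$ automatically lands in $\F_2\ti{\N}_{\PB_3}/\ti{\N}_{\PB_3}$ and the substitution argument goes through for arbitrary $f'\in\PB_3$ — but you should say so rather than silently invoking Remark \ref{rem:practical} for the not-yet-proved-practical inverse. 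The composition step is essentially the paper's argument (your ``composition identity'' is precisely the factorization $T^{\F_2}_{m,f}=T^{\F_2,\isom}_{m_2,f_2}\circ T^{\F_2}_{m_1,f_1}$ the paper uses), so that part is the same.
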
  
\begin{proof} Let $[(m_1,f_1)] \in \Hom_{\GTSh^{\hs}}(\N^{(1)}, \N^{(2)})$ and 
$[(m_2,f_2)] \in \Hom_{\GTSh^{\hs}}(\N^{(2)}, \N^{(3)})$.
Since the $\GT$-shadows $[(m_1,f_1)]$ and $[(m_2, f_2)]$
are charming, we may assume, without loss of generality, that $f_1, f_2 \in [\F_2, \F_2]$. 

Due to Remark \ref{rem:practical}, the composition $[(m_2,f_2)] \circ  [(m_1,f_1)]$  
is represented by a pair $(m,f)$ with
$$
f = f_2 f_1(x^{2m_2+1}, f_2^{-1}(x,y) y^{2m_2+1} f_2(x,y)).  
$$
Since $f_1, f_2 \in [\F_2, \F_2]$, it is clear that $f$ also belongs to $[\F_2, \F_2]$. 

Since $T^{\F_2}_{m,f}: \F_2 \to \F_2 / \N^{(3)}$ is the composition of 
the onto homomorphism $T^{\F_2}_{m_1,f_1}: \F_2 \to \F_2 / \N^{(2)}$ and 
the isomorphism $T^{\F_2, \isom}_{m_2,f_2}: \F_2/ \N^{(2)}  \to \F_2 / \N^{(3)}$, 
the homomorphism $T^{\F_2}_{m,f}$ is also onto. 

We proved that the subset of charming $\GT$-shadows is closed under composition. 

To prove that the subset of charming $\GT$-shadows is closed under taking inverses, 
we start with 
a charming $\GT$-shadow $[(m,f)] \in \Hom_{\GTSh^{\hs}}(\N^{\ms}, \N)$ and assume 
that $f \in [\F_2, \F_2]$. 
Let $[(\ti{m},\ti{f})] \in \Hom_{\GTSh}(\N, \N^{\ms})$ be the inverse of $[(m,f)]$ in $\GTSh$. 
In other words, 
$$
2 m\ti{m} + m + \ti{m} \equiv 0 ~~\mod~~ N_{\ord}
$$
and 
\begin{equation}
\label{ti-f-f}
f \, T^{\PB_3}_{m,f} (\ti{f}) = 1_{\PB_3/\N_{\PB_3}}\,.
\end{equation}
Our goal is to show that the coset $\ti{f} \N_{\PB_3}$ can be represented by $g \in [\F_2, \F_2]$. 

Since $f^{-1}$ belongs to $[\F_2, \F_2]$, we have 
$$
f^{-1} = [g_{11}, g_{12}] [g_{21}, g_{22}] \dots  [g_{r1}, g_{r2}],
$$
where each $g_{ij} \in \F_2$ and $[g_1,g_2] := g_1 g_2 g_1^{-1} g_2^{-1}$.

Since the homomorphism $T^{\F_2}_{m,f} : \F_2 \to \F_2 / \N_{\F_2}$ is onto, 
for every $g_{ij}$, there exists $\ti{g}_{ij} \in \F_2$ such that 
$T^{\F_2}_{m,f}(\ti{g}_{ij}) = g_{ij} \N_{\F_2}$. Hence, for 
$$
g := [\ti{g}_{11}, \ti{g}_{12}] [\ti{g}_{21}, \ti{g}_{22}] \dots  [\ti{g}_{r1}, \ti{g}_{r2}] ~\in~  [\F_2, \F_2]
$$
we have $T^{\PB_3}_{m,f} (g) = f^{-1} \N_{\PB_3}$ or equivalently 
\begin{equation}
\label{g-and-f-inv}
f  T^{\PB_3}_{m,f} (g) = 1_{\PB_3/ \N_{\PB_3}}\,.
\end{equation}

Combining \eqref{ti-f-f} with \eqref{g-and-f-inv} we conclude that the element 
$g^{-1} \ti{f}$ belongs to the kernel of $T^{\PB_3}_{m,f} : \PB_3 \to \PB_3/ \N_{\PB_3}$. 
Thus, due to Proposition \ref{prop:sim-s-OK}, $g$ also represents the coset $\ti{f} \N_{\PB_3}$.  

Since, by construction $g \in [\F_2, \F_2]$, the desired statement is proved.  
\end{proof}

\section{The Main Line functor $\ML$ and $\GTh$}
\label{sec:ML}
In this section, we use (charming) $\GT$-shadows to construct a functor $\ML$ from a certain subposet 
of $\NFI_{\PB_4}(\B_4)$ to the category of finite groups. We prove that the limit of the functor $\ML$
is isomorphic to the Grothendieck-Teichmueller group $\GTh$.

\subsection{Connected components of $\GTSh^{\hs}$, settled $\GT$-shadows and 
isolated elements of $\NFI_{\PB_4}(\B_4)$}
\label{sec:settled-isolated}

Since the set $\NFI_{\PB_4}(\B_4)$ is infinite, so is the groupoid $\GTSh^{\hs}$. Moreover, the groupoid $\GTSh^{\hs}$ is 
highly disconnected. Indeed, if $\ti{\N}$ and $\N$ are connected by a morphism in $\GTSh^{\hs}$, then they must have the 
same index in $\PB_4$.  

For $\N \in \NFI_{\PB_4}(\B_4)$ we denote by 
$$
\GTSh^{\hs}_{\conn}(\N)
$$
the connected component of $\N$ in the groupoid $\GTSh^{\hs}$. 
Clearly, an element $\ti{\N}$ of $\NFI_{\PB_4}(\B_4)$ is an object of $\GTSh^{\hs}_{\conn}(\N)$ 
if and only if there exists $[(m,f)] \in \GT^{\hs}(\N)$ such that 
$$
\ti{\N} = \ker(T^{\PB_4}_{m,f}).
$$
We call objects of the groupoid $\GTSh^{\hs}_{\conn}(\N)$ \emph{conjugates} of $\N$. 

Since $\GT^{\hs}(\N)$ is a finite set for every $\N \in \NFI_{\PB_4}(\B_4)$, it is easy to show that 
\begin{prop}
\label{prop:GTSh-conn-finite}
For every $\N \in \NFI_{\PB_4}(\B_4)$, the (connected) groupoid $\GTSh^{\hs}_{\conn}(\N)$ is finite. 
\end{prop}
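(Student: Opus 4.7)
The plan is to reduce the finiteness of the connected component to the finiteness of a single hom-set $\GT^{\hs}(\N)$, which in turn is bounded by a cardinality we can read off directly from the data of $\N$.

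First, I would observe that $\GT^{\hs}(\N)$ is a finite set for every $\N \in \NFI_{\PB_4}(\B_4)$. By Definition \ref{dfn:GT-pair}, every $\GT$-pair in $\GT_{\pr}(\N)$ is represented by a class in $\bbZ / N_{\ord} \bbZ \times \PB_3 / \N_{\PB_3}$. The factor $\bbZ / N_{\ord} \bbZ$ is finite since $N_{\ord}$ is a positive integer, and $\PB_3 / \N_{\PB_3}$ is finite by Proposition \ref{prop:N-PB3-B3}. Hence $\GT_{\pr}(\N)$ is finite, and so are its subsets $\GT(\N) \supseteq \GT^{\hs}(\N)$.

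Second, I would bound the set of objects of $\GTSh^{\hs}_{\conn}(\N)$ by $|\GT^{\hs}(\N)|$. Since $\GTSh^{\hs}$ is a groupoid, every object $\ti{\N}$ in the connected component of $\N$ admits a morphism $\ti{\N} \to \N$, i.e., a charming $\GT$-shadow $[(m,f)] \in \GT^{\hs}(\N)$ with $\ker(T^{\PB_4}_{m,f}) = \ti{\N}$ (using the identification from \eqref{GT-sh-morph-charm}). Thus the assignment $[(m,f)] \mapsto \ker(T^{\PB_4}_{m,f})$ defines a surjection from the finite set $\GT^{\hs}(\N)$ onto the set of objects of $\GTSh^{\hs}_{\conn}(\N)$, so the latter is finite.

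Third, I would bound the number of morphisms. For any two objects $\ti{\N}_1, \ti{\N}_2$ of $\GTSh^{\hs}_{\conn}(\N)$, the hom-set $\Hom_{\GTSh^{\hs}}(\ti{\N}_1, \ti{\N}_2)$ sits inside the finite set $\GT^{\hs}(\ti{\N}_2)$ (by the finiteness argument of the first step applied to $\ti{\N}_2$). Summing over the finite set of ordered pairs $(\ti{\N}_1, \ti{\N}_2)$ of objects in the component yields a finite total number of morphisms, which completes the argument.

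There is no real obstacle here: the whole proposition is a packaging of the fact that $\bbZ / N_{\ord} \bbZ \times \PB_3 / \N_{\PB_3}$ is a finite set. The only point that requires a moment of thought is the surjectivity in the second step, where one uses that in a groupoid every object of a connected component is linked to any chosen basepoint by an (invertible) morphism.
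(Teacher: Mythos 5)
Your proof is correct and fills in the details of exactly the argument the paper has in mind: the paper simply asserts that the proposition follows from the finiteness of $\GT^{\hs}(\N)$, and you supply precisely the missing steps (finiteness of $\GT^{\hs}(\N)$ from the containment in $\bbZ/N_{\ord}\bbZ \times \PB_3/\N_{\PB_3}$, surjectivity of the source map onto the objects of the component, and the resulting finiteness of each hom-set). No gaps.
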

\qed

To establish a more precise link between (charming) $\GT$-shadows and the group $\GTh$, we 
will be interested in a certain subposet of $\NFI_{\PB_4}(\B_4)$. Let us start with 
the following definition:  
\begin{defi}  
\label{dfn:settled}
Let $\N \in \NFI_{\PB_4}(\B_4)$ and $[(m,f)] \in \GT^{\hs}(\N)$. 
A charming $\GT$-shadow 
$[(m,f)]$ is called \emph{settled} if its source coincides with $\N$, i.e. 
$\ker(T^{\B_4}_{m,f}) = \N$. An element $\N$ of the poset $\NFI_{\PB_4}(\B_4)$ 
is called \emph{isolated} if every $\GT$-shadow in $\GT^{\hs}(\N)$ is settled. 
\end{defi}  
Clearly, a $\GT$-shadow $[(m,f)] \in \GT^{\hs}(\N)$ is settled if and only if $[(m,f)]$ is an automorphism 
of the object $\N$ in the groupoid $\GTSh^{\hs}$. Moreover, 
an element $\N \in  \NFI_{\PB_4}(\B_4)$ is isolated if and only if the groupoid 
$\GTSh^{\hs}_{\conn}(\N)$ has exactly one object. In this case, 
$\GT^{\hs}(\N)$ is the group of automorphisms of the object $\N$ in the groupoid $\GTSh^{\hs}$.

The following proposition gives us a simple way to produce many examples of 
isolated elements of $\NFI_{\PB_4}(\B_4)$:   
\begin{prop}  
\label{prop:isolated-practice}
For every $\N \in \NFI_{\PB_4}(\B_4)$, the normal subgroup
\begin{equation}
\label{N-sharp}
\N^{\sh} : = \bigcap_{\K \in \Ob(\GTSh^{\hs}_{\conn}(\N))} \K
\end{equation}
is an isolated element of $\NFI_{\PB_4}(\B_4)$. 
\end{prop}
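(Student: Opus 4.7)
The plan is first to confirm $\N^{\sh} \in \NFI_{\PB_4}(\B_4)$ and then to establish isolation by a ``push-forward'' argument: a given charming shadow on $\N^{\sh}$ will induce compatible charming shadows on every conjugate $\K$ of $\N$, and comparing their sources will pin down the source of the original. Finiteness of $\Ob(\GTSh^{\hs}_{\conn}(\N))$ (Proposition~\ref{prop:GTSh-conn-finite}) together with the closure of $\NFI_{\PB_4}(\B_4)$ under finite intersections immediately gives $\N^{\sh} \in \NFI_{\PB_4}(\B_4)$.

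For isolation, I would fix $[(m,f)] \in \GT^{\hs}(\N^{\sh})$, set $\ti{\N} := \ker(T^{\PB_4}_{m,f})$, and reduce the goal $\ti{\N} = \N^{\sh}$ to the containment $\ti{\N} \subseteq \N^{\sh}$, using the index equality $|\PB_4 : \ti{\N}| = |\PB_4 : \N^{\sh}|$ of Corollary~\ref{cor:source}.

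For each $\K \in \Ob(\GTSh^{\hs}_{\conn}(\N))$, the inclusion $\N^{\sh} \le \K$ together with Proposition~\ref{prop:sim-OK} provides a projection $\cP_{\sim_{\N^{\sh}}, \sim_{\K}} : \PaB^{\le 4}/\sim_{\N^{\sh}} \to \PaB^{\le 4}/\sim_{\K}$. Composing with $T_{m,f}$ yields an onto morphism of truncated operads, which by Proposition~\ref{prop:onto} arises from a $\GT$-shadow $[(m,f)]_{\K} \in \GT(\K)$ represented by the same pair $(m,f)$; charmingness persists because a commutator representative of $f$ modulo $\N^{\sh}_{\PB_3}$ also serves modulo $\K_{\PB_3}$, and $T^{\F_2}$-surjectivity is preserved under further projections. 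Writing $\ti{\K}$ for the source of $[(m,f)]_{\K}$, the factorization of $T^{\PB_4}_{m,f,\K}$ through $T^{\PB_4}_{m,f}$ identifies $\ti{\K} = (T^{\PB_4}_{m,f})^{-1}(\K/\N^{\sh})$; in particular $\ti{\N} \subseteq \ti{\K}$, and $\ti{\K} \in \Ob(\GTSh^{\hs}_{\conn}(\N))$ as the source of a morphism in $\GTSh^{\hs}$ whose target lies in that component.

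The main obstacle is then to show that $\K \mapsto \ti{\K}$ is a bijection on $\Ob(\GTSh^{\hs}_{\conn}(\N))$. Injectivity comes from the lattice correspondence for the surjection $T^{\PB_4}_{m,f} : \PB_4 \twoheadrightarrow \PB_4/\N^{\sh}$: distinct $\K/\N^{\sh}$ have distinct preimages $\ti{\K}$. Combined with the finiteness of the object set, injectivity upgrades to bijectivity, and then
\begin{equation*}
\ti{\N} \;\subseteq\; \bigcap_{\K \in \Ob(\GTSh^{\hs}_{\conn}(\N))} \ti{\K} \;=\; \bigcap_{\K \in \Ob(\GTSh^{\hs}_{\conn}(\N))} \K \;=\; \N^{\sh}.
\end{equation*}
Combining this with the index equality forces $\ti{\N} = \N^{\sh}$, so every charming $\GT$-shadow in $\GT^{\hs}(\N^{\sh})$ is settled, establishing isolation.
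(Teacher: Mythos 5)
Your argument is correct, but it closes the loop differently than the paper does, and the comparison is worth noting. The paper proves the inclusion in the direction $\N^{\sh} \subseteq \ker(T^{\PB_4}_{m,f})$: take $h \in \N^{\sh}$; since the source $\K^{\ms}$ of the induced shadow $[(m,f)] \in \GT^{\hs}(\K)$ is itself an object of $\GTSh^{\hs}_{\conn}(\N)$ for every conjugate $\K$ of $\N$, the minimality of $\N^{\sh}$ gives $\N^{\sh} \le \K^{\ms}$, so $h \in \K^{\ms}$, i.e.\ $T^{\PB_4}_{m,f}(h) \in \K / \N^{\sh}$ for all $\K$; intersecting over all $\K$ forces $T^{\PB_4}_{m,f}(h) = 1_{\PB_4/\N^{\sh}}$, hence $h \in \ker(T^{\PB_4}_{m,f})$. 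You prove the opposite inclusion $\ker(T^{\PB_4}_{m,f}) \subseteq \N^{\sh}$, and to do so you need the extra observation that $\K \mapsto \ti{\K} := (T^{\PB_4}_{m,f})^{-1}(\K/\N^{\sh})$ is a \emph{bijection} of $\Ob(\GTSh^{\hs}_{\conn}(\N))$ onto itself (injectivity from surjectivity of $T^{\PB_4}_{m,f}$ via the lattice correspondence, then bijectivity from finiteness). The paper's route avoids the bijection step entirely --- it needs only that each $\K^{\ms}$ is \emph{some} conjugate, not that the map is a permutation --- and is therefore a touch more economical; what your route buys is an explicit structural fact (the shadow permutes the set of conjugates of $\N$), which is perhaps worth recording even though it is not strictly needed here. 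The rest of your write-up --- finiteness of the intersection giving $\N^{\sh} \in \NFI_{\PB_4}(\B_4)$, persistence of charmingness under the projections $\cP_{\N^{\sh},\K}$, the identification $\ti{\K} = (T^{\PB_4}_{m,f})^{-1}(\K/\N^{\sh})$, and the final appeal to the index equality from Corollary~\ref{cor:source} --- matches the paper's reasoning.
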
        
\begin{proof} Let $[(m,f)] \in \GT^{\hs}(\N^{\sh})$ and $\N^{\sh, \ms}$ be the source of the corresponding 
morphism in $\GTSh^{\hs}$, i.e. $\N^{\sh, \ms} : = \ker(T^{\PB_4}_{m,f})$.

Since $\N^{\sh} \le \K$, the same pair $(m,f)\in \bbZ \times \F_2$ represents a $\GT$-shadow 
in $\GT^{\hs}(\K)$. Moreover, the homomorphism from $\PB_4$ to $\PB_4/\K$ corresponding 
to $[(m,f)] \in \GT^{\hs}(\K)$ is the composition $\cP_{\N^{\sh}, \K} \circ T^{\PB_4}_{m,f}$ of $T^{\PB_4}_{m,f}$
with the canonical projection 
$$
\cP_{\N^{\sh}, \K} : \PB_4/\N^{\sh} \to  \PB_4/ \K\,. 
$$

Let $h \in \N^{\sh}$, $\ti{h} \in \PB_4$ be a representative of $T^{\PB_4}_{m,f}(h)$ 
and $\K^{\ms}$ be the source of the $\GT$-shadow  $[(m,f)] \in \GT^{\hs}(\K)$ 
(i.e. $\K^{\ms} : = \ker\big( \cP_{\N^{\sh}, \K} \circ T^{\PB_4}_{m,f}\big)$)

Since $\N^{\sh} \le \K^{\ms}$, we have  
\begin{equation}
\label{goes-to-1-mod-K}
\cP_{\N^{\sh}, \K} \big( T^{\PB_4}_{m,f}(h) \big) = 1_{\PB_4/ \K}\,.
\end{equation}

Identity \eqref{goes-to-1-mod-K} implies that
$\ti{h} \in \K$ for every $\K \in \Ob(\GTSh^{\hs}_{\conn}(\N))$ 
and hence $\ti{h} \in \N^{\sh}$. Therefore $T^{\PB_4}_{m,f}(h) = 1_{\PB_4/\N^{\sh}}$ 
or equivalently $h \in \N^{\sh, \ms}$. 
 
We proved that 
\begin{equation}
\label{source-in-sharp}
\N^{\sh} \le \N^{\sh, \ms}\,.
\end{equation}

Since these subgroups have the same index in $\PB_4$, inclusion 
\eqref{source-in-sharp} implies that $\N^{\sh, \ms} = \N^{\sh}$.

Since we started with an arbitrary $\GT$-shadow in $\GT^{\hs}(\N^{\sh})$, 
we proved that $\N^{\sh}$ is indeed an isolated element of $\NFI_{\PB_4}(\B_4)$. 
\end{proof}
\begin{remark}  
\label{rem:isolated-practice}
In all examples we have considered so far (see Section \ref{sec:comp-exp} on selected results of 
computer experiments),  $\GTSh^{\hs}_{\conn}(\N)$ has at most two objects. 
Hence equation \eqref{N-sharp} gives us a practical way to produce examples of isolated 
elements of $\NFI_{\PB_4}(\B_4)$.  
\end{remark}

Let us denote by 
\begin{equation}
\label{NFI-isolated}
\NFI^{isolated}_{\PB_4}(\B_4)
\end{equation}
the subposet of isolated elements of $\NFI_{\PB_4}(\B_4)$. 

Since $\N^{\sh} \le \N$ for every $\N \in \NFI_{\PB_4}(\B_4)$, 
Proposition \ref{prop:isolated-practice} implies that
\begin{cor}  
\label{cor:isolated-cofinal}
The subposet $\NFI^{isolated}_{\PB_4}(\B_4)$ of $\NFI_{\PB_4}(\B_4)$ is \emph{cofinal}. 
In other words, for every $\N \in \NFI_{\PB_4}(\B_4)$, there exists 
$\K \in \NFI^{isolated}_{\PB_4}(\B_4)$ such that $\K \le \N$. $\qed$
\end{cor}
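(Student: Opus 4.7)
The plan is to simply take $\K := \N^{\sh}$, where $\N^{\sh}$ is the subgroup defined by equation \eqref{N-sharp} in Proposition \ref{prop:isolated-practice}. Almost all the work has already been done in that proposition, so the proof reduces to verifying two trivial checks.

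First, $\N^{\sh}$ belongs to $\NFI^{isolated}_{\PB_4}(\B_4)$ by the conclusion of Proposition \ref{prop:isolated-practice} itself; this already uses that the connected component $\GTSh^{\hs}_{\conn}(\N)$ is finite (by Proposition \ref{prop:GTSh-conn-finite}), so that the intersection in \eqref{N-sharp} is a finite intersection of finite-index normal subgroups contained in $\PB_4$, hence again an element of $\NFI_{\PB_4}(\B_4)$.

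Second, I need $\K \le \N$. This is immediate: the identity $\GT$-shadow $[(0, 1)] \in \GT^{\hs}(\N)$ is a morphism from $\N$ to itself in $\GTSh^{\hs}$, so $\N$ is an object of its own connected component $\GTSh^{\hs}_{\conn}(\N)$. Therefore $\N$ appears among the subgroups being intersected in \eqref{N-sharp}, and consequently $\N^{\sh} \le \N$.

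There is no serious obstacle; the only thing worth being careful about is that the intersection in \eqref{N-sharp} really is a finite intersection, which is where the finiteness statement of Proposition \ref{prop:GTSh-conn-finite} is used implicitly. Once that is noted, the corollary is a one-line consequence of Proposition \ref{prop:isolated-practice} together with the observation that $\N$ is always a vertex of $\GTSh^{\hs}_{\conn}(\N)$.
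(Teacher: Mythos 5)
Your proposal is correct and matches the paper's own (very brief) argument: set $\K := \N^{\sh}$, invoke Proposition \ref{prop:isolated-practice} for isolatedness, and observe $\N^{\sh} \le \N$ because $\N$ itself appears in the intersection defining $\N^{\sh}$. Your extra remark that the intersection is finite (via Proposition \ref{prop:GTSh-conn-finite}) and your identification of $[(0,1)]$ as the identity morphism of $\N$ in $\GTSh^{\hs}$ are both accurate fillings-in of details the paper leaves implicit.
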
  

Although, Corollary \ref{cor:isolated-cofinal} implies that 
the poset $\NFI^{isolated}_{\PB_4}(\B_4)$ is directed (it is a cofinal subposet of 
a directed poset), it is still useful to know that the intersection of two isolated 
elements of $\NFI_{\PB_4}(\B_4)$ is an isolated element of $\NFI_{\PB_4}(\B_4)$:
\begin{prop}  
\label{prop:intersect-isolated}
For every $\N^{(1)}, \N^{(2)} \in \NFI^{isolated}_{\PB_4}(\B_4)$, 
$$
\N^{(1)} \cap \N^{(2)}
$$
is also an isolated element of $\NFI_{\PB_4}(\B_4)$. 
\end{prop}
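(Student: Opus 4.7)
The plan is to verify directly, from the definitions, that every charming $\GT$-shadow of $\K := \N^{(1)} \cap \N^{(2)}$ is settled. Fix $[(m,f)] \in \GT^{\hs}(\K)$ and let $\K^{\ms} := \ker(T^{\PB_4}_{m,f})$ denote its source. The goal is to show $\K^{\ms} = \K$.

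First I would observe that, because $\K \le \N^{(i)}$ for $i = 1, 2$, we have $\K_{\PB_3} \le \N^{(i)}_{\PB_3}$, $\K_{\F_2} \le \N^{(i)}_{\F_2}$ and $N^{(i)}_{\ord} \mid K_{\ord}$, so the same pair $(m,f)$ descends to a $\GT$-pair in $\GT_{\pr}(\N^{(i)})$: the hexagon relations \eqref{hexa1}, \eqref{hexa11} and the pentagon relation \eqref{GT-penta}, valid mod $\K_{\PB_3}$ and mod $\K$ respectively, automatically hold mod $\N^{(i)}_{\PB_3}$ and mod $\N^{(i)}$. The homomorphism $T^{\PB_n}_{m,f}\colon \PB_n \to \PB_n/\N^{(i)}_{\PB_n}$ factors as the surjection $T^{\PB_n}_{m,f}\colon \PB_n \to \PB_n/\K_{\PB_n}$ followed by the canonical projection, so surjectivity propagates upward; the analogous argument handles $T^{\F_2}_{m,f}$. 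Any representative with $f \in [\F_2, \F_2]$ for the $\K$-level shadow still belongs to $[\F_2, \F_2]$ at the $\N^{(i)}$-level, so we obtain a charming $\GT$-shadow $[(m,f)]^{(i)} \in \GT^{\hs}(\N^{(i)})$ for each $i$.

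Next, since $\N^{(i)}$ is isolated, the source $\N^{(i), \ms}$ of $[(m,f)]^{(i)}$ must coincide with $\N^{(i)}$ itself. For any $h \in \K^{\ms}$, the element $T_{m,f}^{\PB_4}(h)$ is trivial in $\PB_4/\K$ and hence remains trivial in the further quotient $\PB_4/\N^{(i)}$; this says precisely that $h$ lies in $\N^{(i), \ms} = \N^{(i)}$. Taking $i = 1, 2$ gives $\K^{\ms} \le \N^{(1)} \cap \N^{(2)} = \K$. Finally, Corollary \ref{cor:source} guarantees $|\PB_4 : \K^{\ms}| = |\PB_4 : \K|$, and combined with $\K^{\ms} \le \K$ this forces $\K^{\ms} = \K$. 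Hence every charming $\GT$-shadow of $\K$ is settled, and $\K$ is isolated.

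The only real content is the first step: checking that reduction modulo the projection $\PaB^{\le 4}/\sim_{\K} \to \PaB^{\le 4}/\sim_{\N^{(i)}}$ sends charming $\GT$-shadows of $\K$ to charming $\GT$-shadows of $\N^{(i)}$. This is essentially a functoriality claim and is routine once one unpacks the definitions, but it is where all the technical conditions (hexagons, pentagon, three surjectivity requirements, the commutator representability) must be verified simultaneously. The subsequent deduction of isolation from isolation of the $\N^{(i)}$ is then a short diagram chase coupled with the cardinality argument of Corollary \ref{cor:source}.
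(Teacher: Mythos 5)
Your proof is correct and follows essentially the same route as the paper's: pass the $\GT$-shadow from $\K$ to each $\N^{(i)}$, use isolation of $\N^{(i)}$ to conclude $\K^{\ms}\le\N^{(1)}\cap\N^{(2)}=\K$, and then invoke equality of indices (Corollary \ref{cor:source}) to get $\K^{\ms}=\K$. The only difference is that you spell out the descent step (hexagons, pentagon, surjectivity, commutator representability all pass to the larger quotient) a bit more explicitly, whereas the paper treats it as immediate.
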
  
\begin{proof} $\K := \N^{(1)} \cap \N^{(2)}$ is clearly an element of $\NFI_{\PB_4}(\B_4)$.
So our goal is to prove that $\K$ is isolated.

Let $[(m,f)] \in \GT^{\hs}(\K)$ and $\K^{\ms}$ be the kernel of the homomorphism 
$T^{\PB_4}_{m,f} : \PB_4 \to \PB_4 / \K $.

Recall that $\cP_{\K, \N^{(1)} }$ (resp. $\cP_{\K, \N^{(2)} }$) is the canonical homomorphism 
from $\PB_4/\K$ to $\PB_4/\N^{(1)}$ (resp. to  $\PB_4/\N^{(2)}$). 
Since $\K \le \N^{(1)}$ and $\K \le \N^{(2)}$, the pair $(m,f)$ also represents 
a $\GT$-shadow in $\GT^{\hs}(\N^{(1)})$ and a $\GT$-shadow in $\GT^{\hs}(\N^{(2)})$. 
Moreover, the compositions $\cP_{\K, \N^{(1)} } \circ T^{\PB_4}_{m,f}$ and 
$\cP_{\K, \N^{(2)} } \circ T^{\PB_4}_{m,f}$ are the homomorphisms $\PB_4 \to \PB_4/\N^{(1)}$
and $\PB_4 \to \PB_4/\N^{(2)}$ corresponding to these $\GT$-shadows in 
$\GT^{\hs}(\N^{(1)})$ and  $\GT^{\hs}(\N^{(2)})$, respectively. 

Let us now consider $h \in \K^{\ms}$. 
Since $T^{\PB_4}_{m,f}(h) = 1_{\PB_4/\K}$, we have
\begin{equation}
\label{both-to-1}
\cP_{\K, \N^{(1)} } \circ T^{\PB_4}_{m,f} (h) = 1_{\PB_4/\N^{(1)}}\,, \qquad
\cP_{\K, \N^{(2)} } \circ T^{\PB_4}_{m,f} (h) = 1_{\PB_4/\N^{(2)}}. 
\end{equation}

Since $\N^{(1)}$, $\N^{(2)}$ are both isolated, identities \eqref{both-to-1} imply that
$h \in \N^{(1)}$ and $h \in \N^{(2)}$. Hence $h \in \K$. 

Since we showed that $\K^{\ms} \le \K$ and both subgroups have the same (finite) index in $\PB_4$, 
we have the desired equality $\K^{\ms} = \K$. 
\end{proof}
\bigskip

Recall that, for every isolated element $\N \in \NFI_{\PB_4}(\B_4)$, 
the set $\GT^{\hs}(\N)$ is a finite group. More precisely, $\GT^{\hs}(\N)$ is 
the (finite) group of automorphisms of $\N$ in the groupoid $\GTSh^{\hs}$. 
Let us denote this finite group by $\ML(\N)$ and prove that
\begin{prop}
\label{prop:ML-functor}
The assignment 
$$
\N \mapsto \ML(\N)
$$
upgrades to a functor $\ML$ from the poset $\NFI^{isolated}_{\PB_4}(\B_4)$ to the category of finite groups. 
\end{prop}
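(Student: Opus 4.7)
The plan is: for each morphism $\K \le \N$ in the poset $\NFI^{isolated}_{\PB_4}(\B_4)$, I would define a group homomorphism $\pi_{\K,\N}: \ML(\K) \to \ML(\N)$ by projecting a representing pair. Indeed, $\K \le \N$ forces $\K_{\PB_3} \le \N_{\PB_3}$ and $N_{\ord} \mid K_{\ord}$, so there is a canonical set map
$$
\bbZ/K_{\ord}\bbZ \times \PB_3/\K_{\PB_3} ~\longrightarrow~ \bbZ/N_{\ord}\bbZ \times \PB_3/\N_{\PB_3},
$$
and I would set $\pi_{\K,\N}([(m,f)]_{\K}) := [(m,f)]_{\N}$. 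The three things to check are that this assignment lands in $\GT^{\hs}(\N)$, that it is a group homomorphism, and that the assignment $(\K \le \N) \mapsto \pi_{\K,\N}$ is functorial.

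For well-definedness, the hexagon relations \eqref{hexa1}, \eqref{hexa11} and pentagon relation \eqref{GT-penta} are satisfied modulo the normal subgroups $\K_{\PB_3}$ and $\K$, hence a fortiori modulo the coarser $\N_{\PB_3}$ and $\N$, so $[(m,f)]_{\N} \in \GT_{\pr}(\N)$. The surjectivity conditions \eqref{T-PB-4}, \eqref{T-PB-3}, \eqref{T-PB-2} at the $\N$-level follow by composing the surjections $T^{\PB_n}_{m,f}: \PB_n \to \PB_n/\K_{\PB_n}$ with the canonical surjections $\PB_n/\K_{\PB_n} \twoheadrightarrow \PB_n/\N_{\PB_n}$, so by Proposition \ref{prop:onto} we have $[(m,f)]_{\N} \in \GT(\N)$. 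The charming conditions of Definition \ref{dfn:charm} transfer in the same way: a representative $f \in [\F_2,\F_2]$ of the coset $f\K_{\PB_3}$ is also a representative of $f\N_{\PB_3}$, and $T^{\F_2}_{m,f}: \F_2 \to \F_2/\N_{\F_2}$ factors through the surjection $\F_2 \to \F_2/\K_{\F_2}$, hence is onto. Finally, because $\N$ is isolated, every element of $\GT^{\hs}(\N)$ is a settled $\GT$-shadow, i.e. an automorphism of $\N$ in $\GTSh^{\hs}$, so $\pi_{\K,\N}([(m,f)]_{\K}) \in \ML(\N)$.

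To verify the group homomorphism property, I would apply the explicit composition formula of Proposition \ref{prop:GTSh}. For $[(m_i,f_i)]_{\K} \in \ML(\K)$ with $i=1,2$, the composition equals $[(m,f)]_\K$ where $m \equiv 2m_1 m_2 + m_1 + m_2 \pmod{K_{\ord}}$ and $f\,\K_{\PB_3} = f_2 \K_{\PB_3} \cdot T^{\PB_3}_{m_2,f_2}(f_1)$. The key observation is that the element $T^{\PB_3}_{m_2,f_2}(f_1) \in \PB_3$ is determined (modulo the appropriate normal subgroup) by the formulas of Corollary \ref{cor:act-on-PB3}, which depend only on the representatives $(m_2,f_2)$ and $f_1$, not on the choice of target. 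Reducing the composition formula modulo $N_{\ord}$ and $\N_{\PB_3}$ therefore produces exactly the composition formula for $[(m_i,f_i)]_\N$ in $\ML(\N)$, so $\pi_{\K,\N}$ is a homomorphism. Functoriality is then immediate: for $\K'' \le \K \le \N$ in $\NFI^{isolated}_{\PB_4}(\B_4)$, successive projections of pairs compose to the overall projection, giving $\pi_{\K,\N} \circ \pi_{\K'',\K} = \pi_{\K'',\N}$, while $\pi_{\N,\N} = \id$.

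I do not foresee a real obstacle; the argument is largely bookkeeping. The subtlest point is that $\ML(\K)$ is a group \emph{only} because $\K$ is isolated (so that source equals target for every element), and the isolated hypothesis on $\N$ is used precisely to guarantee that the projected pair lands in the automorphism group $\ML(\N)$ rather than merely in $\GT^{\hs}(\N)$. The explicit compatibility of Corollary \ref{cor:act-on-PB3} with reduction modulo normal subgroups, although essentially formal, is what makes the composition formula behave well under $\pi_{\K,\N}$.
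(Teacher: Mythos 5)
Your proposal is correct and takes essentially the same route as the paper: both define $\ML_{\K,\N}$ by post-composing with the canonical projection (for you, reduction of representing pairs; for the paper, $T_{m,f} \mapsto \cP_{\K,\N} \circ T_{m,f}$ at the level of operad morphisms), and both reduce functoriality to transitivity of projections. The only real difference is in verifying the homomorphism property: you appeal to the explicit composition formula \eqref{sh-composition} of Proposition \ref{prop:GTSh} together with the observation that $T^{\PB_3}_{m_2,f_2}(f_1)$ commutes with reduction modulo nested normal subgroups, whereas the paper establishes commutativity of the diagram \eqref{p-T-diagram} directly on the level of operad morphisms; these are two phrasings of the same compatibility. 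Your proof is if anything slightly more careful than the paper's in spelling out why the projected pair is charming and settled.
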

\begin{proof}
Let $\K \le \N$ be isolated elements of $\NFI_{\PB_4}(\B_4)$. Our goal is to define a group homomorphism 
\begin{equation}
\label{ML-for-morphisms}
\ML_{\K, \N} :  \ML(\K) \to  \ML(\N) 
\end{equation}
and show that, for every triple of nested elements $\N^{(1)} \le \N^{(2)} \le  \N^{(3)}$ of $\NFI^{isolated}_{\PB_4}(\B_4)$,
\begin{equation}
\label{ML-OK}
\ML_{\N^{(2)}, \N^{(3)}}  \circ \ML_{\N^{(1)}, \N^{(2)}}  = \ML_{\N^{(1)}, \N^{(3)}}\,. 
\end{equation}

For this proof, it is convenient to identify $\GT$-shadows $[(m,f)] \in \GT^{\hs}(\K)$ with the corresponding 
onto morphisms $T_{m,f}: \PaB^{\le 4} \to \PaB^{\le 4}/\sim_{\K}$ of truncated operads. 
So let $[(m,f)] \in \GT^{\hs}(\K)$ and $T_{m,f}$ be the corresponding morphism.  

Recall that $\cP_{\K, \N}$ denotes the canonical onto morphism of truncated operads
$$
\cP_{\K, \N} : \PaB^{\le 4}/\sim_{\K} ~\to~  \PaB^{\le 4}/\sim_{\N}\,.
$$
 
Composing $\cP_{\K, \N} $ with $T_{m,f}$ we get an onto morphism 
$$
\cP_{\K, \N}  \circ T_{m,f} : \PaB^{\le 4} \to \PaB^{\le 4}/\sim_{\N}
$$
and hence an element of $\GT^{\hs}(\N)$.  

We set 
\begin{equation}
\label{ML-K-N}
\ML_{\K, \N} (T_{m,f}) : = \cP_{\K, \N}  \circ T_{m,f}\,.
\end{equation}

To prove that $\ML_{\K, \N}$ is a group homomorphism from $\ML(\K)$ to $\ML(\N)$, 
we recall that, since $\K$ is isolated, every  onto morphism of truncated operads 
$T : \PaB^{\le 4} \to \PaB^{\le 4}/\sim_{\K}$ factors as follows 
$$
\begin{tikzpicture}
\matrix (m) [matrix of math nodes, row sep=1.8em, column sep=2.1em]
{\PaB^{\le 4}   &  \\
\PaB^{\le 4} / \sim_{\K}  & \PaB^{\le 4} / \sim_{\K} \\ };
\path[->, font=\scriptsize]
(m-1-1) edge node[above] {$T$} (m-2-2) edge node[left] {$\cP_{\K}$} (m-2-1)
(m-2-1) edge node[above] {$T^{\isom}$} (m-2-2); 
\end{tikzpicture}
$$

Let us now show that, for every onto morphism of truncated operads 
$T : \PaB^{\le 4} \to \PaB^{\le 4}/\sim_{\K}$, the diagram
\begin{equation}
\label{p-T-diagram}
\begin{tikzpicture}
\matrix (m) [matrix of math nodes, row sep=1.8em, column sep=3.2em]
{\PaB^{\le 4}   &  \\
\PaB^{\le 4} / \sim_{\K}  & \PaB^{\le 4} / \sim_{\K} \\ 
 \PaB^{\le 4} / \sim_{\N}  & \PaB^{\le 4} / \sim_{\N} \\  };
\path[->, font=\scriptsize]
(m-1-1) edge node[above] {$T$} (m-2-2) edge node[left] {$\cP_{\K}$} (m-2-1)
(m-2-1) edge node[above] {$T^{\isom}$} (m-2-2)
(m-2-2) edge node[left] {$\cP_{\K, \N}$} (m-3-2)
(m-2-1) edge node[left] {$\cP_{\K, \N}$} (m-3-1) 
(m-3-1) edge node[above] {$(\cP_{\K, \N} \circ T)^{\isom}$} (m-3-2) ; 
\end{tikzpicture}
\end{equation}
commutes.

Since the top triangle of \eqref{p-T-diagram} commutes by definition of $T^{\isom}$, we only need 
to prove the commutativity of the square. Let $\ga \in \PaB^{\le 4}$ and $[\ga]_{\K}$ (resp.  $[\ga]_{\N}$) 
be equivalence classes of $\ga$ in $\PaB^{\le 4}/\sim_{\K}$ (resp. in $\PaB^{\le 4}/\sim_{\N}$). 
Since $T^{\isom}([\ga]_{\K}) = T(\ga)$,  $(\cP_{\K, \N} \circ T)^{\isom}([\ga]_{\N}) = \cP_{\K, \N}  \circ T(\ga)$ and 
$\cP_{\K, \N} ([\ga]_{\K}) = [\ga]_{\N}$, we have 
$$
\cP_{\K, \N}  \circ T^{\isom}([\ga]_{\K}) = \cP_{\K, \N}  \circ T(\ga) = 
(\cP_{\K, \N}  \circ T)^{\isom}([\ga]_{\N}) = (\cP_{\K, \N}  \circ T)^{\isom} \circ \cP_{\K, \N} ([\ga]_{\K}).
$$  
Thus \eqref{p-T-diagram} indeed commutes. 

Now let $T_1$ and $T_2$ be onto morphisms (of truncated operads)
$$
T_1, T_2 :  \PaB^{\le 4} \to \PaB^{\le 4}/\sim_{\K}\,.
$$ 

Since  
$$
T^{\isom}_1 \circ T_2 : \PaB^{\le 4} \to \PaB^{\le 4}/\sim_{\K}
$$
is the composition of $T_1$ and $T_2$ in $\GTSh^{\hs}$ and 
$$
(\cP_{\K, \N} \circ T_1)^{\isom} \circ (\cP_{\K, \N}  \circ T_2) :  \PaB^{\le 4} \to \PaB^{\le 4}/\sim_{\N}
$$
is the composition of $\cP_{\K, \N}  \circ T_1$ and $\cP_{\K, \N}  \circ T_2$ in $\GTSh^{\hs}$, 
our goal is to prove that
\begin{equation}
\label{p-T-T}
\cP_{\K, \N}  \circ ( T^{\isom}_1 \circ T_2 ) =  (\cP_{\K, \N}  \circ T_1)^{\isom} \circ (\cP_{\K, \N}  \circ T_2).
\end{equation}

Due to commutativity of \eqref{p-T-diagram} for $T = T_1$ we have 
$$
\cP_{\K, \N}  \circ  T^{\isom}_1 \circ T_2 = (\cP_{\K, \N}  \circ T_1)^{\isom} \circ \cP_{\K, \N}  \circ T_2\,.
$$
Thus equation \eqref{p-T-T} indeed holds and we proved that $\ML_{\K, \N}$ is a group homomorphism. 

\bigskip

Let us now consider isolated elements $\N^{(1)} \le \N^{(2)} \le  \N^{(3)}$ of $\NFI_{\PB_4}(\B_4)$. 
Since 
$$
\cP_{ \N^{(1)}, \N^{(3)} } = \cP_{ \N^{(2)}, \N^{(3)} } \circ \cP_{ \N^{(1)}, \N^{(2)} }\,,
$$
we have
$$
\ML_{\N^{(2)}, \N^{(3)}}
\circ \ML_{\N^{(1)}, \N^{(2)}}(T_{m,f}) =  \cP_{ \N^{(2)}, \N^{(3)} } \circ  \cP_{ \N^{(1)}, \N^{(2)} } \circ T_{m,f}  
$$
$$
= \cP_{ \N^{(1)}, \N^{(3)} } \circ T_{m,f} = \ML_{\N^{(1)}, \N^{(3)}}(T_{m,f}),
$$
for every $[(m,f)] \in \GT^{\hs}(\N^{(1)})$.

Thus the desired identity \eqref{ML-OK} holds and the proposition is proved. 
\end{proof}
\bigskip

We call the functor\footnote{One of the authors of this paper is trying to live in the 
sequence of suburbs of Philadelphia called the Main Line. The functor $\ML$ is named after this beautiful sequence of suburbs.} 
$\ML$  the {\it Main Line functor}. 

In the next section, we will prove the following theorem:
\begin{thm}  
\label{thm:GTh-ML}
The (profinite version) $\GTh$ of the Grothendieck-Teichmueller group is isomorphic to 
$$
\lim(\ML). 
$$
\end{thm}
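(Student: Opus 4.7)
The plan is to construct mutually inverse group homomorphisms $\Phi: \GTh \to \lim(\ML)$ and $\Psi: \lim(\ML) \to \GTh$. For $\hat T \in \GTh$ and each $\N \in \NFI^{isolated}_{\PB_4}(\B_4)$, set $\Phi(\hat T)_\N := [(m_\N, f_\N)]$ to be the $\GT$-shadow corresponding to $T_\N := \hcP_\N \circ \hat T \circ \cI$, as in Subsection \ref{sec:GT-shadows}. This shadow is genuine, hence charming by Proposition \ref{prop:gen-charm}, and since $\N$ is isolated its source equals $\N$, so it lies in $\ML(\N) = \GT^{\hs}(\N)$. For $\K \le \N$ both isolated, the identity $\cP_{\K, \N} \circ \hcP_\K = \hcP_\N$ gives $\ML_{\K, \N}(\Phi(\hat T)_\K) = \Phi(\hat T)_\N$, so the family assembles into a well-defined element of $\lim(\ML)$; a direct unfolding of definitions shows $\Phi$ is a group homomorphism.

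For injectivity, any continuous $\hat T \in \GTh$ is determined by $\hat T(\alpha) \in \wh{\PB}_3$ and $\hat T(\beta) \in \wh{\PB}_2$, because $\alpha$ and $\beta$ topologically generate $\wh{\PaB}$. Combining Proposition \ref{prop:subgroups-in-PB3-2} with Corollary \ref{cor:isolated-cofinal}, the collections $\{\K_{\PB_2}\}$ and $\{\K_{\PB_3}\}$ for $\K \in \NFI^{isolated}_{\PB_4}(\B_4)$ are cofinal in $\NFI(\PB_2)$ and $\NFI(\PB_3)$ respectively. Hence $\Phi(\hat T) = \Phi(\hat S)$ forces the images of $\hat T(\alpha), \hat T(\beta)$ and $\hat S(\alpha), \hat S(\beta)$ to agree modulo every finite-index normal subgroup, and therefore $\hat T = \hat S$.

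For surjectivity, given a compatible family $\{[(m_\N, f_\N)]\}_\N \in \lim(\ML)$ represented (thanks to charmingness) by $f_\N \in [\F_2, \F_2]$, the cofinality arguments above produce well-defined limits $\hat m \in \Zhat$ and $\hat f \in ([\Fh_2, \Fh_2])^{\cl}$; functoriality of $\Ch_{cyclot}$ from Corollary \ref{cor:virt-cyclotomic} ensures the compatibility needed for $\hat m$. Friendliness of each $(m_\N, f_\N)$ makes $2\hat m + 1 \in \Zhat^{\times}$. Define $\Psi(\{[(m_\N, f_\N)]\}) := \hat T$ as the endomorphism of $\wh{\PaB}$ given on generators by $\hat T(\beta) := \beta \cdot \mm(x_{12}^{\hat m})$ and $\hat T(\alpha) := \alpha \cdot \mm(\hat f)$. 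Identities \eqref{hexa1}, \eqref{hexa11}, and \eqref{GT-penta} hold at each finite level by assumption, so their profinite versions hold and $\hat T$ extends consistently to a continuous endomorphism of $\wh{\PaB}$. Invertibility is automatic: since every $\N$ is isolated, $\ML(\N)$ is a group, so each $[(m_\N, f_\N)]$ has an inverse, and functoriality of $\ML$ makes these inverses into a compatible family, producing $\hat T^{-1} := \Psi(\{[(m_\N, f_\N)]^{-1}\})$. Checking $\Phi \circ \Psi = \id$ and $\Psi \circ \Phi = \id$ reduces to comparing the images of $\alpha$ and $\beta$ at each finite level, which holds by construction.

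The principal obstacle is the verification that the endomorphism $\hat T$ defined on generators extends to a continuous automorphism of all of $\wh{\PaB}$, not just of the truncation $\wh{\PaB}^{\le 4}$. This relies on the fact that $\alpha, \beta$ generate $\PaB$ as an operad and that all defining relations live in arities at most four (see Appendix \ref{app:PaB}); consequently the profinite hexagon and pentagon relations, which follow from their finite-level counterparts, constitute the only obstructions. A secondary subtlety lies in the passage from compatible systems of charming shadows to an honest element $\hat f$ of the closure of $[\Fh_2, \Fh_2]$ inside $\wh{\PB}_3$: this uses that charmingness allows each $f_\N$ to be chosen in $[\F_2, \F_2]$ and that the quotients $\F_2/\K_{\F_2}$ for isolated $\K$ are cofinal in the profinite completion of $\F_2$.
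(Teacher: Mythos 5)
Your construction of $\Phi:\GTh\to\lim(\ML)$ and the injectivity argument match the paper in spirit; the compatibility $\ML_{\K,\N}(\Phi(\hat T)_\K)=\Phi(\hat T)_\N$ that you assert ``by direct unfolding'' is exactly the content of Proposition~\ref{prop:T-hat-T-K}, and the injectivity argument is the paper's density argument.

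Where you diverge is surjectivity, and there is a genuine gap. You assemble $\hat m\in\Zhat$ and $\hat f\in([\Fh_2,\Fh_2])^{\cl}$ from the compatible family, then declare $\hat T$ on the generators by $\hat T(\beta):=\beta\cdot\mm(x_{12}^{\hat m})$, $\hat T(\alpha):=\alpha\cdot\mm(\hat f)$, and argue that the profinite pentagon and hexagon relations suffice to extend this to a continuous endomorphism of $\wh{\PaB}$. But $\wh{\PaB}$ is a profinite completion, not a free-on-generators object: specifying values on $\alpha$ and $\beta$ that satisfy the profinite relations does not hand you an extension by any universal property that the paper has established. The ``generators and relations in arity $\le 4$'' statement (Theorem~\ref{thm:PaB-gener}) is a universal property of the \emph{discrete} operad $\PaB$; transferring it to $\wh{\PaB}$ would require an additional argument (essentially, that the natural map from a free pro-object to $\wh{\PaB}$ is a quotient in the appropriate topological sense), which you do not give. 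The paper sidesteps this entirely: it introduces $\wt{\PaB}^{\le 4}$ as the literal limit of the finite quotients $\PaB^{\le 4}/\sim_\K$ over isolated $\K$, proves in Corollary~\ref{cor:hat-tilde} that $\Psi:\wh{\PaB}^{\le 4}\to\wt{\PaB}^{\le 4}$ is a homeomorphic isomorphism of truncated operads, and then observes that a compatible family $\{\cT^{\isom}_\K\}$ defines an automorphism of $\wt{\PaB}^{\le 4}$ \emph{coordinatewise} via $\hat T(\ga)(\K):=\cT^{\isom}_\K(\ga(\K))$. Nothing has to be checked about relations, because the automorphisms are already given at every finite level; continuity is verified by composing with the projections $\hcP_\K$; and invertibility is obtained by inverting each $\cT^{\isom}_\K$. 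You do identify the extension problem as ``the principal obstacle,'' but your resolution of it is the weakest part of the argument, whereas the paper's route makes the obstacle disappear. Your construction of $\hat m$ and $\hat f$ is otherwise sound (the passage from cosets $f_\N\N_{\PB_3}$ to cosets $f_\N\N_{\F_2}$ works because two representatives in $[\F_2,\F_2]$ that differ by an element of $\N_{\PB_3}$ differ by an element of $\N_{\PB_3}\cap\F_2=\N_{\F_2}$), and if you instead used those data to build the compatible family $\{\cT^{\isom}_\K\}$ directly, you would recover the paper's argument.
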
  

\subsection{Proof of Theorem \ref{thm:GTh-ML}}
\label{sec:proof}

We will need the following auxiliary statements:
\begin{prop}  
\label{prop:cofinal}
~~~
\begin{itemize}
\item[{\bf A)}] For every $\N \in \NFI(\PB_3)$, there exists $\K \in  \NFI^{isolated}_{\PB_4}(\B_4)$ 
satisfying the property 
$$
\K_{\PB_3} \le \N.
$$

\item[{\bf B)}] For every $\N \in \NFI(\PB_2)$ there exists 
$\K \in \NFI^{isolated}_{\PB_4}(\B_4)$ such that $\K_{\PB_2} \le \N$. 
 
\end{itemize}

\end{prop}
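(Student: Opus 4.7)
The plan is to deduce Part B from Part A, and for Part A to first produce a (not necessarily isolated) $\K_{0} \in \NFI_{\PB_{4}}(\B_{4})$ with $(\K_{0})_{\PB_{3}} \le \N$ and then apply Proposition~\ref{prop:isolated-practice} to pass to $\K := \K_{0}^{\sh}$, which is isolated and satisfies $\K \le \K_{0}$. This automatically gives $\K_{\PB_{3}} \le (\K_{0})_{\PB_{3}} \le \N$, as required.

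For Part A, given $\N \in \NFI(\PB_{3})$, I would first replace $\N$ by the $\B_{3}$-normal refinement
\[
\N' := \bigcap_{\si \in \B_{3} / \PB_{3}} \si\, \N\, \si^{-1},
\]
a finite intersection (since $|\B_{3} : \PB_{3}| = 6$) yielding $\N' \in \NFI_{\PB_{3}}(\B_{3})$ with $\N' \le \N$. The construction of $\K_{0}$ then rests on the observation that each of the five insertion homomorphisms $\vf \in \{\vf_{123}, \vf_{12,3,4}, \vf_{1,23,4}, \vf_{1,2,34}, \vf_{234}\}$ is split on pure braids by an appropriate ``forget-a-strand'' homomorphism $r_{\vf} : \PB_{4} \to \PB_{3}$; that is, $r_{\vf} \circ \vf\big|_{\PB_{3}} = \id_{\PB_{3}}$. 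This is a direct computation on the standard generators $x_{ij}$: each $\vf$ either inserts an extra strand (for $\vf_{123}, \vf_{234}$) or doubles an existing one (for $\vf_{12,3,4}, \vf_{1,23,4}, \vf_{1,2,34}$), and forgetting the suitably chosen strand provides the retraction. For each $\vf$, the preimage $r_{\vf}^{-1}(\N')$ is a finite-index normal subgroup of $\PB_{4}$, and intersecting over its (at most $|\B_{4} : \PB_{4}| = 24$) $\B_{4}$-conjugates produces a finite-index $\B_{4}$-normal subgroup $\K_{0}^{\vf} \le \PB_{4}$ still contained in $r_{\vf}^{-1}(\N')$; consequently $\vf^{-1}(\K_{0}^{\vf}) \le \vf^{-1}(r_{\vf}^{-1}(\N')) = \N'$.

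Setting $\K_{0} := \bigcap_{\vf} \K_{0}^{\vf}$ then yields an element of $\NFI_{\PB_{4}}(\B_{4})$ with $\vf^{-1}(\K_{0}) \le \N'$ for each of the five $\vf$'s, so $(\K_{0})_{\PB_{3}} \le \N' \le \N$; Proposition~\ref{prop:isolated-practice} applied to this $\K_{0}$ then provides the desired isolated $\K$. For Part B, given $\N = \lan x_{12}^{k} \ran \in \NFI(\PB_{2})$, I take $\N^{(3)} := [\PB_{3}, \PB_{3}] \cdot \PB_{3}^{k} \in \NFI(\PB_{3})$, whose quotient $\PB_{3} / \N^{(3)}$ is abelian of exponent $k$. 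Applying Part A to $\N^{(3)}$ produces $\K \in \NFI^{isolated}_{\PB_{4}}(\B_{4})$ with $\K_{\PB_{3}} \le \N^{(3)}$; since the image of $x_{12} \K_{\PB_{3}}$ in $\PB_{3} / \N^{(3)}$ has order exactly $k$, the order of $x_{12} \K_{\PB_{3}}$ in $\PB_{3} / \K_{\PB_{3}}$ is a multiple of $k$. By Proposition~\ref{prop:N-ord}, $K_{\ord}$ is a multiple of this order, so $k \mid K_{\ord}$ and therefore $\K_{\PB_{2}} = \lan x_{12}^{K_{\ord}} \ran \le \lan x_{12}^{k} \ran = \N$.

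The main obstacle is the retraction claim for each of the five $\vf$'s, but this is genuinely elementary: once one unfolds the operadic insertion defining $\vf$ on generators $x_{ij}$ and writes down the forget-a-strand map on the Artin generators of $\PB_{4}$, both sides of $r_{\vf} \circ \vf\big|_{\PB_{3}} = \id_{\PB_{3}}$ can be checked by inspection. Everything else is bookkeeping: finite index, intersection of finitely many conjugates, and the definitional translation $(\K_{0})_{\PB_{3}} = \bigcap_{\vf} \vf^{-1}(\K_{0})$.
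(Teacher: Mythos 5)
Your proof is correct and reaches the same conclusion by a genuinely different route. In Part A the paper extends $\psi : \PB_3 \to S_n$ to $\ti{\psi} : \PB_4 \to S_n$ by declaring $\ti{\psi}(x_{14}) = \ti{\psi}(x_{24}) = \ti{\psi}(x_{34}) := \id_{S_n}$ and then checking that the relations \eqref{PB-n-rel} are respected; you instead observe that each of the five cofaces $\vf$ is split on pure braids by a forget-a-strand retraction $r_\vf$, which makes $\vf^{-1}\big(r_\vf^{-1}(\N')\big) = \N'$ tautological and bypasses any relation-checking. Both routes then feed into the same machinery (Proposition~\ref{prop:isolated-practice}, equivalently Corollary~\ref{cor:isolated-cofinal}) to pass to an isolated refinement. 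There are two small redundancies in your argument: since $(\K_0)_{\PB_3} = \bigcap_\vf \vf^{-1}(\K_0) \le \vf_{123}^{-1}(\K_0)$, a single retraction --- say the one for $\vf_{123}$, which just forgets the fourth strand --- already suffices, so you do not need all five; and the preliminary passage to the $\B_3$-normal $\N'$ is unnecessary because $r_\vf^{-1}(\N)$ is already normal in $\PB_4$ (it is a preimage of a normal subgroup under a homomorphism $\PB_4 \to \PB_3$) and the subsequent intersection over $\B_4$-conjugates handles normality in $\B_4$ on its own. For Part B the paper builds a second explicit extension $\ti{\ka} : \PB_4 \to S_n$ from scratch, whereas you reduce to Part A by applying it to $\N^{(3)} = [\PB_3,\PB_3]\cdot\PB_3^k$ and invoking Proposition~\ref{prop:N-ord} to deduce $k \mid K_{\ord}$; this reduction is valid and arguably cleaner, avoiding a second relation-check.
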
  
\begin{proof}
Let  $\N \in \NFI(\PB_3)$ and $\psi$ be a group homomorphism from $\PB_3$ to $S_n$ 
such that $\ker(\psi) = \N$. 

Using relations \eqref{PB-n-rel} on the generators of $\PB_4$, 
it is easy to show that the equations
$$
\ti{\psi}(x_{12}) : = \psi(x_{12}), \quad 
\ti{\psi}(x_{23}) : = \psi(x_{23}), \quad
\ti{\psi}(x_{13}) : = \psi(x_{13}),  
$$
$$
\ti{\psi}(x_{14}) = \ti{\psi}(x_{24}) = \ti{\psi}(x_{34}) : = \id_{S_n}
$$
define a group homomorphism $\ti{\psi} : \PB_4 \to S_n$. 

Moreover, the kernel of $\ti{\psi}$ satisfies the property 
$$
 \vf_{123}^{-1}(\ker(\ti{\psi})) = \N.
$$
Hence 
\begin{equation}
\label{N-ker-ti-psi}
\vf_{123}^{-1}(\ker(\ti{\psi})) \cap 
\vf_{12,3,4}^{-1}(\ker(\ti{\psi})) \cap
\vf_{1,23,4}^{-1}(\ker(\ti{\psi})) \cap
\vf_{1,2,34}^{-1}(\ker(\ti{\psi})) \cap 
\vf_{234}^{-1}(\ker(\ti{\psi})) ~\le~ \N.
\end{equation}

Let $\ti{\N}$ be the normal subgroup of $\PB_4$ obtained by intersecting 
all normal subgroups of $\PB_4$ of index $|\PB_4 : \ker(\ti{\psi})|$. 
Since $\ti{\N}$ is a characteristic subgroup of $\PB_4$ of finite index (in $\PB_4$), we have 
$$
\ti{\N} \in \NFI_{\PB_4}(\B_4). 
$$

Furthermore, due to Corollary \ref{cor:isolated-cofinal}, there exists 
an isolated element $\K$ of $\NFI_{\PB_4}(\B_4)$ satisfying the property 
$\K \le \ti{\N}$. Combining $\K \le \ti{\N}$ with $\ti{\N} \le \ker(\ti{\psi})$ and \eqref{N-ker-ti-psi}, 
we deduce that 
$$
\K_{\PB_3} \le \N. 
$$

Thus desired Statement {\bf A)} is proved. 

Just as for Statement {\bf A)}, we start with a group homomorphism 
$\ka : \PB_2 \to S_n$ whose kernel coincides with $\N$. 

It is easy to see that the equations
$$
\ti{\ka}(x_{12}) : = \ka(x_{12}), \quad 
\ti{\ka}(x_{23}) : =   \ka(x_{12})^{-1}\,, \quad
\ti{\ka}(x_{13}) : =  \id_{S_n}\,,
$$
$$
\ti{\ka}(x_{14}) = \ti{\ka}(x_{24})  = \ti{\ka}(x_{34}) : =  \id_{S_n}
$$
define a group homomorphism $\ti{\ka} : \PB_4 \to S_n$. 

The kernel of $\ti{\ka}$ satisfies the property 
\begin{equation}
\label{N-ker-ti-ka}
\vf_{12}^{-1}\big( \vf_{123}^{-1}(\ker(\ti{\ka})) \big) = \N.
\end{equation}

Let $\ti{\N}$ be the normal subgroup of $\PB_4$ obtained by intersecting 
all normal subgroups of $\PB_4$ of index $|\PB_4 : \ker(\ti{\ka})|$. 
Since $\ti{\N}$ is a characteristic subgroup of $\PB_4$ of finite index (in $\PB_4$), we have 
$$
\ti{\N} \in \NFI_{\PB_4}(\B_4). 
$$

As above, there exists an isolated element $\K$ of $\NFI_{\PB_4}(\B_4)$ satisfying the property 
$\K \le \ti{\N}$. Combining $\K \le \ti{\N}$ with $\ti{\N} \le \ker(\ti{\ka})$ and \eqref{N-ker-ti-ka}, 
we deduce that 
$$
\K_{\PB_2} \le \N. 
$$

Thus Statement  {\bf B)} is also proved.
\end{proof}

Proposition \ref{prop:cofinal} allows us to produce a more practical description of $\wh{\PaB}^{\le 4}$. 
To give this description, we note that the assignment $\K \mapsto \PaB^{\le 4}/ \K$ upgrades to 
a functor from the poset $\NFI^{isolated}_{\PB_4}(\B_4)$ to the category of truncated operads 
in finite groupoids. Indeed, for every pair $\K_1 \le \K_2$ of elements of $\NFI^{isolated}_{\PB_4}(\B_4)$
we have the obvious morphism of truncated operads 
$$
\cP_{\K_1, \K_2} :  \PaB^{\le 4} /\sim_{\K_1} \, \to \,  \PaB^{\le 4} /\sim_{\K_2}\,.
$$
Moreover, for every triple $\K_1 \le \K_2 \le \K_3$ of elements of $\NFI^{isolated}_{\PB_4}(\B_4)$, 
we have  $\cP_{\K_2, \K_3} \circ \cP_{\K_1, \K_2} = \cP_{\K_1, \K_3}$.
 
Let us denote by 
\begin{equation}
\label{wt-PaB}
\wt{\PaB}^{\le 4}
\end{equation}
the limit of this functor. 

More concretely, $\wt{\PaB}(n)$ consists of functions 
$$
\ga :  \NFI^{isolated}_{\PB_4}(\B_4) ~\to~ \bigsqcup_{\K \in \NFI^{isolated}_{\PB_4}(\B_4)} ~ \PaB(n)/ \sim_{\K}
$$
satisfying these two conditions: 
\begin{itemize}

\item for every $\K \in  \NFI^{isolated}_{\PB_4}(\B_4)$,  $\ga(\K) \in \PaB(n)/ \sim_{\K}$ and 

\item for every pair $\K_1 \le \K_2$ in  $\NFI^{isolated}_{\PB_4}(\B_4)$,
$\cP_{\K_1, \K_2} (\ga(\K_1)) = \ga(\K_2)$.

\end{itemize}

Since for every pair $\K_1 \le \K_2$ of elements of $\NFI^{isolated}_{\PB_4}(\B_4)$ we have 
$$
\cP_{\K_1, \K_2} \circ \hcP_{\K_1}  = \hcP_{\K_2},  
$$
the assignment 
$$
\Psi (\hat{\ga}) (\K) : = \hcP_{\K} (\hat{\ga}), \qquad \hat{\ga} \in \wh{\PaB}(n)
$$
defines a morphism of truncated operads
\begin{equation}
\label{Psi}
\Psi : \wh{\PaB}^{\le 4} \to \wt{\PaB}^{\le 4}\,.
\end{equation}

Let us prove that
\begin{cor}  
\label{cor:hat-tilde}
The morphism $\Psi$ in \eqref{Psi} is an isomorphism of truncated operads 
in the category of topological groupoids.  
\end{cor}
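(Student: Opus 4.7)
The plan is to exhibit $\Psi$ as a continuous bijection between two compact Hausdorff topological groupoids and then upgrade it to a homeomorphism by compactness. By construction, $\Psi$ is the map induced by the universal property of $\wt{\PaB}^{\le 4}$ as a limit: the canonical projections $\hcP_{\K}$ for $\K \in \NFI^{isolated}_{\PB_4}(\B_4)$ are continuous morphisms of truncated operads, mutually compatible with the refinement morphisms $\cP_{\K_1, \K_2}$, and hence assemble into a continuous morphism $\Psi$ of truncated operads in topological groupoids. That $\Psi$ is the identity on objects is clear, since each $\hcP_{\K}$ is.

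To prove $\Psi$ is bijective on morphism sets, I would first identify $\wh{\PaB}^{\le 4}$ with the inverse limit $\lim_{\N \in \NFI_{\PB_4}(\B_4)} \PaB^{\le 4}/\sim_{\N}$. This is the description of the profinite completion developed in Appendix \ref{app:PaB}, possibly after verifying (using Proposition \ref{prop:subgroups-in-PB3-2} and its counterpart for $\PB_4$) that the equivalence relations $\sim_{\N}$ are cofinal among all compatible equivalence relations on $\PaB^{\le 4}$ whose morphism sets of quotients are finite. Under this identification, $\Psi$ becomes precisely the canonical comparison map
$$
\lim_{\N \in \NFI_{\PB_4}(\B_4)} \PaB^{\le 4}/\sim_{\N} ~\longrightarrow~ \lim_{\K \in \NFI^{isolated}_{\PB_4}(\B_4)} \PaB^{\le 4}/\sim_{\K}
$$
induced by the inclusion of indexing posets. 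By Corollary \ref{cor:isolated-cofinal}, $\NFI^{isolated}_{\PB_4}(\B_4)$ is a cofinal subposet of $\NFI_{\PB_4}(\B_4)$, and the standard fact that limits of inverse systems of sets over cofinal subposets coincide immediately yields bijectivity of $\Psi$ on morphism sets.

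Finally, to promote $\Psi$ from a continuous bijection to a topological isomorphism, I would observe that both $\wh{\PaB}^{\le 4}(n)$ and $\wt{\PaB}^{\le 4}(n)$ have morphism sets which are inverse limits of finite discrete sets, hence compact Hausdorff (and totally disconnected). A continuous bijection between compact Hausdorff spaces is automatically a homeomorphism, and compatibility of the inverse with the operadic structure follows from compatibility of $\Psi$ with it. This gives the desired isomorphism in the category of topological groupoids.

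The main obstacle is verifying the initial identification $\wh{\PaB}^{\le 4} \cong \lim_{\N \in \NFI_{\PB_4}(\B_4)} \PaB^{\le 4}/\sim_{\N}$. If $\wh{\PaB}^{\le 4}$ is defined in the appendix as a limit over a broader system of finite quotients, one must show that the subsystem indexed by $\NFI_{\PB_4}(\B_4)$ is cofinal within that broader system. This is a routine group-theoretic refinement argument, arity by arity, based on the fact that a compatible equivalence relation with finite quotients is controlled by finite index normal subgroups of $\PB_2, \PB_3, \PB_4$ compatible under the homomorphisms $\vf_{\bullet}$ of \eqref{N-PB-3} and \eqref{N-PB-2}, and every such datum can be refined by some $\N \in \NFI_{\PB_4}(\B_4)$; Proposition \ref{prop:cofinal} then allows one to further refine into $\NFI^{isolated}_{\PB_4}(\B_4)$ if desired.
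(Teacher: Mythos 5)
Your proposal is correct and takes essentially the same approach as the paper. Both arguments reduce to the cofinality supplied by Proposition~\ref{prop:cofinal} — the paper unpacks it into explicit injectivity and surjectivity checks (the image of $\hat{\ga}_2^{-1}\cdot\hat{\ga}_1$ in $\PB_n/\N$ is trivial for all $\N \in \NFI(\PB_n)$, hence trivial in $\wh{\PB}_n$; and a compatible family over $\NFI^{isolated}_{\PB_4}(\B_4)$ determines a unique element of $\wh{\PB}_n$), whereas you invoke the corresponding abstract fact that inverse limits over cofinal subposets agree — and both close with the observation that a continuous bijection from the compact space $\wh{\PaB}^{\le 4}$ to the Hausdorff space $\wt{\PaB}^{\le 4}$ is a homeomorphism. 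The ``initial identification'' you flag as the main obstacle is really the entire content of the paper's injectivity and surjectivity steps rather than a routine preliminary: in the appendix $\wh{\PaB}(n)$ is defined arity by arity as the profinite completion of the groupoid $\PaB(n)$, i.e.\ a limit over $\NFI(\PB_n)$, so the cofinality one must establish is that of the images $\K_{\PB_n}$, $\K \in \NFI^{isolated}_{\PB_4}(\B_4)$, inside $\NFI(\PB_n)$ for $n = 2,3,4$ — which is exactly what Proposition~\ref{prop:cofinal} provides, as you correctly cite. So the gap you identify is fillable by the ingredients you name, and your argument is sound.
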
  
\begin{proof} Since the compatibility with the structures of truncated 
operads and the composition of morphisms is obvious, it suffices to 
prove that $\Psi$ is a homeomorphism of topological spaces.

Let $\tau$, $\tau'$ be objects of $\PaB(n)$ and 
$\hat{\ga}_1, \hat{\ga}_2 \in \Hom_{\wh{\PaB}}(\tau, \tau')$ such that 
$\Psi(\hat{\ga}_1) = \Psi(\hat{\ga_2})$ or equivalently, for every 
$\K \in \NFI^{isolated}_{\PB_4}(\B_4)$
$$
\Psi \big(\hat{\ga}_2^{-1} \cdot \hat{\ga}_1 \big)(\K) 
$$
is the identity automorphism of $\tau$ in $\PaB(n)/\sim_{\K}$. 

Thus, due to Proposition \ref{prop:cofinal}, the image of  
$\hat{\ga}_2^{-1} \cdot \hat{\ga}_1$ in $\PB_n / \N$ is the identity element 
for every $\N \in \NFI(\PB_n)$.  Therefore $\hat{\ga}_2^{-1} \cdot \hat{\ga}_1$ is 
the identity element of $\wh{\PB}_n$ and hence 
$$
\hat{\ga}_1 = \hat{\ga}_2\,.
$$ 
We proved that $\Psi$ is one-to-one. 

Let $\ga \in  \wt{\PaB}(n)$, $\tau$ and $\tau'$ be the source and 
the target of $\ga$, respectively. Let $\la$ be any isomorphism 
from $\tau$ to $\tau'$ in $\PaB(n)$. By abuse of notation, we will use 
symbol $\la$ for its obvious image in $\wh{\PaB}(n)$ and in $\wt{\PaB}(n)$.
   
Due to Proposition \ref{prop:cofinal}, there exists an element 
$\hat{h} \in \wh{\PB}_n$ such that 
\begin{equation}
\label{Psi-onto}
\hcP_{\K}(\hat{h}) = (\la^{-1}\cdot \ga)(\K), \qquad \forall~~\K \in \NFI^{isolated}_{\PB_4}(\B_4).
\end{equation}

Equation \eqref{Psi-onto} implies that $\Psi(\la \cdot \hat{h}) = \ga$. 
Thus we proved that $\Psi$ is onto. 

Since, for every $\K \in \NFI^{isolated}_{\PB_4}(\B_4)$,
the composition of $\Psi$ with the canonical projection 
$$
\wt{\PaB}^{\le 4} \to \PaB^{\le 4}/\sim_{\K}
$$
coincides with the continuous map 
$$
\hcP_{\K} : \wh{\PaB}^{\le 4} \to \PaB^{\le 4}/\sim_{\K}\,,
$$
we conclude that $\Psi$ is continuous.

Since $\Psi$ is a continuous bijection from a compact space 
$\wh{\PaB}^{\le 4}$ to a Hausdorff space
$\wt{\PaB}^{\le 4}$\,, $\Psi$ is indeed a homeomorphism.
\end{proof}  

Due to Corollary \ref{cor:hat-tilde}, we can safely replace $\wh{\PaB}^{\le 4}$ by 
$\wt{\PaB}^{\le 4}$ in all further considerations. We will also use the same symbol
$\cI$ (resp. $\hcP_{\K}$ for $\K \in \NFI^{isolated}_{\PB_4}(\B_4)$) for the canonical 
embedding $\cI: \PaB^{\le 4} \to \wt{\PaB}^{\le 4}$ and the canonical projection 
$\hcP_{\K} :  \wt{\PaB}^{\le 4} \to  \PaB^{\le 4} /\sim_{\K}$.

\bigskip
\bigskip

Recall that, for every $\hat{T} \in \GTh$ and  $\K \in  \NFI^{isolated}_{\PB_4}(\B_4)$, the formula 
$T_{\K}  :=  \hcP_{\K} \circ \hat{T} \circ \cI$
defines an onto morphism of truncated operads $ \PaB^{\le 4} \to  \PaB^{\le 4}/ \K$.
Since $\K$ is an isolated element of  $\NFI_{\PB_4}(\B_4)$, Corollary \ref{cor:isom-quotients}
implies that the onto morphism $T_{\K}$ factors as follows: 
\begin{equation}
\label{T-K-isom}
T_{\K} =  T^{\isom}_{\K} \circ \cP_{\K}\,,
\end{equation}
where $T^{\isom}_{\K}$ is an isomorphism of truncated operads 
$T^{\isom}_{\K} :  \PaB^{\le 4}/\K  ~\stackrel{\cong}{\longrightarrow}~  \PaB^{\le 4}/ \K$
and $\cP_{\K}$ is the canonical projection $\PaB^{\le 4} \to  \PaB^{\le 4}/ \K$.


We claim that
\begin{prop}  
\label{prop:T-hat-T-K}
For every $\hat{T}\in \GTh$ and for every $\K \in  \NFI^{isolated}_{\PB_4}(\B_4)$ the diagram
\begin{equation}
\label{T-hat-T-K-isom}
\begin{tikzpicture}
\matrix (m) [matrix of math nodes, row sep=1.8em, column sep=2.1em]
{\wt{\PaB}^{\le 4}   & \wt{\PaB}^{\le 4} \\
 \PaB^{\le 4} / \sim_{\K} & \PaB^{\le 4} / \sim_{\K} \\ };
\path[->, font=\scriptsize]
(m-1-1) edge node[above] {$\hat{T}$} (m-1-2)
edge node[left] {$\hcP_{\K}$} (m-2-1) 
(m-2-1) edge node[above] {$T_{\K}^{\isom}$} (m-2-2)
(m-1-2) edge node[left] {$\hcP_{\K}$} (m-2-2); 
\end{tikzpicture}
\end{equation}
commutes.
\end{prop}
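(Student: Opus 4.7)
The strategy is to reduce the commutativity statement to agreement on the dense image of $\cI$, where it will follow at once from the defining equation \eqref{T-N} of $T_{\K}$ and the factorization \eqref{T-K-isom}.

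First I would observe that both compositions appearing in \eqref{T-hat-T-K-isom} are \emph{continuous} morphisms of truncated operads from $\wt{\PaB}^{\le 4}$ (a profinite topological groupoid, by Corollary \ref{cor:hat-tilde}) to the truncated operad $\PaB^{\le 4}/\sim_{\K}$, which is finite and hence discrete (in particular Hausdorff). Consequently, any two such continuous morphisms that agree on a topologically dense subset of $\wt{\PaB}^{\le 4}$ must coincide on all of $\wt{\PaB}^{\le 4}$.

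Next I would check the agreement on the image of the embedding $\cI : \PaB^{\le 4} \to \wt{\PaB}^{\le 4}$. For every morphism $\ga \in \PaB^{\le 4}$, the defining equation \eqref{T-N} (read through the identification $\Psi : \wh{\PaB}^{\le 4} \xrightarrow{\cong} \wt{\PaB}^{\le 4}$) gives
\[
\hcP_{\K}\bigl(\hat{T}(\cI(\ga))\bigr) \,=\, T_{\K}(\ga),
\]
while the factorization \eqref{T-K-isom} together with the tautology $\hcP_{\K} \circ \cI = \cP_{\K}$ yields
\[
T^{\isom}_{\K}\bigl(\hcP_{\K}(\cI(\ga))\bigr) \,=\, T^{\isom}_{\K}(\cP_{\K}(\ga)) \,=\, T_{\K}(\ga).
\]
Thus the two compositions $\hcP_{\K} \circ \hat{T}$ and $T^{\isom}_{\K} \circ \hcP_{\K}$ restrict to the same map on $\cI(\PaB^{\le 4})$.

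Finally I would establish the density of $\cI(\PaB^{\le 4})$ in $\wt{\PaB}^{\le 4}$ by a standard approximation argument: a basic open neighborhood of $\ga \in \wt{\PaB}^{\le 4}$ is cut out by specifying the values $\ga(\K_1), \dots, \ga(\K_r)$ at finitely many isolated $\K_i$'s, and by Proposition \ref{prop:intersect-isolated} the intersection $\K_0 := \K_1 \cap \cdots \cap \K_r$ is again isolated with $\cP_{\K_0,\K_i}(\ga(\K_0)) = \ga(\K_i)$, so the neighborhood is already controlled by the single value $\ga(\K_0) \in \PaB^{\le 4}/\sim_{\K_0}$; surjectivity of the canonical projection $\cP_{\K_0}$ then furnishes $\ti{\ga} \in \PaB^{\le 4}$ with $\cI(\ti{\ga})$ in the chosen neighborhood. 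Combining this density with the previous two steps concludes the proof. There is no serious obstacle here; the only delicate point is bookkeeping the identification of $\wh{\PaB}^{\le 4}$ with $\wt{\PaB}^{\le 4}$ provided by Corollary \ref{cor:hat-tilde}, so that the original definition \eqref{T-N} of $T_{\K}$ can be legitimately applied to elements of $\wt{\PaB}^{\le 4}$.
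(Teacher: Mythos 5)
Your proof is correct and follows essentially the same route as the paper's: you verify that $\hcP_{\K} \circ \hat{T}$ and $T_{\K}^{\isom} \circ \hcP_{\K}$ agree on the dense subset $\cI(\PaB^{\le 4})$, using the definition of $T_{\K}$ and its factorization through $T_{\K}^{\isom}$, and then conclude by continuity and Hausdorffness of the target. The only cosmetic difference is that you supply an explicit argument for the density of $\cI(\PaB^{\le 4})$ in $\wt{\PaB}^{\le 4}$ via Proposition \ref{prop:intersect-isolated}, whereas the paper takes this density for granted (it follows from the homeomorphism of Corollary \ref{cor:hat-tilde} together with the standard density of $\PaB^{\le 4}$ in its profinite completion).
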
  
\begin{proof}  
By definition of $T_{\K}^{\isom}$, 
$\hcP_{\K} \circ \hat{T} \circ \cI (\ga) = T_{\K}^{\isom} \circ \cP_{\K} (\ga), $
for every $\ga\in \PaB^{\le 4}$.

Hence 
\begin{equation}
\label{T-hat-T-K-isom-eq}
\hcP_{\K} \circ \hat{T} (\cI (\ga)) = T_{\K}^{\isom} \circ \hcP_{\K}  (\cI(\ga)), \qquad \forall~~ \ga\in \PaB^{\le 4}\,. 
\end{equation}

Since the image $\cI(\PaB^{\le 4})$ of $\PaB^{\le 4}$ in $\wt{\PaB}^{\le 4}$ is dense in $\wt{\PaB}^{\le 4}$
and the target $\PaB^{\le 4} / \sim_{\K}$ of the compositions $\hcP_{\K} \circ \hat{T}$ and 
$T_{\K}^{\isom} \circ \cP_{\K}$ is Hausdorff, identity \eqref{T-hat-T-K-isom-eq}
implies that diagram \eqref{T-hat-T-K-isom} indeed commutes.
\end{proof}

\bigskip

\begin{proof-of}[{\bf Theorem \ref{thm:GTh-ML}}] 
Let $\K, \ti{\K}$ be elements of $\NFI^{isolated}_{\PB_4}(\B_4)$ such that $\ti{\K} \le \K$ and 
$\cP_{\ti{\K}, \K}$ be the canonical projection from $\PaB^{\le 4} / \sim_{\ti{\K}}$ 
to $\PaB^{\le 4} / \sim_{\K}$.
Furthermore, let $T_{\K}$ and $T_{\ti{\K}}$ be onto morphisms from $\PaB^{\le 4}$ to 
$\PaB^{\le 4}/ \K$  and $\PaB^{\le 4}/ \ti{\K}$, respectively, coming from $\hat{T}\in \GTh$. 

Since $\hcP_{\K} =  \hcP_{\ti{\K}, \K}  \circ  \cP_{\ti{\K}}\,,$ the diagram 
$$
\begin{tikzpicture}
\matrix (m) [matrix of math nodes, row sep=1.8em, column sep=2.1em]
{\PaB^{\le 4}   & \PaB^{\le 4} / \sim_{\ti{\K}} \\
~ & \PaB^{\le 4} / \sim_{\K} \\ };
\path[->, font=\scriptsize]
(m-1-1) edge node[above] {$T_{\ti{\K}}$} (m-1-2) edge node[below] {$T_{\K}~$} (m-2-2)
(m-1-2) edge node[right] {$\cP_{\ti{\K}, \K}$} (m-2-2); 
\end{tikzpicture}
$$
commutes. Hence the assignment $\hat{T} \mapsto \{T_{\K}\}_{\K \in \NFI^{isolated}_{\PB_4}(\B_4)}$ 
gives us a map from $\GTh$ to  $\lim(\ML)$
\begin{equation}
\label{GTh-to-ML}
\GTh \to \lim(\ML). 
\end{equation}

Let us show that the map \eqref{GTh-to-ML} is a group homomorphism.

Indeed, let $\hat{T}^{(1)}, \hat{T}^{(2)} \in \GTh$, $\hat{T} : = \hat{T}^{(1)} \circ \hat{T}^{(2)}$ and  $\K \in \NFI^{isolated}_{\PB_4}(\B_4)$.
Using  Proposition \ref{prop:T-hat-T-K}, we get  
$$
\hcP_{\K} \circ  \hat{T} =  \hcP_{\K} \circ \hat{T}^{(1)} \circ \hat{T}^{(2)} =  T^{(1),\, \isom}_{\K} \circ  \hcP_{\K} \circ  \hat{T}^{(2)} = 
T^{(1),\, \isom}_{\K} \circ  T^{(2),\, \isom}_{\K} \circ \hcP_{\K}. 
$$ 
On the other hand, $\hcP_{\K} \circ  \hat{T} =   T^{\isom}_{\K} \circ \hcP_{\K}$ and hence 
\begin{equation}
\label{T-T12-isom}
T^{\isom}_{\K} \circ \hcP_{\K} = T^{(1),\, \isom}_{\K} \circ  T^{(2),\, \isom}_{\K} \circ \hcP_{\K}\,.
\end{equation}

Since $\hcP_{\K} : \wt{\PaB}^{\le 4}  \to  \PaB^{\le 4} / \sim_{\K}$ is onto, identity \eqref{T-T12-isom} implies that 
$$
T^{\isom}_{\K} = T^{(1),\, \isom}_{\K} \circ  T^{(2),\, \isom}_{\K}\,.
$$  
Thus the map \eqref{GTh-to-ML} is indeed a group homomorphism.  

\bigskip
\bigskip

Our next goal is to show that homomorphism \eqref{GTh-to-ML} is one-to-one and onto.

To prove that \eqref{GTh-to-ML} is one-to-one, we consider $\hat{T} \in \GTh$ 
such that $T_{\K}$ coincides with the canonical projection 
$$
\PaB^{\le 4}   ~\to~ \PaB^{\le 4} / \sim_{\K}
$$
for every $\K \in  \NFI^{isolated}_{\PB_4}(\B_4)$.  

Hence, for every $\ga \in \PaB^{\le 4}$, we have 
$$
\hcP_{\K} \circ \hat{T}\big( \cI(\ga) \big) = \hcP_{\K} \circ \cI(\ga)\,, 
\qquad \forall~~\K \in \NFI^{isolated}_{\PB_4}(\B_4).
$$

This means that the restriction of $\hat{T}$ to the subset  $\cI(\PaB^{\le 4}) \subset  \wt{\PaB}^{\le 4}$
coincides with the restriction of the identity map $\id :  \wt{\PaB}^{\le 4} \to \wt{\PaB}^{\le 4}$
to the subset  $\cI(\PaB^{\le 4})$.
Since the subset $\cI(\PaB^{\le 4})$ is dense in $\wt{\PaB}^{\le 4}$ and the 
space $\wt{\PaB}^{\le 4}$ is Hausdorff, we conclude that $\hat{T}$ is the identity 
map $\id :  \wt{\PaB}^{\le 4} \to \wt{\PaB}^{\le 4}$. 
Thus the injectivity of \eqref{GTh-to-ML} is established. 

\bigskip

Note that an element of $\lim(\ML)$ is a family 
$\{ \cT^{\isom}_{\K} \}_{\K \in  \NFI^{isolated}_{\PB_4}(\B_4)}$ of isomorphisms of truncated operads 
$$
\cT^{\isom}_{\K} : \PaB^{\le 4} / \sim_{\K}  \stackrel{\cong}{\longrightarrow} \PaB^{\le 4} / \sim_{\K} 
$$
satisfying the following property: for every pair $\K \le \ti{\K}$ in $\NFI^{isolated}_{\PB_4}(\B_4)$, 
the diagram 
\begin{equation}
\label{K-ti-K}
\begin{tikzpicture}
\matrix (m) [matrix of math nodes, row sep=2.1em, column sep=2.1em]
{ \PaB^{\le 4} / \sim_{\K} &   \PaB^{\le 4} / \sim_{\K}  \\
   \PaB^{\le 4} / \sim_{\ti{\K}}  &   \PaB^{\le 4} / \sim_{\ti{\K}}  \\ };
\path[->, font=\scriptsize]
(m-1-1) edge node[above] {$\cT^{\isom}_{\K}$} (m-1-2) 
edge node[left] {$\cP_{\K, \ti{\K}}$} (m-2-1) 
(m-2-1) edge node[above] {$\cT^{\isom}_{\ti{\K}}$} (m-2-2) 
(m-1-2) edge node[right] {$\cP_{\K, \ti{\K}}$} (m-2-2) ;
\end{tikzpicture}
\end{equation}
commutes. 

Due to commutativity of \eqref{K-ti-K}, the formula 
\begin{equation}
\label{hat-T}
\hat{T} (\ga)(\K) : = \cT^{\isom}_{\K}(\ga(\K)) 
\end{equation}
defines a morphism of truncated operads in groupoids 
$\hat{T} : \wt{\PaB}^{\le 4} \to \wt{\PaB}^{\le 4}$.

To prove that $\hat{T}$ is continuous, we need to show that the composition 
$$
\hcP_{\K} \circ \hat{T} :  \wt{\PaB}^{\le 4} \to  \PaB^{\le 4} / \sim_{\K} 
$$
is continuous for every $\K \in \NFI^{isolated}_{\PB_4}(\B_4)$. 
 
By definition of $\hat{T}$ \eqref{hat-T}, 
\begin{equation}
\label{hcPT-T-hat}
\hcP_{\K} \circ \hat{T} =  \cT^{\isom}_{\K} \circ \hcP_{\K}
\end{equation}
for every $\K \in \NFI^{isolated}_{\PB_4}(\B_4)$. 

Since $\cT^{\isom}_{\K}$ is an automorphism of the (finite) groupoid $\PaB^{\le 4} / \sim_{\K}$
equipped with the discrete topology and  $\hcP_{\K}$ is continuous, identity \eqref{hcPT-T-hat}
implies that the composition $\hcP_{\K} \circ \hat{T}$ is indeed continuous. 

Thus equation \eqref{hat-T} defines a continuous endomorphism of the operad $\wt{\PaB}^{\le 4}$. 

To find the inverse of $\hat{T}$, we denote by $\cS^{\isom}_{\K}$ the inverse of $\cT^{\isom}_{\K}$ 
for every $\K \in \NFI^{isolated}_{\PB_4}(\B_4)$. Then it is easy to see that the formula 
$$
\hat{S} (\ga)(\K) : = \cS^{\isom}_{\K}(\ga(\K)) 
$$
defines the inverse of $\hat{T}$. 

The proof of surjectivity of \eqref{GTh-to-ML} is complete. 
\end{proof-of}

\bigskip

Let us consider $\K, \N \in \NFI_{\PB_4}(\B_4)$ with $\K \le \N$ and a pair $(m,f) \in \bbZ \times \F_2$
that represents a $\GT$-shadow in $\GT^{\hs}(\K)$. Clearly, the same pair $(m,f)$ also 
represents a $\GT$-shadow in $\GT^{\hs}(\N)$. In other words, if $\K \le \N$, then we have a natural map 
\begin{equation}
\label{K-to-N}
\GT^{\hs}(\K) \to \GT^{\hs}(\N). 
\end{equation}
It makes sense to consider this map even if neither $\K$ nor $\N$ are isolated.

\begin{defi}  
\label{dfn:survives}
We say that a $\GT$-shadow $[(m,f)] \in \GT^{\hs}(\N)$ \emph{survives into} $\K$ if 
$[(m,f)]$ belongs to the image of the map \eqref{K-to-N}. In other words, there exists 
$(m_1, f_1) \in \bbZ \times \F_2$ such that $[(m_1, f_1)] \in \GT^{\hs}(\K)$, 
$m_1 \cong m \mod N_{\ord}$ and $f_1 \N_{\F_2} = f \N_{\F_2}$. 
\end{defi}  

The following statement is a straightforward consequence of Proposition \ref{prop:isolated-practice}
and Theorem \ref{thm:GTh-ML}:
\begin{cor}  
\label{cor:survives}
Let $\N \in \NFI_{\PB_4}(\B_4)$ and $[(m,f)] \in \GT^{\hs}(\N)$. 
The $\GT$-shadow $[(m,f)]$ is genuine if and only if $[(m,f)]$ survives 
into $\K$ for every $\K \in \NFI_{\PB_4}(\B_4)$ such that $\K \le \N$. \qed
\end{cor}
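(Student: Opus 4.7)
\textbf{Proof proposal for Corollary \ref{cor:survives}.} The plan is to prove the two implications separately; the forward direction is essentially immediate from the definition of ``induced'', while the reverse direction will rely on Theorem \ref{thm:GTh-ML} together with a standard inverse-limit compactness argument.

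For $(\Rightarrow)$: If $[(m,f)] \in \GT^{\hs}(\N)$ is induced by $\hat{T} \in \GTh$, then for any $\K \le \N$ in $\NFI_{\PB_4}(\B_4)$ I form $T_{\K} := \hcP_{\K} \circ \hat{T} \circ \cI$, which is an onto morphism of truncated operads and hence corresponds to a $\GT$-shadow in $\GT(\K)$, charming by Proposition \ref{prop:gen-charm}. Using $\hcP_{\N} = \cP_{\K,\N} \circ \hcP_{\K}$, one obtains $T_{m,f} = \cP_{\K,\N} \circ T_{\K}$; reading this off on the generators $\al$ and $\beta$, the representative pair $(m_1,f_1)$ for the shadow in $\GT^{\hs}(\K)$ also represents $[(m,f)]$ in $\GT^{\hs}(\N)$, establishing survival.

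For $(\Leftarrow)$: Assume $[(m,f)]$ survives into every $\K \le \N$ in $\NFI_{\PB_4}(\B_4)$. The strategy is to build a compatible family indexed by $\NFI^{isolated}_{\PB_4}(\B_4)$ and then invoke Theorem \ref{thm:GTh-ML}. For each isolated $\K \le \N$, define $Y_{\K} \subset \GT^{\hs}(\K)$ to be the set of charming $\GT$-shadows whose image in $\GT^{\hs}(\N)$ under $\cP_{\K,\N}$ equals $[(m,f)]$. The hypothesis (applied to the isolated $\K$) guarantees $Y_{\K} \ne \emptyset$, and $Y_{\K}$ is finite since $\ML(\K) = \GT^{\hs}(\K)$ is finite. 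For isolated $\K_1 \le \K_2 \le \N$, the functorial map $\ML_{\K_1,\K_2}$ sends $Y_{\K_1}$ into $Y_{\K_2}$ (since $\cP_{\K_1,\N} = \cP_{\K_2,\N} \circ \cP_{\K_1,\K_2}$); by Proposition \ref{prop:intersect-isolated} the indexing poset is directed. I then invoke K\"onig's lemma to conclude that the inverse limit of this cofiltered system of finite nonempty sets is nonempty, producing a partial compatible family $\{T_{\K}^{\isom}\}_{\K \in \NFI^{isolated},\, \K \le \N}$.

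To finish, this partial family must be extended to a full element of $\lim(\ML)$. For arbitrary isolated $\K$, I pick an isolated $\K' \le \K \cap \N$ (using Corollary \ref{cor:isolated-cofinal} applied to $\K \cap \N \in \NFI_{\PB_4}(\B_4)$) and set $T_{\K}^{\isom} := \ML_{\K',\K}(T_{\K'}^{\isom})$; independence of the choice of $\K'$, and compatibility of the extended family with $\ML_{\K_1,\K_2}$ for arbitrary pairs $\K_1 \le \K_2$, both follow from Proposition \ref{prop:intersect-isolated} by passing to a common isolated refinement below and tracing the compatibility of the original partial family. Applying Theorem \ref{thm:GTh-ML} yields $\hat{T} \in \GTh$ whose reduction along any isolated $\K \le \N$, followed by $\cP_{\K,\N}$, recovers $[(m,f)]$, proving that $[(m,f)]$ is genuine. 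The main obstacle will be the compactness step producing the initial partial family together with the verification that the cofinality-based extension is well-defined and functorial; both rest crucially on the fact that isolated subgroups are closed under finite intersections (Proposition \ref{prop:intersect-isolated}).
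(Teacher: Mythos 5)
Your proof is correct and is exactly the argument the authors have in mind when they declare the corollary a ``straightforward consequence'' of Proposition \ref{prop:isolated-practice} (hence Corollary \ref{cor:isolated-cofinal}) and Theorem \ref{thm:GTh-ML}: the $(\Rightarrow)$ direction is read off from diagram \eqref{T-hat-diag} together with Proposition \ref{prop:gen-charm}, and the $(\Leftarrow)$ direction is precisely the compactness step you describe --- the fibers $Y_{\K}$ form a codirected system of nonempty finite sets (directedness via Proposition \ref{prop:intersect-isolated} or, equivalently, by applying Corollary \ref{cor:isolated-cofinal} to $\K_1 \cap \K_2$), its limit is nonempty, and cofinality of $\{\K \in \NFI^{isolated}_{\PB_4}(\B_4) : \K \le \N\}$ lets you identify that limit with a point of $\lim(\ML) \cong \GTh$. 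The only minor imprecision is the attribution to K\"onig's lemma: the fact actually used is the nonemptiness of the inverse limit of a codirected system of nonempty finite sets (compactness of profinite sets), of which K\"onig's lemma is the special case for a tree-shaped index poset.
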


\section{Selected results of computer experiments}
\label{sec:comp-exp}

In the computer implementation \cite{package-GT}, an element $\N$ of $\NFI_{\PB_4}(\B_4)$ 
is represented by a group homomorphism $\psi$ from $\PB_4$ to a symmetric group
such that $\N = \ker(\psi)$. Each homomorphism $\psi : \PB_4 \to S_d$ is, in turn, 
represented by a tuple of permutations
\begin{equation}
\label{g6}
(g_{12}, g_{23}, g_{13}, g_{14}, g_{24}, g_{34}) \in (S_d)^6
\end{equation}
satisfying the relations of $\PB_4$ (see \eqref{PB-n-rel}). 

It should be mentioned that, in \cite{package-GT}, we consider only practical 
$\GT$-shadows (see Remark \ref{rem:practical}). In particular, throughout this section, 
$\GT(\N)$ denotes the set of practical $\GT$-shadows with the target $\N$. Clearly, every 
charming $\GT$-shadow is practical.  

Table \ref{table:E-GT} presents basic information about $35$ selected elements 
\begin{equation}
\label{35-cool-guys}
\N^{(0)},~ \N^{(1)},~ \dots,~ \N^{(34)} ~\in~\NFI_{\PB_4}(\B_4).
\end{equation}
For every $\N^{(i)}$ in this list, the quotient $\F_2/\N^{(i)}_{\F_2}$ 
is non-Abelian. Table \ref{table:E-GT} also shows $N^{(i)}_{\ord} := |\PB_2:\N^{(i)}_{\PB_2}|$, 
the size of $\GT(\N^{(i)})$ (i.e. the total number of practical $\GT$-shadows
with the target $\N^{(i)}$) and the size of $\GT^{\hs}(\N^{(i)})$. 
The last column indicates whether $\N^{(i)}$ is isolated or not.

For every non-isolated element $\N$ in the list \eqref{35-cool-guys}, the connected 
component  $\GTSh^{\hs}_{\conn}(\N)$ has exactly two objects. More precisely,
\begin{itemize}

\item $\N^{(4)}$ is a conjugate of $\N^{(3)}$ and $\N^{(3)} \cap \N^{(4)} = \N^{(14)}$;

\item $\N^{(11)}$ is a conjugate of $\N^{(10)}$ and $\N^{(10)} \cap \N^{(11)} = \N^{(24)}$; 

\item $\N^{(17)}$ is a conjugate of $\N^{(16)}$ and $\N^{(16)} \cap \N^{(17)} = \N^{(30)}$;

\item  $\N^{(27)}$ is a conjugate of $\N^{(26)}$ and $\N^{(26)} \cap \N^{(27)} = \N^{(34)}$.

\end{itemize}

For $\N^{(31)}$, $\GT(\N^{(31)})$ has $588$ elements. To find the size of $\GT(\N^{(31)})$,
the computer had to look at $\approx 9 \cdot 10^6$ elements of the group $\F_2/\N^{(31)}_{\F_2}$. 
For the iMac with the processor 3.4 GHz, Intel Core i5, it took over 9 full days to complete this task.  

For $\N^{(32)}$, $\GT(\N^{(32)})$ has $800$ elements. To find the size of $\GT(\N^{(32)})$,
the computer had to look at over $9 \cdot 10^6$ elements of the group $\F_2/\N^{(32)}_{\F_2}$.
For the iMac with the processor 3.4 GHz, Intel Core i5, it took almost 10 full days to complete 
this task.

\begin{table}[h!]
\centering
{ \small
\begin{tabular}{|c|c|c|c|c|c|c|c|}
\hline
~& ~& ~& ~& ~& ~& ~ &~ \\
$i$ &  $|\PB_4: \N^{(i)}|$ &  $|\F_2: \N^{(i)}_{\F_2}|$ & 
$|[ \F_2/\N^{(i)}_{\F_2}, \F_2/\N^{(i)}_{\F_2}] | $ &  $ N^{(i)}_{\ord}$ &
$|\GT(\N^{(i)})|$ &  $|\GT^{\hs}(\N^{(i)})|$ & isolated? \\[0.3cm]  \hline
0 & 8 & 16 & 2 & 4 & 4 & 4 & True \\ \hline
1 & 8 & 16 & 2 &  4 &  8 & 4 & True    \\ \hline 
2 & 12 & 36 & 4  &  3 &18 & 6 & True  \\ \hline
3 & 21 & 63 & 7 &  3 &  36 & 12 & False  \\ \hline
4 & 21 & 63 & 7 &   3 & 36 & 12 & False   \\ \hline
5 & 24 & 288 & 8 &  6 & 72 & 12 & True  \\ \hline
6 & 24 & 144 & 4 &  6 & 72 & 12 & True  \\ \hline
7 & 48  & 144 & 4 &  6 & 72 & 12 & True   \\ \hline
8 & 60 & 1500 & 60 &  5 & 100 & 20 & True  \\ \hline
9 & 60 & 900 & 4  &  15 & 360 & 24 & True  \\ \hline
10 & 72 & 144 & 18  & 4 & 16 & 8 & False \\ \hline
11 & 72 &144 & 18  &  4 & 16 & 8 & False   \\ \hline
12 & 108 & 972 & 27 &  6 & 72 & 12 & True \\ \hline
13 & 120 & 6000 & 60 &  10 & 400 & 40 & True   \\ \hline
14 & 147 & 441 & 49  &  3 & 216 & 72 & True   \\ \hline
15 & 168 & 8232 & 168 &  7 & 294 & 42 & True  \\ \hline
16 & 168 & 1344 & 168  &   4 & 64 & 32 & False  \\ \hline
17 & 168 & 1344 & 168  &  4 &  64 & 32 & False  \\ \hline
18 & 180 & 13500 & 60  &  15 &600 & 40 & True  \\ \hline
19 & 216 & 7776 & 216  &  6 & 72 & 12 & True  \\ \hline
20 & 240 & 6000 & 60   &  10 & 400 & 40 & True  \\ \hline
21 & 324 & 8748 & 108 &  9 & 486 & 54 & True  \\ \hline
22 & 504 & 40824 & 504  &  9 &486 & 54 & True \\ \hline
23 & 504 & 24696 & 504 &  7 &294 & 42 & True   \\ \hline
24 & 648 & 1296 & 162  &  4  & 32 & 16 & True  \\ \hline
25 & 720 & 54000 & 240 &  15 & 1800 & 120 & True \\ \hline
26 & 1512 & 40824 & 504 & 9 & 486 & 54 & False  \\ \hline
27 & 1512 & 40824 & 504 &  9 & 486 & 54 & False   \\ \hline
28 & 2520 & 63000 & 2520 & 5 & 200 & 40 & True  \\ \hline
29 & 2520 & 45360 & 2520  &  6 &144 & 48 & True   \\ \hline
30 & 28224 & 225792 & 28224  & 4 & 512 & 256 & True \\ \hline
31 & 181440 & 8890560 & 181440 & 7 & {\bf 588} & 84 & True  \\ \hline
32 & 181440 & 9072000 & 181440  &  10 & {\bf 800} & 160 & True  \\ \hline
33 & 181440 & 40824000 & 181440  & 15 & {\bf $\ge$ 1800} & 120 & True  \\ \hline
34 & 762048 & 20575296 & 254016 &  9 & {\bf $\ge$ 4374} & 486 & True \\ \hline
\end{tabular}
}
\caption{The basic information about selected $35$ compatible equivalence relations}
\label{table:E-GT}
\end{table}

\begin{remark}  
\label{rem:isolated}
Recall that the definition of an isolated element of $\NFI_{\PB_4}(\B_4)$ 
(see Definition \ref{dfn:settled}) is based on charming $\GT$-shadows. In principal, 
it is possible that there exists an isolated element $\N \in \NFI_{\PB_4}(\B_4)$
for which $\GT(\N)$ has a non-settled element. We \emph{did not} encounter such 
examples in our experiments.  
\end{remark}

\subsection{Selected remarkable examples}
\label{sec:selected}
For the $19$-th example $\N^{(19)}$ in table \ref{table:E-GT}, the quotient $\F_2 / \N^{(19)}_{\F_2}$
has order $7776=2^5 \cdot 3^5$. Due to the similarity between this order and the historic year 1776, 
we decided to call the subgroup $\N^{(19)}$ the {\it Philadelphia subgroup} of $\PB_4$.
This subgroup is the kernel of the homomorphism from $\PB_4$ to $S_9$ that sends
the standard generators of $\PB_4$ to the permutations
\begin{equation}
\label{Philly_subgroup}
\begin{array}{ccc}
g_{12}  := (1, 3, 2)(4, 6, 5), & g_{23} := (1, 4, 9)(2, 7, 6), & g_{13} := (1, 7, 5)(3, 6, 9), \\
g_{14} := (2, 6, 7)(3, 8, 5), & g_{24} := (1, 8, 6)(3, 4, 7), & g_{34} := (1, 2, 3)(7, 9, 8),
\end{array}
\end{equation}
respectively.

Since $\N^{(19)}$ is isolated, $\GT^{\hs}(\N^{(19)})$ is a group. We showed that $\GT^{\hs}(\N^{(19)})$ is isomorphic 
to the dihedral group $D_6=\lan r,s ~|~ r^6, s^2, rsrs \ran$ of order $12$. We also showed that the kernel of the restriction of
the virtual cyclotomic character to $\GT^{\hs}(\N^{(19)})$ coincides with the cyclic subgroup $\lan r \ran$ of 
order $6$. 

The last element $\N^{(34)}$ in \eqref{35-cool-guys} has the biggest index 
$762,048 = 2^ 6 \cdot 3^5 \cdot 7^2$ in $\PB_4$. This subgroup is the kernel of the homomorphism 
from $\PB_4$ to $S_{18}$ that sends the standard generators of $\PB_4$ to 
\begin{equation}
\label{m_dandy}
\begin{array}{c}
g_{12} := (1, 3, 5, 7, 9, 2, 4, 6, 8)(10, 12, 14, 16, 18, 11, 13, 15, 17), \\
g_{23} := (1, 3, 7, 8, 2, 4, 9, 6, 5) (10, 15, 17, 11, 12, 16, 18, 14, 13)\\
g_{13} := (1, 3, 8, 5, 4, 9, 2, 6, 7) (10, 11, 15, 17, 13, 12, 18, 14, 16) \\
g_{14} := (1, 3, 7, 8, 2, 4, 9, 6, 5) (10, 15, 17, 11, 12, 16, 18, 14, 13)\\
g_{24} :=(1, 7, 6, 2, 4, 8, 9, 3, 5) (10, 15, 14, 11, 16, 18, 12, 13, 17)\\
g_{34} := (1, 3, 5, 7, 9, 2, 4, 6, 8) (10, 12, 14, 16, 18, 11, 13, 15, 17)
\end{array}
\end{equation}
respectively. We call this subgroup the \emph{Mighty Dandy}. 
 
Due to Proposition \ref{prop:isolated-practice}, the Mighty Dandy is an isolated element and 
hence $\GT^{\hs}(\N^{(34)})$ is a group. This is what we showed about this group:
\begin{itemize}
\item  $\GT^{\hs}(\N^{(34)})$ has order $486=2 \cdot 3^5$; 

\item the kernel $Ker_{34}$ of the restriction of the virtual cyclotomic character 
to $\GT^{\hs}(\N^{(34)})$ is an Abelian subgroup of order $81=3^4$;
in fact, $Ker_{34}$ is isomorphic to $\cZ_9 \times \cZ_9$;

\item  $\GT^{\hs}(\N^{(34)})$ is isomorphic to the semi-direct product
$$
(\cZ_2 \times \cZ_3) \ltimes (\cZ_9 \times \cZ_9);
$$

\item the Sylow $3$-subgroup $Syl$ of $\GT^{\hs}(\N^{(34)})$ is a non-Abelian
group of order $3^5 = 243$; $Syl$ is a normal subgroup of  $\GT^{\hs}(\N^{(34)})$
and it is isomorphic to the semi-direct product 
$$
\cZ_3 \ltimes (\cZ_9 \times \cZ_9).
$$ 
\end{itemize}

Although every element $\N$ in the list \eqref{35-cool-guys} has the property
$|\F_2: \N_{\F_2}| > |\PB_4 : \N| $, there are examples $\N \in  \NFI_{\PB_4}(\B_4)$
for which $|\PB_4 : \N|$ is significantly bigger than the index 
$|\F_2: \N_{\F_2}|$. 

One such example was suggested to us by Leila Schneps. 
\emph{Leila's subgroup} $\N^{\cL}$ of $\PB_4$ is the kernel of a homomorphism 
from $\PB_4$ to $S_{130}$ and it can be retrieved from one of the storage files 
in \cite{package-GT}. Here is what we know about $\N^{\cL}$: 
\begin{itemize}

\item the index of $\N^{\cL}$ in $\PB_4$ is $2^{29} \cdot 3^{12} = 285315214344192$;

\item the index of $\N^{\cL}_{\PB_3}$ in $\PB_3$ is $ 2^{12} \cdot 3^6 = 2985984$;

\item the index of $\N^{\cL}_{\F_2}$ in $\F_2$ is $ 2^{10} \cdot 3^5 = 248832$; 

\item $N^{\cL}_{\ord} = 12$;

\item the order of the commutator subgroup of $\F_2/\N^{\cL}_{\F_2}$ is $2^6 \cdot 3^3 = 1728$;
 
\item there are only $48= 2^4 \cdot 3$ charming $\GT$-shadows for $\N^{\cL}$;  

\item $\N^{\cL}$ is an \emph{isolated} element of $\NFI_{\PB_4}(\B_4)$ and hence 
$\GT^{\hs}(\N^{\cL})$ is a group. 

\end{itemize}
 
We found that the group $\GT^{\hs}(\N^{\cL})$ is isomorphic to the semi-direct product 
\begin{equation}
\label{Leila-group}
\cZ_2 \ltimes (\cZ_2 \times \cZ_2 \times \cZ_2 \times \cZ_3),
\end{equation}
where the non-trivial element of $\cZ_2$ acts on 
\begin{equation}
\label{Abelian-part}
\cZ_2 \times \cZ_2 \times \cZ_2 \times \cZ_3 = \lan a | a^2 \ran \times  \lan b | b^2 \ran
\times  \lan c | c^2 \ran \times  \lan d | d^3 \ran
\end{equation}
by the automorphism 
$$
a \mapsto b, \qquad b \mapsto a, \qquad c \mapsto c, \qquad d \mapsto d^{-1}\,.
$$ 

The restriction of the virtual cyclotomic character to  $\GT^{\hs}(\N^{\cL})$ gives 
us the group homomorphism
$$
\GT^{\hs}(\N^{\cL}) \to \big(\bbZ/12 \bbZ \big)^{\times}
$$
and the kernel of this homomorphism is the subgroup of \eqref{Abelian-part}
generated by $ab$, $c$ and $d$. 

\subsection{Is there a charming $\GT$-shadow that is also fake?}
\label{sec:charming_fake}

Table \ref{table:E-GT} shows that the set $\GT^{\hs}(\N)$ of charming $\GT$-shadows  
corresponding to a given $\N \in  \NFI_{\PB_4}(\B_4)$ is typically a {\it proper subset } of $\GT(\N)$. 
For example, for the Philadelphia subgroup $\N^{(19)}$, we have $72$ $\GT$-shadows 
and only 12 of them are charming.   

Due to Proposition \ref{prop:gen-charm}, every non-charming $\GT$-shadow is fake.  
Thus, for a typical $\N$ from our list of $35$ elements of $\NFI_{\PB_4}(\B_4)$, 
we have many examples of a fake $\GT$-shadows. 
For instance, $\GT(\N^{(19)})$ contains at least $60$ fake $\GT$-shadows. 

It is more challenging to find examples of charming $\GT$-shadows that are fake. 
At the time of writing, we did \emph{not} find a single example of a charming $\GT$-shadow
that is also fake. 

Here is what we did. In the list \eqref{35-cool-guys}, there are exactly 24 pairs $(\N^{(i)}, \N^{(j)})$
with $i \neq j$ such that 
$$
\N^{(j)} \le \N^{(i)}\,.
$$
For each such pair, we showed that every $\GT$-shadow in $\GT^{\hs}(\N^{(i)})$ survives 
into $\N^{(j)}$, i.e. the natural map  $\GT^{\hs}(\N^{(j)}) \to \GT^{\hs}(\N^{(i)})$ is onto.  
We also looked at other selected examples of elements $\K \le \N$ in  $\NFI_{\PB_4}(\B_4)$
in which $\N$ belongs to the list \eqref{35-cool-guys} and $\K$ is obtained by intersecting 
$\N$ with another element of \eqref{35-cool-guys}. In all examples we have considered so far, the natural 
map $\GT^{\hs}(\K) \to \GT^{\hs}(\N)$ is onto.  

\subsection{Versions of the Furusho property and selected open questions}
\label{sec:Furusho-open}

Two versions of the Furusho property are motivated by a remarkable theorem which 
says roughly that, in the prounipotent setting, the pentagon relation implies the hexagon 
relations. For a precise statement, we 
refer the reader to \cite[Theorem 3.1]{BN-Furusho} and \cite[Theorem 1]{Furusho}. 

We say that an element $\N \in \NFI_{\PB_4}(\B_4)$ satisfies {\it the strong Furusho property} if
\begin{pty}  
\label{P:Furusho-strong}
For every $f \N_{\F_2} \in \F_2/\N_{\F_2}$ satisfying
pentagon relation \eqref{GT-penta} modulo $\N$,  
there exists $m \in \bbZ$ such that 
\begin{itemize}

\item $2m+1$ represents a unit in $\bbZ/N_{\ord} \bbZ$ and 

\item the pair $(m,f)$ satisfies hexagon relations \eqref{hexa1}, \eqref{hexa11}. 

\end{itemize}
\end{pty}   

Furthermore, we say that an element $\N \in \NFI_{\PB_4}(\B_4)$ satisfies 
{\it the weak Furusho property} if
\begin{pty}  
\label{P:Furusho-weak}
For every $f \N_{\F_2} ~\in~ [\F_2/\N_{\F_2}, \F_2/\N_{\F_2}]$ satisfying  
pentagon relation \eqref{GT-penta} modulo $\N$,  
there exists $m \in \bbZ$ such that 
\begin{itemize}

\item $2m+1$ represents a unit in $\bbZ/N_{\ord} \bbZ$ and 

\item the pair $(m,f)$ satisfies hexagon relations \eqref{hexa1}, \eqref{hexa11}. 

\end{itemize}
\end{pty}   

Using \cite{package-GT}, we showed that the following $11$ elements of 
the list \eqref{35-cool-guys}
\begin{equation}
\label{Furusho-strong-list}
\N^{(1)}, ~\N^{(2)},~ \N^{(3)},~ \N^{(4)},~ \N^{(6)},~ \N^{(7)},~ \N^{(9)},~ \N^{(10)}, ~
\N^{(11)},~ \N^{(14)},~ \N^{(24)}
\end{equation}
satisfy Property \ref{P:Furusho-strong} and the remaining $24$ elements of \eqref{35-cool-guys}
do \emph{not} satisfy Property \ref{P:Furusho-strong}. 

For instance, for the Philadelphia subgroup $\N^{(19)}$, $N^{(19)}_{\ord}=6$ and there 
are $216$ elements $f \N^{(19)}_{\F_2}$ in 
$\F_2 /\N^{(19)}_{\F_2}$ that satisfy the pentagon relation modulo $\N^{(19)}$. 
However, for only $36$ of these $216$ elements, 
there exists $m \in \{ 0,1,\dots, 5\}$ such that $2m+1$ represents 
a unit in $\bbZ/ 6 \bbZ$ and the pair $(m,f)$ satisfies hexagon 
relations  \eqref{hexa1}, \eqref{hexa11} (modulo $\N^{(19)}_{\PB_3}$).

Using \cite{package-GT}, we also showed that the following $13$ elements of 
the list  \eqref{35-cool-guys}
\begin{equation}
\label{Furusho-weak-list}
\N^{(0)}, ~\N^{(1)},~ \N^{(2)},~ \N^{(3)},~ \N^{(4)},~ \N^{(5)},~ \N^{(6)},~ \N^{(7)},~ \N^{(9)},~ \N^{(10)},~
\N^{(11)},~ \N^{(14)},~ \N^{(24)}
\end{equation}
satisfy Property \ref{P:Furusho-weak} and the remaining $22$ elements of \eqref{35-cool-guys}
do \emph{not} satisfy Property \ref{P:Furusho-weak}. 

For instance, for the Mighty Dandy $\N^{(34)}$, $N^{(34)}_{\ord}=9$ and there 
are $4096$ elements\footnote{For the iMac with the processor 3.4 GHz, Intel Core i5, it 
took over 52 hours to find all these elements.} 
in $[\F_2 /\N^{(34)}_{\F_2}, \F_2 /\N^{(34)}_{\F_2}]$ that satisfy 
the pentagon relation modulo $\N^{(34)}$. However, for only $243$ of them, 
there exists $m \in \{ 0,1,\dots, 8\}$ such that $2m+1$ represents 
a unit in $\bbZ/ 9 \bbZ$ and the pair $(m,f)$ satisfies hexagon 
relations  \eqref{hexa1}, \eqref{hexa11} (modulo $\N^{(34)}_{\PB_3}$).

\bigskip

We conclude this section with selected open questions. 
Most of these questions are motivated by our experiments \cite{package-GT}.
\begin{quest}  
\label{quest:onto}
Let $\N \in \NFI_{\PB_4}(\B_4)$ and $(m,f) \in \bbZ \times \F_2$ be a pair satisfying 
\eqref{hexa1}, \eqref{hexa11}, \eqref{GT-penta} (relative to $\sim_{\N}$). 
Recall that, due Proposition \ref{prop:onto}, if the group homomorphisms 
$T^{\PB_2}_{m,f}$ and  $T^{\PB_3}_{m,f}$ are onto then so is the group 
homomorphism 
$$
T^{\PB_4}_{m,f}: \PB_4 \to \PB_4/\N.
$$
Using \cite{package-GT}, the authors could not find an example of a pair
$(m,f) \in \bbZ \times \F_2$ for which $T^{\PB_4}_{m,f}$ is onto but 
$T^{\PB_2}_{m,f}$ is not onto or $T^{\PB_3}_{m,f}$ is not onto.
Can one prove that, if  $T^{\PB_4}_{m,f}$ is onto, then so are 
the group homomorphisms $T^{\PB_2}_{m,f}$ and $T^{\PB_3}_{m,f}$?
\end{quest}
\begin{quest}  
\label{quest:more-conjugates}
Is it possible to find an example of a non-isolated $\N \in \NFI_{\PB_4}(\B_4)$ for which the connected 
component $\GTSh^{\hs}_{\conn}(\N)$ has more than $2$ objects? In other words, 
is it possible to find $\N \in \NFI_{\PB_4}(\B_4)$ that has $> 2$ distinct conjugates?
\end{quest}
\begin{quest}  
\label{quest:charming-fake}
Is it possible to find $\K, \N \in  \NFI_{\PB_4}(\B_4)$ such that 
$\K \le \N$ and the natural map 
$$
\GT^{\hs}(\K) \to \GT^{\hs}(\N)
$$
is \emph{not} onto? In other words, can one produce an example 
of a charming $\GT$-shadow that is also fake? 
\end{quest}  
\begin{quest}  
\label{quest:genuine}
Is it possible to find $\N \in  \NFI_{\PB_4}(\B_4)$ for which
$\F_2/\N_{\F_2}$ is \emph{non-Abelian} and we can identify all genuine $\GT$-shadows 
in the set $\GT^{\hs}(\N)$?
\end{quest}  
Note that, if $\F_2/\N_{\F_2}$ is Abelian, all charming $\GT$-shadows
can be described completely and they are {\it all genuine}. 
(See Theorem \ref{thm:Abelian} in Appendix \ref{app:Abelian}.)

\appendix

\section{The operad $\PaB$ and its profinite completion}
\label{app:PaB}

The operad $\PaB$ of parenthesized braids is an operad in the category of groupoids 
and it was introduced\footnote{A very similar construction appeared in beautiful paper \cite{BNGT} by D. Bar-Natan.} 
by D. Tamarkin in \cite{Tamarkin}.

In this appendix, we give a brief reminder of the operad $\PaB$ and its profinite completion. 
For a more detailed exposition, we refer the reader to \cite[Chapter 6]{Fresse1}.

\subsection{The groups $\B_n$ and $\PB_n$}
\label{app:braids}

The Artin braid group $\B_n$ on $n$ strands is, by definition, the fundamental group 
of the orbifold 
$$
\Conf(n,\bbC)/S_n\,,
$$
where $\Conf(n, \bbC)$ denotes the configuration space of $n$ (labeled) points on $\bbC$: 
$\Conf(n, \bbC) : = \{ (z_1, \dots, z_n) \in \bbC^n ~|~ z_i \neq z_j \textrm{ if } i \neq j\}$.   

It is known \cite[Chapter 1]{Braids} that $\B_n$ has the following presentation 
$$
\big\langle\, \si_1, \si_2, \dots, \si_{n-1} ~|~ 
$$
\begin{equation}
\label{Bn}
\si_i \si_j \si^{-1}_i \si^{-1}_j~~ \textrm{if} ~|i-j| \ge 2, 
~~ \si_i \si_{i+1} \si_i   \si^{-1}_{i+1} \si^{-1}_{i} \si^{-1}_{i+1} ~~\textrm{for}~~ 1\le i \le n-2 \,\big\rangle,
\end{equation}
where $\si_i$ is the element depicted in figure \ref{fig:si-i}.
\begin{figure}[htp] 
\centering 
\begin{tikzpicture}[scale=1.5, > = stealth]
\tikzstyle{v} = [circle, draw, fill, minimum size=0, inner sep=1]
\node[v] (v1) at (-2, 0) {};
\draw (-2,-0.2) node[anchor=center] {{\small $1$}};
\node[v] (vv1) at (-2, 1) {};
\draw [->] (v1) -- (vv1);
\draw (-1.2,0.5) node[anchor=center] {{$\dots$}};
\node[v] (v1i) at (-0.5, 0) {};
\draw (-0.5,-0.2) node[anchor=center] {{\small $i-1$}};
\node[v] (vv1i) at (-0.5, 1) {};
\draw [->] (v1i) -- (vv1i);
\node[v] (vi) at (0, 0) {};
\draw (0,-0.2) node[anchor=center] {{\small $i$}};
\node[v] (vi1) at (1, 0) {};
\draw (1,-0.2) node[anchor=center] {{\small $i+1$}};
\node[v] (vvi) at (0, 1) {};
\node[v] (vvi1) at (1, 1) {};
\draw [->] (vi) -- (vvi1);
\draw (vi1) -- (0.6, 0.4); 
\draw [->] (0.4, 0.6) -- (vvi);
\node[v] (vi2) at (1.7, 0) {};
\draw (1.7,-0.2) node[anchor=center] {{\small $i+2$}};
\node[v] (vvi2) at (1.7, 1) {};
\draw [->] (vi2) -- (vvi2);
\draw (2.4,0.5) node[anchor=center] {{$\dots$}};
\node[v] (vn) at (3, 0) {};
\draw (3,-0.2) node[anchor=center] {{\small $n$}};
\node[v] (vvn) at (3, 1) {};
\draw [->] (vn) -- (vvn);
\end{tikzpicture}
\caption{The generator $\si_i$} \label{fig:si-i}
\end{figure}

Recall that the \emph{pure braid group} $\PB_n$ on $n$ strands is 
the kernel of the standard group homomorphism $\rho: \B_n \to S_n$. 
This homomorphism sends the generator $\si_i$ to the transposition $(i, i+1)$. 

We denote by $x_{ij}$ (for $1 \le i < j \le n$) the following elements of $\PB_n$
\begin{equation}
\label{x-ij}
x_{ij} : =  \si_{j-1} \dots \si_{i+1} \si^2_i \si^{-1}_{i+1} \dots \si^{-1}_{j-1}
\end{equation}
and recall \cite[Section 1.3]{Braids} that $\PB_n$ has the following presentation: 
$$
\PB_n \cong \lan  \{ x_{ij} \}_{1 \le i < j \le n} ~|~ \textrm{the relations}\ran
$$
with the relations
\begin{equation}
\label{PB-n-rel}
x^{-1}_{rs} x_{ij} x_{rs} = 
\begin{cases}
x_{ij} ~~~\textrm{if} ~~ s < i \textrm{ or } i < r < s < j,\\[0.18cm]
x_{r j} x_{ij} x^{-1}_{r j} ~~~\textrm{if} ~~~ s = i, \\[0.18cm]
x_{r j} x_{s j} x_{ij} x^{-1}_{s j} x^{-1}_{r j} ~~~\textrm{if} ~~~ r = i < s < j, \\[0.18cm]
x_{r j} x_{sj}  x^{-1}_{rj} x^{-1}_{sj} \, x_{ij} \, x_{sj}  x_{rj}  x^{-1}_{sj}  x^{-1}_{rj}  ~~~\textrm{if} ~~~ r < i < s < j. 
\end{cases}
\end{equation}

For example, the standard generators of $\PB_3$ are
\begin{equation}
\label{PB3-gener}
x_{12} : = \si_1^2, \qquad x_{23} : = \si_2^2, \qquad 
x_{13} : = \si_2 \si_1^2 \si_2^{-1}\,.  
\end{equation}
The element 
\begin{equation}
\label{c-PB3-B3}
c : = x_{23} x_{12} x_{13} = x_{12} x_{13} x_{23} = (\si_1 \si_2)^3 =   (\si_2 \si_1)^3 
\end{equation}
has an infinite order; it generates the center of $\PB_3$ and the center of $\B_3$. 

The elements $x_{12}$ and $x_{23}$ generate a free subgroup in $\PB_3$. 
Thus $\PB_3$ is isomorphic to $\F_2 \times \bbZ$.  

A direction calculation shows that 
\begin{equation}
\label{conj-xxx-PB3}
\si_1^{-1} x_{23} \si_1 = x_{13} \,, \qquad
\si_2^{-1}x_{12}\si_2 = x_{23}^{-1}x_{12}^{-1}c \,,
\qquad
\si_2^{-1} x_{13} \si_2 = x_{12}.
\end{equation}

\subsection{The groupoid $\PaB(n)$}
\label{app:groupoid-PaB}
Objects of $\PaB(n)$ are parenthesizations of sequences $(\tau(1), \tau(2), \dots, \tau(n))$ where 
$\tau$ is a permutation $S_n$. For example, $\PaB(2)$ has exactly two objects $(1\, 2)$ and $(2\, 1)$ and  
$\PaB(3)$ has 12 objects: 
$$
(12)3,~ (21)3,~ (23)1,~ (32)1,~ (31)2,~ (13)2,~ 
1(23),~ 2(13),~ 2(31),~ 3(21),~ 3(12),~ 1(32).
$$ 

To define morphisms in $\PaB(n)$, we denote by $\mmp$ the obvious projection 
from the set of objects of $\PaB(n)$ onto $S_n$. For example, 
$$
\mmp( \,(23)1\,) : = 
\left(
\begin{array}{ccc}
1 & 2  & 3  \\
2  & 3  & 1  
\end{array}
\right).
$$

For two objects $\tau_1, \tau_2$ of $\PaB(n)$ we set 
\begin{equation}
\label{Hom-PaB}
\Hom_{\PaB}(\tau_1, \tau_2) : = \rho^{-1} (\mmp(\tau_2)^{-1} \circ  \mmp(\tau_1)) \,\subset\, \B_n\,,
\end{equation}
where $\rho$ is the standard homomorphism $\B_n$ to $S_n$.  

For instance, $\Hom_{\PaB}(2(31), (31)2)$ consist of elements $g \in \B_n$ such that 
$$
\rho(g) = 
\left(
\begin{array}{ccc}
1 & 2  & 3  \\
3 & 1  & 2  
\end{array}
\right).
$$
An example of an isomorphism from $2(31)$ to $(31)2$ is shown in figure \ref{fig:hom-PaB}
\begin{figure}[htp] 
\centering 
\begin{tikzpicture}[scale=1.5, > = stealth]
\tikzstyle{v} = [circle, draw, fill, minimum size=0, inner sep=1]
\node[v] (v2) at (0, 0) {};
\draw (0,-0.2) node[anchor=center] {{\small $2$}};
\node[v] (v3) at (1, 0) {};
\draw (0.85,-0.2) node[anchor=center] {{\small $($}};
\draw (1,-0.2) node[anchor=center] {{\small $3$}};
\node[v] (v1) at (2, 0) {};
\draw (2,-0.2) node[anchor=center] {{\small $1$}};
\draw (2.15,-0.2) node[anchor=center] {{\small $)$}};
\node[v] (vv3) at (0, 1) {};
\draw (-0.15,1.2) node[anchor=center] {{\small $($}};
\draw (0,1.2) node[anchor=center] {{\small $3$}};
\node[v] (vv1) at (1, 1) {};
\draw (1,1.2) node[anchor=center] {{\small $1$}};
\draw (1.15,1.2) node[anchor=center] {{\small $)$}};
\node[v] (vv2) at (2, 1) {};
\draw (2,1.2) node[anchor=center] {{\small $2$}};
\draw [->] (v1) -- (vv1);
\draw (v2) -- (1.2,0.6) [->] (1.4,0.7) -- (vv2);
\draw (v3) -- (0.75,0.25) [->] (0.6,0.4) -- (vv3) ;
\end{tikzpicture}
\caption{An example of an isomorphism from  $2(31)$ to $(31)2$ in $\PaB(3)$} \label{fig:hom-PaB}
\end{figure}

The composition of morphisms in $\PaB(n)$ comes from the multiplication in $\B_n$. 
For example, if $\eta$ is the element of $\Hom_{\PaB}(\tau_1, \tau_2)$ corresponding to $h \in B_n$ and 
$\ga$ is the element of $\Hom_{\PaB}(\tau_2, \tau_3)$ corresponding to $g \in B_n$ then their composition 
$\ga \cdot \eta$ is the element of $\Hom_{\PaB}(\tau_1, \tau_3)$ corresponding to $g \cdot h$.
Note that we use $\cdot$ for the composition of morphisms in $\PaB$ and the multiplication of elements in braid groups. 

By definition of morphisms, we have a natural forgetful map 
\begin{equation}
\label{ou}
\ou : \PaB(n) \to \B_n\,.
\end{equation}
This map assigns to a morphism $\ga\in \PaB(n)$ the corresponding element of the braid group $\B_n$. 
Moreover, since the composition of morphisms in $\PaB(n)$ comes from the multiplication in $\B_n$, we have
$$
\ou(\ga \cdot \eta) = \ou(\ga) \cdot \ou(\eta) 
$$
for every pair $\ga,\eta$ of composable morphisms. 

The isomorphisms $\al \in \PaB(3)$ and $\beta\in \PaB(2)$ shown in figure \ref{fig:beta-alpha}
play a very important role. We call $\beta$ {\it the braiding} and $\al$ {\it the associator}.
Note that, although $\al$ corresponds to the identity element in $\B_3$, it is not an 
identity morphism in $\PaB(3)$ because $(12)3 \neq 1(23)$. 
\begin{figure}[htp] 
\centering 
\begin{tikzpicture}[scale=1.5, > = stealth]
\tikzstyle{v} = [circle, draw, fill, minimum size=0, inner sep=1]
\draw (-0.7,0.5) node[anchor=center] {{$\beta ~ : = $}};
\node[v] (v1) at (0, 0) {};
\draw (0,-0.2) node[anchor=center] {{\small $1$}};
\draw (0,1.2) node[anchor=center] {{\small $2$}};
\node[v] (v2) at (1, 0) {};
\draw (1,-0.2) node[anchor=center] {{\small $2$}};
\draw (1,1.2) node[anchor=center] {{\small $1$}};
\node[v] (vv2) at (0, 1) {};
\node[v] (vv1) at (1, 1) {};
\draw [->] (v1) -- (vv1);
\draw (v2) -- (0.6, 0.4); 
\draw [->] (0.4, 0.6) -- (vv2);
\begin{scope}[shift={(3.5,0)}]
\draw (-0.7,0.5) node[anchor=center] {{$\al ~ : = $}};
\node[v] (v1) at (0, 0) {};
\draw (-0.15,-0.2) node[anchor=center] {{\small $($}};
\draw (0,-0.2) node[anchor=center] {{\small $1$}};
\node[v] (v2) at (0.5, 0) {};
\draw (0.5,-0.2) node[anchor=center] {{\small $2$}};
\draw (0.65,-0.2) node[anchor=center] {{\small $)$}};
\node[v] (v3) at (1.5, 0) {};
\draw (1.5,-0.2) node[anchor=center] {{\small $3$}};
\node[v] (vv1) at (0, 1) {};
\draw (0,1.2) node[anchor=center] {{\small $1$}};
\node[v] (vv2) at (1, 1) {};
\draw (0.85,1.2) node[anchor=center] {{\small $($}};
\draw (1,1.2) node[anchor=center] {{\small $2$}};
\node[v] (vv3) at (1.5, 1) {};
\draw (1.5,1.2) node[anchor=center] {{\small $3$}};
\draw (1.65,1.2) node[anchor=center] {{\small $)$}};
\draw [->] (v1) -- (vv1);  \draw [->] (v2) -- (vv2);  \draw [->] (v3) -- (vv3); 
\end{scope}
\end{tikzpicture}
\caption{The isomorphisms $\al$ and $\beta$} \label{fig:beta-alpha}
\end{figure}

The symmetric group $S_n$ acts on $\Ob(\PaB(n))$ in the obvious way. Moreover, 
for every $\te \in S_n$ and $\ga \in \Hom_{\PaB(n)}(\tau_1, \tau_2)$, we denote by 
$\te(\ga)$ the morphism from $\te(\tau_1)$ to $\te(\tau_2)$ that corresponds to the 
same element of the braid group $B_n$, i.e. 
\begin{equation}
\label{ou-S-n}
\ou (\te(\ga)) = \ou (\ga).
\end{equation}
 
For example, if $\te = (1,2) \in S_3$ then 
$$
\begin{tikzpicture}[scale=1.5, > = stealth]
\tikzstyle{v} = [circle, draw, fill, minimum size=0, inner sep=1]
\draw (-0.9,0.5) node[anchor=center] {{$\te(\al) ~ = $}};
\node[v] (v1) at (0, 0) {};
\draw (-0.15,-0.2) node[anchor=center] {{\small $($}};
\draw (0,-0.2) node[anchor=center] {{\small $2$}};
\node[v] (v2) at (0.5, 0) {};
\draw (0.5,-0.2) node[anchor=center] {{\small $1$}};
\draw (0.65,-0.2) node[anchor=center] {{\small $)$}};
\node[v] (v3) at (1.5, 0) {};
\draw (1.5,-0.2) node[anchor=center] {{\small $3$}};
\node[v] (vv1) at (0, 1) {};
\draw (0,1.2) node[anchor=center] {{\small $2$}};
\node[v] (vv2) at (1, 1) {};
\draw (0.85,1.2) node[anchor=center] {{\small $($}};
\draw (1,1.2) node[anchor=center] {{\small $1$}};
\node[v] (vv3) at (1.5, 1) {};
\draw (1.5,1.2) node[anchor=center] {{\small $3$}};
\draw (1.65,1.2) node[anchor=center] {{\small $)$}};
\draw [->] (v1) -- (vv1);  \draw [->] (v2) -- (vv2);  \draw [->] (v3) -- (vv3); 
\end{tikzpicture}
$$

For our purposes, it is convenient to assign to every element $g \in \B_n$ the corresponding morphism 
$\mm(g)\in \PaB(n)$  from $\paren 1,2)3) \dots n)$ to $\paren i_1 , i_2) i_3)\dots i_n)$, 
where $i_k : = \rho(g)^{-1}(k)$. It is easy to see that the map 
\begin{equation}
\label{mm}
\mm : \B_n \to \PaB(n)
\end{equation}
defined in this way is a right inverse of $\ou$ (see \eqref{ou}).  

It is also easy to see that, for every pair $g_1, g_2 \in \B_n$, we have 
\begin{equation}
\label{mm-comp}
\mm(g_1 \cdot g_2)  = \rho(g_2)^{-1}\big(\mm(g_1)\big) \cdot \mm(g_2).
\end{equation}

For example, for $\si_1, \si_2 \in \B_3$, $\mm(\si_1) = \id_{12} \circ_1 \beta$ and 
$$
\begin{tikzpicture}[scale=1.5, > = stealth]
\tikzstyle{v} = [circle, draw, fill, minimum size=0, inner sep=1]
\draw (-1.5,0.5) node[anchor=center] {{$\mm(\si_2) ~: =$}};
\node[v] (v1) at (0, 0) {};
\draw (-0.1,-0.2) node[anchor=center] {{\small $($}};
\draw (0,-0.2) node[anchor=center] {{\small $1$}};
\node[v] (v2) at (1, 0) {};
\draw (1,-0.2) node[anchor=center] {{\small $2$}};
\draw (1.1,-0.2) node[anchor=center] {{\small $)$}};
\node[v] (v3) at (2, 0) {};
\draw (2,-0.2) node[anchor=center] {{\small $3$}};
\node[v] (vv1) at (0, 1) {};
\draw (-0.1,1.2) node[anchor=center] {{\small $($}};
\draw (0,1.2) node[anchor=center] {{\small $1$}};
\node[v] (vv3) at (1, 1) {};
\draw (1,1.2) node[anchor=center] {{\small $3$}};
\draw (1.1,1.2) node[anchor=center] {{\small $)$}};
\node[v] (vv2) at (2, 1) {};
\draw (2,1.2) node[anchor=center] {{\small $2$}};
\draw [->] (v1) -- (vv1);
\draw [->] (v2) -- (vv2);
\draw (v3) -- (1.6,0.4) [->] (1.4,0.6) -- (vv3);
\end{tikzpicture}
$$

The composition $\mm(\si_2) \cdot \mm(\si_1)$ is not defined because the source of $\mm(\si_2)$ does not 
coincide with the target of $\mm(\si_1)$. On the other hand, the source of $(1,2) (\mm(\si_2))$ coincides 
with the target of $\mm(\si_1)$ and  $(1,2) (\mm(\si_2))\, \cdot\, \mm(\si_1) = \mm(\si_2 \cdot \si_1)$. 

\subsection{The operad structure on $\PaB$}
\label{app:oper-PaB} 

We already explained how the symmetric group $S_n$ acts on the groupoid $\PaB(n)$. 
Furthermore, it is easy to see that $\{\Ob(\PaB(n))\}_{n \ge 1}$ is the underlying collection of the free operad 
(in the category of sets) generated by the collection $\sT$ with 
$$
\sT(n) : = 
\begin{cases}
\{~1\,2, 2\,1 ~\}  \qquad \textrm{if} ~~ n = 2 \,, \\
~~~\emptyset \qquad \qquad \textrm{otherwise}.
\end{cases}
$$ 

Thus the functors 
\begin{equation}
\label{circ-i}
\circ_i ~:~ \PaB(n) \times \PaB(m) ~\to~ \PaB(n+m-1)
\end{equation}
act on the level of objects in the obvious way. 

For example, 
$$
(23)1 \,\circ_2\, 12 := (\gray{(23)}4)1, 
\quad
21 \,\circ_1\, (23)1 : = 4 \gray{ ((2 3)1)}, 
\quad 
2( 3 (14)) \,\circ_3\, 1(32) := 2 ( \gray{(3(54))}(16) ),
$$
where we use the gray color to indicate what happens with the inserted sequence. 
For instance, in the third example,  $1(32) \mapsto \gray{(3(54))}$.  

To define the action of the functor $\circ_i$ on the level of morphisms, we proceed 
as follows: given $\ga \in \PaB(n)$, $\ti{\ga} \in \PaB(m)$ and $1 \le i \le n$, we 
set $g : = \ou(\ga)$ and $\ti{g} : = \ou(\ti{\ga})$;  we 
compute the source and the target of $\ga \circ_i \ti{\ga}$ using the 
rules of operad $\{\Ob(\PaB(k))\}_{k \ge 1}$. Finally, to get the element of $\B_{n+m -1}$
corresponding to $\ga \circ_i \ti{\ga}$, we replace the strand of $g$ that originates at 
the position labeled by $i$ by a ``thin'' version of $\ti{g}$. For example,  
$$
\begin{tikzpicture}[scale=1.5, > = stealth]
\tikzstyle{v} = [circle, draw, fill, minimum size=0, inner sep=1]
\node[v] (v2) at (0, 0) {};
\draw (0,-0.2) node[anchor=center] {{\small $2$}};
\node[v] (v3) at (1, 0) {};
\draw (0.85,-0.2) node[anchor=center] {{\small $($}};
\draw (1,-0.2) node[anchor=center] {{\small $3$}};
\node[v] (v1) at (2, 0) {};
\draw (2,-0.2) node[anchor=center] {{\small $1$}};
\draw (2.15,-0.2) node[anchor=center] {{\small $)$}};
\node[v] (vv3) at (0, 1) {};
\draw (-0.15,1.2) node[anchor=center] {{\small $($}};
\draw (0,1.2) node[anchor=center] {{\small $3$}};
\node[v] (vv1) at (1, 1) {};
\draw (1,1.2) node[anchor=center] {{\small $1$}};
\draw (1.15,1.2) node[anchor=center] {{\small $)$}};
\node[v] (vv2) at (2, 1) {};
\draw (2,1.2) node[anchor=center] {{\small $2$}};
\draw (v1) -- (1.4, 0.6)  [->] (1.25,0.75) -- (vv1);
\draw [->] (v2) --  (vv2);
\draw (v3) -- (0.75,0.25) [->] (0.6,0.4) -- (vv3) ;
\draw (3,0.5) node[anchor=center] {{$\circ_2$}};
\begin{scope}[shift={(4,0)}]
\node[v] (v2) at (0, 0) {};
\draw (0,-0.2) node[anchor=center] {{\small $2$}};
\draw (0,1.2) node[anchor=center] {{\small $1$}};
\node[v] (v1) at (0.5, 0) {};
\draw (0.5,-0.2) node[anchor=center] {{\small $1$}};
\draw (0.5,1.2) node[anchor=center] {{\small $2$}};
\node[v] (vv1) at (0, 1) {};
\node[v] (vv2) at (0.5, 1) {};
\draw [->] (v2) -- (vv2);
\draw (v1) -- (0.3, 0.4); 
\draw [->] (0.2, 0.6) -- (vv1);
\draw (1.5,0.5) node[anchor=center] {{$ : = $}};
\end{scope}
\begin{scope}[shift={(6.5,0)}]
\node[v] (v3) at (0, 0) {};
\draw (-0.15,-0.2) node[anchor=center] {{\small $($}};
\draw (0,-0.2) node[anchor=center] {{\small $3$}};
\node[v] (v2) at (0.5, 0) {};
\draw (0.5,-0.2) node[anchor=center] {{\small $2$}};
\draw (0.65,-0.2) node[anchor=center] {{\small $)$}};
\node[v] (v4) at (1.5, 0) {};
\draw (1.35,-0.2) node[anchor=center] {{\small $($}};
\draw (1.5,-0.2) node[anchor=center] {{\small $4$}};
\node[v] (v1) at (2.5, 0) {};
\draw (2.5,-0.2) node[anchor=center] {{\small $1$}};
\draw (2.65,-0.2) node[anchor=center] {{\small $)$}};
\node[v] (vv4) at (0, 1) {};
\draw (-0.15,1.2) node[anchor=center] {{\small $($}};
\draw (0,1.2) node[anchor=center] {{\small $4$}};
\node[v] (vv1) at (1, 1) {};
\draw (1,1.2) node[anchor=center] {{\small $1$}};
\draw (1.15,1.2) node[anchor=center] {{\small $)$}};
\node[v] (vv2) at (2, 1) {};
\draw (1.85,1.2) node[anchor=center] {{\small $($}};
\draw (2,1.2) node[anchor=center] {{\small $2$}};
\node[v] (vv3) at (2.5, 1) {};
\draw (2.5,1.2) node[anchor=center] {{\small $3$}};
\draw (2.65,1.2) node[anchor=center] {{\small $)$}};
\draw  (v1) --  (1.7, 0.53) [->] (1.4,0.73) -- (vv1);
\draw (v2) -- (vv2);
\draw (v3) -- (0.8,0.3) [->] (1.2,0.4) -- (vv3);
\draw (v4) -- (1.2,0.2) [->] (0.8,0.47) -- (vv4);
\end{scope}
\end{tikzpicture}
$$

For a more precise definition of operadic multiplications on $\PaB$ we refer 
the reader to \cite[Chapter 6]{Fresse1}.

The (iso)morphisms $\al$ and $\beta$ satisfy the following {\it pentagon relation}
\begin{equation}
\label{pentagon}
\begin{tikzpicture}
\matrix (m) [matrix of math nodes, row sep=1.5em, column sep=1.5em]
{~&  (1(23))4  &  ~ & 1((23)4) &  ~ \\
((12)3)4 & ~ & ~ & ~  & 1(2(34)) \\ 
 ~ & ~~ & (12)(34)  & ~ &  ~ \\};
\path[->, font=\scriptsize]
(m-2-1) edge node[above] {$\id_{12} \circ_1 \al~~~~~~~~$} (m-1-2)
(m-1-2) edge node[above] {$\al \circ_2 \id_{12}$} (m-1-4)
(m-1-4) edge node[above] {$~~~~~~~~\id_{12} \circ_2 \al$} (m-2-5)
(m-2-1) edge node[above] {$~~~~\al \circ_1 \id_{12}$} (m-3-3)
 (m-3-3)  edge node[above] {$\al \circ_3 \id_{12}~~~~$} (m-2-5);
\end{tikzpicture}
\end{equation}
and the two {\it hexagon relations}:
 \begin{equation}
\label{hexagon1}
\begin{tikzpicture}
\matrix (m) [matrix of math nodes, row sep=1.8em, column sep=3.2em]
{(12) 3 &  3(12)  &  (31)2  \\
1(23)  & 1(32) &   (13)2  \\ };
\path[->, font=\scriptsize]
(m-1-1) edge node[above] {$\beta \circ_1 \id_{12}$} (m-1-2)  edge node[left] {$\al~$} (m-2-1)
(m-2-1) edge node[above] {$\id_{12} \circ_2 \beta $} (m-2-2)  (m-2-2)  edge node[above] {$ (2,3)\, \al^{-1}$} (m-2-3) 
(m-2-3)  edge node[right] {$(2,3)\, (\id_{12} \circ_1 \beta)$} (m-1-3) (m-1-3)  edge node[above] {$(1,3,2)\, \al$} (m-1-2) ;
\end{tikzpicture}
\end{equation}
\begin{equation}
\label{hexagon11}
\begin{tikzpicture}
\matrix (m) [matrix of math nodes, row sep=1.8em, column sep=3.2em]
{1(23) &  (23)1  &  2(31)  \\
(12)3  & (21)3 &   2(13)  \\ };
\path[->, font=\scriptsize]
(m-1-1) edge node[above] {$\beta \circ_2 \id_{12}$} (m-1-2)  (m-1-2) edge node[above] {$(1,2,3)\, \al$}  (m-1-3)
(m-1-1) edge node[left] {$\al^{-1}$} (m-2-1)  (m-2-1) edge node[above] {$\id_{12} \circ_1 \beta$} (m-2-2) 
(m-2-2)  edge node[above] {$(1,2)\, \al $} (m-2-3)  (m-2-3) edge node[right] {$(1,2) \, (\id_{12} \circ_2 \beta)$}  (m-1-3);
\end{tikzpicture}
\end{equation}

It is known \cite[Theorem 6.2.4]{Fresse1} that\footnote{A very similar statement is proved in 
\cite{BNGT}. See Claim 2.6 in {\it loc. cit.} It goes without saying that Theorem \ref{thm:PaB-gener} can 
be thought of as a version of MacLane's coherence theorem for braided monoidal categories.} 
\begin{thm}
\label{thm:PaB-gener}
As the operad in the category of groupoids, $\PaB$ is generated by 
morphisms $\al$ and $\beta$ shown in figure \ref{fig:beta-alpha}.
Moreover, any relation on $\al$ and $\beta$ in $\PaB$ is a consequence 
of \eqref{pentagon}, \eqref{hexagon1} and  \eqref{hexagon11}.
\end{thm}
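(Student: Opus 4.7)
The plan is to establish the two halves of the theorem separately: first, that every morphism in $\PaB$ can be expressed in terms of $\al, \beta$ and their inverses via operadic insertions, symmetric group actions, and compositions; second, that the pentagon relation \eqref{pentagon} and the hexagon relations \eqref{hexagon1}, \eqref{hexagon11}, together with the naturality and functoriality built into the operad axioms, suffice to recover every relation among such expressions. The overall strategy is to recast MacLane's coherence theorem for braided monoidal categories, in the form proved by Joyal and Street, in operadic language.

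For generation, I would first handle re-parenthesization at a fixed underlying permutation. MacLane's coherence theorem (a consequence of the pentagon alone) asserts that between any two objects $\tau_1, \tau_2 \in \PaB(n)$ sharing the same underlying permutation, there is a canonical ``standard'' morphism built from operadic insertions of $\al$ and $\al^{-1}$; moreover, any two such composites give the same morphism. Next, for each $1 \le i \le n-1$, the standard generator $\si_i \in \B_n$ lifts via $\mm$ to a morphism in $\PaB(n)$ which, after re-parenthesization by $\al$'s, is an operadic insertion of $\beta$ at the $i$-th slot. Since $\si_1, \dots, \si_{n-1}$ generate $\B_n$, identity \eqref{mm-comp} together with the freedom to re-associate shows that $\mm(g)$ is built from $\al, \beta, \al^{-1}, \beta^{-1}$ by compositions and operadic insertions for every $g \in \B_n$. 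An arbitrary morphism in $\PaB(n)$ between parenthesizations $\tau_1$ and $\tau_2$ then factors as an associator from $\tau_1$, followed by $\mm(g)$ for the underlying braid $g$, followed by another associator to $\tau_2$.

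For the completeness of relations, I would introduce the operad $\mathcal{Q}$ in groupoids freely generated by abstract symbols $\al, \beta$ subject to the pentagon and hexagon relations. The generation argument above produces a surjective morphism of operads $\Phi : \mathcal{Q} \to \PaB$. It remains to show $\Phi$ is faithful, for which I plan to construct, for each $n$, a right inverse to $\Phi$ at the level of automorphism groups of the ``left-combed'' parenthesization. Building this inverse amounts to verifying that the defining relations of $\B_n$ are consequences of the pentagon and hexagons under $\Phi$: the commutativity $\si_i \si_j = \si_j \si_i$ for $|i-j| \ge 2$ follows immediately from the operad axiom that insertions into disjoint slots commute, while the braid relation $\si_i \si_{i+1} \si_i = \si_{i+1} \si_i \si_{i+1}$ (which lives in $\PaB(3)$) must be derived by concatenating the two hexagons after re-parenthesizing with $\al$.

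The main obstacle will be this derivation of the braid relation from the hexagons together with pentagon-controlled re-parenthesizations. Concretely, one pastes \eqref{hexagon1} and \eqref{hexagon11} along their common edges, moving between the parenthesizations of $\{1,2,3\}$ via $\al$, and uses the pentagon in $\PaB(4)$ to reconcile ambiguities arising from performing these moves in the presence of an additional strand. This is the heart of the Joyal--Street coherence theorem, and the efficient route is to quote the detailed verification in \cite[Chapter 6]{Fresse1}, which packages exactly this derivation. Once faithfulness of $\Phi$ is established, the theorem follows immediately.
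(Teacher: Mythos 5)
The paper supplies no proof of Theorem~\ref{thm:PaB-gener}: it is recorded as a known result, with the sentence preceding it citing \cite[Theorem~6.2.4]{Fresse1} (and, in a footnote, \cite{BNGT}, Claim~2.6) and identifying the statement as an operadic form of MacLane's coherence theorem for braided monoidal categories. Your proposal therefore does not conflict with the paper's treatment: the architecture you describe --- generation of $\PaB$ from $\alpha,\beta$ by factoring an arbitrary morphism as (incoming associator)$\,\cdot\, \mm(\textrm{word in the }\sigma_i)\,\cdot\,$(outgoing associator), then completeness of relations by exhibiting a section of the surjection from the free ``pentagon-and-hexagon'' operad onto $\PaB$ --- is how the coherence theorem is proved, and, like the paper, you ultimately defer the technical core to Fresse's Chapter~6.

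One small inaccuracy worth flagging: in the generation half you invoke MacLane coherence (and hence the pentagon) to produce a composite of operadic insertions of $\alpha^{\pm 1}$ connecting two parenthesizations with the same underlying permutation. The pentagon is not needed for this; existence of such a composite follows only from the fact that any two parenthesizations are connected by a finite chain of elementary rebracketings, each of which is a single operadic insertion of $\alpha^{\pm 1}$. The pentagon is needed for \emph{uniqueness} of the resulting morphism in the free operad $\mathcal{Q}$, which belongs to the faithfulness half, not to generation. Similarly, when you say the braid relation ``must be derived by concatenating the two hexagons,'' be aware that what is really at stake is deriving it from the hexagons \emph{together with} the naturality encoded in the operad axioms; the pentagon in $\PaB(4)$ then only enters to control coherence of the associator moves used in the course of this derivation. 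Neither point undermines the sketch, since you delegate these verifications to Fresse.
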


\subsection{The cosimplicial homomorphisms for pure braid groups in arities $2,3,4$}
\label{app:vfs}

The collection $\{\PB_n\}_{n \ge 1}$ of pure braid groups can be equipped 
with the structure of a cosimplicial group. For our purposes we will need the cofaces 
of this cosimplicial structure only in arities $2,3$ and $4$. 

Let $\tau_1$ and $\tau_2$ be objects of $\PaB(n)$ which differ only by parenthesizations, 
i.e. $\mmp(\tau_1) = \mmp(\tau_2)$. For such objects, we denote by $\al_{\tau_1}^{\tau_2}$ the 
isomorphism from $\tau_1$ to $\tau_2$ given by the identity element of $\B_n$. 
For example, the associator $\al$ is precisely $\al_{(12)3}^{1(23)}$ and $\al^{-1}$
is precisely $\al_{1(23)}^{(12)3}$.  
  
Using the identity morphism $\id_{12} \in \PaB(2)$, the maps $\ou$, $\mm$ (see \eqref{ou}, \eqref{mm})
and the operadic insertions, we define 
the following maps from $\PB_3$ to $\PB_4$ and the maps from $\PB_2$ to $\PB_3$: 
$$
\vf_{123} (h) : =  \ou ( \id_{12} \circ_1 \mm(h) ), \qquad \vf_{12,3,4}(h) : = \ou( \mm(h) \circ_1 \id_{12} ), 
$$
\begin{equation}
\label{vfs-PB}
\vf_{1,23,4} (h) : = \ou ( \mm(h) \circ_2 \id_{12}  ),  
\end{equation}
$$
\vf_{1,2,34} (h) : = \ou ( \mm(h) \circ_3 \id_{12} ) ,
\qquad
\vf_{234} (h) : =  \ou ( \id_{12} \circ_2 \mm(h) ), 
$$
\bigskip
$$
\vf_{12} (h) : = \ou (\id_{12} \circ_1 \mm(h)), \qquad \vf_{23}(h) : = \ou(\id_{12} \circ_{2} \mm(h) ),
$$
\begin{equation}
\label{vfs-PB-2-3}
\vf_{12,3}(h) := \ou( \mm(h) \circ_1 \id_{12} ), \qquad 
\vf_{1,23}(h) := \ou(\mm(h) \circ_2 \id_{12}).
\end{equation}

We claim that 
\begin{prop}
\label{prop:vf-h-sm}
The equations in \eqref{vfs-PB} (resp. in  \eqref{vfs-PB-2-3}) define group homomorphisms 
from $\PB_3$ (resp. $\PB_2$) to $\PB_4$ (resp. $\PB_3$). 
\end{prop}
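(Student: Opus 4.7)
The plan is to show that each of the maps defined in \eqref{vfs-PB} and \eqref{vfs-PB-2-3} is a composition of three operations each of which is compatible with multiplication/composition, once we restrict the input to pure braids. All the maps have the same shape $h \mapsto \ou(X \circ_i Y)$ where one of $X,Y$ is $\mm(h)$ and the other is $\id_{12}$, so a single argument handles all nine homomorphisms.

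The first step is to observe that the restriction of $\mm : \B_n \to \PaB(n)$ to $\PB_n$ is itself a group homomorphism into $\Aut_{\PaB(n)}(\paren 1,2)3)\dots n))$. Indeed, for $h \in \PB_n$ we have $\rho(h) = \id_{S_n}$, so $\mm(h)$ is a loop at $\paren 1,2)3)\dots n)$, and the general identity \eqref{mm-comp} simplifies to $\mm(h_1 h_2) = \mm(h_1) \cdot \mm(h_2)$ for $h_1,h_2 \in \PB_n$.

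The second step uses that, for every $1 \le i \le n$, the elementary insertion $\circ_i : \PaB(n) \times \PaB(m) \to \PaB(n+m-1)$ is a functor of groupoids. Concretely, if $\ga_1,\ga_2 \in \PaB(n)$ and $\eta_1,\eta_2 \in \PaB(m)$ are such that $\ga_1 \cdot \ga_2$ and $\eta_1 \cdot \eta_2$ are defined, then
\begin{equation*}
(\ga_1 \cdot \ga_2) \circ_i (\eta_1 \cdot \eta_2) \;=\; (\ga_1 \circ_i \eta_1) \cdot (\ga_2 \circ_i \eta_2).
\end{equation*}
I will apply this with $\ga_1 = \mm(h_1)$, $\ga_2 = \mm(h_2)$ and $\eta_1 = \eta_2 = \id_{12}$ (and use $\id_{12} \cdot \id_{12} = \id_{12}$); for the maps in \eqref{vfs-PB} that take the form $\ou(\id_{12} \circ_j \mm(h))$ I apply it with the roles swapped. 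Then using $\ou(\ga \cdot \eta) = \ou(\ga) \cdot \ou(\eta)$ for composable morphisms, I conclude $\vf(h_1 h_2) = \vf(h_1) \vf(h_2)$ in each case.

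The third step is to check that the image actually lands in the pure braid group on $n+1$ strands and not merely in $\B_{n+1}$. This is immediate: since $\mm(h)$ is an endomorphism of a single object in $\PaB(n)$, the insertion $\mm(h) \circ_i \id_{12}$ (or $\id_{12} \circ_j \mm(h)$) is an endomorphism of a single object in $\PaB(n+1)$, and applying $\ou$ to such an endomorphism yields an element with trivial underlying permutation, hence an element of $\PB_{n+1}$. There is no hard part here: the proposition follows from assembling \eqref{mm-comp}, functoriality of $\circ_i$, and multiplicativity of $\ou$ on composable morphisms; the only thing to be careful about is matching sources and targets so that all the compositions $\mm(h_1)\cdot \mm(h_2)$ and $\id_{12} \cdot \id_{12}$ are legitimate before invoking functoriality of $\circ_i$, which is precisely where the hypothesis $h \in \PB_n$ is used.
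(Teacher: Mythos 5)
Your proof is correct and follows essentially the same route as the paper's: both rest on the observation that $\mm$ restricted to pure braids is multiplicative (via the simplification of \eqref{mm-comp} when $\rho(g_2)=\id$), functoriality of the elementary insertions $\circ_i$, and multiplicativity of $\ou$ on composable morphisms. The paper spells out only the case $\vf_{1,23,4}$ and declares the other eight similar, whereas you phrase the argument once and uniformly and also make explicit the (easy) check that the image actually lands in the pure braid group; these are presentational differences, not a different method.
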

\begin{proof}
Let us consider the map $\vf_{1,23,4} : \PB_3 \to \PB_4$.
For elements $h$, $\ti{h} \in \PB_3$, we set 
$$
\ga : = \mm(h), \qquad \ti{\ga} : = \mm(\ti{h}).
$$

Since $\PaB$ is an operad in the category of groupoids,  we have 
$$
(\ga \cdot \ti{\ga}) \circ_2 \id_{12} = (\ga \circ_2 \id_{12}) \cdot (\ti{\ga} \circ_2 \id_{12}).
$$

Hence 
$$
\vf_{1,23,4} (h) \cdot \vf_{1,23,4} (\ti{h}) = \ou(\ga \circ_2 \id_{12}) \cdot \ou(\ti{\ga} \circ_2 \id_{12}) 
 = \ou \big(\, (\ga \circ_2 \id_{12}) \cdot (\ti{\ga} \circ_2 \id_{12}) \, \big) = 
$$
$$
\ou \big(\, (\ga \cdot \ti{\ga}) \circ_2 \id_{12} \, \big) = \vf_{1,23,4} (h\cdot \ti{h}), 
$$
where the last identity is a consequence of $\ga \cdot \ti{\ga} =  \mm(h\cdot \ti{h})$.

The proofs for the remaining $8$ maps are very similar and we leave it to the reader.
\end{proof}

Since all $9$ maps in \eqref{vfs-PB} and \eqref{vfs-PB-2-3} are group homomorphisms, 
they are uniquely determined by their values on generators of $\PB_3$ and $\PB_4$, respectively. 
It is easy to see that 
$$
\vf_{123}(x_{12}) = x_{12}, \quad
\vf_{123}(x_{23}) = x_{23}, \quad
\vf_{123}(x_{13}) = x_{13}, 
$$
$$
\vf_{234}(x_{12}) = x_{23}, \quad
\vf_{234}(x_{23}) = x_{34}, \quad
\vf_{234}(x_{13}) = x_{24},
$$
\begin{equation}
\label{vfs-3-4-on-gen}
\vf_{12,3,4}(x_{12}) = x_{13} x_{23}, \quad
\vf_{12,3,4}(x_{23}) =  x_{34}, \quad
\vf_{12,3,4}(x_{13}) =  x_{14} x_{24},
\end{equation}
$$
\vf_{1,23,4}(x_{12}) = x_{12} x_{13}, \quad
\vf_{1,23,4}(x_{23}) =  x_{24} x_{34}, \quad
\vf_{1,23,4}(x_{13}) =  x_{14},
$$
$$
\vf_{1, 2, 34}(x_{12}) = x_{12}, \quad
\vf_{1, 2, 34}(x_{23}) = x_{23} x_{24}, \quad
\vf_{1, 2, 34}(x_{13}) = x_{13} x_{14}.
$$
\bigskip
\begin{equation}
\label{vfs-2-3-on-gen}
\vf_{12}(x_{12}) = x_{12}, \quad \vf_{23}(x_{12}) = x_{23}, 
\quad 
\vf_{12,3}(x_{12}) = x_{13} x_{23}, 
\quad 
\vf_{1,23}(x_{12}) = x_{12} x_{13},
\end{equation}

\subsection{The profinite completion $\wh{\PaB}$ of $\PaB$}
\label{sec:PaB-hat}

Let $\cG$ be a connected groupoid with finitely many objects and $G$ be the group that
represents the isomorphism class of $\Aut(a)$ for some object $a$ of $\cG$. We tacitly assume that 
the group $G$ is residually finite.  
Following \cite{ProfiniteGrpd}, an equivalence relation $\sim$ on $\cG$ is called \emph{compatible}, if  

\begin{enumerate}

\item $\ga_1 \sim \ga_2 \quad \Rightarrow$ the source (resp. the target) 
of $\ga_1$ coincides with the source (resp. the target) of $\ga_2$;  

\item  $\ga_1 \sim \ga_2 \quad \Rightarrow \quad \ga_1 \cdot \ga \sim \ga_2  \cdot \ga
 ~\textrm{and}~ \tau \cdot \ga_1 \sim  \tau \cdot \ga_2 $ (if the compositions are defined);

\item the set $\cG / \sim$ of equivalence classes is finite. 

\end{enumerate}
It is clear that, for every compatible equivalence relation $\sim$ on 
$\cG$, the quotient $\cG/\sim$ is naturally a finite groupoid (with the same set of objects).

Compatible equivalence relations on $\cG$ form a directed poset and the assignment $\sim \mapsto \cG/\sim$
gives us a functor from this poset to the category of finite groupoids.  
In \cite{ProfiniteGrpd}, the profinite completion $\wh{\cG}$ of the groupoid $\cG$ is defined as the limit 
of this functor. 

In \cite{ProfiniteGrpd}, it was also shown that compatible equivalence relations on $\cG$ are in bijection 
with finite index normal subgroups $\N$ of $G$. This gives us the following  ``pedestrian'' way of thinking 
about morphisms in $\wh{\cG}(a,b)$:  choose\footnote{$\cG(a,b)$ is non-empty because $\cG$ is connected.} 
$\la \in \cG(a,b)$, then every morphism in $\ga \in \wh{\cG}(a,b)$ can be uniquely written as 
$$
\ga = \la \cdot h,
$$ 
where $h \in \wh{G}$.

In \cite{ProfiniteGrpd}, we also proved that the assignment $\cG  \mapsto \wh{\cG}$ upgrades to a functor 
from the category of groupoids to the category of topological groupoids. 
Moreover, this is a symmetric monoidal functor.
 
Thus ``putting hats'' over $\PaB(n)$ for every $n \ge 0$ gives us an operad $\wh{\PaB}$ in the category 
of topological groupoids.  

\section{Charming $\GT$-shadows in the Abelian setting. Examples of genuine $\GT$-shadows}
\label{app:Abelian}

Let us prove the following statement: 
\begin{prop}  
\label{prop:Abelian}
For $\N \in \NFI_{\PB_4}(\B_4)$, the following conditions are equivalent: 
\begin{itemize}

\item [{\bf a)}] the quotient group $\PB_4/\N$ is Abelian;
\item [{\bf b)}] the quotient group $\PB_3/\N_{\PB_3}$ is Abelian;
\item [{\bf c)}] the quotient group $\F_2/\N_{\F_2}$ is Abelian. 

\end{itemize}
\end{prop}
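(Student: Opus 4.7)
The plan is to establish the easy chain $(a) \Rightarrow (b) \Leftrightarrow (c)$ first, and then tackle the substantive implication $(b) \Rightarrow (a)$, which I expect to be the main obstacle.

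For $(a) \Rightarrow (b)$, each of the five homomorphisms $\vf \in \{\vf_{123}, \vf_{234}, \vf_{12,3,4}, \vf_{1,23,4}, \vf_{1,2,34}\}: \PB_3 \to \PB_4$ induces an injection $\PB_3/\vf^{-1}(\N) \hookrightarrow \PB_4/\N$, so each such quotient is Abelian whenever $\PB_4/\N$ is. Since $\N_{\PB_3} = \bigcap_\vf \vf^{-1}(\N)$ by \eqref{N-PB-3}, the group $\PB_3/\N_{\PB_3}$ embeds diagonally into the product $\prod_\vf \PB_3/\vf^{-1}(\N)$ of Abelian groups, hence is Abelian. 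For $(b) \Leftrightarrow (c)$ I would use $\PB_3 \cong \F_2 \times \langle c \rangle$ with $c$ central: since $\N_{\F_2} = \F_2 \cap \N_{\PB_3}$, the inclusion $\F_2 \hookrightarrow \PB_3$ induces an injection $\F_2/\N_{\F_2} \hookrightarrow \PB_3/\N_{\PB_3}$, giving $(b) \Rightarrow (c)$; conversely, if $\F_2/\N_{\F_2}$ is Abelian, then $\PB_3/\N_{\PB_3}$ is generated by the Abelian image of $\F_2$ together with the image of the central element $c$, hence is also Abelian.

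For the main implication $(b) \Rightarrow (a)$, assume $\PB_3/\N_{\PB_3}$ is Abelian, so that $\vf([\PB_3, \PB_3]) \subseteq \N$ for each of the five homomorphisms $\vf$ above. I would first harvest the direct consequences by evaluating each $\vf$ on the commutators of the three generators $x_{12}, x_{23}, x_{13}$ of $\PB_3$, using formulas \eqref{vfs-3-4-on-gen}. Applied to $\vf_{123}$ and $\vf_{234}$, this yields that the triples $\{x_{12}, x_{13}, x_{23}\}$ and $\{x_{23}, x_{24}, x_{34}\}$ each consist of pairwise commuting elements of $\PB_4/\N$. Combined with the relations $[x_{12}, x_{34}] = [x_{14}, x_{23}] = 1$ that already hold in $\PB_4$ (cf.\ \eqref{PB-n-rel}), only the seven pairs $[x_{12}, x_{14}], [x_{12}, x_{24}], [x_{13}, x_{14}], [x_{13}, x_{24}], [x_{13}, x_{34}], [x_{14}, x_{24}], [x_{14}, x_{34}]$ remain to be checked.

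For each of these seven, I would apply the standard identities $[ab, c] = a[b,c]a^{-1} \cdot [a,c]$ and $[a, bc] = [a,b] \cdot b[a,c]b^{-1}$ to a suitable relation supplied by $\vf_{12,3,4}$, $\vf_{1,23,4}$, or $\vf_{1,2,34}$, then cancel the factors already known to lie in $\N$ (using normality of $\N$ to strip conjugations). For instance, $[x_{12}x_{13}, x_{14}] \in \N$ (from $\vf_{1,23,4}$) combined with $[x_{12}, x_{14}] \in \N$ (obtained from $\vf_{1,2,34}$ in the same way) isolates $[x_{13}, x_{14}] \in \N$. The subtlest case, which I view as the main obstacle, is $[x_{13}, x_{24}]$: this pair is not a consequence of the $\PB_4$-relations alone, and I would extract it last, from $[x_{12}x_{13}, x_{24}x_{34}] \in \N$ (from $\vf_{1,23,4}$), since that commutator only simplifies to a conjugate of $[x_{13}, x_{24}]$ modulo $\N$ after $[x_{12}, x_{24}], [x_{12}, x_{34}]$, and $[x_{13}, x_{34}]$ have all been disposed of. Once every commutator of generators of $\PB_4$ is shown to lie in $\N$, we conclude $[\PB_4, \PB_4] \le \N$, i.e.\ $\PB_4/\N$ is Abelian.
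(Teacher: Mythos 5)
Your proof is correct and follows essentially the same strategy as the paper: the easy implications $(a)\Rightarrow(b)\Rightarrow(c)$ and $(c)\Rightarrow(b)$ are handled via the diagonal embedding and the $\F_2\times\langle c\rangle$ decomposition, and the substantive implication $(b)\Rightarrow(a)$ is derived by pushing Abelianness through the five cosimplicial homomorphisms $\vf$ from \eqref{vfs-3-4-on-gen} together with the relations built into $\PB_4$. The one place you genuinely diverge is the commutator $[x_{13},x_{24}]$: the paper handles it by invoking the $\PB_4$-relation $x_{13}^{-1}x_{24}x_{13}=[x_{14},x_{34}]\,x_{24}\,[x_{14},x_{34}]^{-1}$, so that $[x_{13},x_{24}]\in\N$ follows once $[x_{14},x_{34}]\in\N$ is known; you instead extract it from $\vf_{1,23,4}([x_{12},x_{23}])=[x_{12}x_{13},\,x_{24}x_{34}]\in\N$ after $[x_{12},x_{24}]$, $[x_{12},x_{34}]$, $[x_{13},x_{34}]$ are settled. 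Both routes are valid; the paper's is a little shorter once one notices the relevant $\PB_4$-relation, while yours keeps the deduction uniform (everything comes from the $\vf$'s plus the two free commutations $[x_{12},x_{34}]=[x_{23},x_{14}]=1$) at the cost of a longer dependency chain. Your write-up also spells out the commutator-identity bookkeeping for the remaining pairs, which the paper compresses to ``one can show.''
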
  
\begin{proof} Implications {\bf a)} $\Rightarrow$ {\bf b)} and 
{\bf b)} $\Rightarrow$ {\bf c)} are straightforward so we leave them to the reader.  

Let us assume that the quotient group $\F_2/\N_{\F_2}$ is Abelian. 
Then the images of $x_{12}$ and $x_{23}$ in $\PB_3/\N_{\PB_3}$ commute. 
Furthermore, since the image of $c$ in $\PB_3/\N_{\PB_3}$
is obviously in the center of $\PB_3/\N_{\PB_3}$ and $\PB_3 =\lan x_{12}, x_{23}, c\ran$,
we conclude that the quotient group $\PB_3/\N_{\PB_3}$ is also Abelian.  

To show that the generators $\bar{x}_{ij} := x_{ij} \N$ ($1 \le i < j \le 4$) of $\PB_4/\N$ commute
with each other, we consider the group homomorphisms from $\PB_3$ to $\PB_4$
given by formulas \eqref{vfs-3-4-on-gen}. 

Note that, for every homomorphism $\vf : \PB_3 \to \PB_4$ in the set
\begin{equation}
\label{set-vfs}
\{\vf_{234}, \vf_{12,3,4}, \vf_{1,23,4}, \vf_{1,2,34}, \vf_{234}\},
\end{equation}
we have $\N_{\PB_3} \le \vf^{-1}(\N) \le \PB_3$. Therefore, since the quotient $\PB_3/\N_{\PB_3}$
is Abelian, the quotient  $\PB_3/\vf^{-1}(\N)$ is also Abelian. 

Applying these observations to every $\vf$ in \eqref{set-vfs}, we deduce that 
\begin{itemize}

\item the elements  $\bar{x}_{12}$,  $\bar{x}_{23}$, $\bar{x}_{13}$ commute with each other;

\item the elements  $\bar{x}_{23}$,  $\bar{x}_{34}$, $\bar{x}_{24}$ commute with each other;

\item  the elements 
$\bar{x}_{13} \bar{x}_{23}$, $\bar{x}_{34}$ and $\bar{x}_{14} \bar{x}_{24}$ 
commute with each other;

\item the elements $\bar{x}_{12}$, $\bar{x}_{23} \bar{x}_{24}$ and $\bar{x}_{13} \bar{x}_{14}$
commute with each other;

\item  the elements $\bar{x}_{14}$, $\bar{x}_{12} \bar{x}_{13}$ and $\bar{x}_{24} \bar{x}_{34}$
commute with each other.

\end{itemize}

Using these observations one can show that $[\bar{x}_{ij}, \bar{x}_{kl}] = 1_{\PB_4/\N}$ for 
every pair in the set 
$$
\{ \{(i,j), (k,l)\} \,|\, 1 \le i < j \le 4, \, 1 \le k < l \le 4,\}  -  \{\{(1,2),(3,4)\}, \{ (1,3),(2,4) \}, \{ (2,3),(1,4) \} \}.
$$
 
Luckily, due to \eqref{PB-n-rel}, we have 
$$
x_{12} x_{34} = x_{34} x_{12}\,,
\qquad 
x_{23} x_{14} = x_{14} x_{23}\,, \qquad
x_{13}^{-1} x_{24} x_{13} =  [x_{14} , x_{34}] x_{24}  [x_{14} , x_{34}]^{-1}\,.
$$

Thus all generators $\bar{x}_{ij}$ of $\PB_4/\N$ commute with each other. 
\end{proof}

If one of the three equivalent conditions of Proposition \ref{prop:Abelian}
is satisfied then we say that we are in {\it the Abelian setting}.  

\bigskip

We can now prove the following analog of the Kronecker-Weber theorem:
\begin{thm}  
\label{thm:Abelian}
Let $\N \in \NFI_{\PB_4}(\B_4)$.  If the quotient group $\PB_4/\N $ is Abelian then
\begin{equation}
\label{GT-Abelian}
\GT^{\hs}(\N) = \{(m + N_{\ord} \bbZ, \bar{1}) ~|~ 0 \le m \le N_{\ord}-1, ~ \gcd(2m+1,  N_{\ord}) =1\},
\end{equation}
where $\bar{1}$ is the identity element of $\F_2/\N_{\F_2}$. Furthermore, 
every $\GT$-shadow in \eqref{GT-Abelian} is genuine. 
\end{thm}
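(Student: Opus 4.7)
The plan is to split the theorem into two claims: the set-theoretic identity \eqref{GT-Abelian}, and the genuineness of every shadow appearing on the right-hand side. For the forward inclusion of \eqref{GT-Abelian}, I will use the charmingness hypothesis together with Proposition~\ref{prop:Abelian}: being charming means $f\N_{\PB_3}$ is represented by some $f_1 \in [\F_2,\F_2]$, but the Abelian hypothesis forces $[\F_2,\F_2] \le \N_{\F_2} \le \N_{\PB_3}$, so any such representative is trivial modulo $\N_{\PB_3}$, whence $f\N_{\PB_3} = \bar 1$. The condition $\gcd(2m+1, N_{\ord}) = 1$ is just friendliness \eqref{friendly}, which is necessary for the surjectivity of $T^{\PB_2}_{m,f}$.

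For the reverse inclusion, given $m$ with $\gcd(2m+1, N_{\ord}) = 1$, I will check that $(m, 1)$ defines a charming $\GT$-shadow. The pentagon relation \eqref{GT-penta} is automatic when $f = 1$. The two hexagon relations \eqref{hexa1}, \eqref{hexa11} reduce, after applying the conjugation identities \eqref{conj-xxx-PB3} together with $x_{13}x_{23} = x_{12}^{-1}c$ and $x_{12}x_{13} = x_{23}^{-1}c$ from \eqref{c-PB3-B3}, to trivial identities in the Abelian quotient $\PB_3/\N_{\PB_3}$. The surjectivity of $T^{\PB_3}_{m,1}$, $T^{\PB_2}_{m,1}$, and $T^{\F_2}_{m,1}$ follows from the explicit formulas of Corollary~\ref{cor:act-on-PB3}: the homomorphisms send each generator to its $(2m+1)$-st power, and by Proposition~\ref{prop:N-ord} the orders of $x_{12}\N_{\PB_3}$, $x_{23}\N_{\PB_3}$, $c\N_{\PB_3}$ all divide $N_{\ord}$, so raising to $(2m+1)$ is bijective on the Abelian quotient exactly when $\gcd(2m+1, N_{\ord}) = 1$. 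Finally, $1 \in [\F_2, \F_2]$ gives the remaining requirement of charmingness.

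For genuineness, the two required inputs are Corollary~\ref{cor:comm-F2}, which guarantees that the $\Fh_2$-component of any $(\hat m, \hat f) \in \GTh$ lies in $\overline{[\Fh_2, \Fh_2]}$, and the classical surjectivity of the cyclotomic character $\chi: G_{\bbQ} \to \hat{\bbZ}^{\times}$, which via the injection $G_{\bbQ} \hookrightarrow \GTh$ of Subsection~\ref{sec:GQ-GTh} makes the composition $\GTh \to \hat{\bbZ}^{\times}$ given by $(\hat m, \hat f) \mapsto 2\hat m + 1$ surjective. Given $m$ with $\gcd(2m+1, N_{\ord}) = 1$, I pick $g \in G_{\bbQ}$ with $\chi(g) \equiv 2m+1 \bmod N_{\ord}$ and let $(\hat m, \hat f)$ be its image in $\GTh$. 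Then $\hat m \equiv m \bmod N_{\ord}$ automatically, and since $\hat f \in \overline{[\Fh_2, \Fh_2]}$ maps into the closure of $[\F_2,\F_2]$ inside the finite discrete group $\F_2/\N_{\F_2}$ (which is trivial because $[\F_2,\F_2] \le \N_{\F_2}$ by Proposition~\ref{prop:Abelian}), the induced image in $\F_2/\N_{\F_2} \hookrightarrow \PB_3/\N_{\PB_3}$ is $\bar 1$. Hence the shadow induced by $(\hat m, \hat f)$ in $\GT(\N)$ is exactly $(m, \bar 1)$.

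The main obstacle is really only bookkeeping: the hardest step is the explicit hexagon verification in the converse direction of the first claim, but with the identities $\si_2^{-1}x_{12}\si_2 = x_{23}^{-1}x_{12}^{-1}c$, $\si_1^{-1}x_{23}\si_1 = x_{13}$, and the centrality of $c$ in hand, both hexagons collapse to tautologies after commuting everything in the Abelian quotient. The appeal to Galois theory in the genuineness claim is minimal — one only needs a single Galois element with a prescribed cyclotomic character modulo $N_{\ord}$, a classical statement.
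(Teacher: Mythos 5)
Your treatment of the identity \eqref{GT-Abelian} is correct and follows essentially the same path as the paper: the forward inclusion comes from $[\F_2,\F_2]\le\N_{\F_2}$ (a consequence of the Abelian hypothesis via Proposition~\ref{prop:Abelian}) plus friendliness, and the reverse inclusion is the pentagon, the two hexagons collapsing in the Abelian quotient, and the surjectivity of $T^{\PB_3}_{m,1}$, $T^{\PB_2}_{m,1}$, $T^{\F_2}_{m,1}$ from Corollary~\ref{cor:act-on-PB3} together with $\gcd(2m+1,N_{\ord})=1$.

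The genuineness argument, however, has a gap at exactly the step you call ``automatic.'' From $\chi(g)\equiv 2m+1 \pmod{N_{\ord}}$ and the relation $\chi(g)=2\hat m+1$ you obtain $2(\hat m-m)\equiv 0\pmod{N_{\ord}}$, which gives $\hat m\equiv m\pmod{N_{\ord}}$ \emph{only when $N_{\ord}$ is odd}. If $N_{\ord}=2k$, the conclusion is merely $\hat m\equiv m\pmod{k}$, so the shadow induced by $g$ could be $[(m,\bar 1)]$ or $[(m+k,\bar 1)]$, and your argument does not distinguish them. This is precisely the subtlety the paper's proof handles by a separate case: they pass to a finer $\K\le\N$ with $\PB_4/\K$ Abelian and $4k\mid K_0:=|\PB_2:\K_{\PB_2}|$, use surjectivity of the cyclotomic character at level $\K$ to hit the two \emph{distinct} units $2m+1$ and $2m+2k+1$ in $(\bbZ/4k\bbZ)^\times$, and then project back to $\N$ to see that both $[(m,\bar 1)]$ and $[(m+k,\bar 1)]$ are genuine.

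The fix to your argument is short and morally the same doubling trick in different clothing: since $2m+1$ is odd and coprime to $N_{\ord}$, it is also coprime to $2N_{\ord}$, so you may choose $g\in G_{\bbQ}$ with $\chi(g)\equiv 2m+1\pmod{2N_{\ord}}$; then $2\hat m\equiv 2m\pmod{2N_{\ord}}$ forces $\hat m\equiv m\pmod{N_{\ord}}$, and the rest of your paragraph (the $\hat f$-component vanishing in $\F_2/\N_{\F_2}$ by Corollary~\ref{cor:comm-F2} and $[\F_2,\F_2]\le\N_{\F_2}$) goes through unchanged. With that repair your route is arguably cleaner than the paper's, since it controls the modulus of the cyclotomic character directly instead of passing to a finer level of the poset and then descending; but as written, the even case is not covered.
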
  
\begin{proof} Since $\bar{1}$ can be represented by the identity element of $\F_2$, every
element of the set  
\begin{equation}
\label{X-Abelian}
X_{\N} : =  \{(m + N_{\ord} \bbZ, \bar{1}) ~|~ 0 \le m \le N_{\ord}-1, ~ \gcd(2m+1,  N_{\ord}) =1\}
\end{equation}
satisfies the pentagon relation \eqref{GT-penta}. 

For every element of $X_{\N}$, the hexagon relations \eqref{hexa1} and \eqref{hexa11}
boil down to 
\begin{equation}
\label{hexa1-Ab}
\si_1 x_{12}^m  \si_2 x_{23}^m  \, \N_{\PB_3} ~ = ~ 
 \si_1 \si_2 (x_{13} x_{23})^m \, \N_{\PB_3}
\end{equation}
and
\begin{equation}
\label{hexa11-Ab}
 \si_2 x_{23}^m  \, \si_1 x_{12}^m \,  \N_{\PB_3}  
~=~ \si_2 \si_1 (x_{12} x_{13})^m \,    \N_{\PB_3}\,.
\end{equation}

Equation \eqref{hexa1-Ab} follows easily from the identity 
$$
\si^{-1}_2 x_{12} \si_2 = x_{23}^{-1} x_{13} x_{23}
$$
and the fact that the quotient $\PB_3/\N_{\PB_3}$ is Abelian. 

Similarly,  equation \eqref{hexa11-Ab} follows easily from the identity 
$$
\si^{-1}_1 x_{23} \si_1 = x_{13}
$$
and the fact that the quotient $\PB_3/\N_{\PB_3}$ is Abelian. 

We proved that every element of $X_{\N}$ is a $\GT$-pair for $\N$.
Moreover, since $2m+1$ represents a unit in the ring  $\bbZ/N_{\ord} \bbZ$, 
every $\GT$-pair in $X_{\N}$ is friendly, i.e. the group homomorphism 
$T^{\PB_2}_{m,1} : \PB_2 \to \PB_2 / \N_{\PB_2}$ is onto.

Due to \eqref{act-on-xy} and the second identity in \eqref{act-on-x-c},
we have 
$$
T^{\PB_3}_{m,1} (x_{12})= x_{12}^{2m+1} \N_{\PB_3}\,, 
\quad
T^{\PB_3}_{m,1} (x_{23})= x_{23}^{2m+1} \N_{\PB_3}\,,
\quad
T^{\PB_3}_{m,1} (c) = c^{2m+1} \N_{\PB_3}
$$ 
for every $m \in \bbZ$. 

Since the orders of the elements $x_{12} \N_{\PB_3}$, $x_{23} \N_{\PB_3}$ and $c \N_{\PB_3}$ 
divide $N_{\ord}$ and $2m+1$ represents a unit in $\bbZ/N_{\ord} \bbZ$, 
all three cosets $x_{12} \N_{\PB_3}$,  $x_{23} \N_{\PB_3}$ and $c \N_{\PB_3}$ belong to 
the image of $T^{\PB_3}_{m,1}$. Thus, due to Proposition \ref{prop:onto}, every element 
of $X_{\N}$ is a $\GT$-shadow. 

Furthermore, every $\GT$-shadow in $X_{\N}$ is charming. The first condition of 
Definition \ref{dfn:charm} is clearly satisfied and the second one follows from the 
fact that $2m+1$ represents a unit in $\bbZ/N_{\ord} \bbZ$ and the orders of the elements $x_{12} \N_{\F_2}$,
$x_{23} \N_{\F_2}$ divide $N_{\ord}$.  

Since the inclusion $\GT^{\hs}(\N) \subset X_{\N}$ is obvious, the first statement of 
Theorem \ref{thm:Abelian} is proved.

Let us now show that every $\GT$-shadow in $\GT^{\hs}(\N)$ is genuine. 

Due to Remark \ref{rem:virt-cyclotomic} and the surjectivity of the cyclotomic character, 
we know that, for every $\bar{\la} \in \big( \bbZ/ N_{\ord} \bbZ \big)^{\times}$ there should exist 
at least one genuine $\GT$-shadow $[(m,f)] \in \GT^{\hs}(\N)$ such that 
\begin{equation}
\label{m-lambda}
2 \bar{m} + \bar{1} = \bar{\la}.
\end{equation}
 
Let us assume that $N_{\ord}$ is odd. In this case $\bar{2} \in \big( \bbZ/ N_{\ord} \bbZ \big)^{\times}$ 
and hence, for every fixed $\bar{\la} \in  \big( \bbZ/ N_{\ord} \bbZ \big)^{\times}$, equation
\eqref{m-lambda} has exactly one solution  $\bar{m} \in  \bbZ/ N_{\ord} \bbZ$. 

Since, for every $\bar{\la} \in  \big( \bbZ/ N_{\ord} \bbZ \big)^{\times}$, we have exactly 
one $\GT$-shadow $(\bar{m}, \bar{1})$ in $\GT^{\hs}(\N)$ such that $2\bar{m} +1 = \bar{\la}$, 
the surjectivity of the cyclotomic character implies that every $\GT$-shadow 
in $\GT^{\hs}(\N)$ is genuine.

The case when $N_{\ord} = 2k$ (for $k \in \bbZ_{\ge 1}$) requires more work.
In this case, equation \eqref{m-lambda} has exactly two solutions 
for every $\bar{\la} \in  \big( \bbZ/ 2 k \bbZ \big)^{\times}$.  More precisely, 
if $2\bar{m}+\bar{1} = \bar{\la}$ then the solution set for  \eqref{m-lambda}
is $\{\bar{m}, \bar{m} + \bar{k}\}$.

The proof of the desired statement about $\GT^{\hs}(\N)$ is based on the fact that
the integers $2 m+1$ and $2m+2k+1$ represent two distinct units in the ring  $\bbZ/ 4 k \bbZ$. 

Let $\K$ be an element of $\NFI_{\PB_4}(\B_4)$ satisfying these three properties:
\begin{itemize}

\item $\K \le \N$;

\item $\PB_4/\K$ is Abelian;

\item $4k$ divides $K_0: = |\PB_2 : \K_{\PB_2}|$.

\end{itemize}

One possible way to construct such $\K$ is to define a group homomorphism 
$\psi : \PB_4 \to S_{4k}$ by the formulas
\begin{equation}
\label{psi-4k}
\psi(x_{ij}) := (1,2,\dots, 4k), \qquad \forall ~1 \le i < j \le 4
\end{equation}
and set $\K := \N \cap \ker(\psi)$. 

Since the natural group homomorphism 
$$
\big( \bbZ/K_0 \bbZ \big)^{\times} \to \big( \bbZ/ 4k \bbZ \big)^{\times} 
$$
is onto, there exist $\bar{\la}_1 \neq \bar{\la}_2$ in $\big( \bbZ/K_0 \bbZ \big)^{\times} $ 
whose images in $ \big( \bbZ/ 4k \bbZ \big)^{\times} $ are the two distinct units 
represented by $2m+1$ and $2m+2k+1$, respectively. 

Therefore there exist genuine $\GT$-shadows $[(m_1,1)]$ and $[(m_2,1)]$ in 
$\GT^{\hs}(\K)$ such that 
$$
2 m_1 +1 \equiv \la_1 \mod K_0 ~~~\textrm{and}~~~
2 m_2 +1 \equiv \la_2 \mod K_0\,.
$$
Consequently, $m_1$ and $m_2$ satisfy these congruences mod $4k$: 
$$
2 m_1 +1 \equiv 2m+1 \mod 4k ~~~\textrm{and}~~~
2 m_2 +1 \equiv 2m+2k+1 \mod 4k. 
$$

Thus the images of the genuine $\GT$-shadows  $[(m_1,1)]$ and $[(m_2,1)]$
in $\GT(\N)$ are  $[(m,1)]$ and $[(m+k,1)]$.

\end{proof}
\begin{remark}
\label{rem:Abelian}
Note that, in the Abelian setting, every charming $\GT$-shadow comes from an element of $G_{\bbQ}$. 
The authors \emph{do not know} whether there is a genuine $\GT$-shadow (in the non-Abelian setting) 
that does not come from an element of $G_{\bbQ}$. Of course, if such a $\GT$-shadow exists then 
the homomorphism \eqref{G-Q-to-GTh} is not 
onto\footnote{Some mathematicians believe that, in modern mathematics,  
there are no tools for tackling this question.}. 
\end{remark}

\bigskip
\bigskip

\noindent\textsc{Department of Mathematics,
Temple University, \\
Wachman Hall Rm. 638\\
1805 N. Broad St.,\\
Philadelphia PA, 19122 USA \\
\emph{E-mail addresses:} {\bf vald@temple.edu}, {\bf khanh.q.le@temple.edu}}

\bigskip
\bigskip

\noindent\textsc{Department of Mathematics,
Vanderbilt University, \\
1326 Stevenson Center Ln, Office 1227D \\
Nashville, TN 37240 USA\\
\emph{E-mail addresses:} {\bf aidan.lorenz@vanderbilt.edu}}

\end{document}